\documentclass[11pt,dvipsnames]{amsart}

\usepackage{xy}
\usepackage{graphicx, amsmath, amssymb, amsthm, amsfonts}
\usepackage{hyperref}
\hypersetup{colorlinks, citecolor=Red, linkcolor=NavyBlue, urlcolor=NavyBlue}
\usepackage[capitalise, nameinlink]{cleveref}
\usepackage[margin=1.2in]{geometry}
\usepackage{mathtools}
\usepackage{quiver}
\usepackage[textsize=tiny]{todonotes}% this is the todonotes
\usepackage{microtype}

\usepackage[backend=biber,style=alphabetic]{biblatex}

% Symbols & operations
\newcommand{\C}{\mathbb{C}}
\newcommand{\Z}{\mathbb{Z}}
\newcommand{\N}{\mathbb{N}}
\newcommand{\A}{\mathcal{A}}

\DeclareMathOperator{\colim}{colim}
\DeclareMathOperator{\id}{id}
\DeclareMathOperator{\Sym}{Sym}
\DeclareMathOperator{\Fun}{Fun}
\DeclareMathOperator{\res}{res}
\DeclareMathOperator{\tr}{tr}
\DeclareMathOperator{\nm}{nm}
\DeclareMathOperator{\conj}{c}
\DeclareMathOperator{\cod}{cod}
\DeclareMathOperator{\ev}{ev}
\DeclareMathOperator{\coeq}{coeq}

% Paired delimiters
\DeclarePairedDelimiter{\of}{(}{)}

% Categories
\newcommand{\Tamb}{\mathsf{Tamb}}
\newcommand{\Mack}{\mathsf{Mack}}
\newcommand{\Ring}{\mathsf{Ring}}
\newcommand{\CRing}{\mathsf{CRing}}
\newcommand{\Green}{\mathsf{Green}}
\newcommand{\Alg}{\mathsf{Alg}}
\newcommand{\Mod}{\mathsf{Mod}}
\newcommand{\set}{\mathsf{set}}
\newcommand{\Set}{\mathsf{Set}}
\newcommand{\catC}{\mathsf{C}}
\newcommand{\catD}{\mathsf{D}}
\newcommand{\Poly}{\mathcal{P}}
\newcommand{\Cat}{\mathsf{Cat}}
\newcommand{\GCRing}[1][G]{\CRing^{#1}}
\newcommand{\GTamb}[1][G]{\Tamb_{#1}}
\newcommand{\GMack}[1][G]{\Mack_{#1}}
\newcommand{\GGreen}[1][G]{\Green_{#1}}
\newcommand{\Gset}[1][G]{\Set^{#1}}
\newcommand{\kAlg}[1][k]{\Alg_{#1}}
\newcommand{\kMod}[1][k]{\Mod_{#1}}

\newcommand{\toby}[1]{\overset{{#1}}{\rightarrow}}

\newtheorem{theorem}{Theorem}[section]
\newtheorem{letterthm}{Theorem}

\newtheorem{lemma}[theorem]{Lemma}

\newtheorem{definition}[theorem]{Definition}
\newtheorem{proposition}[theorem]{Proposition}
\newtheorem{corollary}[theorem]{Corollary}

\newtheoremstyle{BoldRemark} % name
{10pt}                    % Space above
{10pt}                    % Space below
{\upshape}                   % Body font
{}                           % Indent amount
{\bfseries}                  % Theorem head font
{.}                          % Punctuation after theorem head
{.5em}                       % Space after theorem head
{}  % Theorem head spec (can be left empty, meaning ‘normal’)
\theoremstyle{BoldRemark}

\newtheorem{example}[theorem]{Example}

\newtheoremstyle{DefinitionOf}
{\topsep}   % ABOVESPACE
{\topsep}   % BELOWSPACE
{\normalfont}  % BODYFONT
{0pt}       % INDENT (empty value is the same as 0pt)
{\bfseries} % HEADFONT
{.}         % HEADPUNCT
{5pt plus 1pt minus 1pt} % HEADSPACE
{\thmname{#1}\thmnumber{ \normalfont{}(Part #2)}}          % CUSTOM-HEAD-SPEC
\theoremstyle{DefinitionOf}
\newtheorem{def_of_poly}{Definition of $\Poly_G$}

% to get hyphens in math mode, use \mhyphen
% see https://www.logic.at/staff/salzer/etc/mhyphen/
\mathchardef\mhyphen="2D

\title{Algebraically Closed Fields in Equivariant Algebra}
\author{Jason Schuchardt \and Ben Spitz \and Noah Wisdom}
\date{}

\addbibresource{ref.bib}

\begin{document}

\begin{abstract}
    Using the Burklund-Schlank-Yuan abstraction of ``algebraically closed" to ``Nullstellensatzian", we show that a $G$-Tambara functor is Nullstellensatzian if and only if it is the coinduction of an algebraically closed field (for any finite group $G$). As a consequence we deduce an equivalence between the $K$-theory spectrum of any Nullstellensatzian $G$-Tambara functor with the $K$ theory of some algebraically closed field.
\end{abstract}
\maketitle

\tableofcontents

\section{Introduction}

In \cite{BSY22} the notion of a Nullstellensatzian object in a category is introduced in order to abstract the definition of an algebraically closed field. A ring is an algebraically closed field if and only if it is a Nullstellensatzian object in the category of rings, and the central aim of \cite{BSY22} is to classify the Nullstellensatzian objects in a certain category of great importance to homotopy theorists. In particular, there is a functor for which the Nullstellensatzian objects are precisely those obtained by applying the functor to an algebraically closed field of characteristic $p$. The authors of \cite{BSY22} obtain many striking applications of their results. For example, under mild conditions, every object in a category admits a morphism to a Nullstellensatzian object (in some sense this is a generalization of taking an algebraic closure). This immediately allows the authors of \cite{BSY22} to reduce the remaining part of the proof of the famed red-shift conjecture to merely establishing a red-shift property for Nullstellensatzian objects in their setting, which (along with \cite{Yua21}) completes the proof of the red-shift conjecture.

In equivariant homotopy theory, homotopy groups of highly structured ring spectra give Tambara functors rather than rings. One might hope to prove equivariant analogues of the results of \cite{BSY22}; such results would likely be useful in the study of chromatic equivariant homotopy theory, especially in relation to the Balmer spectrum computations of \cite{BGH20} \cite{BHN+19}, or towards the problem of studying motivic height \cite{BGL22}. As a stepping stone towards this, we first study the purely algebraic case: in this article, we completely classify the algebraically closed objects in the category of Tambara functors. Additionally, we classify Nullstellensatzian $G$-rings, and compute algebraic closures.

Roughly speaking, the impetus of equivariant algebra is to port over results from commutative algebra, broadly construed, to Tambara functors. For example, Nakaoka \cite{Nak11a} \cite{Nak11b} has defined ideals, domains, fields, and localizations of Tambara functors, Blumberg and Hill \cite{BH18} define free polynomial algebras over a Tambara functor (which are observed to be almost never flat by Hill-Merhle-Quigley \cite{HMQ22}), and Hill \cite{Hil17} has established a robust theory of Andre-Quillen homology and K\"{a}hler differentials. In \cite{4DS24}, the second author, along with Chan, Mehrle, Quigley, and Van Niel, initiate a study of affine schemes of Tambara functors, and prove an analogue of the going up theorem, as well as a weak version of a Hilbert basis theorem. Also, a complete description of all field-like Tambara functors for $G = \mathbb{Z}/p^n$ is given by the third author in \cite{Wis24}.

Recall from \cite{BSY22} the definition of a Nullstellensatzian object in a category.

\begin{definition}
    A category $\catD$ with an initial object is Nullstellensatzian if every compact, nonterminal object admits a map to the initial object. An object $x$ in a category $\catC$ is Nullstellensatzian if the slice category $x/\catC$ of $x$-algebras is Nullstellensatzian.
\end{definition}

Our main result is a complete classification of Nullstellensatzian objects in the category of $G$-Tambara functors, for any finite group $G$. We remark that complete classifications of Nullstellensatzian objects are sparse in the literature; to the author's knowledge, the only other categories for which Nullstellensatzian objects are completely characterized are the category of rings (by Hilbert's Nullstellensatz), and the category of $T(n)$-local $\mathbb{E}_\infty$-ring spectra \cite{BSY22} (although it seems that the notion of ``algebraically closed group" studied, for example, in \cite{Sco51} \cite{Mac72} \cite{Bel74} is very closely related to the notion of Nullstellensatzian group).

\begin{letterthm}[cf. \Cref{theorem:alg-closed-iff-coinduced}]
    Let $k$ be a Nullstellensatzian Tambara functor. Then $k$ is isomorphic to the coinduction of an algebraically closed field, i.e., $k$ is the fixed-point Tambara functor associated to the $G$-ring $\Fun(G,F)$ for $F$ algebraically closed.
\end{letterthm}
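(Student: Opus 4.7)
The plan is to proceed in three stages: first, show that any Nullstellensatzian $G$-Tambara functor $k$ is field-like in the sense of Nakaoka \cite{Nak11a}; second, identify the underlying $G$-ring $R := k(G/e)$ as $\Fun(G, F)$ for an algebraically closed field $F$; third, show that the full Tambara functor structure on $k$ coincides with the fixed-point Tambara functor of $R$. The first stage is direct: for a finitely generated proper ideal $I \subseteq k$, the quotient $k/I$ is a compact nonterminal $k$-algebra, so the Nullstellensatzian hypothesis furnishes a retraction $k/I \to k$ splitting the quotient map, which forces $I = 0$. Consequently $k$ has no proper finitely generated nonzero ideals, and in particular all its restrictions are injective.

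For the second stage, I would exploit the adjunction $\ev_{G/e} \dashv \mathrm{coind}\colon \GCRing \to \GTamb$, whose right adjoint sends a $G$-ring $R$ to its fixed-point Tambara functor $G/H \mapsto R^H$. Since $H$-fixed points commute with filtered colimits of $G$-rings, $\mathrm{coind}$ preserves filtered colimits, so $\ev_{G/e}$ preserves compact objects. Given a compact nonterminal $R$-algebra $S$, one presents $S$ and lifts generators via the polynomial Tambara algebras of Blumberg--Hill \cite{BH18} to build a compact nonterminal $k$-algebra whose $G/e$-level surjects to $S$; the Nullstellensatzian hypothesis on $k$ then yields a retraction, from which a map $S \to R$ is extracted, so $R$ itself is Nullstellensatzian as a $G$-ring. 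The classification of Nullstellensatzian $G$-rings established separately in this paper then forces $R \cong \Fun(G, F)$ with $F$ algebraically closed; directly, $F$ must be algebraically closed because roots of any polynomial in $F[x]$ may be adjoined at the $G/e$-level as a compact $k$-algebra, and the $G$-orbit structure on primitive idempotents of $R$ must be free since any nontrivial stabilizer would produce a Galois-type obstruction witnessed by a compact $k$-algebra admitting no retract.

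For the third stage, the unit $\eta\colon k \to \mathrm{coind}(R)$ of the adjunction is an isomorphism at $G/e$ by construction, and field-likeness of $k$ makes $\eta$ levelwise injective. To show it is surjective at $G/H$, suppose $r \in R^H$ is not in the image of $\eta_{G/H}$. Form the $k$-algebra freely generated by a class $x_H$ at level $G/H$ (via \cite{BH18}) modulo the relation $\res^H_e(x_H) = r$; the $H$-invariance of $r$ ensures consistency and nontriviality, so the compact $k$-algebra thus obtained admits a retraction by the Nullstellensatzian hypothesis, and the image of $x_H$ exhibits an element of $k(G/H)$ restricting to $r$, contradicting the choice.

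The main obstacle, I expect, will be the second stage: constructing precisely the right compact $k$-algebras to witness each forbidden structure on $R$, namely transcendental elements, unsolvable polynomials, and non-free $G$-orbits of primitive idempotents. This requires fine control of the Blumberg--Hill polynomial algebras \cite{BH18}, whose behavior is delicate in view of the flatness failures observed in \cite{HMQ22}; fortunately only nontriviality of the constructed algebras (rather than flatness) is required for our argument.
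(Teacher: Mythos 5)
Your three-stage architecture matches the paper's: stage 1 (field-likeness via retractions of quotients by principal ideals) and stage 3 (surjectivity of $k(G/H)\to k(G/e)^H$ by adjoining $x_H$ with the relation $\res^H_e(x_H)=r$ and retracting) are essentially identical to the paper's Proposition on field-likeness and \cref{lem:nullstellensatzian-are-fixed-pt-tam-funcs}. The problem is stage 2, where the actual content of the theorem lives, and there you have a genuine gap on two counts. First, a circularity: the classification of Nullstellensatzian $G$-rings is \emph{not} established independently in the paper --- it is deduced as a corollary of the Tambara-functor classification you are trying to prove (by applying $\underline{(-)}$ and then \cref{theorem:alg-closed-iff-coinduced}), so you cannot invoke it here. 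Second, your fallback ``direct'' argument asserts that the primitive idempotents of $R=k(G/e)$ form a free $G$-orbit because ``any nontrivial stabilizer would produce a Galois-type obstruction,'' but this presupposes that $R$ decomposes as a product of $|G|$ factors at all, which is exactly what must be proved; for $\underline{\C}$ the identity is the only idempotent and its orbit is a fixed point, and ruling this out is precisely the hard part (cf.\ \cref{exl:constant-C-is-not-ac}). You flag this yourself as ``the main obstacle'' without resolving it.

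The missing idea is the paper's construction of the compact $k$-algebra
\[
K \;=\; k[x_{G/e}]\big/\bigl(x_g^2 - x_g,\ x_g x_h\ (g\neq h),\ \textstyle\sum_{g} x_g - 1\bigr),
\]
using the identification $k[x_{G/e}](G/e)\cong k(G/e)[x_g \mid g\in G]$ from \cref{prop:underlying-of-free-on-underlying}. One shows $K\neq 0$ by exhibiting a map $K\to C_e^G\, k(G/e)$ classifying a coordinate idempotent of $\Fun(G,k(G/e))$; compactness then forces a section $K\to k$, whose images $y_g$ are a complete orthogonal set of idempotents forming a single $G$-orbit, yielding $k(G/e)\cong\Fun(G,A)$ with $A=k(G/e)\cdot y_e$. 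Combined with your stages 1 and 3 (i.e.\ $k\cong\underline{k(G/e)}$) this gives $k\cong C_e^G A$, and $A\cong\Fun(G,A)^G\cong k(G/G)$ is algebraically closed because $\ev_{G/G}$ preserves Nullstellensatzian objects (\cref{prop:top-level-is-alg-closed}) --- note the paper extracts algebraic closedness from the top level, not from $G/e$, which sidesteps your separate ``adjoin roots at level $G/e$'' step. Until the idempotent-splitting algebra (or an equivalent device) is supplied, the proof is incomplete.
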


In fact, it turns out that $k$ and $F$ are Morita equivalent: their respective categories of modules are equivalent (this is a very special case of a much broader result, see \cite{Wis25} for a more complete discussion). Thus we deduce the following.

\begin{letterthm}[cf. \Cref{cor:K-theory-equiv}]
    Let $k$ be a Nullstellensatzian Tambara functor. Then we have an equivalence of $\mathbb{E}_\infty$-rings $K(k) \cong K(k(G/G))$ of $K$-theory spectra.
\end{letterthm}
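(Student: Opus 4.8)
The plan is to combine Theorem A with Morita invariance of algebraic $K$-theory. By Theorem A we may assume $k$ is the fixed-point Tambara functor of the $G$-ring $\Fun(G,F)$ for some algebraically closed field $F$, where $G$ acts on $\Fun(G,F)$ by translation. Unwinding the definition of the fixed-point Tambara functor, $k(G/G) = \Fun(G,F)^G$, and the $G$-fixed functions are precisely the constant ones, so $k(G/G) \cong F$. Thus it suffices to construct an equivalence of $\mathbb{E}_\infty$-ring spectra $K(k) \simeq K(F)$.

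The essential input is a Morita equivalence: the symmetric monoidal category $\Mod_k$ of $k$-modules in $G$-Mackey functors is equivalent, as a symmetric monoidal stable $\infty$-category, to $\Mod_F$. One clean way to see this is to note that $k$ is the coinduction of the trivial-group Tambara functor $\underline F$ from the trivial subgroup to $G$; a Shapiro/projection-formula computation then identifies modules over $\mathrm{CoInd}_e^G \underline F$ with modules over $\underline F$, i.e.\ with $F$-modules, compatibly with tensor products. Equivalently, since $\mathrm{Spec}(\Fun(G,F))$ is a free $G$-orbit over $\mathrm{Spec}(F)$, $G$-equivariant $\Fun(G,F)$-modules descend to $F$-modules, and modules over the fixed-point Tambara functor $k$ are in turn identified with $G$-equivariant $\Fun(G,F)$-modules. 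In either case this is the very special case of \cite{Wis25} mentioned above, so in the body of the paper I would simply cite that result, taking care to record that the equivalence is symmetric monoidal.

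Finally, algebraic $K$-theory — regarded as a functor on symmetric monoidal stable $\infty$-categories via the category of perfect modules — is invariant under symmetric monoidal equivalences and lands in $\mathbb{E}_\infty$-ring spectra. A symmetric monoidal equivalence $\Mod_k \simeq \Mod_F$ restricts to one on compact (equivalently, perfect) objects, hence gives $K(k) \simeq K(F) = K(k(G/G))$ as $\mathbb{E}_\infty$-rings. The hard part is the Morita step, and specifically verifying that the module-category equivalence respects the box product of modules over the Tambara (not merely Green) functor $k$: the underlying equivalence of module categories is essentially formal, but the monoidal compatibility is what upgrades a bare equivalence of $K$-theory spectra to an equivalence of $\mathbb{E}_\infty$-rings, and it interacts with the norm maps of $\Fun(G,F)$. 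Granting that from \cite{Wis25}, the remainder is routine functoriality of $K$-theory.
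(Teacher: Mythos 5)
Your argument is essentially the paper's proof: both reduce via Theorem A to $k \cong C_e^G \mathbb{F}$ with $k(G/G) \cong \mathbb{F}$, then invoke the symmetric monoidal Morita equivalence $\Mod_k \simeq \Mod_{\mathbb{F}}$ (which the paper proves directly in \Cref{prop:coinduction-and-modules} and \Cref{lem:coind-equiv-is-sym-mon}, and also attributes in generalized form to \cite{Wis25} and \cite{CW25}), and finish by functoriality of $K$-theory on symmetric monoidal stable categories. You correctly flag the monoidality of the Morita step as the crux that upgrades the spectrum equivalence to one of $\mathbb{E}_\infty$-rings, which is exactly the content of \Cref{lem:coind-equiv-is-sym-mon}.
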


In particular, the computations of Suslin \cite{Sus83} towards the $K$-groups of algebraically closed fields also apply verbatim to the $K$-groups of Nullstellensatzian $G$-Tambara functors. To the author's knowledge, this is the first identification of the $K$-theory spectrum of (the Green functor underlying) a Tambara functor, although we are unable to fully determine its homotopy groups. We also direct the reader to \cite{CW25} for the first full computation of the $K$-theory spectrum, along with all of the $K$-groups, of a Tambara functor for which the previously mentioned Morita equivalence argument does not work.

We note that the study of modules over (the Green functor underlying) a Tambara functor, particularly their homological algebra, has a rich history. For example, \cite{HMQ22} observe that free polynomial algebras over Tambara functors tend to fail to be flat as modules, in contrast to the classical picture, where $R[x]$ is a free $R$-module, for any ring $R$. As a consequence of our Morita equivalence result, free polynomial algebras over Nullstellensatzian Tambara functors are always flat; in fact, they are always free. Additionally, \cite{BSW17} show that the category of so-called cohomological $G$-Mackey functors turns out to have finite global projective dimension with surprising frequency; our Morita equivalence result implies that (the Green functor underlying) any Nullstellensatzian Tambara functor has global projective dimension $0$. Lastly, \cite{Hil17} studies square-zero extensions in the context of modules over Tambara functors, and we expect the story of \cite{Hil17} to simplify enormously over Nullstellensatzian Tambara functors.

We now turn our attention to the proof of our main result. We may control the behavior of Nullstellensatzian objects by controlling compact objects. It turns out that the compact objects in the category of Tambara functors are precisely the finitely presented Tambara functors (this is a completely formal fact from the theory of universal algebra, which we exposit in \Cref{prop:compact same as fp}). Thus, in particular, the free Tambara functor on a finite $G$-set is always compact. However, we will require many more examples of compact Tambara functors, and for this we prove the following theorem.

\begin{letterthm}[cf. \Cref{thm:constant-preserves-compacts}]
    The functor $S \mapsto \underline{S} : \CRing \to \Tamb_G$ preserves compact objects and compact morphisms. In other words, $\underline{\Z}$ is a finitely presented Tambara functor, and if $R \to S$ is a finitely presented morphism of rings, then $\underline{R} \to \underline{S}$ is a finitely presented morphism of Tambara functors.
\end{letterthm}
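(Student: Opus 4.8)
The plan is to reduce everything to explicit finite presentations, using \Cref{prop:compact same as fp} to translate between ``compact'' and ``finitely presented'' (both absolutely and relatively). Since a composite of finitely presented morphisms is finitely presented, and a quotient of a finitely presented Tambara functor by a finitely generated ideal is again finitely presented, it suffices to establish: (i) $\underline{\Z}$ is a finitely presented Tambara functor; and (ii) if $R \to S$ is a finitely presented morphism of rings, then $\underline{R} \to \underline{S}$ is a finitely presented morphism of Tambara functors. Granting these, the statement about compact objects is the case $R = \Z$ of (ii) composed with (i) (a compact ring being exactly a finitely presented $\Z$-algebra), and the statement about morphisms is (ii) itself. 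There is no formal shortcut via an adjunction here: $S \mapsto \underline{S}$ is in fact a \emph{right} adjoint, so it has no reason to preserve colimits or compact objects, and the finiteness must be witnessed by hand.

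For (i), recall that the Burnside Tambara functor $\underline{A}$ is the initial object of $\GTamb$, and that there is a levelwise surjection $\underline{A}\to\underline{\Z}$ sending a finite $H$-set to its cardinality. I claim its kernel is the ideal generated by the finite set $\{\tr^K_L(1)-[K:L]\cdot 1\in\underline{A}(G/K): L\le K\le G\}$. Each generator maps to $0$, so the ideal they generate lies in the kernel, which is levelwise the ``cardinality'' augmentation ideal; conversely, at level $G/K$ this ideal contains $[K/L]-[K:L]$ for every $L\le K$, and since $A(K)$ is generated as a ring by the classes $[K/L]=\tr^K_L(1)$, a standard argument shows these elements generate the whole augmentation ideal of $A(K)$. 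Hence $\underline{A}$ modulo this finitely generated ideal is $\underline{\Z}$, which is therefore finitely presented.

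For (ii), write $S=R[x_1,\dots,x_n]/(f_1,\dots,f_m)$. Because restrictions in a constant Tambara functor are identities and $N^K_L(a+b)-N^K_L(a)$ always lies in the ideal generated by $b$ (binomial theorem), the ideal of $\underline{R}$ generated by $f_1,\dots,f_m\in\underline{R}(G/G)$ is levelwise just the ring ideal $(f_1,\dots,f_m)\subseteq R$, so $\underline{R}/(f_1,\dots,f_m)\cong\underline{S}$. This reduces (ii) to showing that $\underline{R[x_1,\dots,x_n]}$ is finitely presented over $\underline{R}$. Let $P$ be the free $\underline{R}$-Tambara-algebra on $n$ generators $y_1,\dots,y_n$ at the orbit $G/G$ (this is compact over $\underline{R}$, being free on a finite $G$-set, in the sense of \cite{BH18}), and let $Q$ be its quotient by the \emph{finite} family of relations $N^K_L(\res^G_L y_i)=(\res^G_K y_i)^{[K:L]}$ for $L\le K\le G$ and $1\le i\le n$. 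There is an evident surjection $Q\twoheadrightarrow\underline{R[x_1,\dots,x_n]}$, and to see it is an isomorphism I would compute $\Hom_{\underline{R}}(\underline{R[x_1,\dots,x_n]},T)$ and $\Hom_{\underline{R}}(Q,T)$ for an arbitrary $\underline{R}$-algebra $T$ and check that they agree. In each case a morphism is determined by the images $(t_1,\dots,t_n)\in T(G/G)^n$ of the generators: restrictions in the source are identities, so the morphism is the corresponding restriction of its $G/G$-component, and Weyl-equivariance is automatic since $\res^G_H$ lands in Weyl-invariants. Compatibility with transfers is then automatic, because $T$ being an $\underline{R}$-algebra forces $\tr^K_L(1_T)=[K:L]\cdot 1_T$, whence the projection formula gives transfer-compatibility for free; and compatibility with norms unwinds to exactly the relations defining $Q$. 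Thus $Q\cong\underline{R[x_1,\dots,x_n]}$ is finitely presented over $\underline{R}$, and (ii) follows.

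The main obstacle is that last step: showing that norm-compatibility for a morphism out of $\underline{R[x_1,\dots,x_n]}$ is genuinely captured by the finitely many relations cutting out $Q$. This amounts to proving that, in any Tambara functor $T$ with constant transfers, the ``defect'' map $t\mapsto N^K_L(\res^K_L t)-t^{[K:L]}$ on $T(G/K)$ vanishes on a subring as soon as it vanishes on a generating set. Multiplicativity of $N$ and $\res$ handles products, but closure under addition requires expanding $N^K_L$ of a sum via the Tambara exponential (``norm of a sum'') formula and checking that, after repeatedly applying the constant-transfer identities and the assumed vanishing on the summands, everything collapses to the binomial expansion -- i.e.\ reducing the general computation to the identity that already holds in the constant Tambara functor $\underline{R}$. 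This combinatorial bookkeeping, carried out with the Blumberg--Hill description of free Tambara algebras, is where essentially all of the content lies.
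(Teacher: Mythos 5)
Your route is genuinely different from the paper's, and your parenthetical claim that ``there is no formal shortcut via an adjunction here'' is exactly backwards: the paper's central trick \emph{is} a formal shortcut via an adjunction. The point (Cor.~\ref{cor:constant-Z-is-fp} through Thm.~\ref{thm:constant-preserves-compacts}) is that $S \mapsto \underline{S}$, while indeed a right adjoint to $\ev_{G/e}$, is \emph{also} a left adjoint: $\underline{\Z[x]}$ carries a co-ring structure in $\kAlg[\underline{\Z}]$ (because $\underline{(-)}$ preserves finite coproducts), so $\kAlg[\underline{\Z}](\underline{\Z[x]}, -) : \kAlg[\underline{\Z}] \to \CRing$ is a lax ring-valued functor with $\underline{(-)}$ as its left adjoint. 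Once one knows $\underline{\Z[x]}$ is a compact $\underline{\Z}$-algebra, this right adjoint preserves filtered colimits, so Lemma~\ref{lem:left-adjoint-preserves-compact-morphisms} applies directly. The compactness of $\underline{\Z[x]}$ is proved by a Noetherianness argument: by Prop.~\ref{prop:free-at-fixed-pts-is-levelwise-fg} (an inductive Hilbert-basis-style result using integrality of restrictions), $\A[x_G]$ is levelwise a finitely generated ring, hence so is $\underline{\Z}[x_G]$, hence it is levelwise Noetherian, so the surjection $\underline{\Z}[x_G] \twoheadrightarrow \underline{\Z[x]}$ automatically has finitely generated kernel without ever naming generators.

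Your approach instead aims at explicit finite presentations. Step (i) is correct as you state it: the kernel of $\A \to \underline{\Z}$ is levelwise the augmentation ideal, which is the ring ideal generated by $[K/L]-[K{:}L]$, so the Nakaoka ideal generated by $\tr^K_L(1)-[K{:}L]$ does the job. Your step (ii) is also \emph{true}, but the gap you flag at the end is genuine and is where the real content hides; it does not follow from ``multiplicativity of $\nm$ and $\res$ plus the binomial theorem.'' The cleanest way to close it is precisely the machinery the paper builds in \S 2 for a different reason: by Prop.~\ref{prop:free-green-has-norms-and-is-levelwise-tensor}, the levelwise tensor $\A \otimes \Z[s,t]$ carries a Tambara structure with $\A \otimes \Z[s,t](G/K) = A(K)[s,t]$, and a pair $(a,b)$ in $T(G/G)$ satisfying your norm relations determines a Tambara map $\A\otimes\Z[s,t] \to T$. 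In $\A\otimes\Z[s,t]$ the element $\nm^K_L(\res^G_L(s+t)) - (\res^G_K(s+t))^{[K:L]}$ lies in $I_K[s,t]$ where $I_K$ is the augmentation ideal of $A(K)$ (since it dies under $A(K)\to\Z$), and if $T$ lives over $\underline{\Z}$ then Frobenius reciprocity plus $\tr^K_L(1_T)=[K{:}L]$ forces the image of $I_K[s,t]$ to vanish in $T(G/K)$. That closes the gap, but notice that you end up re-deriving essentially the same auxiliary object ($\A\otimes\Z[s,t]$) that underlies the paper's left-adjoint argument; so the two proofs are less independent than they first appear, and yours is somewhat longer because it trades the abstract Noetherian step for an explicit expansion of the Tambara exponential formula that you did not carry out.
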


One of the key ingredients of this result is, in turn, the fact that the free Tambara functor $\A[x_G]$ is Noetherian; this an equivariant incarnation of the Hilbert basis theorem. We establish this by proving the following stronger statement, which generalizes the case $G = C_p$ proven in \cite[Proposition 3.9]{4DS24}.

\begin{letterthm}[cf. \Cref{prop:free-at-fixed-pts-is-levelwise-fg}]
    The free Tambara functor $\A[x_G]$ on a generator at level $G/G$ is levelwise finitely generated as a commutative ring.
\end{letterthm}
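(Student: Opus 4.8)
The plan is to induct on $|G|$, reducing everything to the top level via restriction. The key input is the identification $\res^G_H\A[x_G]\cong\A[x_H]$ of $H$-Tambara functors for every $H\le G$: restriction $\res^G_H\colon\Tamb_G\to\Tamb_H$ has a right adjoint, coinduction, and since $(\mathrm{CoInd}^G_H S)(X)=S(\res^G_H X)$ and $\res^G_H(G/G)=H/H$, the functor $\res^G_H\A[x_G]$ corepresents $S\mapsto S(H/H)$ on $\Tamb_H$ --- which is exactly the universal property of $\A[x_H]$. As $\A[x_G](G/H)=(\res^G_H\A[x_G])(H/H)$, the inductive hypothesis finitely generates every level $\A[x_G](G/H)$ with $H\lneq G$, and we are reduced to showing that $R:=\A[x_G](G/G)$ is a finitely generated ring, knowing that $R_H:=\A[x_G](G/H)\cong\A[x_H](H/H)$ is finitely generated for each proper $H$.

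Next I would fix a tractable set of algebra generators for $R$. Using the bispan (``norm--transfer--restriction monomial'') presentation of the free Tambara functor --- together with transitivity of transfers, Frobenius reciprocity $z\cdot\tr^G_H(a)=\tr^G_H(\res^G_H(z)\,a)$, and the double coset formula for a restriction of a norm --- one checks that any product of bispan generators collapses to a single transfer of a monomial in the ``basic norms'', so that $R$ is spanned as an abelian group by the elements $\tr^G_H\!\bigl(\prod_{K\le H}\nu_{H,K}^{\,e_K}\bigr)$ with $H\le G$, $e_K\in\N$, and $\nu_{H,K}:=\nm_{G/K\to G/H}(\res^G_K x)\in R_H$ (so $\nu_{H,H}=\res^G_H x$ and $x=\nu_{G,G}$). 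In particular the terms with $H=G$ all lie in the finitely generated subring $\Z[\nu_{G,K}:K\le G]$, so $R$ is generated as a ring by this finite set together with the transfer images $\tr^G_H(R_H)$ for $H\lneq G$; it therefore suffices to show that each such transfer image is contained in a finitely generated subring of $R$.

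This last point is the crux, and I would attack it by induction on $|H|$ (i.e.\ over the subgroup lattice, handling smallest subgroups first). For the base case $H=e$ the Weyl action on $R_e\cong\Z[\res^G_e x]$ is trivial, and Frobenius reciprocity gives $\tr^G_e(\res^G_e(x)^n)=x^n\tr^G_e(1)$, so $\tr^G_e(R_e)\subseteq\Z[x,\tr^G_e(1)]$. For $H\lneq G$ and a monomial $\tr^G_H\!\bigl(\prod_K\nu_{H,K}^{\,e_K}\bigr)$: if the inner monomial is trivial this is the generator $\tr^G_H(1)$; otherwise I would peel off one factor $\nu_{H,K_0}$ and combine the quadratic transfer relation $\tr^G_H(a)\tr^G_H(b)=\tr^G_H\!\bigl(a\cdot\res^G_H\tr^G_H(b)\bigr)$ with the double coset expansion $\res^G_H\tr^G_H(b)=\sum_{w\in W_G(H)}w\cdot b+(\text{transfers from subgroups }\subsetneq H)$. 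This rewrites the given monomial as: (i) a product of the bounded-degree element $\tr^G_H(\nu_{H,K_0})$ with $\tr^G_H$ of a strictly shorter monomial (reducible by a subsidiary induction on monomial length/degree); plus (ii) correction terms $\tr^G_H\!\bigl(\nu_{H,K_0}\cdot(w\cdot\text{monomial})\bigr)$ for $w\ne e$, and transfers $\tr^G_{H'}(\cdots)$ with $H'\subsetneq H$, the latter reducing by transitivity and a further norm double coset expansion to $\tr^G_{H'}$ of monomials in the $\nu_{H',\ast}$ and hence handled by the induction on $|H|$. Collecting the finitely many generators thrown off over all $H$ --- the $\nu_{G,K}$, the $\tr^G_H(1)$, the $\tr^G_H(\nu_{H,K})$, and a bounded set of low-degree transfers of short monomials --- then finitely generates $R$.

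The main obstacle is controlling the correction terms in (ii) when $H$ (or $K_0$, or an intermediate subgroup) is not normal in $G$: the double coset formulas then mix in basic norms and transfers indexed by the various intersections $H\cap{}^\gamma H$ and $H\cap{}^\gamma K_0$, and the Weyl-conjugate monomials $w\cdot(\text{monomial})$ have the \emph{same} degree and length as the original, so one must set the whole thing up as a genuinely well-founded induction --- simultaneously on $|H|$, on degree, and with monomials considered up to the $W_G(H)$-action --- to be certain the corrections strictly decrease the chosen complexity. (For $G$ cyclic of prime order, which is \cite[Proposition~3.9]{4DS24}, only the base case and a single Frobenius step are needed: the unique proper subgroup is trivial, its Weyl action is trivial, and the relevant norm double coset formula degenerates to an honest power.) An alternative packaging --- that $R_H$ is module-finite over $\res^G_H(R)$, whence $\tr^G_H(R_H)$ is a finitely generated ideal --- does not by itself suffice, since a finitely generated ideal together with the finitely generated quotient $R/I_{\mathrm{tr}}\cong\Phi^G\A[x_G]\cong\Z[x]$ need not force $R$ to be finitely generated as a ring; so the explicit bookkeeping above is what actually carries the proof.
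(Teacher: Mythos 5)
Your reduction — inducting on $|G|$, using $R^G_H\A_G[x_G]\cong\A_H[x_H]$ to handle all levels below the top, and reading off a spanning set of $\A_G[x_G](G/G)$ as formal transfers $\tr^G_H$ of monomials in the $\nu_{H,K}=\nm_K^H\res^G_K x$ — is exactly the paper's setup. The gap is in the crux step, and you have, somewhat ironically, explicitly considered and rejected essentially the paper's argument. Your stated grounds for dismissal ("a f.g. ideal $I_{\mathrm{tr}}$ plus a f.g. quotient $R/I_{\mathrm{tr}}$ need not give a f.g. ring") are correct as a general principle but mischaracterize what the module-finiteness route actually delivers. The missing ingredient is the lemma that \emph{every restriction map $\res^H_K\colon R(G/H)\to R(G/K)$ is integral}: for $a\in R(G/K)$, the polynomial $p(t)=\nm_K^H(t-a)\in R(G/H)[t]$ is monic of degree $[H:K]$ and kills $a$, by the norm double coset formula applied inside $R[\underline{t}]$. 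Using this, one takes $S\subseteq\A_G[x_G](G/G)$ to be the subring generated by $\A_G(G/G)$, the coefficients of the monic integral polynomials over $R(G/G)$ for the (finitely many, by your inductive hypothesis) ring generators $y^H_i$ of each $R_H=\A_G[x_G](G/H)$, and the norms $\nm^G_H y^H_i$. Then $S$ is finitely generated, each $R_H$ is finitely generated as a ring \emph{and} integral over $S$ via $\res^G_H$, hence module-finite over $S$ with module generators $z^H_j$. Frobenius reciprocity now gives $\tr^G_H(a)=\sum_k a_k\,\tr^G_H z^H_k$ with $a_k\in S$, so every transfer term lands in the f.g.\ subring generated by $S$ and the $\tr^G_H z^H_j$, while the $H=G$ terms lie in $S$ directly. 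There is no f.g.-ideal-plus-f.g.-quotient argument; the generators are captured outright.

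By contrast, your preferred explicit-bookkeeping induction is not carried through, and you yourself identify why it is delicate: the quadratic transfer relation plus the restriction-of-transfer double coset formula produce correction terms $\tr^G_H(\nu_{H,K_0}\cdot(w\cdot m))$ for $w\in W_G(H)$ of the \emph{same} length and degree, plus terms indexed by intersections $H\cap{}^\gamma H$ for non-normal $H$, and it is not established that the proposed simultaneous induction on $|H|$, degree, and $W_G(H)$-orbit is well-founded. As written the proof proposal therefore has a genuine gap in its main line, and the alternative it dismisses — strengthened by the integrality lemma to module-finiteness over a controlled finitely generated subring rather than over $\res^G_H(R)$ — is what actually closes it.
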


With these tools, we can establish necessary conditions for a Tambara functor to be Nullstellensatzian; namely, that any Nullstellensatzian Tambara functor is isomorphic to one of the claimed form. We then prove our main theorem by establishing that everything of this form is, in fact, Nullstellensatzian.

A formal consequence of our classification of Nullstellensatzian Tambara functors is that the Nullstellensatzian $G$-rings are precisely those of the form $\Fun(G,F)$ with $F$ algebraically closed. Additionally, from this we may describe algebraic closures of field-like Tambara functors. From this, one might expect to be able to study Galois theory of Tambara fields, because normal and separable field extensions may both be defined purely in terms of algebraic closures, without any reference to elements. However, as observed in \cite{Wis24}, there are not really enough examples of field-like Tambara functors (at least for $G = \mathbb{Z}/p^n$) to give rise to interesting examples.

Finally, one might have hoped for a more exotic answer to the question ``what are the Nullstellensatzian $G$-Tambara functors". We might therefore have worked in a sufficiently large and well-behaved full subcategory of $\Tamb$ consisting of Tambara functors which are not coinduced. In this situation, if a Nullstellensatzian Tambara functor $R$ is field-like, then each ring $R(G/H)$ would be an honest field, and one would hope for Nullstellensatzian objects in this setting to give rise to interesting finite group actions on fields. 

Unfortunately, this method turns out not to yield a nice answer. Indeed, in \cite{Wis25} the third-named author constructs such a category of $G$-Tambara functors essentially by formally declaring coinductions to be zero. It then turns out that if $R$ is a Nullstellensatzian object in the resulting category, and $R$ is field-like, then $R(G/e)$ is an algebraically closed field, which by Artin-Schreier theory forces it to have characteristic $0$ and forces the $G$-action on $R(G/e)$ to factor through $\C_2$. On the other hand, it is observed in \cite{Wis25} that there are many Nullstellensatzian objects which are not field-like; their study remains open and presumably related to our initial motivation of the study of an equivariant chromatic Nullstellensatz.

\subsection{Contents}

We begin with a review of Tambara functors in section 2. Next, we recall the relevant results of Nakaoka \cite{Nak11a} on field-like Tambara functors and Blumberg-Hill \cite{BH18} on free polynomial Tambara functors in section 3. In section 4, we state from \cite{BSY22} the definition of a Nullstellensatzian object in a category $\mathsf{C}$, and study how compact and Nullstellensatzian objects interact with adjoint functors. Finally, in section 5, we characterize the Nullstellensatzian objects in the category of Tambara functors.

\subsection{Notation and Terminology}

%Once and for all, we fix a finite group $G$.%
When there is only one group in play, we may abbreviate
``$G$-Tambara functor'' and ``$G$-Mackey functor'' simply as
``Tambara functor'' and ``Mackey functor.'' The category of Tambara
functors will be denoted $\GTamb$ or $\Tamb$, and the category of Mackey
functors will be denoted $\GMack$ or $\Mack$. If $k$ is a Tambara functor,
we will use term \emph{$k$-algebra} to refer to an object of the slice
category $k/\Tamb$, i.e., a morphism $k \to T$ for some Tambara functor
$T$. We will typically use
$\kAlg$ to denote the category of $k$-algebras.

For $R$ a commutative $G$-ring, $\underline{R}$ will denote the fixed-point Tambara functor of $R$.

We will often refer to a Nullstellensatzian object as algebraically closed. Since we are working solely with field-like Tambara functors, there is no risk of confusion with other possible definitions of algebraically closed objects (in particular in terms of nonexistence of finite \'{e}tale extensions).

\subsection{Acknowledgements}

The authors would like to thank Mike Hill for many things, including suggesting a simplified argument for \cref{exl:constant-C-is-not-ac}, as well as Tomer Schlank for posing this question, and Allen Yuan, David Chan, and Haochen Cheng for helpful discussions. The authors also thank J.D. Quigley for insightful comments on an early draft.

Schuchardt and Spitz were partially supported by NSF grant DMS-2136090.
\section{Primer on Tambara Functors}

Tambara functors are an ``equivariant generalization'' of commutative rings---for each finite group $G$, there is a notion of ``$G$-Tambara functor,'' and in the case that $G$ is the trivial group, this notion coincides exactly with that of a commutative ring. For the unfamiliar reader, a thorough treatment of the theory of Tambara functors is given in \cite{Str12}. We give here a streamlined introduction, containing only the necessary details to state the relevant definitions.

We use $\Gset$ to denote the category of finite $G$-sets, and $\Set$ to denote the category of all sets. Eventually, we will define a $G$-Tambara functor to be a certain kind of functor $\Poly_G \to \Set$ from the category $\Poly_G$ to be defined below. The first step towards constructing $\Poly_G$ is the following observation about slices of $\Gset$:

\begin{proposition}
    Let $f : X \to Y$ be a morphism of finite $G$-sets. The functor $\Sigma_f : \Gset/X \to \Gset/Y$ defined by $g \mapsto f \circ g$ sits in an adjoint triple
    % https://q.uiver.app/#q=WzAsMixbMCwwLCJHey19XFxzZXQvWCJdLFswLDIsIkd7LX1cXHNldC9ZIl0sWzAsMSwiXFxTaWdtYV9mIiwyLHsib2Zmc2V0Ijo1fV0sWzEsMCwiZl4qIiwxXSxbMCwxLCJcXFBpX2YiLDAseyJvZmZzZXQiOi01fV1d
    \[\begin{tikzcd}[ampersand replacement=\&]
            {\Gset/X} \\
            \\
            {\Gset/Y}
            \arrow["{\Sigma_f}"', shift right=5, from=1-1, to=3-1]
            \arrow["{\Pi_f}", shift left=5, from=1-1, to=3-1]
            \arrow["{f^*}"{description}, from=3-1, to=1-1]
        \end{tikzcd}\]
    where $f^*$ is given by pullback along $f$, and $\Pi_f$ is the functor defined on objects by
    \[\Pi_f(g : A \to X) := \left\{(y, t) : y \in Y, t \in 
    %\prod_{x \in f^{-1}(y)} g^{-1}(x)
    \Gamma(g,f^{-1}(y))
    \right\} \xrightarrow{(y,t) \mapsto y} Y,\]
    where for $g: Z\to X$, and $A\subseteq X$, 
    \[
        \Gamma(g,A) := \{t \in \Set(A, Z) \mid g\circ t = 1_A \} 
        = \prod_{a\in A} g^{-1}(a)
    \]
    is the set of sections of $g$ on the subset $A$.
    The $G$-action is given by
    $g(y,t) = (gy, x\mapsto gt(g^{-1}x))$.
\end{proposition}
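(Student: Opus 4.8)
The plan is to verify the two adjunctions $\Sigma_f \dashv f^*$ and $f^* \dashv \Pi_f$ separately and directly from the universal property, since the passage from ordinary sets to $G$-sets introduces nothing beyond bookkeeping. (One could instead invoke that $\Gset$ is an elementary topos, hence locally cartesian closed, to obtain the adjoint triple formally, and then merely identify the dependent product with the stated formula; but a hands-on check is about as short and keeps everything visibly inside finite $G$-sets.)

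First I would dispatch $\Sigma_f \dashv f^*$. Given $p \colon A \to X$ over $X$ and $q \colon B \to Y$ over $Y$, a morphism $\Sigma_f p \to q$ in $\Gset/Y$ is a $G$-map $\varphi \colon A \to B$ with $q\varphi = fp$; such a $\varphi$ is the same datum as a $G$-map $A \to X \times_Y B = f^* B$ over $X$, namely $a \mapsto (p(a), \varphi(a))$, and this assignment is visibly bijective and natural in both variables, with the evident unit and counit. Next I would check that $\Pi_f$ is well-defined: for $p \colon A \to X$, the fibre of $\Pi_f(p) \to Y$ over $y$ is $\Gamma(p, f^{-1}(y)) = \prod_{x \in f^{-1}(y)} p^{-1}(x)$, a finite product of finite sets, so $\Pi_f(p)$ is a finite set; and one checks that $\gamma \cdot (y,t) = (\gamma y,\ x \mapsto \gamma\, t(\gamma^{-1}x))$ indeed lands in $\Gamma(p, f^{-1}(\gamma y))$ — using $G$-equivariance of $f$ to see $\gamma^{-1}x \in f^{-1}(y)$ when $x \in f^{-1}(\gamma y)$, and of $p$ to see $p(\gamma\, t(\gamma^{-1}x)) = x$ — and defines a $G$-action making $\Pi_f(p) \to Y$ a $G$-map. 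Functoriality of $\Pi_f$ is then routine: a map $A \to A'$ over $X$ induces $\Gamma(p, f^{-1}(y)) \to \Gamma(p', f^{-1}(y))$ by postcomposition, compatibly with the $G$-action.

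Finally, for $f^* \dashv \Pi_f$, fix $q \colon B \to Y$ in $\Gset/Y$ and $p \colon A \to X$ in $\Gset/X$. A morphism $q \to \Pi_f(p)$ over $Y$ is a $G$-map $\varphi \colon B \to \Pi_f(p)$ whose composite to $Y$ is $q$, hence of the form $\varphi(b) = (q(b), t_b)$ with $t_b \in \Gamma(p, f^{-1}(q(b)))$, the $G$-equivariance of $\varphi$ translating into $t_{\gamma b}(x) = \gamma\, t_b(\gamma^{-1}x)$ for all $x \in f^{-1}(q(\gamma b))$. On the other hand, a morphism $f^* q \to p$ over $X$ is a $G$-map $\psi \colon X \times_Y B \to A$ with $p\,\psi(x,b) = x$ whenever $f(x) = q(b)$. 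I would then set up mutually inverse assignments $\varphi \mapsto \psi$ via $\psi(x,b) := t_b(x)$ and $\psi \mapsto \varphi$ via $t_b(x) := \psi(x,b)$ (legitimate because $f(x) = q(b)$ says precisely $x \in f^{-1}(q(b))$); under this dictionary the section/over-$X$ conditions match by construction, the two equivariance conditions correspond, and naturality in $q$ and in $p$ is immediate. Together with $\Sigma_f \dashv f^*$, this exhibits the claimed adjoint triple. The only mildly delicate point is keeping the $G$-actions straight in this last step — in particular, seeing that the equivariance constraint on the family $\{t_b\}$ is exactly the equivariance of the corresponding $\psi$ — but this is pure bookkeeping once the $G$-action on $\Pi_f(p)$ has been pinned down as above.
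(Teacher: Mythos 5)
The paper does not prove this proposition at all: it is stated as a background fact in the ``Primer on Tambara Functors'' section, with a general reference to Strickland's exposition rather than a written-out argument. So there is no proof in the paper to compare against. Your direct verification is correct and complete. The $\Sigma_f \dashv f^*$ adjunction is the usual one; your check that the stated formula $\gamma\cdot(y,t)=(\gamma y,\ x\mapsto \gamma\,t(\gamma^{-1}x))$ lands in $\Gamma(p,f^{-1}(\gamma y))$ (using equivariance of $f$ and $p$) is exactly what is needed, and your dictionary $\psi(x,b)=t_b(x)$ between maps $f^*q\to p$ over $X$ and maps $q\to\Pi_f p$ over $Y$ is the right one --- the over-$X$ condition $p\psi(x,b)=x$ matches the section condition, and $G$-equivariance of $\psi$ on $X\times_Y B$ translates precisely into the relation $t_{\gamma b}(x)=\gamma\,t_b(\gamma^{-1}x)$ encoding equivariance of $\varphi$. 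Your aside that one could instead invoke local cartesian closedness of the topos $\Gset$ and then merely identify the dependent product with the displayed formula is also a legitimate shortcut, though the hands-on route you chose has the virtue of explicitly pinning down the $G$-action on $\Pi_f(p)$, which the abstract argument leaves implicit.
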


Now we can begin defining $\Poly_G$.

\begin{def_of_poly}\label{def:poly_g part 1}
    In the category $\Poly_G$,
    \begin{itemize}
        \item objects are finite $G$-sets;
        \item morphisms $X \to Y$ are isomorphism classes of \emph{bispans}, i.e. diagrams
              \[X \leftarrow \bullet \to \bullet \to Y\]
              in $\Gset$, where two diagrams $X \leftarrow A \to B \to Y$ and $X \leftarrow A' \to B' \to Y$ are said to be isomorphic if there exist isomorphisms $A \to A'$ and $B \to B'$ in $\Gset$ making
              % https://q.uiver.app/#q=WzAsNixbMCwxLCJYIl0sWzEsMCwiQSJdLFsyLDAsIkIiXSxbMywxLCJZIl0sWzEsMiwiQSciXSxbMiwyLCJCJyJdLFsxLDBdLFsxLDJdLFsyLDNdLFs0LDBdLFs0LDVdLFs1LDNdLFsxLDQsIlxcY29uZyIsMV0sWzIsNSwiXFxjb25nIiwxXV0=
              \[\begin{tikzcd}[ampersand replacement=\&,row sep=tiny]
                      \& A \& B \\
                      X \&\&\& Y \\
                      \& {A'} \& {B'}
                      \arrow[from=1-2, to=1-3]
                      \arrow[from=1-2, to=2-1]
                      \arrow["\cong"{description}, from=1-2, to=3-2]
                      \arrow[from=1-3, to=2-4]
                      \arrow["\cong"{description}, from=1-3, to=3-3]
                      \arrow[from=3-2, to=2-1]
                      \arrow[from=3-2, to=3-3]
                      \arrow[from=3-3, to=2-4]
                  \end{tikzcd}\]
              commute. We use $[X \leftarrow A \to B \to Y]$ to denote the isomorphism class of the diagram $X \leftarrow A \to B \to Y$.
    \end{itemize}
\end{def_of_poly}

We must now describe the composition operation in $\Poly_G$. First, we name three classes of distinguished morphisms.

\begin{definition}
    Let $f : X \to Y$ be a morphism in $\Gset$. We declare
    \begin{align*}
        T_f & = [X \xleftarrow{\id} X \xrightarrow{\id} X \xrightarrow{f} Y] \\
        N_f & = [X \xleftarrow{\id} X \xrightarrow{f} Y \xrightarrow{\id} Y] \\
        R_f & = [Y \xleftarrow{f} X \xrightarrow{\id} X \xrightarrow{\id} X]
    \end{align*}
\end{definition}

Tambara functors are also known as TNR functors, referring to these distinguished classes of morphisms.

\begin{def_of_poly}
    For any diagram $X \xleftarrow{h} A \xrightarrow{g} B \xrightarrow{f} Y$ in $\Gset$, we declare
    \[T_f \circ N_g \circ R_h = [X \xleftarrow{h} A \xrightarrow{g} B \xrightarrow{f} Y]\]
    in $\Poly_G$.
\end{def_of_poly}

We will next define pairwise compositions for morphisms in the three distinguished classes $T$, $N$, and $R$. Within individual classes, the composition is straightforward.

\begin{def_of_poly}
    For any diagram $X \xrightarrow{f} Y \xrightarrow{g} Z$ in $\Gset$, we declare
    \begin{align*}
        T_g \circ T_f & = T_{g \circ f} \\
        N_g \circ N_f & = N_{g \circ f} \\
        R_f \circ R_g & = R_{g \circ f}
    \end{align*}
    in $\Poly_G$.
\end{def_of_poly}

When the morphisms are in $T$ and $R$, or in $N$ and $R$, 
composition is also easy to define.

\begin{def_of_poly}
    For any pullback square
    % https://q.uiver.app/#q=WzAsNCxbMCwxLCJYIl0sWzEsMSwiWiJdLFsxLDAsIlkiXSxbMCwwLCJXIl0sWzAsMSwiZiIsMl0sWzIsMSwiZyJdLFszLDAsImcnIiwyXSxbMywyLCJmJyJdLFszLDEsIiIsMSx7InN0eWxlIjp7Im5hbWUiOiJjb3JuZXIifX1dXQ==
    \[\begin{tikzcd}[ampersand replacement=\&]
            W \& Y \\
            X \& Z
            \arrow["{f'}", from=1-1, to=1-2]
            \arrow["{g'}"', from=1-1, to=2-1]
            \arrow["\lrcorner"{anchor=center, pos=0.125}, draw=none, from=1-1, to=2-2]
            \arrow["g", from=1-2, to=2-2]
            \arrow["f"', from=2-1, to=2-2]
        \end{tikzcd}\]
    in $\Gset$, we declare
    \begin{align*}
        R_g \circ T_f & = T_{f'} \circ R_{g'} \\
        R_g \circ N_f & = N_{f'} \circ R_{g'}
    \end{align*}
    in $\Poly_G$.
\end{def_of_poly}

The most complicated composition rule is between morphisms in the 
$T$ and $N$ classes.

\begin{def_of_poly}
    Consider a diagram $X \xrightarrow{f} Y \xrightarrow{g} Z$ in $\Gset$, and view $f$ as an object of $\Gset/Y$. Apply $\Pi_g$ to $f$ and apply $g^*$ to $\Pi_g f$ to obtain a diagram
    % https://q.uiver.app/#q=WzAsNSxbMCwxLCJYIl0sWzEsMSwiWSJdLFsyLDEsIloiXSxbMiwwLCJcXGJ1bGxldCJdLFsxLDAsIlxcYnVsbGV0Il0sWzAsMSwiZiIsMl0sWzEsMiwiZyIsMl0sWzMsMiwiXFxQaV9nIGYiXSxbNCwxLCJnXiogXFxQaV9nIGYiXSxbNCwzLCJnJyJdLFs0LDIsIiIsMSx7InN0eWxlIjp7Im5hbWUiOiJjb3JuZXIifX1dXQ==
    \[\begin{tikzcd}[ampersand replacement=\&]
            \& \bullet \& \bullet \\
            X \& Y \& Z
            \arrow["{g'}", from=1-2, to=1-3]
            \arrow["{g^* \Pi_g f}", from=1-2, to=2-2]
            \arrow["\lrcorner"{anchor=center, pos=0.125}, draw=none, from=1-2, to=2-3]
            \arrow["{\Pi_g f}", from=1-3, to=2-3]
            \arrow["f"', from=2-1, to=2-2]
            \arrow["g"', from=2-2, to=2-3]
        \end{tikzcd}\]
    in $\Gset$. Now the counit $\varepsilon$ of the adjunction $g^* \dashv \Pi_g$ yields a morphism $g^* \Pi_g f \to f$ in $\Gset/Y$, giving us a diagram
    % https://q.uiver.app/#q=WzAsNSxbMCwxLCJYIl0sWzEsMSwiWSJdLFsyLDEsIloiXSxbMiwwLCJcXGJ1bGxldCJdLFsxLDAsIlxcYnVsbGV0Il0sWzAsMSwiZiIsMl0sWzEsMiwiZyIsMl0sWzMsMiwiXFxQaV9nIGYiXSxbNCwxLCJnXiogXFxQaV9nIGYiXSxbNCwzLCJnJyJdLFs0LDIsIiIsMSx7InN0eWxlIjp7Im5hbWUiOiJjb3JuZXIifX1dLFs0LDAsIlxcdmFyZXBzaWxvbl9mIiwyXV0=
    \[\begin{tikzcd}[ampersand replacement=\&]
            \& \bullet \& \bullet \\
            X \& Y \& Z
            \arrow["{g'}", from=1-2, to=1-3]
            \arrow["{\varepsilon_f}"', from=1-2, to=2-1]
            \arrow["{g^* \Pi_g f}", from=1-2, to=2-2]
            \arrow["\lrcorner"{anchor=center, pos=0.125}, draw=none, from=1-2, to=2-3]
            \arrow["{\Pi_g f}", from=1-3, to=2-3]
            \arrow["f"', from=2-1, to=2-2]
            \arrow["g"', from=2-2, to=2-3]
        \end{tikzcd}\]
    in $\Gset$. We declare
    \[N_g \circ T_f = T_{\Pi_g f} \circ N_{g'} \circ R_{\varepsilon_f}\]\
    in $\Poly_G$.
\end{def_of_poly}

One must check that the above definitions of pairwise compositions are coherent, i.e., respect the isomorphism relation of Part~\labelcref{def:poly_g part 1} of the definition, and do not force additional relations between diagrams. This is indeed true (see \cite[Lemma 2.15]{gambino-kock}), and with this in place, the composition operation in $\Poly_G$ is fully defined, since an arbitrary composition of two morphisms can be reduced via:
\begin{align*}
    (TNR)(TNR) & \rightsquigarrow TN(TR)NR  \\
               & \rightsquigarrow T(TNR)RNR \\
               & \rightsquigarrow TNRNR     \\
               & \rightsquigarrow TN(NR)R   \\
               & \rightsquigarrow TNR.
\end{align*}

The identity arrow of an object $X \in \Poly_G$ is $R_{\id_X} = N_{\id_X} = T_{\id_X}$.

\begin{proposition}
    $\Poly_G$ admits finite products, given on objects by disjoint union of $G$-sets.
\end{proposition}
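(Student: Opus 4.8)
The plan is to verify that the empty $G$-set is a terminal object of $\Poly_G$, and that for finite $G$-sets $X$ and $Y$ the disjoint union $X \sqcup Y$, equipped with the restriction morphisms $p_X := R_{\iota_X} \colon X \sqcup Y \to X$ and $p_Y := R_{\iota_Y} \colon X \sqcup Y \to Y$ (where $\iota_X \colon X \hookrightarrow X \sqcup Y$ and $\iota_Y \colon Y \hookrightarrow X \sqcup Y$ are the canonical inclusions in $\Gset$), is a binary product; the case of an arbitrary finite family then follows by the usual induction, so that the finite product of a finite family of $G$-sets is its disjoint union. For the terminal object, observe that any bispan $W \leftarrow A \to B \to \emptyset$ forces $B = \emptyset$ and hence $A = \emptyset$, so that $[W \leftarrow \emptyset \to \emptyset \to \emptyset]$ is the unique morphism $W \to \emptyset$. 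For the binary product it suffices, by the Yoneda characterization of products, to show that for every finite $G$-set $W$ the map
\[
  \Hom_{\Poly_G}(W, X \sqcup Y) \longrightarrow \Hom_{\Poly_G}(W, X) \times \Hom_{\Poly_G}(W, Y), \qquad h \longmapsto (p_X \circ h,\ p_Y \circ h),
\]
is a bijection; its naturality in $W$ is automatic from associativity of composition in $\Poly_G$.

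To construct an inverse I will use the evident decomposition and gluing operations on bispans. Given $\beta = [W \xleftarrow{h} A \xrightarrow{g} B \xrightarrow{f} X \sqcup Y]$, the map $f$ partitions $B$ into $B_X := f^{-1}(X)$ and $B_Y := f^{-1}(Y)$, and then $g$ partitions $A$ into $A_X := g^{-1}(B_X)$ and $A_Y := g^{-1}(B_Y)$; restricting every map produces bispans $\beta_X := [W \leftarrow A_X \to B_X \to X]$ and $\beta_Y := [W \leftarrow A_Y \to B_Y \to Y]$. Conversely, two bispans $W \to X$ and $W \to Y$ are glued by taking the disjoint union of their two middle objects, their legs to $W$ being combined by the universal property of $\sqcup$ in $\Set$ and their legs to $X$ and to $Y$ assembling into a leg to $X \sqcup Y$. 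One checks that both operations respect the isomorphism relation on bispans and that they are mutually inverse, using the canonical identifications $B \cong B_X \sqcup B_Y$ over $X \sqcup Y$ and $A \cong A_X \sqcup A_Y$ over $B$; in particular each $\beta$ is canonically the gluing of $\beta_X$ and $\beta_Y$.

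It then remains to identify $h \mapsto (p_X \circ h, p_Y \circ h)$ with $\beta \mapsto (\beta_X, \beta_Y)$, i.e.\ to prove $p_X \circ \beta = \beta_X$ (the case of $Y$ being symmetric). Writing $\beta = T_f \circ N_g \circ R_h$, I compute $R_{\iota_X} \circ T_f \circ N_g \circ R_h$ by applying the composition rules in turn. The rule $R_g \circ T_f = T_{f'} \circ R_{g'}$, applied to the pullback square whose corner is $B_X$ (the pullback of $f$ along $\iota_X$), gives $R_{\iota_X} \circ T_f = T_{f|_{B_X}} \circ R_j$ with $j \colon B_X \hookrightarrow B$; the rule $R_g \circ N_f = N_{f'} \circ R_{g'}$, applied to the pullback square whose corner is $A_X$ (the pullback of $g$ along $j$), gives $R_j \circ N_g = N_{g|_{A_X}} \circ R_{j'}$ with $j' \colon A_X \hookrightarrow A$; and finally $R_{j'} \circ R_h = R_{h \circ j'} = R_{h|_{A_X}}$. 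Hence $p_X \circ \beta = T_{f|_{B_X}} \circ N_{g|_{A_X}} \circ R_{h|_{A_X}} = \beta_X$, and similarly $p_Y \circ \beta = \beta_Y$. Together with the previous paragraph this shows $h \mapsto (p_X \circ h, p_Y \circ h)$ is a bijection with inverse given by gluing, which is precisely the universal property exhibiting $X \sqcup Y$ (with projections $p_X, p_Y$) as the product of $X$ and $Y$.

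I expect the only genuinely delicate point to be the bookkeeping in this three-step composition computation, together with the routine but fiddly checks that the decomposition and gluing operations are well-defined on isomorphism classes of bispans and are mutually inverse; the rest is formal.
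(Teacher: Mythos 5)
Your proof is correct. The paper states this proposition without proof (it refers the reader to Strickland \cite{Str12} for foundational details), so there is no in-paper argument to compare against; your argument is the standard one. The key steps check out: $\varnothing$ is terminal because there are no nonempty $G$-sets over $\varnothing$; the projections $R_{\iota_X}$, $R_{\iota_Y}$ are the right choice; the composition computation $p_X \circ \beta = \beta_X$ correctly applies the $R\circ T$ and $R\circ N$ pullback rules followed by $R \circ R$; and the decomposition/gluing of bispans along the partition of $B$ induced by $f^{-1}(X)$, $f^{-1}(Y)$ gives the required two-sided inverse. The one point you acknowledge leaving to the reader---that decomposition and gluing are well-defined on isomorphism classes and mutually inverse, via $B \cong B_X \sqcup B_Y$ and $A \cong A_X \sqcup A_Y$---is genuinely routine. (Incidentally, since $\Poly_G$ is semi-additive, the same disjoint unions are also finite coproducts, with injections $T_{\iota_X}$, $T_{\iota_Y}$; but the statement only asks for products, and your proof delivers exactly that.)
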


\begin{definition}
    A $G$-semi-Tambara functor is a product-preserving functor $\Poly_G \to \Set$.
\end{definition}

A $G$-semi-Tambara functor comes equipped with certain operations on its output sets:

\begin{proposition}
    Let $X$ be a finite $G$-set, and let $F : \Poly_G \to \Set$ be a $G$-semi-Tambara functor. Let $\nabla_X : X \amalg X \to X$ denote the codiagonal. We obtain operations
    \begin{align*}
        +_{F,X}     & := F(X) \times F(X) \cong F(X \amalg X) \xrightarrow{F(T_{\nabla_X})} F(X) \\
        \cdot_{F,X} & := F(X) \times F(X) \cong F(X \amalg X) \xrightarrow{F(N_{\nabla_X})} F(X) \\
    \end{align*}
    which make $(F(X), +_{F,X}, \cdot_{F,X})$ into a commutative semiring\footnote{A semiring is just like a ring, except that it need not admit additive inverses of its elements. These are also known as \emph{rigs}.}.
\end{proposition}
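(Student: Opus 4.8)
The plan is to translate each axiom defining a commutative semiring into an identity between two parallel morphisms of $\Poly_G$, and then verify that identity using the composition laws above. Since $F$ is product-preserving and finite products in $\Poly_G$ are disjoint unions of $G$-sets, we have canonical identifications $F(X \amalg \cdots \amalg X) \cong F(X) \times \cdots \times F(X)$ (with $F(\emptyset) \cong \ast$); moreover, the composition laws $R_g \circ T_f = T_{f'} \circ R_{g'}$ and $R_g \circ N_f = N_{f'} \circ R_{g'}$ for pullback squares show that $T_{f \amalg f'}$ and $N_{f \amalg f'}$ coincide with the product morphisms $T_f \times T_{f'}$ and $N_f \times N_{f'}$ in $\Poly_G$, so that under these identifications $F(T_{f \amalg f'}) = F(T_f) \times F(T_{f'})$, and likewise for $N$. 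With this bookkeeping in place, each semiring axiom becomes the assertion that $F$ carries two explicit composites of $T$-, $N$-, and $R$-morphisms to the same function.

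First I would dispatch the two monoid structures and the absorption law. The additive unit is $0_X := F(T_{!})(\ast)$ for $! : \emptyset \to X$, and the multiplicative unit is $1_X := F(N_{!})(\ast)$. Associativity, commutativity, and unitality of $+_{F,X}$ then follow from functoriality $T_g \circ T_f = T_{g \circ f}$ applied to the $\Gset$-identities $\nabla_X \circ (\nabla_X \amalg \id_X) = \nabla_X \circ (\id_X \amalg \nabla_X)$, $\nabla_X \circ \tau_X = \nabla_X$ (with $\tau_X$ the swap of $X \amalg X$), and $\nabla_X \circ (\id_X \amalg\, !) = \id_X$; the identical argument with $N_g \circ N_f = N_{g \circ f}$ gives the corresponding facts for $\cdot_{F,X}$. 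The absorption law $0_X \cdot a = 0_X$ is the first place the exponential composition law enters: applying $N_g \circ T_f = T_{\Pi_g f} \circ N_{g'} \circ R_{\varepsilon_f}$ with $g = \nabla_X$ and $f = (!\amalg \id_X) : \emptyset \amalg X \to X \amalg X$, one computes that the fibers of $f$ over the two preimages of $x \in X$ are $\emptyset$ and a point, so $\Pi_{\nabla_X}(f) \cong (!_X : \emptyset \to X)$ and the composite $N_{\nabla_X} \circ T_{!\amalg\id_X}$ factors through $\emptyset$; applying $F$ shows $0_X \cdot a$ is independent of $a$ and equal to $F(T_{!_X})(\ast) = 0_X$.

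The one axiom with genuine content — and the step I expect to be the main obstacle — is distributivity, since it is the unique axiom mixing $+$ and $\cdot$ and hence the only one that seriously invokes $N_g \circ T_f = T_{\Pi_g f} \circ N_{g'} \circ R_{\varepsilon_f}$. Here $a \cdot (b + c)$ is $F$ of the composite $X \amalg X \amalg X \xrightarrow{T_{\id_X \amalg \nabla_X}} X \amalg X \xrightarrow{N_{\nabla_X}} X$, and I would apply the exponential law with $g = \nabla_X$ and $f = (\id_X \amalg \nabla_X)$. Computing $\Pi_{\nabla_X}(f)$ from the formula for $\Pi$: its fiber over $x \in X$ is the product of the fibers of $f$ over the two preimages $x^{(1)}, x^{(2)}$ of $x$ under $\nabla_X$, which has $1 \cdot 2 = 2$ elements, so $\Pi_{\nabla_X}(f) \cong (\nabla_X : X \amalg X \to X)$. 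It then remains to trace the counit $\varepsilon_f$ through the resulting expression $T_{\nabla_X} \circ N_{g'} \circ R_{\varepsilon_f}$ and match it termwise with the composite that first duplicates the $a$-coordinate by an $R$-morphism, then multiplies the two resulting pairs via $N_{\nabla_X \amalg \nabla_X}$, then adds via $T_{\nabla_X}$ — i.e., with the recipe computing $a \cdot b + a \cdot c$. This gives left distributivity, and right distributivity follows by combining it with the already-established commutativity of $\cdot_{F,X}$. The substance of the whole proof is therefore concentrated in this single bookkeeping step: identifying $\Pi_{\nabla_X}$ on fold maps and matching the counit $\varepsilon_f$; the coherence needed to know all of these bispan composites are well-defined is \cite[Lemma 2.15]{gambino-kock}, and everything else is formal.
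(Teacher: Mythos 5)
The paper states this proposition without proof, deferring to the standard references (\cite{Str12} and ultimately Tambara's original work), so there is no in-text argument to compare against. Your proposal is a correct sketch of the standard verification, and the organization is right: the product-preservation of $F$ together with the pullback laws $R_g T_f = T_{f'} R_{g'}$ and $R_g N_f = N_{f'} R_{g'}$ identify $T_{f\amalg f'}$ and $N_{f\amalg f'}$ with product morphisms, the pure $T$- and pure $N$-composition laws handle the two commutative monoid structures, and the exponential law $N_g T_f = T_{\Pi_g f} N_{g'} R_{\varepsilon_f}$ is what drives absorption and distributivity. Your $\Pi$-computations are right, including the one that matters: for $g=\nabla_X$ and $f=\id_X\amalg\nabla_X$ one gets $\Gamma(f,\nabla_X^{-1}(y)) = f^{-1}(y^{(1)})\times f^{-1}(y^{(2)})$ of cardinality $1\cdot 2 = 2$, and a short check of the action $g(y,t)=(gy,\,x\mapsto gt(g^{-1}x))$ shows the ``which section'' label is $G$-fixed, so $\Pi_{\nabla_X}(f)\cong\nabla_X$ equivariantly; similarly for absorption the empty fiber over $y^{(1)}$ forces $\Gamma = \varnothing$ so the composite factors through $F(\varnothing)\cong\ast$ and lands on $0_X$. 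The only incomplete piece is the final matching of $g'$ and $\varepsilon_f$ against the explicit $R$-then-$N_{\nabla_X\amalg\nabla_X}$-then-$T_{\nabla_X}$ recipe for $ab+ac$; you flag this yourself as the remaining bookkeeping, and it does go through -- the pullback $\nabla_X^*\Pi_{\nabla_X}f$ is the four-fold coproduct $X^{\amalg 4}$, $g'$ is the fold to $X\amalg X$, and $\varepsilon_f$ duplicates the $a$-slot -- so the sketch completes to a correct proof.
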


A Tambara functor is simply a semi-Tambara functor for which all of these commutative semirings are in fact rings.

\begin{definition}
    A $G$-Tambara functor is a $G$-semi-Tambara functor $F$ such that $(F(X), +_{F,X})$ is an abelian group for all finite $G$-sets $X$.
\end{definition}

Since Tambara functors preserve products and $\varnothing$ is the terminal object in $\Poly_G$, every Tambara functor sends $\varnothing$ to a singleton set (whose commutative ring structure must be that of the zero ring). Moreover, every morphism in the category of finite $G$-sets decomposes canonically as a disjoint union of morphisms between transitive $G$-sets (or from the empty set to a transitive $G$-set). Every transitive $G$-set is isomorphic to $G/H$ for some subgroup $H \leq G$, and a morphism $G/H \to G/K$ is always a composition
\[G/H \xrightarrow{xH \mapsto xg^{-1}(gHg^{-1})} G/gHg^{-1} \to G/K\]
for some $g \in G$ such that $gHg^{-1} \subseteq K$. Thus, the data of a $G$-Tambara functor can also be specified by finitely many commutative rings with finitely many functions between them:

\begin{proposition}\label{prop:alt-def-of-Tambara-functors}
    A $G$-Tambara functor $F$ is determined (up to isomorphism) by the commutative rings $F(G/H)$ for all subgroups $H \leq G$, together with the operations
    \begin{align*}
        \tr_H^K     & := F(G/H) \xrightarrow{F(T_{G/H \to G/K})} F(G/K)               &  & \text{``restriction''} \\
        \nm_H^K     & := F(G/H) \xrightarrow{F(N_{G/H \to G/K})} F(G/K)               &  & \text{``transfer''}    \\
        \res_H^K    & := F(G/K) \xrightarrow{F(R_{G/H \to G/K})} F(G/H)               &  & \text{``norm''}        \\
        \conj_{H,g} & := F(G/H) \xrightarrow{F(T_{G/H \to G/gHg^{-1}})} F(G/gHg^{-1}) &  & \text{``conjugation''}
    \end{align*}
    for all subgroups $H \leq K \leq G$ and all elements $g \in G$.
\end{proposition}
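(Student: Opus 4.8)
The plan is to exhibit each object and each morphism of $\Poly_G$ as built, using only finite products in $\Poly_G$ and composition, from the orbits $G/H$ and the elementary bispans $T_f$, $N_f$, $R_f$ attached to maps $f$ between transitive $G$-sets, and then to use that every map between transitive $G$-sets factors as a conjugation isomorphism followed by a canonical projection $G/H \to G/K$ with $H \le K$. Applying a product-preserving functor $F$ to such a presentation forces it to be determined by the rings $F(G/H)$ together with the operations $\tr_H^K$, $\nm_H^K$, $\res_H^K$, $\conj_{H,g}$.

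On objects this is immediate: $F$ preserves finite products and $\varnothing$ is terminal in $\Poly_G$, so $F(\varnothing)$ is a point and $F(X \amalg X') \cong F(X) \times F(X')$; writing a finite $G$-set canonically as a coproduct $\coprod_i G/H_i$ of orbits gives $F(X) \cong \prod_i F(G/H_i)$, the identification being $F$ applied to the structural maps of $\Poly_G$ relating $X$ and its orbits. For morphisms, every morphism of $\Poly_G$ has by construction the form $T_f \circ N_g \circ R_h$ for a bispan $X \xleftarrow{h} A \xrightarrow{g} B \xrightarrow{f} Y$, so by functoriality it suffices to pin down $F(T_f)$, $F(N_f)$, $F(R_f)$ for an arbitrary $\Gset$-morphism $f$. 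Here I would invoke the explicit composition rules: first, for a coproduct $\coprod_j f_j$ of $\Gset$-morphisms, $T_{\coprod_j f_j}$, $N_{\coprod_j f_j}$, $R_{\coprod_j f_j}$ are the evident products of morphisms in $\Poly_G$ (the product projections of $X \amalg X'$ are $R$ of the two inclusions, and the claim follows from the composition rules relating $R$ to $T$ and $R$ to $N$ applied to the obvious pullback squares); and second, a morphism $\coprod_i g_i \colon Z \to G/K$ with disconnected source but transitive target factors as the codiagonal $\nabla_{G/K}$ composed with $\coprod_i g_i \colon Z \to \coprod_i G/K$, so that $F(T_{\coprod_i g_i}) = \sum_i F(T_{g_i})$ and $F(N_{\coprod_i g_i}) = \prod_i F(N_{g_i})$ in the ring $F(G/K)$, since $F(T_{\nabla_{G/K}})$ and $F(N_{\nabla_{G/K}})$ are by definition the addition and multiplication of $F(G/K)$. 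Combined with the object step, this reduces $F(T_f)$, $F(N_f)$, $F(R_f)$ to the ring structures on the $F(G/H)$ together with $F(T_g)$, $F(N_g)$, $F(R_g)$ for $g$ a map between transitive $G$-sets; factoring such a $g$ as a conjugation followed by a canonical projection, and using that $T$, $N$, $R$ agree on isomorphisms up to inversion, these become composites of $\conj$'s with $\tr$'s, with $\nm$'s, and with $\res$'s respectively.

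To get the statement up to isomorphism, I would take two Tambara functors $F$, $F'$ equipped with ring isomorphisms $F(G/H) \xrightarrow{\sim} F'(G/H)$ compatible with all four families of operations, extend them componentwise along the decomposition above to bijections $F(X) \xrightarrow{\sim} F'(X)$, and check naturality; via the factorization $T_f \circ N_g \circ R_h$ and the reductions just described, naturality collapses to compatibility with $T$, $N$, $R$ of maps between transitive $G$-sets, i.e. with $\tr$, $\nm$, $\res$, $\conj$, which is the hypothesis. There is no real obstacle here beyond bookkeeping; the single point that requires care is that $T$ and $N$ are not simply ``componentwise'' for a map out of a disconnected $G$-set into a single orbit---there the ring operations of the target appear---so the reduction to transitive-to-transitive maps must be routed through the identities $F(T_{\nabla_{G/K}}) = {+}$ and $F(N_{\nabla_{G/K}}) = {\cdot}$.
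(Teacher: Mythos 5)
The proposal is correct and follows essentially the same route as the paper's sketch in the paragraph preceding the proposition: decompose by orbits using product-preservation, reduce via the $T_f \circ N_g \circ R_h$ factorization, and factor maps between orbits as a conjugation isomorphism followed by a canonical projection $G/H \to G/K$. Your additional care with the codiagonal---so that $F(T_f)$ for $f$ out of a disconnected $G$-set into a single orbit is recovered from the $\tr$'s together with ring addition, and similarly for $N$---is exactly the bookkeeping the paper's sketch elides and leaves to the cited reference.
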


This can be used to give an alternative definition of Tambara functors; see \cite{Str12} for details. In particular, we note that the 
$\conj_{H,g}$ operations give an action of the ``Weyl group,'' 
$W_G(H) := N_G(H)/H$ on $F(G/H)$, called the \emph{Weyl action}. 
The $\res_H^K$ operations are ring homomorphisms, 
the $\tr_H^K$ operations are $F(G/K)$-module homomorphisms,
and the $\nm_H^K$ operations are homomorphisms of underlying 
multiplicative monoids.

Lastly, a common source of Tambara functors is the fixed-point Tambara functor construction.
\begin{definition}
\label{defn:fixed-pt-tambara-functor}
If $R$ is a commutative ring with $G$ action, then the fixed point
Tambara functor of $R$, $\underline{R}$,
is the $G$-Tambara functor defined by $\underline{R}(G/H)= R^H$,
$c_{H,g} x = gx$, and when $H\subseteq K$, $\res^K_H x = x$, 
\[ \tr_H^K x = \sum_{kH\in K/H} kx
\text{, and }
\nm_H^K x = \prod_{kH\in K/H} kx.
\]
\end{definition}
When $R$ is an ordinary ring we regard it as having the trivial $G$-action,
in which case $\underline{R}$ is also called the \emph{constant Tambara functor
on $R$}.

\begin{lemma}
    \label{lem:fp-tambara-functor-adjt-to-eval-at-underlying}
    The fixed-point Tambara functor construction is right adjoint 
    to the evaluation functor $\ev_{G/e}:\GTamb\to \GCRing$ that 
    sends a Tambara functor $T$ to the ring $T(G/e)$ with its 
    Weyl group action.
\end{lemma}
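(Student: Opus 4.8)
The plan is to produce, for each Tambara functor $T$ and each commutative $G$-ring $R$, a bijection
\[
\Hom_{\GTamb}(T,\underline R)\;\cong\;\Hom_{\GCRing}(T(G/e),R)
\]
natural in $T$ and $R$ (which is exactly the asserted adjunction), where $T(G/e)$ carries its Weyl ($=G$-) action. In the forward direction I send a morphism $\phi\colon T\to\underline R$ to its component $\phi_{G/e}\colon T(G/e)\to\underline R(G/e)=R$; this is a ring homomorphism by definition of a morphism of Tambara functors, and it is $G$-equivariant because $\phi$ is compatible with conjugation and $\conj_{e,g}$ acts on $\underline R(G/e)=R$ by $y\mapsto gy$ (\Cref{defn:fixed-pt-tambara-functor}), so $\phi_{G/e}(\conj_{e,g}x)=g\cdot\phi_{G/e}(x)$. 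Conversely, given an equivariant ring map $f\colon T(G/e)\to R$, I set $\beta(f)=\phi$ levelwise by $\phi_{G/H}:=f\circ\res_e^H\colon T(G/H)\to R$. This is a ring homomorphism since $\res_e^H$ and $f$ are, and its image lies in the subring $R^H=\underline R(G/H)$: for $h\in H$ and $x\in T(G/H)$, $h\cdot\phi_{G/H}(x)=f\bigl(\conj_{e,h}\res_e^H x\bigr)=f(\res_e^H x)=\phi_{G/H}(x)$, using equivariance of $f$ and the identity $\conj_{e,h}\circ\res_e^H=\res_e^H$, which is just functoriality applied to the fact that the $G$-maps $G/e\to G/H$ given by $g\mapsto gH$ and $g\mapsto ghH$ coincide when $h\in H$.

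The next step is to check that $\beta(f)$ is a morphism of Tambara functors; by the description in \Cref{prop:alt-def-of-Tambara-functors} it suffices to check compatibility with $\res$, $\conj$, $\tr$, and $\nm$. Compatibility with $\res$ is immediate from transitivity $\res_e^K=\res_e^H\circ\res_H^K$ together with the fact that the restriction maps of $\underline R$ are the subring inclusions $R^K\hookrightarrow R^H$. Compatibility with $\conj$ is an analogous manipulation using functoriality and the equivariance of $f$. The substantive point is compatibility with the transfers $\tr_H^K$ and norms $\nm_H^K$: here I would compute the composite $\Poly_G$-morphisms $R_{G/e\to G/K}\circ T_{G/H\to G/K}$ and $R_{G/e\to G/K}\circ N_{G/H\to G/K}$ via the $TR$- and $NR$-composition rules, which forces me to identify the pullback of $G/H\to G/K$ along $G/e\to G/K$ as a coproduct of $|K/H|$ copies of $G/e$. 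This yields the double-coset identities $\res_e^K\tr_H^K x=\sum_{kH\in K/H}k\cdot(\res_e^H x)$ and $\res_e^K\nm_H^K x=\prod_{kH\in K/H}k\cdot(\res_e^H x)$ in $T(G/e)$; applying $f$, which is additive, multiplicative, and $G$-equivariant, then reproduces exactly the transfer and norm formulas for $\underline R$ from \Cref{defn:fixed-pt-tambara-functor}.

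Finally I would verify that the two assignments are mutually inverse and natural. One composite is $\beta(f)_{G/e}=f\circ\res_e^e=f$. For the other, given $\phi\colon T\to\underline R$, compatibility of $\phi$ with restriction gives $\iota_{R^H}\circ\phi_{G/H}=\phi_{G/e}\circ\res_e^H$ (since $\res_e^H$ in $\underline R$ is the inclusion $\iota_{R^H}\colon R^H\hookrightarrow R$), and as $\iota_{R^H}$ is injective this yields $\beta(\phi_{G/e})_{G/H}=\phi_{G/H}$; naturality in $T$ and in $R$ is a routine diagram chase. The main obstacle is the transfer/norm compatibility in the second paragraph: it is the only step requiring a genuine computation in $\Poly_G$ (equivalently, an instance of the Mackey double-coset formula), everything else being formal bookkeeping with restrictions, conjugations, and the subring inclusions defining $\underline R$.
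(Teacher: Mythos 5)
Your proof is correct and takes essentially the same approach as the paper: the paper constructs the unit $\eta_T$ (whose components are the restriction maps $\res_e^H$) and the identity counit and checks the triangle identities, while you equivalently construct the hom-set bijection directly using those same restriction maps and the identity at level $G/e$. Your version spells out in more detail the verifications (that $\beta(f)$ lands in fixed points and is compatible with $\tr$, $\nm$ via the double-coset formula) that the paper compresses into ``follows from the rules for composition.''
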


\begin{proof}
    Let $S$ be a commutative ring with $G$-action and $T$ be a 
    $G$-Tambara functor. We'll describe the unit and counit of the 
    adjunction.

    The unit map,
    $\eta_T:T\to \underline{T(G/e)}$, is given componentwise by the 
    restriction maps
    $\res^H_e : T(G/H)\to T(G/e)^H$. That these maps are compatible 
    with the norms and transfers follows from the rules for composition.
    The counit map is even more straightforward, since $\ev_{G/e}(\underline{S})=S^e=S$, and the counit map will just be the identity map.

    The triangle identities follow from the facts that 
    $\eta_{\underline{S}}$ reduces to the identity map $\underline{S}\to \underline{S}$
    and that the $G/e$ component of $\eta_T$ is the identity map 
    $T(G/e)\to T(G/e)$.
    % If $f:T(G/e)\to S$ is an equivariant map, then for any subgroup $H$
    % of $G$, we can define the components of the adjoint map, 
    % $\tilde{f}_{G/H}:T(G/H)\to \underline{S}(G/H)=S^H$, 
    % by $f\circ \res^H_e$. That these maps are compatible with transfers 
    % and norms follows from the rules for the composition of restrictions 
    % with the transfers and norms.
\end{proof}

\subsection{The Burnside Tambara functor and the free Green functor}

The Burnside Tambara functor, which we will denote by $\A$,
is the monoidal unit for the box product of Mackey functors,
and therefore the initial Tambara functor, 
so for Tambara functors it plays 
the role that $\Z$ does for ordinary commutative rings.

Although the description above determines $\A$, we'll 
give it a more concrete definition.
\begin{definition}
    Let $\Poly_G^+$ denote the additive completion of the semi-additive 
    category $\Poly_G$. Then $\A$ is the representable functor 
    $\A(X) := \Poly_G^+(\varnothing, X)$. 
\end{definition}

Since representable functors are product preserving,
$\A$ is a Tambara functor,
and by the Yoneda lemma, $\GTamb(\A, R)\cong R(\varnothing)\cong *$,
since $\varnothing$ is the terminal object in $\Poly_G$ and $R$ is
product preserving, 
where we use $*$ to denote a one element set or $G$-set. Therefore 
$\A$ is, in fact, the initial Tambara functor, so this agrees with our 
description above.

Then for an orbit $G/H$, we have that $\A(G/H)$ is
the group completion of the monoid of 
isomorphism classes of diagrams of $G$-sets of the form
\[\varnothing 
\leftarrow 
\varnothing 
\rightarrow X 
\toby{p} 
G/H.
\]
The category of $G$-sets over $G/H$ is equivalent to the 
category of $H$-sets, so $\A(G/H)$ is naturally isomorphic to the 
Burnside ring of $H$, which is the Grothendieck ring of the category 
of finite $H$-sets.

Now a Green functor is a monoid for the box product of 
$G$-Mackey functors. Green functors can equivalently be described 
as a Mackey functor,
$R$, with a ring structure on every abelian group $R(X)$ such 
that the restriction maps are ring homomorphisms and the 
transfers are module homomorphisms. 

Now for any Green functor $R$ and ordinary ring $A$, let 
$R\otimes A$ denotes the levelwise tensor product of 
$R$ with $A$, $(R\otimes A)(X) := R(X)\otimes A$.
$R\otimes A$ is naturally a Green functor as well,
since the tensor product of 
a ring homomorphism or module homomorphism with 
a fixed ring is still a ring or module homomorphism. 

\begin{lemma}
    If $R$ is a Green functor and $A$ is a commutative ring, then
    the levelwise tensor product $R\otimes A$ represents the 
    functor that sends a Green functor $S$ to pairs $(f,g)$
    of a Green functor morphism $f:R\to S$ and a ring morphism 
    $g:A\to S(*)$. The universal pair consists of 
    the Green functor 
    map $\iota_R : R\to R\otimes A$ which sends $r$ to $r\otimes 1$ and 
    the ring map $\iota_A: A\to R(*)\otimes A$ which sends $a$ to $1\otimes a$.
\end{lemma}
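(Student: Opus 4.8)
The plan is to construct the natural bijection directly. Fix a Green functor $S$; I claim the assignment $\phi\mapsto(\phi\circ\iota_R,\ \phi_\ast\circ\iota_A)$ is a bijection from $\GGreen(R\otimes A,S)$ onto the set of pairs $(f,g)$ with $f\in\GGreen(R,S)$ and $g\in\CRing(A,S(\ast))$. The essential input is purely ring-theoretic: for each finite $G$-set $X$, the ring $(R\otimes A)(X)=R(X)\otimes_{\Z}A$ is the coproduct of $R(X)$ and $A$ in $\CRing$ (as $S(X)$ is commutative), so a ring homomorphism $(R\otimes A)(X)\to S(X)$ is the same data as a pair of ring homomorphisms out of $R(X)$ and $A$.

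Given such a pair $(f,g)$, for each $X$ let $\rho_X\colon S(\ast)\to S(X)$ denote the restriction along the unique $G$-map $X\to\ast$ (a ring homomorphism, since restrictions in a Green functor are ring maps), and set $\phi_X(r\otimes a)=f_X(r)\cdot\rho_X(g(a))$; this is a well-defined ring homomorphism by the universal property just noted. The first thing to verify is that $\phi=(\phi_X)_X$ is a morphism of Green functors. Since each $\phi_X$ is a ring map, the only content is compatibility with the Mackey structure maps. For restrictions and conjugations this reduces, on pure tensors, to $f$ being a map of Mackey functors together with functoriality of restriction (so that $\rho_X=\res_q\circ\rho_Y$ whenever $q\colon X\to Y$, since $X\to\ast$ factors through $q$) and multiplicativity of $\res_q$.

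The substantive point --- which I expect to be the main obstacle --- is compatibility with transfers. Writing the transfer of $R\otimes A$ as $\tr_q^R\otimes\id_A$ for $q\colon X\to Y$, one computes on a pure tensor that $\phi_Y(\tr_q^{R\otimes A}(r\otimes a))=\tr_q^S(f_X(r))\cdot\rho_Y(g(a))$, using that $f$ is a Mackey morphism, whereas $\tr_q^S(\phi_X(r\otimes a))=\tr_q^S\bigl(f_X(r)\cdot\rho_X(g(a))\bigr)$. These agree by the projection formula $\tr_q^S(s)\cdot t=\tr_q^S(s\cdot\res_q(t))$ --- precisely the Green-functor axiom that transfers are module homomorphisms --- applied with $t=\rho_Y(g(a))$ and $\res_q\circ\rho_Y=\rho_X$. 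Additivity of $\phi_X$ extends the identity from pure tensors to all of $R(X)\otimes A$.

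Finally I would check that the assignment is bijective. That $(\phi\circ\iota_R,\ \phi_\ast\circ\iota_A)=(f,g)$ is immediate from $\phi_X(r\otimes1)=f_X(r)$ and $\phi_\ast(1\otimes a)=g(a)$ (the map $\ast\to\ast$ being the identity). For injectivity, let $\psi\colon R\otimes A\to S$ be any Green functor map with $\psi\circ\iota_R=f$ and $\psi_\ast\circ\iota_A=g$. Then $\psi_X(r\otimes1)=f_X(r)$, and since $1\otimes a\in R(X)\otimes A$ is the restriction along $X\to\ast$ of $\iota_A(a)=1\otimes a\in R(\ast)\otimes A$ (restrictions preserve $1$), we get $\psi_X(1\otimes a)=\rho_X(\psi_\ast(\iota_A(a)))=\rho_X(g(a))$; multiplicativity and additivity of $\psi_X$ then force $\psi_X=\phi_X$. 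Naturality in $S$ is clear from the explicit formula for $\phi$.
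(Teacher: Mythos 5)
Your proposal is correct and follows essentially the same route as the paper's proof: you construct the same inverse map $\phi_X(r\otimes a)=f_X(r)\cdot\rho_X(g(a))$, and the decisive verification (transfer compatibility via the projection formula, i.e.\ transfers being module homomorphisms) is identical in both. The only cosmetic difference is that you invoke the coproduct characterization of $R(X)\otimes A$ in $\CRing$ to justify well-definedness, whereas the paper obtains the same map by tensoring $f_X$ with $t_X^*g$ and composing with multiplication.
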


\begin{proof}
    If $S$ is a Green functor, then a map $\phi: R\otimes A\to S$
    has ring maps $\phi_X : R(X)\otimes A\to S(X)$ as components.

    The component of our natural transformation 
    corresponding to the pair 
    $(\iota_R,\iota_A)$ is the map
    $\GGreen(R\otimes A, S)\to \GGreen(R,S)\otimes \Ring(A, S(*))$
    that sends $\phi: R\otimes S$ to the pair 
    $(\phi\iota_R, \phi_*\iota_A)$.

    To show that this is an isomorphism, we'll show that this map 
    is invertible. So suppose we're given $f:R\to S$ and 
    $g:A\to S(*)$. Then if we let $t_X:X\to *$ be the terminal 
    map and $t_X^* : S(*)\to S(X)$ be the restriction, 
    then we can define 
    $\psi_X : R(X)\otimes A \to S(X)$ to be 
    \[ 
    R(X)\otimes A\toby{f_X\otimes t_X^*g} S(X)\otimes S(X) \to S(X).
    \]

    Then for any map $\alpha:X\to Y$, if $\alpha^*$ is the restriction map 
    along $\alpha$, then 
    \[
    \alpha^* \psi_Y(r\otimes a) = 
    (\alpha^* f_Y(r))(\alpha^*t_Y^*g(a))  
    = (f_X(\alpha^*r))(t_X^* g(a))
    = \psi_X(\alpha^*(r\otimes a)).
    \]
    Similarly, if $T_{\alpha}$ represents the transfer map along 
    $\alpha$, then 
    \[
    T_{\alpha}\psi_X(r\otimes a)
    =T_{\alpha} (f_X(r) t_X^* g(a))
    =T_{\alpha} (f_X(r) \alpha^*t_Y^* g(a))
    = (T_{\alpha} f_X(r)) t_Y^*g(a)
    \]
    \[
    = f_Y(T_{\alpha} r)t_Y^*g(a)
    = \psi_Y(T_{\alpha}(r\otimes a)).
    \]
    So $\psi$ is a valid map of Green functors.

    Then $\psi\iota_R(r) = \psi(r\otimes 1) = f(r)$, and 
    $\psi_*\iota_A(a) = \psi_*(1\otimes a) = g(a)$. 
    Conversely, if we start with a map $\phi$ and build $\psi$
    from $f=\phi\iota_R$ and $g=\phi_*\iota_A$, then 
    \[ 
    \psi_X(r\otimes a) = \of*{\phi_X\iota_R(r)}\of*{t_X^*\phi_*\iota_A(a)}
    = \phi_X(r\otimes 1)\phi_X(1\otimes a)
    = \phi_X(r\otimes a).
    \]
\end{proof}

\begin{corollary}
    \label{cor:levelwise-tensor-is-adjt-to-evaluation-at-*-for-green}
    The functor $A\mapsto R\otimes A$ is the left adjoint 
    to the evaluation functor $\ev_*^{\text{Green}} : \Alg[R] \to \CRing$ that
    sends an $R$-algebra in Green functors, $S$, to $S(*)$.
\end{corollary}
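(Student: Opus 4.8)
The plan is to deduce this immediately from the preceding lemma by passing to the appropriate slice category. Recall that $\Alg[R]$ is the slice category $R/\GGreen$: an $R$-algebra in Green functors is a Green functor $S$ equipped with a structure morphism $\rho_S : R \to S$, and a morphism of $R$-algebras $S \to S'$ is a morphism of Green functors commuting with the structure maps. First I would observe that $R \otimes A$ is canonically an $R$-algebra via the morphism $\iota_R : R \to R \otimes A$ supplied by the lemma, so that $A \mapsto R \otimes A$ does define a functor $\CRing \to \Alg[R]$ (functoriality in $A$ being clear from the levelwise tensor product, as $\iota_R$ is manifestly natural in $A$).

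Next, for a fixed $R$-algebra $(S, \rho_S)$ and commutative ring $A$, I would compute the hom-set $\Alg[R](R \otimes A, S)$. A morphism here is precisely a morphism of Green functors $\phi : R \otimes A \to S$ with $\phi \circ \iota_R = \rho_S$. The preceding lemma identifies $\GGreen(R \otimes A, S)$ with the set of pairs $(f, g)$, where $f : R \to S$ is a Green functor map and $g : A \to S(*)$ is a ring map, via $\phi \mapsto (\phi \iota_R, \phi_* \iota_A)$. Under this bijection the condition $\phi \circ \iota_R = \rho_S$ is exactly the condition that the first coordinate equals $\rho_S$; hence $\Alg[R](R \otimes A, S)$ is in natural bijection with $\{\rho_S\} \times \Ring(A, S(*)) \cong \Ring(A, S(*)) = \CRing(A, \ev_*^{\text{Green}}(S))$. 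Naturality in $A$ and in $S$ is inherited from the naturality in the lemma, restricted to the sub-functor cut out by fixing the first coordinate; this establishes the adjunction $(R \otimes {-}) \dashv \ev_*^{\text{Green}}$.

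For concreteness I would also record the unit and counit. The unit at $A$ is the ring map $A \to (R \otimes A)(*) = R(*) \otimes A$, which is $\iota_A$; the counit at $(S, \rho_S)$ is the $R$-algebra map $R \otimes S(*) \to S$ corresponding under the lemma to the pair $(\rho_S, \id_{S(*)})$, i.e., the map which in degree $X$ sends $r \otimes s \mapsto (\rho_S)_X(r) \cdot t_X^*(s)$ in the notation of the lemma's proof. The triangle identities then reduce directly to the identities $\psi_X(r \otimes a) = \phi_X(r \otimes a)$ and $\psi \iota_R = f$, $\psi_* \iota_A = g$ already verified in the proof of the lemma.

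I do not expect any genuine obstacle here: this is a purely formal slice-category manipulation. The only point requiring any care is the bookkeeping of matching morphisms in $R/\GGreen$ against the product decomposition $\GGreen(R \otimes A, S) \cong \GGreen(R,S) \times \Ring(A, S(*))$, and in particular confirming that the structure morphism used to view $R \otimes A$ as an object of $\Alg[R]$ is precisely the $\iota_R$ appearing in that decomposition — which it is, by construction.
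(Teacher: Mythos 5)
Your proof is correct and matches the argument the paper intends; the paper treats this corollary as an immediate consequence of the preceding representability lemma, and your slice-category unpacking — fixing the first coordinate of the pair $(f,g)$ to be the structure map $\rho_S$ — is exactly the standard way to make that implication explicit. The description of the unit as $\iota_A$ and the counit as the map corresponding to $(\rho_S,\id_{S(*)})$ is also right, though the paper does not spell these out.
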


Since we can write that evaluation functor as a composite of 
the forgetful functor from $R$-algebras to Green functors 
and then as evaluation at $*$ from Green functors to commutative 
rings, we have that the left adjoint of the composite is 
naturally isomorphic to the composite of the left adjoints of 
each of those functors, which gives us the following additional
corollary.

\begin{corollary}
    There is a natural isomorphism $R\otimes A\cong R\boxtimes (\A\otimes A)$, where $\A$ is the Burnside Tambara functor.
\end{corollary}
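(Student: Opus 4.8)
The plan is to apply uniqueness of left adjoints. By \Cref{cor:levelwise-tensor-is-adjt-to-evaluation-at-*-for-green}, the functor $A \mapsto R \otimes A$ is left adjoint to the evaluation functor $\ev_* \colon R/\Green \to \CRing$ sending an $R$-algebra $S$ in Green functors to $S(*)$, where I write $R/\Green$ for the coslice category of such $R$-algebras (the category $\Alg_R$ appearing in that corollary). I will exhibit a second left adjoint of the \emph{same} functor $\ev_*$, obtained by factoring $\ev_*$ and composing the left adjoints of the factors; uniqueness of adjoints up to natural isomorphism then forces the two left adjoints to agree, which is precisely the asserted isomorphism.

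First I would note that $\ev_*$ factors as $R/\Green \xrightarrow{U} \Green \xrightarrow{\ev_*^{\Green}} \CRing$, where $U$ forgets the structure morphism out of $R$ and $\ev_*^{\Green}$ is evaluation at $*$; this is immediate from the definitions. Next, $U$ admits a left adjoint $T \mapsto R \boxtimes T$: the box product is the coproduct in the category of Green functors, so $R \boxtimes T$ together with its canonical map from $R$ is the free $R$-algebra on $T$, and the natural bijection $\Green(T, US) \cong (R/\Green)(R \boxtimes T, S)$ is the standard one for a forgetful functor out of a coslice. Finally, $\ev_*^{\Green} \colon \Green \to \CRing$ has left adjoint $A \mapsto \A \otimes A$: this is \Cref{cor:levelwise-tensor-is-adjt-to-evaluation-at-*-for-green} applied with $R$ replaced by the Burnside Tambara functor $\A$, which is the initial Green functor (being the monoidal unit for $\boxtimes$), so that $\A/\Green$ is canonically equivalent to $\Green$ and $\ev_*^{\A/\Green}$ is identified with $\ev_*^{\Green}$.

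Since the left adjoint of a composite of right adjoints is the composite, in the reverse order, of the left adjoints, $\ev_* = \ev_*^{\Green}\circ U$ has left adjoint $A \mapsto R \boxtimes (\A \otimes A)$. Comparing with \Cref{cor:levelwise-tensor-is-adjt-to-evaluation-at-*-for-green} and invoking uniqueness of left adjoints gives a natural isomorphism $R \otimes A \cong R \boxtimes (\A \otimes A)$ of functors $\CRing \to R/\Green$, and hence also after forgetting down to $\Green$; naturality in $R$ as well follows by tracking how each of these adjunctions varies with $R$. The only step here that is not pure formalism is the identification of the left adjoint of $U$ with $R \boxtimes (-)$, which rests on the box product being the coproduct of Green functors; everything else is bookkeeping with uniqueness of adjoints.
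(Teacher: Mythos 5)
Your argument matches the paper's exactly: both factor $\ev_*$ through the forgetful functor $R/\Green \to \Green$, identify the left adjoint of the forgetful functor as $R \boxtimes (-)$ via the coproduct property of the box product, identify the left adjoint of $\ev_*^{\Green}$ as $\A \otimes (-)$ by specializing \Cref{cor:levelwise-tensor-is-adjt-to-evaluation-at-*-for-green} to the initial Green functor $\A$, and then compose left adjoints and invoke uniqueness. The proposal is correct and takes the same route.
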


\begin{proof}
    By the above corollary, since $\A$ is initial in Green functors,
    so the category of Green functors is equivalent to the 
    category of 
    $\A$-algebras, the adjoint to evaluation at $*$ from Green
    functors is $A\mapsto \A \otimes A$. Then since $\boxtimes$
    is the coproduct of Green functors, $S\mapsto R\boxtimes S$
    is left adjoint to the forgetful functor from $R$-algebras
    to Green functors.
\end{proof}

Now we're particularly interested in the case where 
$A$ is a polynomial ring, $\Z[x_1,\ldots,x_n]$. By 
\Cref{cor:levelwise-tensor-is-adjt-to-evaluation-at-*-for-green},
we have that $\A[\underline{x}] := \A \otimes \Z[x]$ is the free 
Green functor on a single element adjoined at the $G/G$ level.
Our goal is to show that in fact $\A[\underline{x}]$ has a 
Tambara functor structure, which is characterized by the 
property that 
$\nm_K^H \res^G_K \underline{x}_i = \res^G_H\underline{x}_i^{[H:K]}$
for $K\subseteq H \subseteq G$. 

If that holds, then when $R$ is a Tambara functor, then 
if we define $R[\underline{x}_1,\ldots,\underline{x}_n]$ 
to be 
$R\boxtimes \A[\underline{x}_1]\boxtimes \cdots \A[\underline{x}_n]
\cong R\otimes \Z[x_1,\ldots,x_n]$, 
then $R[\underline{x}_1,\ldots,\underline{x}_n]$ also has 
a Tambara functor structure, and it will be characterized by the 
property that 
$\nm_K^H \res^G_K \underline{x}_i = \res^G_H\underline{x}_i^{[H:K]}$
for $K\subseteq H \subseteq G$ and $1\le i \le n$.

In order to prove this, we'll need results of Nakaoka from 
\cite{Nak11}. In particular, the results we're using are 
Proposition 2.11, 
Theorem 2.12, Corollary 2.14, Theorem 2.15 and Corollary 2.17 of that
paper, which can be summarized in the following way:
\begin{theorem}[Nakaoka]
    If $M$ is a semi-Mackey functor, thought of as having
    restrictions and norms, then there is a 
    free Tambara functor generated by $M$, $\A[M]$, 
    whose elements over a given $G$-set $X$ are isomorphism classes
    of pairs 
    $(\alpha : S\to X, m\in M(S))$, which can be thought of 
    as formal transfers $T_{\alpha}m$. Here free means that 
    the functor $M\mapsto \A[M]$ is 
    left adjoint to the forgetful functor 
    from Tambara functors to their multiplicative semi-Mackey 
    functors. Moreover, the same is true 
    if you choose a subcategory of allowed norms and work with the 
    corresponding 
    incomplete semi-Mackey functors and incomplete Tambara functors.
\end{theorem}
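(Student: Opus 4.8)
Existence of a left adjoint to the forgetful functor $U$ from Tambara functors to multiplicative semi-Mackey functors is a soft consequence of presentability (both categories are locally presentable and $U$ preserves limits and filtered colimits), so the content of the theorem is the \emph{explicit} description of $\A[M]$; the plan is to construct $\A[M]$ by hand, using the composition rules of $\Poly_G$ recalled above as the blueprint for its operations, and then to verify the universal property directly.

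First I would write down the underlying data. For a finite $G$-set $X$, let $\A[M](X)$ be the set of isomorphism classes of pairs $(\alpha\colon S\to X,\, m\in M(S))$ — written suggestively $T_\alpha m$ — with addition $T_{\alpha_1}m_1 + T_{\alpha_2}m_2 := T_{\alpha_1\amalg\alpha_2}(m_1,m_2)$; since every $S$ is a disjoint union of orbits and $M$ preserves products, this presents $\A[M](X)$ as the commutative monoid $\bigoplus_{[\beta]}M(\mathrm{dom}\,\beta)$, the sum running over isomorphism classes of orbit maps $\beta\to X$. (Strictly, this yields a semi-Tambara functor; one obtains a genuine Tambara functor by group-completing each level, which is harmless for the universal property since the target Tambara functors are already additively complete.) The restriction along $f\colon X'\to X$ is base change: pull $\alpha$ back to $\alpha'\colon S'\to X'$ with second projection $p\colon S'\to S$, and set $\res_f(T_\alpha m):=T_{\alpha'}(\res_p m)$ using the restriction of $M$ along the $G$-map $p$; conjugation is the case of base change along an isomorphism, and the additive transfer along $g\colon X\to Y$ is $\tr_g(T_\alpha m):=T_{g\circ\alpha}m$. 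The norm along $g\colon X\to Y$ is dictated by the distributive law: on a single transfer set $\nm_g(T_\alpha m):=T_{\Pi_g\alpha}\bigl(\nm_{g'}(\res_{\varepsilon_\alpha}m)\bigr)$, where $\Pi_g\alpha$, the base-changed map $g'$, and the counit $\varepsilon_\alpha\colon g^*\Pi_g\alpha\to\alpha$ are exactly as in the rule for $N_g\circ T_f$ above and $\nm_{g'},\res_{\varepsilon_\alpha}$ are operations of $M$, and extend over sums using that $\nm_g$ of $T_{\alpha_1}m_1+\dots+T_{\alpha_n}m_n = T_{\alpha_1\amalg\dots\amalg\alpha_n}(m_1,\dots,m_n)$ is again the norm of a single transfer. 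Multiplication is defined by the projection formula $T_\alpha a\cdot b = T_\alpha(a\cdot\res_\alpha b)$: applying it twice reduces $T_{\alpha_1}m_1\cdot T_{\alpha_2}m_2$ to $T_{\alpha_1\circ\pi_1}\bigl((\res_{\pi_1}m_1)\cdot(\res_{\pi_2}m_2)\bigr)$ over the pullback $S_1\times_X S_2$ (with projections $\pi_1,\pi_2$), the final product being the commutative-monoid operation of $M$; the multiplicative unit at $X$ is $T_{\id_X}(1)$.

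Next I would check that these operations are well defined on isomorphism classes (independence of the chosen representing $G$-set) and assemble into a product-preserving functor $\Poly_G\to\Set$, i.e.\ that $\A[M]$ satisfies the Tambara axioms. By the reduction $(TNR)(TNR)\rightsquigarrow TNR$ recalled above it suffices to verify the five pairwise composition laws of $\Poly_G$ against the definitions just given; each unwinds — via a diagram chase with iterated pullbacks and dependent products — to the semi-Mackey functor axioms for $M$ together with the coherence of $\Poly_G$ itself \cite[Lemma 2.15]{gambino-kock}, and product-preservation is immediate because $M$ is product-preserving and a $G$-set over $X\amalg Y$ is canonically a pair of $G$-sets over $X$ and $Y$. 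This middle step is where I expect the real difficulty to lie: the delicate point is consistency of the norm-of-a-transfer formula with composition — concretely, that the two ways of evaluating an $N$ followed by a $T$ followed by another $N$ agree — which is the equivariant analogue of the coherence that \cite{gambino-kock} establishes for bispans, now further entangled with the norm and restriction axioms of $M$; organizing this bookkeeping uniformly in $G$ is the bulk of the work, and is exactly what the cited results of Nakaoka carry out.

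Finally, for the universal property I would exhibit mutually inverse bijections between $\GTamb(\A[M],R)$ and the set of maps $M\to UR$ of multiplicative semi-Mackey functors. A map $\phi\colon M\to UR$ goes to $\tilde\phi\colon\A[M]\to R$, $\tilde\phi(T_\alpha m):=\tr^R_\alpha(\phi(m))$; its well-definedness, additivity, and compatibility with $\res$, $\nm$, and multiplication is one more diagram chase, now using that $R$ genuinely satisfies the Tambara axioms and that $\phi$ respects restrictions and norms. Conversely a Tambara map $\psi$ goes to $U\psi$ composed with the unit $M\to U\A[M]$, $m\mapsto T_{\id}m$, and the two assignments are inverse by a short computation with the transfer axioms. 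The incomplete variant requires no new idea: if only norms along a chosen class of maps are allowed, then $\Pi_g$ is invoked only for $g$ in that class, so the entire construction restricts verbatim.
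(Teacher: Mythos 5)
The paper supplies no proof of this theorem: it is stated as a digest of cited results (Proposition~2.11, Theorem~2.12, Corollary~2.14, Theorem~2.15, and Corollary~2.17 of \cite{Nak11}) and used as a black box, so there is no authorial proof for yours to match. As a reconstruction of Nakaoka's argument your sketch is plausible, and the essential moves are in place: the norm is given by the $N\circ T$ distributive rule in $\Poly_G$, multiplication by the projection formula, the Tambara axioms for $\A[M]$ reduce to the coherence of $\Poly_G$ together with the semi-Mackey axioms of $M$, the universal property is witnessed by $\tilde\phi(T_\alpha m)=\tr^R_\alpha\phi(m)$, and the incomplete case works because pullback stability keeps $g'$ in the indexing class whenever $g$ is. Your parenthetical on group completion is also a sensible gloss on a point the theorem statement itself elides.

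One imprecision is worth flagging even though it does not propagate into the rest of the sketch: the identification $\A[M](X)\cong\bigoplus_{[\beta]}M(\operatorname{dom}\beta)$ is not correct. The addition on $\A[M](X)$ is entirely free --- it comes from disjoint union of the underlying $G$-sets over $X$ --- and $M$, being a \emph{multiplicative} semi-Mackey functor, carries no addition for a $\bigoplus$ to see. The right description is the free commutative monoid on the set of isomorphism classes of single-orbit pairs $(\beta\colon O\to X,\, m\in M(O))$, and that generating set is $\coprod_{[\beta]}M(\operatorname{dom}\beta)/\operatorname{Aut}(\beta)$, with the quotient by automorphisms of $\beta$ in $\Gset/X$, not merely $\coprod_{[\beta]}M(\operatorname{dom}\beta)$.
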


\begin{lemma}
If $M$ is an ordinary monoid, and $\underline{M}$ is the 
constant semi-Mackey functor on $M$, then $\A[\underline{M}]$ 
is isomorphic as a Green functor to $\A\otimes \Z[M]$.
\end{lemma}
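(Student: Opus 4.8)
The plan is to construct a canonical morphism of Green functors $\Phi\colon\A\otimes\Z[M]\to\A[\underline M]$ from the universal properties already recorded, and then to check that $\Phi$ is an isomorphism one level at a time. To build $\Phi$, recall that Nakaoka's theorem equips $\A[\underline M]$ with a unit map $u\colon\underline M\to\A[\underline M]$ of multiplicative semi-Mackey functors; evaluating at $*=G/G$ yields a monoid homomorphism $M=\underline M(*)\to(\A[\underline M](*),\cdot)$, $m\mapsto T_{\id_*}(m)$, which extends uniquely to a ring homomorphism $\bar\phi\colon\Z[M]\to\A[\underline M](*)$. Since $\A$ is the initial Green functor, $\Green$ is the category of $\A$-algebras in Green functors, so by \Cref{cor:levelwise-tensor-is-adjt-to-evaluation-at-*-for-green} the functor $A\mapsto\A\otimes A$ is left adjoint to evaluation at $*$; hence $\bar\phi$ corresponds to a Green functor morphism $\Phi\colon\A\otimes\Z[M]\to\A[\underline M]$. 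Unwinding this adjunction (as in the proof that $R\otimes A$ is the claimed representing object), $\Phi$ is given at level $X$ by $a\otimes x\mapsto f_X(a)\cdot\res^*_X(\bar\phi(x))$, where $f\colon\A\to\A[\underline M]$ is the unique map of Green functors and $\res^*_X$ denotes restriction along $X\to *$.

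Both $\A\otimes\Z[M]$ and $\A[\underline M]$ carry disjoint unions of $G$-sets to products, and $\Phi$ is compatible with the resulting decompositions, so it suffices to prove that $\Phi_{G/H}$ is an isomorphism for each $H\le G$. Here $\A(G/H)$ is the Burnside ring of $H$, free abelian on the classes $[G/K\to G/H]$ for $K\le H$ taken up to $H$-conjugacy, so $\A(G/H)\otimes\Z[M]$ is free abelian on the symbols $[G/K\to G/H]\otimes m$ with $m\in M$. On the other hand, since $\underline M$ is the \emph{constant} semi-Mackey functor the conjugation maps act trivially on its values, so Nakaoka's description identifies the underlying Mackey functor of $\A[\underline M]$ with $\bigoplus_{m\in M}\A$; explicitly, $\A[\underline M](G/H)$ is free abelian on the formal transfers $T_\beta(m)$ indexed by isomorphism classes of pairs $(\beta\colon G/K\to G/H,\ m\in M)$ with $G/K$ transitive. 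Now $\Phi_{G/K}(1\otimes m)=f_{G/K}(1)\cdot\res^*_{G/K}(\bar\phi(m))=T_{\id_{G/K}}(m)$, using that $f$ is a ring map and that $u$ commutes with the (identity) restrictions of $\underline M$; and since $\Phi$ is a morphism of Green functors and the transfer of $\A\otimes\Z[M]$ is $\tr\otimes\id$,
\[
\Phi_{G/H}\big([G/K\xrightarrow{\beta}G/H]\otimes m\big)=\tr_K^H\big(\Phi_{G/K}(1\otimes m)\big)=\tr_K^H\,T_{\id_{G/K}}(m)=T_\beta(m).
\]
Thus $\Phi_{G/H}$ carries the displayed basis of its source bijectively onto the basis of its target, so it is an isomorphism of abelian groups; as $\Phi$ is a morphism of Green functors, it is therefore an isomorphism of Green functors.

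The step I expect to be the main obstacle is the identification of the underlying Mackey functor of $\A[\underline M]$ with $\bigoplus_{m\in M}\A$: one must go into Nakaoka's construction to see that, once the conjugation action on labels has been trivialized, the formal transfers $T_\beta(m)$ ranging over isomorphism classes of pairs $(\beta\colon G/K\to G/H,\ m\in M)$ with $G/K$ transitive are $\Z$-linearly independent and additively span, and that the restriction and transfer maps reduce summand-by-summand to those of $\A$. Once that bookkeeping is done, everything else is a formal consequence of the universal properties recorded above.
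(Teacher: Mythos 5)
Your proof is correct, but it takes a genuinely different route from the paper's. The paper never constructs an explicit map: it applies the Yoneda lemma, showing that both sides represent the same functor $\mathsf{Mon}(M, S(*))$ on Green functors. The key trick in the paper is to instantiate Nakaoka's theorem with the \emph{minimal} subcategory of norms (just fold maps), which turns the free Tambara functor adjunction into a free Green functor adjunction out of \emph{coefficient systems in monoids}; this immediately gives $\GGreen(\A[\underline M],S)\cong\mathsf{Mon}(M,S(*))$, and then the chain $\mathsf{Mon}(M,S(*))\cong\CRing(\Z[M],S(*))\cong\GGreen(\A\otimes\Z[M],S)$ follows from the earlier adjunction for $R\otimes A$. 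Your argument instead builds a concrete comparison map $\Phi$ from the same adjunction and verifies it is a bijection levelwise by matching explicit $\Z$-bases of $\A(G/H)\otimes\Z[M]$ and of $\A[\underline M](G/H)$ in terms of formal transfers $T_\beta(m)$. The step you correctly flag as the main obstacle — unwinding Nakaoka's construction to exhibit the levelwise free basis and check that transfers and restrictions behave summand-by-summand — is exactly what the paper's representability argument avoids having to do at all. What your approach buys in exchange is an explicit description of the isomorphism and of $\A[\underline M]$ as $\bigoplus_{m\in M}\A$ at the Mackey functor level, which can be useful for concrete computation; what the paper's approach buys is brevity and the fact that it only ever touches universal properties.
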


\begin{proof}
The key is to show that there is a natural isomorphism
$\GGreen(\A[\underline{M}],S) \cong \mathsf{Mon}(M, S(*))$. Then 
we have the sequence of natural isomorphisms
\[ 
\GGreen(\A[\underline{M}], S)
\cong \mathsf{Mon}(M, S(*))
\cong \CRing(\Z[M], S(*))
\cong \GGreen(\A\otimes \Z[M], S),
\]
which shows $\A[\underline{M}]\cong \A\otimes \Z[M]$.

Nakaoka's theorem constructs the same object regardless of 
what subcategory of allowed norms you take, so
if we take the minimal subcategory generated by the fold maps,
then in fact, Nakaoka's theorem tells us 
\[ 
\GGreen(\A[\underline{M}],S)\cong \mathsf{Coeff}_{\mathsf{Mon}}(\underline{M}, S)\cong \mathsf{Mon}(M, S(*)),
\]
where $\mathsf{Coeff}_{\mathsf{Mon}}$ is the category 
of coefficient systems in monoids, which is to say product 
preserving functors from $\Gset{}^{\text{op}}$ to $\mathsf{Mon}$.
\end{proof}

\begin{corollary}
    $\A\otimes \Z[x]\cong \A[\underline{\N}]$,
    and therefore has a Tambara functor structure,
    with $\nm_K^H x = x^{[H:K]}$ for $K\subseteq H \subseteq G$.
\end{corollary}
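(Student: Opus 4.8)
The plan is to obtain this as an essentially immediate consequence of the preceding lemma together with Nakaoka's theorem. Taking $M = \N$, the free commutative monoid on one generator (so that its monoid ring is $\Z[\N] \cong \Z[x]$), the preceding lemma furnishes an isomorphism of Green functors $\A \otimes \Z[x] \cong \A[\underline{\N}]$. By Nakaoka's theorem \cite{Nak11}, $\A[\underline{\N}]$ is the free Tambara functor on the semi-Mackey functor $\underline{\N}$, and in particular \emph{is} a Tambara functor; since the additions, restrictions, and additive transfers are already part of the Green-functor structure, transporting the norm operations of $\A[\underline{\N}]$ across the isomorphism equips $\A \otimes \Z[x]$ with a Tambara functor structure. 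It then remains only to verify the identity $\nm_K^H x = x^{[H:K]}$.

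For the norm computation I would first track the generator across the isomorphism. Unwinding (via Yoneda) the chain of natural isomorphisms used to prove the preceding lemma, the element $1 \otimes x \in (\A \otimes \Z[x])(G/G)$ corresponds to $\eta_{G/G}(x)$, where $\eta : \underline{\N} \to \A[\underline{\N}]$ is the unit of the free--forgetful adjunction of Nakaoka's theorem and $x$ denotes the generator of $\N = \underline{\N}(G/G)$. The crucial point is that $\eta$ is a morphism of (multiplicative) semi-Mackey functors, hence commutes with the restriction and norm operations. As the restriction maps of the constant semi-Mackey functor $\underline{\N}$ are identities, $\res^G_K x$ corresponds to $\eta_{G/K}(x)$ and $\res^G_H x$ to $\eta_{G/H}(x)$; since $\eta$ commutes with norms, $\nm_K^H(\res^G_K x)$ corresponds to $\eta_{G/H}\bigl(\nm_K^H x\bigr)$ with the inner norm computed in $\underline{\N}$. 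Finally, exactly as in \Cref{defn:fixed-pt-tambara-functor} but for a commutative monoid with trivial action rather than a ring, the norm of the constant semi-Mackey functor is the $[H:K]$-fold product over coset representatives, i.e.\ $\nm_K^H(m) = m^{[H:K]}$; pushing $\nm_K^H x = x^{[H:K]}$ in $\underline{\N}$ through $\eta$ yields $\nm_K^H(\res^G_K x) = \res^G_H(x^{[H:K]})$ in $\A \otimes \Z[x]$, which is the asserted formula.

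I do not expect a genuine obstacle: granting the preceding lemma and Nakaoka's theorem, the remaining work is bookkeeping—tracking $x$ through the isomorphism and invoking the general fact that the unit of an adjunction into a forgetful functor is a morphism of the relevant algebraic structure (here semi-Mackey functors, whose morphisms respect norms). The one subtlety worth spelling out is \emph{which} Tambara structure is being transported: it is the one arising from viewing $\A[\underline{\N}]$ as the free \emph{complete} Tambara functor on the full constant semi-Mackey functor $\underline{\N}$, not the free Green functor on the underlying coefficient system (which carries no norms); Nakaoka's observation that the underlying object of $\A[\underline{\N}]$ is insensitive to the chosen class of allowed norms is precisely what reconciles this with the Green-functor identification of the preceding lemma. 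If one wishes to justify the word ``characterized'' used in the surrounding discussion, one can add that $\A[\underline{\N}]$ is generated as a Tambara functor by $x$, so any Tambara structure on $\A \otimes \Z[x]$ realizing this norm relation must be the one above.
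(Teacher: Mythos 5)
Your proof is correct and takes essentially the same approach as the paper: identify $\Z[x]$ with the monoid ring $\Z[\N]$, invoke the preceding lemma with $M = \N$ to obtain the Green functor isomorphism $\A \otimes \Z[x] \cong \A[\underline{\N}]$, and then compute the norm of the generator in the constant semi-Mackey functor $\underline{\N}$. The paper's version is terser (it simply writes $\nm_K^H t^1 = t^{[H:K]}$ in $\underline{\N}$), while you spell out the bookkeeping of tracking the generator through the unit of Nakaoka's free--forgetful adjunction; this extra detail is sound but not a different argument.
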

\begin{proof}
    Write the elements of the monoid $\N$ as $t^n$ so that we can 
    distinguish them from ordinary uses of the integers.
    Then 
    since $\Z[x]\cong \Z[\N]$, with $x$ going to $t^1$,
    we have 
    \[
    \A\otimes \Z[x]\cong \A\otimes \Z[\N]\cong \A[\underline{\N}],
    \]
    and $\nm_K^H t^1 = t^{[H:K]}$ in $\underline{\N}$.
\end{proof}

Altogether, the results in this section establish the following 
result:

\begin{proposition}
    \label{prop:free-green-has-norms-and-is-levelwise-tensor}
    For a Tambara functor $R$, 
    $R[\underline{x}_1,\ldots,\underline{x}_n]$ 
    is an $R$-Tambara algebra with norms characterized by 
    $\nm_K^H \res^G_K \underline{x}_i = \res^G_H\underline{x}_i^{[H:K]}$,
    which is isomorphic as a Green functor to the levelwise tensor
    product,
    $R\otimes \Z[x_1,\ldots,x_n]$.
\end{proposition}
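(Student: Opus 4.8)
The plan is to assemble the results of this section, treating $R=\A$ (with $n$ variables) first and then bootstrapping to arbitrary $R$. \textbf{The case $R=\A$.} The ring $\Z[x_1,\dots,x_n]$ is the monoid ring $\Z[\N^n]$ of the free commutative monoid on generators $t_1,\dots,t_n$, so applying the lemma $\A[\underline M]\cong\A\otimes\Z[M]$ to $M=\N^n$ gives a Green-functor isomorphism $\A[\underline{\N^n}]\cong\A\otimes\Z[x_1,\dots,x_n]$ carrying $t_i$ to $\underline x_i$. By Nakaoka's theorem $\A[\underline{\N^n}]$ is a Tambara functor, which transports a Tambara structure onto $\A\otimes\Z[x_1,\dots,x_n]$. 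The norm on the constant semi-Mackey functor $\underline{\N^n}$ is computed inside $\N^n$, so $\nm_K^H t_i=t_i^{[H:K]}$ exactly as when $n=1$; and since $\A[\underline{\N^n}]$ is free on $\underline{\N^n}$, the element $\res^G_K\underline x_i$ is precisely $t_i$ regarded at level $G/K$, whence $\nm_K^H\res^G_K\underline x_i=\res^G_H\underline x_i^{[H:K]}$.

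\textbf{The Green-functor identification for general $R$.} As Green functors, $\A[\underline x_i]\cong\A\otimes\Z[x_i]$; since $\boxtimes$ is the coproduct of Green functors while $A\mapsto\A\otimes A$ is a left adjoint (\Cref{cor:levelwise-tensor-is-adjt-to-evaluation-at-*-for-green}) and so preserves coproducts (the coproduct of commutative rings being $\otimes_\Z$), we get $\A[\underline x_1]\boxtimes\cdots\boxtimes\A[\underline x_n]\cong\A\otimes\Z[x_1,\dots,x_n]\cong\A[\underline{\N^n}]$. Composing with the isomorphism $R\otimes A\cong R\boxtimes(\A\otimes A)$ then yields the asserted Green-functor isomorphism $R[\underline x_1,\dots,\underline x_n]:=R\boxtimes\A[\underline x_1]\boxtimes\cdots\boxtimes\A[\underline x_n]\cong R\otimes\Z[x_1,\dots,x_n]$.

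\textbf{The Tambara structure.} It remains to equip $R[\underline x_1,\dots,\underline x_n]$ with an $R$-Tambara-algebra structure and to verify the norm characterization, and I expect this to be the main obstacle: the forgetful functor $\GTamb\to\GGreen$ is a right adjoint, so a Tambara structure on the box product is not automatic. The plan is to realize $R[\underline x_1,\dots,\underline x_n]$ as the coproduct $R\sqcup\A[\underline{\N^n}]$ in $\GTamb$ --- equivalently, the free $R$-Tambara-algebra on the semi-Mackey functor $\underline{\N^n}$ --- and to verify, via a relative version of Nakaoka's explicit description (with elements over $X$ given by iso classes of formal transfers $(\alpha\colon S\to X,\ r\in R(S),\ m\in\underline{\N^n}(S))$), that the underlying Green functor of this coproduct is exactly $R\boxtimes\A[\underline{\N^n}]$. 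This is the step where it matters that the generators sit at the top level $G/G$, so that their restrictions and the norms thereof contribute nothing beyond the box product. Granted this, the structure map $R\to R[\underline x_1,\dots,\underline x_n]$ exhibits the $R$-algebra structure, and the identities $\nm_K^H\res^G_K\underline x_i=\res^G_H\underline x_i^{[H:K]}$ are inherited from those in $\A[\underline x_i]$. Lastly, these relations characterize the Tambara structure over the given Green functor: the ring $(R[\underline x_1,\dots,\underline x_n])(G/K)\cong R(G/K)[x_1,\dots,x_n]$ is generated by the image of $R(G/K)$ together with the elements $\res^G_K\underline x_i$, so by multiplicativity of norms together with the $N_g\circ T_f$ composition rule of $\Poly_G$ (which expresses the norm of a sum, or of a transfer, in terms of transfers, restrictions and norms along other maps), every norm operation is determined by its values on these generators --- namely $\nm_K^H$ on $R(G/K)$, already part of the structure of $R$, and $\nm_K^H\res^G_K\underline x_i=\res^G_H\underline x_i^{[H:K]}$.
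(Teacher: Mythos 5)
Your Green-functor identification proceeds along the same lines as the paper (left-adjointness of $A\mapsto\A\otimes A$ preserving coproducts, then $R\otimes A\cong R\boxtimes(\A\otimes A)$), and reducing the case $R=\A$ to $\A[\underline{\N^n}]$ is a clean way to get the Tambara structure together with $\nm_K^H t_i=t_i^{[H:K]}$ in one stroke. That part of the proposal is correct.

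Your third step, however, manufactures an obstacle and then does not clear it. You flag that ``the forgetful functor $\GTamb\to\GGreen$ is a right adjoint, so a Tambara structure on the box product is not automatic.'' But $R[\underline{x}_1,\ldots,\underline{x}_n]$ is \emph{defined} as $R\boxtimes\A[\underline{x}_1]\boxtimes\cdots\boxtimes\A[\underline{x}_n]$, a finite coproduct taken inside $\GTamb$ (where $\boxtimes$ is the coproduct), so it is a Tambara functor and an $R$-algebra with no additional work. The only remaining point is that the forgetful functor $\GTamb\to\GGreen$ carries this coproduct to the Green-functor coproduct; that is a standard structural fact (the Tambara norm on the Mackey-functor box product of two Tambara functors is constructed directly, see \cite{BH18}), which the paper already leans on implicitly, e.g.\ whenever it writes $k[x_H]=k\boxtimes\A[x_H]$ and treats it as a Tambara functor (\cref{cor:free-kalg-corepresents}). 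Your proposed alternative --- a ``relative Nakaoka'' description of the free $R$-Tambara algebra on $\underline{\N^n}$ by triples $(\alpha,r,m)$ --- is not Nakaoka's actual formula and is never carried out; you leave it at ``the plan is'' and ``granted this.'' So as written the proposal has a genuine gap at this step, but it is a gap produced by misplaced caution rather than a missing idea: replace that speculative paragraph with a citation of the fact that the underlying Green functor of a Tambara box product is the Green box product, and the rest of your argument closes correctly.
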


\subsection{Coinduction}

Let $H \leq G$ be an inclusion of finite groups. There is a \emph{restriction} functor $R^G_H : \GTamb \to \GTamb[H]$, given levelwise by $(R^G_H k)(H/K) = k(G/K)$. More precisely, $R^G_H$ is given by precomposition with the product-preserving functor $G \times_H {-} : \Poly_H \to \Poly_G$.

For completely formal reasons \cite[Section 18]{Str12}, $R^G_H$ has both a left adjoint, denoted $N^G_H$ and called \emph{induction} or \emph{norm}, and a right adjoint, denoted $C^G_H$ and called \emph{coinduction}. It turns out that $G \times_H {-} : \Poly_H \to \Poly_G$ is right adjoint to the forgetful functor $\Poly_G \to \Poly_H$, and it follows that $C_H^G$ is given by precomposition with the forgetful functor \cite[Theorem 6.4]{BH18}.

Recall that that the category of $e$-Tambara functors is naturally identified with the category of commutative rings, so the coinduction functor, $C_e^G$, can be described as going from commutative rings to $G$-Tambara functors. This functor plays
an important role in our classification of Nullstellensatzian Tambara
functors, so we need to say what it does more concretely. We do so by 
observing that the restriction functor 
$R^G_e:\GTamb\to\GTamb[e]\simeq \CRing$ is 
the composite of the evaluation functor, $\ev_{G/e}:\GTamb\to \GCRing$,
which remembers the Weyl action, and the forgetful functor, 
$U:\GCRing\to \CRing$, so $C_e^G$ is the composite of the 
right adjoints of these functors. By \Cref{lem:fp-tambara-functor-adjt-to-eval-at-underlying} at the end of 
the previous section, the adjoint to $\ev_{G/e}$ is given by the 
fixed-point Tambara functor construction, so we just need to 
determine the right adjoint to the forgetful functor. This is 
well-known in its own right, as well as a special 
case of a well-known result about the right adjoint to evaluation on 
presheaf categories, but we'll give the specific result and its proof here.

\begin{lemma}
    Given a commutative ring $R$, the right adjoint to the forgetful 
    functor, $U:\GCRing \to \CRing$, sends $R$ to 
    the $G$-ring $\Fun(G,R)$ of 
    functions $G \to R$, whose $G$-action is given by 
    $(g \cdot \varphi)(h) = \varphi(h g)$. 
\end{lemma}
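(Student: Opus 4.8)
The plan is to exhibit $\Fun(G,R)$ with the claimed $G$-action as the right adjoint by constructing a natural bijection
\[
\GCRing(S, \Fun(G,R)) \cong \CRing(U(S), R)
\]
for every $G$-ring $S$, i.e., every commutative ring $S$ equipped with a $G$-action by ring automorphisms. Given a $G$-equivariant ring map $\Phi : S \to \Fun(G,R)$, I would send it to the composite $S \xrightarrow{\Phi} \Fun(G,R) \xrightarrow{\ev_e} R$, where $\ev_e(\varphi) = \varphi(e)$; this is a ring homomorphism since evaluation at a point is. Conversely, given a (non-equivariant) ring map $f : U(S) \to R$, I would define $\widehat{f} : S \to \Fun(G,R)$ by $\widehat{f}(s)(g) := f(gs)$. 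One checks $\widehat{f}(s)$ depends ring-homomorphically on $s$ (pointwise, using that each $s \mapsto gs$ is a ring map and $f$ is a ring map), so $\widehat{f}$ lands in $\CRing$; and that $\widehat{f}$ is $G$-equivariant for the given action: $\widehat{f}(g's)(h) = f(hg's) = \widehat{f}(s)(hg') = (g' \cdot \widehat{f}(s))(h)$, which matches the prescribed action $(g'\cdot\varphi)(h) = \varphi(hg')$.

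Next I would verify these two assignments are mutually inverse. Starting from $f$, forming $\widehat f$, then applying $\ev_e$ gives $s \mapsto \widehat f(s)(e) = f(es) = f(s)$, so we recover $f$. Starting from an equivariant $\Phi$, setting $f = \ev_e \circ \Phi$ and forming $\widehat f$ gives $\widehat f(s)(g) = f(gs) = \Phi(gs)(e) = (g \cdot \Phi(s))(e)$, and by the definition of the action on $\Fun(G,R)$ this equals $\Phi(s)(eg) = \Phi(s)(g)$; here I use the equivariance of $\Phi$ crucially, namely $\Phi(gs) = g\cdot\Phi(s)$. Hence $\widehat f = \Phi$. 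Naturality in both variables is then a routine check: precomposing with a $G$-ring map $S' \to S$ or postcomposing with a map $R \to R'$ commutes with both $\ev_e$ and the hat construction essentially by inspection.

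The only genuinely delicate point — and the one I would be most careful about — is the bookkeeping of the $G$-action on $\Fun(G,R)$, specifically that the correct convention is the \emph{right} translation $(g\cdot\varphi)(h) = \varphi(hg)$ rather than $\varphi(g^{-1}h)$. This is exactly what makes the hat construction $\widehat f(s)(g) = f(gs)$ both well-defined as a map into $\Fun(G,R)$ and $G$-equivariant simultaneously; with the other convention the associativity $(g'\cdot(g\cdot\varphi)) = (g'g)\cdot\varphi$ would clash with $f((g'g)s) = f(g'(gs))$. So I would state the convention up front and then let the verifications fall out. Everything else is formal, and one could alternatively simply cite it as the standard description of the right adjoint to evaluation at a point on a presheaf (functor) category, but since the action convention is what matters downstream for identifying coinductions, spelling out the explicit bijection as above is worthwhile.
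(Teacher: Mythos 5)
Your proof is correct and takes essentially the same approach as the paper's: both directions use the same constructions, namely evaluation at $e$ in one direction and $\widehat{f}(s)(g) := f(gs)$ in the other. The paper presents the verification more tersely but the argument is the same; your extra remarks about the action convention and naturality are fine but not needed.
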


\begin{proof}
    Let $S$ be a commutative ring with $G$-action, and let $R$ be an 
    ordinary commutative ring. 
    
    If $\varphi : S \to \Fun(G,R)$ is a map of commutative rings 
    with $G$ action, then the adjoint, $\tilde{\varphi}:US\to R$,
    is given by
    $\tilde{\varphi}(s) := \varphi_s(e)$. On the other hand, if we start with
    some ring map
    $\tilde{\varphi}: US\to R$, we can recover $\varphi$ from the fact that
    if $\varphi$ exists, it must satisfy
    \[\varphi_s(g) = \varphi_s(e\cdot g) = (g\cdot \varphi_s)(e) = \varphi_{gs}(e) = \tilde{\varphi}(gs),\]
    but that gives us a definition of $\varphi$.
    % First we suppose that $U$ does in fact have a right adjoint, $F$,
    % so that we can compute what it ought to be. Note that $U$ is corepresented by $\mathbb{Z}[x_G]$ (the polynomial ring with one generator, $x_g$, for each element $g$ of $G$, where $G$ acts by left translation on the generators). Then we have
    % \begin{align*}
    %     UFR & \cong \GCRing(\mathbb{Z}[x_G], FR) \\
    %         & \cong \CRing(U\mathbb{Z}[x_G], R) \\
    %         & \cong \Fun(G,R).
    % \end{align*}
    % The $G$-action on $UFR \cong \Fun(G,R)$ comes from precomposition with the additional right action of $G$ on $\mathbb{Z}[x_G]$,
    % given by $x_g\cdot h = x_{gh}$. Indeed, this yields $(g \cdot \varphi)(h) = \varphi(hg)$.

    % To show that this is, in fact, the right adjoint we can give the 
    % unit and counit maps and verify that the triangle identities hold.
    % The unit map, $\eta_S:S\to FUS$, is given by 
    % $\eta_S(s) = (g\mapsto gs)$, and the counit map, $\epsilon_R: UFR\to R$,
    % is given by $\epsilon_R(\varphi)= \varphi(e)$. Then $\epsilon_{US}\circ U\eta_S$ sends $s\in S$ to the function $g\mapsto gs$, and then 
    % evaluates that function at $e$, ending up back at $s$. Similarly, 
    % $F\epsilon_R \circ \eta_{FR}$ sends a function $\varphi:G\to R$ to 
    % the function $g\mapsto g\cdot \varphi$ and then evaluates at $e$ to 
    % get $\varphi$ back.
\end{proof}

Putting the pieces together, we get a clear description of 
$C_e^G R$:
\begin{lemma}\label{lem:coinduction-from-e}
    Let $R$ be a commutative ring. Then $C_e^G R = \underline{\Fun(G,R)}$.
\end{lemma}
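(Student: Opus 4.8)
The plan is to read off $C_e^G$ by composing two adjunctions that are already in hand, rather than checking anything about coinduction directly. As observed just above, the restriction functor $R^G_e : \GTamb \to \CRing$ is the composite
\[ \GTamb \xrightarrow{\ev_{G/e}} \GCRing \xrightarrow{U} \CRing, \]
where $\ev_{G/e}$ records the Weyl action of $W_G(e) = G$ on $T(G/e)$ and $U$ forgets it. Since the right adjoint of a composite is the composite of the right adjoints in the reverse order, $C_e^G$ is the right adjoint of $U$ followed by the right adjoint of $\ev_{G/e}$.

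Next I would simply plug in the two right adjoints. By the lemma immediately preceding, the right adjoint of $U : \GCRing \to \CRing$ sends a commutative ring $R$ to $\Fun(G,R)$ equipped with the $G$-action $(g\cdot\varphi)(h)=\varphi(hg)$; by \Cref{lem:fp-tambara-functor-adjt-to-eval-at-underlying}, the right adjoint of $\ev_{G/e}$ is the fixed-point Tambara functor construction $S \mapsto \underline{S}$. Chaining these gives $C_e^G R \cong \underline{\Fun(G,R)}$, which is the claim. Concretely, this is the chain of natural isomorphisms
\[ \GTamb(T, \underline{\Fun(G,R)}) \;\cong\; \GCRing(T(G/e), \Fun(G,R)) \;\cong\; \CRing(T(G/e), R) \;=\; \CRing(R^G_e T, R), \]
the first step by \Cref{lem:fp-tambara-functor-adjt-to-eval-at-underlying} and the second by the preceding lemma; by Yoneda this identifies $\underline{\Fun(G,R)}$ with the value of $C_e^G$ at $R$.

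Everything here is formal, so I do not expect a genuine obstacle. The one point deserving a moment's care is the factorization $R^G_e = U \circ \ev_{G/e}$, which holds because passing from $G$-Tambara functors to $e$-Tambara functors $\simeq \CRing$ discards precisely the Weyl-group action that $\ev_{G/e}$ retains. I would also double-check that the $G$-action on $\Fun(G,R)$ produced by the preceding lemma is literally the one fed into \Cref{defn:fixed-pt-tambara-functor} when forming $\underline{\Fun(G,R)}$, but this has been arranged by how that lemma was stated.
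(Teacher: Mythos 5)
Your proof is correct and takes exactly the same route as the paper: factor $R^G_e = U \circ \ev_{G/e}$ through $\GCRing$, then compose the two right adjoints $R \mapsto \Fun(G,R)$ and $S \mapsto \underline{S}$ already identified in the two preceding lemmas. The paper presents this as a commutative diagram of adjunctions, while you additionally spell out the resulting chain of natural isomorphisms, but the argument is the same.
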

\begin{proof}
    We note that $R^G_e = k \mapsto k(G/e) : \GTamb \to \CRing$ factors through $\GCRing$, by first remembering the Weyl action on $k(G/e)$ and then forgetting it. Thus, we compute $C_e^G$ as a composition of two right adjoints, which are as follows:
    % https://q.uiver.app/#q=WzAsMyxbMCwwLCJHey19XFxUYW1iIl0sWzAsMSwiR3stfVxcQ1JpbmciXSxbMCwyLCJcXENSaW5nIl0sWzEsMiwiXFx0ZXh0e2ZvcmdldH0iLDIseyJvZmZzZXQiOjJ9XSxbMSwwLCJTIFxcbWFwc3RvIFxcdW5kZXJsaW5le1N9IiwyLHsib2Zmc2V0IjoyfV0sWzIsMSwiUiBcXG1hcHN0byBcXEZ1bihHLFIpIiwyLHsib2Zmc2V0IjoyfV0sWzAsMSwiXFxldl97Ry9lfSIsMix7Im9mZnNldCI6Mn1dLFszLDUsIiIsMCx7ImxldmVsIjoxLCJzdHlsZSI6eyJuYW1lIjoiYWRqdW5jdGlvbiJ9fV0sWzYsNCwiIiwwLHsibGV2ZWwiOjEsInN0eWxlIjp7Im5hbWUiOiJhZGp1bmN0aW9uIn19XV0=
\[\begin{tikzcd}[ampersand replacement=\&]
	{G{-}\Tamb} \\
	{G{-}\CRing} \\
	\CRing
	\arrow[""{name=0, anchor=center, inner sep=0}, "{\ev_{G/e}}"', shift right=2, from=1-1, to=2-1]
	\arrow[""{name=1, anchor=center, inner sep=0}, "{S \mapsto \underline{S}}"', shift right=2, from=2-1, to=1-1]
	\arrow[""{name=2, anchor=center, inner sep=0}, "{\text{forget}}"', shift right=2, from=2-1, to=3-1]
	\arrow[""{name=3, anchor=center, inner sep=0}, "{R \mapsto \Fun(G,R)}"', shift right=2, from=3-1, to=2-1]
	\arrow["\dashv"{anchor=center}, draw=none, from=0, to=1]
	\arrow["\dashv"{anchor=center}, draw=none, from=2, to=3]
\end{tikzcd}\]

\end{proof}

\section{Recollections on the Commutative Algebra of Tambara Functors}

In this section we review the established story of commutative algebra in Tambara functors. Nakaoka \cite{Nak11a} \cite{Nak11b} has established relatively well-behaved notions of prime ideals, localizations, integral domains, and fields for Tambara functors. On the other hand, Blumberg and Hill \cite{BH18} established a useful notion of ``free polynomial algebra'' which will play a central role in our study of Nullstellensatzian Tambara functors. For an introduction to $G$-Tambara functors, see \cite{Str12} and \cite{BH18}.

\subsection{Prime Ideals and Tambara Fields}

Nakaoka originally defined prime ideals and field-like Tambara functors in \cite{Nak11a}. We collect some of the relevant results here, and refer the interested reader to the excellent exposition in \cite{Nak11a}.

\begin{definition}
    An \emph{ideal} of a Tambara functor $k$ is a kernel of a morphism from $k$. More precisely, an ideal $I$ of $k$ is a collection of subsets $I(G/H) \subseteq k(G/H)$ (for all subgroups $H \leq G$) such that there is some morphism $\varphi : k \to k'$ such that $I(G/H) = \{x \in k(G/H) : \varphi_{G/H}(x) = 0\}$ for all $H \leq G$.

    Equivalently, a collection of subsets $I(G/H) \subseteq k(G/H)$ forms an ideal if and only if the following conditions all hold:
    \begin{enumerate}
        \item $I(G/H)$ is an ideal of the commutative ring $k(G/H)$ for all $H \leq G$;
        \item $I(G/H)$ is closed under the Weyl action on $k(G/H)$ for all $H \leq G$;
        \item $I$ is closed under transfer, norm, and restriction for all inclusions of subgroups $H' \leq H \leq G$.
    \end{enumerate}
\end{definition}

% It is easy to see that the collection of ideals of $T$ forms a poset under inclusion, and is closed under arbitrary intersections. Moreover, $T$ is an ideal of $T$. Thus, any collection of subsets $X(G/H) \subseteq T(G/H)$ generates an ideal of $T$ (the intersection of all ideals of $T$ which contain $X$).

% \begin{definition}
%     Let $I$ and $J$ be ideals of a Tambara functor $T$. The product $IJ$ is defined to be the ideal of $T$ generated by the sets
%     \[\{xy : x \in I(G/H), y \in J(G/H)\} \subseteq T(G/H).\]
% \end{definition}

% \begin{definition}
%     An ideal $P$ of $T$ is said to be \emph{prime} if $P \neq T$ and
%     \[IJ \subseteq P \implies I \subseteq P \text{ or } J \subseteq P\]
%     for all ideals $I,J$ of $T$.
% \end{definition}

% \todo{Maybe not necessary to introduce prime-ness at all.}

\begin{definition}
    A Tambara functor $k$ is said to be \emph{field-like} (or a \emph{Tambara field}) if $0$ is the unique proper ideal of $k$.
\end{definition}

The following gives a relatively straightforward and checkable condition for when a Tambara functor is field-like.

\begin{proposition}[{\cite[Theorem 4.32]{Nak11a}}]
    A Tambara functor $k$ is field-like if and only if all restriction maps $k(G/e) \rightarrow k(G/H)$ are injective and $k(G/e)$ has no nontrivial $G$-invariant ideal.
\end{proposition}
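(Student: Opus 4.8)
The plan is to relate ideals of $k$ to $G$-invariant ideals of the bottom ring $k(G/e)$, using that the Weyl group $W_G(e)=N_G(e)/e$ is all of $G$, so that an ideal of $k$ --- being Weyl-stable levelwise --- restricts at level $G/e$ to a $G$-invariant ideal of $k(G/e)$, while conversely every $G$-invariant ideal of $k(G/e)$ pulls back to a Tambara ideal of $k$ via the fixed-point functor. For the ``if'' direction I would assume all restriction maps $\res^H_e\colon k(G/H)\to k(G/e)$ are injective and $k(G/e)$ has no nontrivial $G$-invariant ideal, and take a proper ideal $I\subsetneq k$. Then $I(G/e)$ is a $G$-invariant ideal of $k(G/e)$, hence is $0$ or all of $k(G/e)$. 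If $I(G/e)=k(G/e)$ then $1\in I(G/e)$; since ideals are closed under norms and $\nm^H_e$ is a homomorphism of multiplicative monoids (so $\nm^H_e(1)=1$), we get $1\in I(G/H)$ for every $H$, i.e.\ $I=k$, contradicting properness. Hence $I(G/e)=0$, and then closure under restriction together with injectivity of $\res^H_e$ forces $I(G/H)=0$ for all $H$, so $I=0$ and $k$ is field-like. (Here one tacitly takes $k\neq 0$; the zero Tambara functor is not field-like since it admits no proper ideal.)

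For the ``only if'' direction, the key device is the unit $\eta_k\colon k\to\underline{k(G/e)}$ of the adjunction of \Cref{lem:fp-tambara-functor-adjt-to-eval-at-underlying}, whose $G/H$-component is the restriction $\res^H_e\colon k(G/H)\to k(G/e)^H$. Given a $G$-invariant ideal $J\subseteq k(G/e)$, the quotient $k(G/e)/J$ is a $G$-ring, and composing $\eta_k$ with the image under $\underline{(-)}$ of the quotient map $k(G/e)\twoheadrightarrow k(G/e)/J$ produces a morphism of Tambara functors $\phi_J\colon k\to\underline{k(G/e)/J}$ whose $G/H$-component sends $x$ to $\res^H_e(x)+J$. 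Thus $I_J:=\ker(\phi_J)$ is an ideal of $k$ with $I_J(G/H)=(\res^H_e)^{-1}(J)$, and in particular $I_J(G/e)=J$. The virtue of this construction is that it promotes an arbitrary $G$-invariant ideal of $k(G/e)$ to an honest Tambara ideal of $k$ for free: all of the compatibility with conjugations, transfers, and norms is already encoded in the fact (from \Cref{lem:fp-tambara-functor-adjt-to-eval-at-underlying}) that $\eta_k$ is a morphism of Tambara functors.

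With this in hand, field-likeness finishes both remaining assertions. Applying the construction with $J=0$ gives $I_0=\ker(\eta_k)$, and $I_0(G/e)=\ker(\id_{k(G/e)})=0\neq k(G/e)$, so $I_0$ is proper; field-likeness then forces $I_0=0$, i.e.\ every $\res^H_e$ is injective. Applying it with an arbitrary $G$-invariant ideal $J\subseteq k(G/e)$ gives an ideal $I_J$ of $k$, which by field-likeness is $0$ or $k$; hence $J=I_J(G/e)$ equals $0$ or $k(G/e)$, so $k(G/e)$ has no nontrivial $G$-invariant ideal. The only genuinely non-formal ingredient is that $\eta_k$ is a morphism of Tambara functors, i.e.\ that restriction to level $G/e$ is compatible with transfers and norms --- this is exactly \Cref{lem:fp-tambara-functor-adjt-to-eval-at-underlying}, but absent that one would instead have to check by hand that each $I_J$ is closed under transfers and norms using the additive and multiplicative double-coset formulas, which is where the real work would lie. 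One should also flag the degenerate case $k=0$, which is excluded from being field-like by convention ($0\neq 1$).
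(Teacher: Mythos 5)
The paper cites this result from \cite{Nak11a} without reproducing an argument, so there is no in-paper proof to compare against; your proof stands on its own and is essentially correct. The route you take is a clean one: the unit $\eta_k\colon k\to\underline{k(G/e)}$ of \Cref{lem:fp-tambara-functor-adjt-to-eval-at-underlying} packages all the level-$G/e$ restrictions into a single morphism of Tambara functors, so that postcomposing with $\underline{k(G/e)}\to\underline{k(G/e)/J}$ produces, for each $G$-invariant ideal $J\subseteq k(G/e)$, a morphism $\phi_J$ whose kernel $I_J$ is automatically a Tambara ideal with $I_J(G/H)=(\res^H_e)^{-1}(J)$. This makes the ``only if'' direction a pair of one-line properness arguments, and your ``if'' direction (norm $1$ up from the bottom if $I(G/e)$ is everything; restrict down and use injectivity if $I(G/e)=0$) is also correct. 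You rightly note that all the genuine content --- closure of $I_J$ under transfers and norms --- is outsourced to the fact that $\eta_k$ is a Tambara morphism, which is a nice economy.

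One small point to tighten in the ``only if'' direction: you assert $I_0(G/e)=0\neq k(G/e)$, i.e.\ you implicitly use that a field-like $k$ has $k(G/e)\neq 0$. This is true but not quite automatic from $k\neq 0$: it needs the observation that $1_G=\nm^G_e(1_e)$ and that $\nm^G_e(0)=0$ (norms along surjective maps of orbits send $0$ to $0$), so $k(G/e)=0$ would force $1_G=0$ and hence $k=0$. (This is exactly the argument the paper deploys later when showing $\ev_{G/e}$ reflects terminal objects.) You flag the degenerate case $k=0$, but the implication $k\neq 0\Rightarrow k(G/e)\neq 0$ is the precise thing that makes $I_0$ proper, so it should be said. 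With that patched the proof is complete. (As an aside: the statement as printed has the restriction maps going $k(G/e)\to k(G/H)$, which is backwards relative to the paper's own conventions; you correctly read them as $\res^H_e\colon k(G/H)\to k(G/e)$.)
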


In particular, if $k$ is a Tambara functor with each $k(G/H)$ a field, then $k$ is field-like. So, given a field $\mathbb{F}$, the constant Tambara functor at $\mathbb{F}$ is field-like. Additionally, if $\mathbb{F} \subset L$ is a Galois extension with Galois group $G$, then the fixed-point Tambara functor of $L$ is field-like. One may also show that the coinduction of any field-like $H$-Tambara functor to a $G$-Tambara functor is field-like. In fact, the converse is also true.

\begin{theorem}[\cite{Wis24} \cite{Wis25}]
    Let $k$ be a field-like $G$-Tambara functor. Then $k \cong \mathrm{C}_H^G \ell$ for some field-like $H$-Tambara functor $\ell$ such that each $\ell(G/H)$ is a field. Let $R \cong \mathrm{C}_H^G S$ be any isomorphism of $G$-Tambara functors. Then $R$ is field-like if and only if $S$ is.
\end{theorem}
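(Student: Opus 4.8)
The plan is to prove the two assertions separately. The second one---that in an isomorphism $R \cong \mathrm{C}_H^G S$, the Tambara functor $R$ is field-like if and only if $S$ is---is essentially a computation with Nakaoka's criterion (the Proposition quoted above), so I would dispatch it first; the first assertion needs genuine input, and its heart is a descent statement.

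For the second assertion: since $\mathrm{C}_H^G$ is precomposition with the forgetful functor $\Poly_G \to \Poly_H$, we have $R(G/K) \cong S(\mathrm{Res}^G_H(G/K))$, and in particular $R(G/e) \cong S(\mathrm{Res}^G_H(G/e))$. The $H$-set $\mathrm{Res}^G_H(G/e)$ is $[G:H]$ free $H$-orbits, permuted by the Weyl action of $G$ with stabilizer $H$ of the base orbit, so $R(G/e)$ is, as a $G$-ring, the coinduction of the $H$-ring $S(H/e)$. Because $G$-invariant ideals of such a coinduction correspond exactly to $H$-invariant ideals of $S(H/e)$, the "no nonzero proper invariant ideal" half of Nakaoka's criterion agrees for $R(G/e)$ and for $S(H/e)$. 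For the injectivity half, the restriction maps of $R$ out of level $G/e$ are $S$ applied to the bispans of the surjective $H$-maps $\mathrm{Res}^G_H(G/e) \to \mathrm{Res}^G_H(G/K)$; decomposing these into maps of transitive $H$-sets exhibits each restriction map of $R$ as a finite product of restriction maps $\res^L_e$ of $S$ with $L \le H$, and as $K$ ranges over the subgroups of $G$ every such $\res^L_e$ occurs (take $K = L$). Hence $R$ passes the injectivity half iff $S$ does, and so $R$ is field-like iff $S$ is.

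For the first assertion I would argue by induction on $|G|$. Set $A := k(G/e)$ with its Weyl $G$-action; by Nakaoka's criterion $A$ has no nonzero proper $G$-invariant ideal, and I would prove the standard structure theorem for such $G$-rings: the nilradical is invariant, so $A$ is reduced; the $G$-orbit of a minimal prime $\mathfrak p$ has intersection a proper invariant ideal, hence $0$, forcing every minimal prime to contain---so equal---a conjugate of $\mathfrak p$; thus there are finitely many minimal primes $\mathfrak p_1,\dots,\mathfrak p_n$ forming one $G$-orbit with $\bigcap_i \mathfrak p_i = 0$; then $\sum_i \bigcap_{j\ne i}\mathfrak p_j$ is a nonzero invariant ideal, hence all of $A$, which forces $A \cong \prod_i A/\mathfrak p_i$; writing $H := \mathrm{Stab}_G(\mathfrak p_1)$ and $F := A/\mathfrak p_1$, the same argument over $H$ (a domain with no nonzero proper $H$-invariant ideal is a field) shows $F$ is a field, so $A$ is the coinduced $G$-ring of $F$. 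Now let $e_1 \in A$ be the idempotent onto the factor $F$; it is $H$-fixed, and $\epsilon := \nm^H_e(e_1) \in k(G/H)$ is an idempotent with $\res^H_e(\epsilon) = \prod_{h\in H} h\cdot e_1 = e_1$ by the double-coset formula, while applying the injective map $\res^G_e$ to $\tr^G_H(\epsilon)$ and using the Mackey formula gives $\tr^G_H(\epsilon) = 1$. The idempotent $\epsilon$ and its conjugates split $\mathrm{Res}^G_H k$ into a product of $H$-Tambara functors (restriction, transfer, and norm all respect the splitting, via the projection formula and the reciprocity formula for $\nm\circ\res$ of an idempotent); let $\ell$ be the factor cut out by $\epsilon$. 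Then $\ell(H/e) = e_1 A = F$ is a field, so Nakaoka's criterion shows $\ell$ is a field-like $H$-Tambara functor. Assuming the descent step---that $\epsilon$, having transfer $1$ and the correct restriction, forces the natural (injective) map $k \to \mathrm{C}_H^G(\mathrm{Res}^G_H k) \to \mathrm{C}_H^G \ell$ to be an isomorphism---we finish: if $H \ne G$ then $|H| < |G|$, so by induction $\ell \cong \mathrm{C}_{H'}^H \ell'$ with $\ell'$ field-like and all levels fields, whence $k \cong \mathrm{C}_{H'}^G \ell'$ by transitivity of coinduction; and if $H = G$ then $A = k(G/e)$ is already a field, so it remains only to see that every $k(G/K)$ is a field---a priori it is just a domain, since $\res^K_e$ embeds it into the field $k(G/e)$---and this I would extract from surjectivity of the trace on the separable part of the Weyl action, the fact that any ring lying between a field and a purely inseparable algebraic extension of it is a field, and, where the Weyl action is not faithful, the observation that the norm map lands in a perfect-power subfield.

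The step I expect to be the main obstacle is the descent: showing that an idempotent $\epsilon \in k(G/H)$ with $\tr^G_H(\epsilon) = 1$ whose image in $k(G/e)$ generates a full system of $[G:H]$ orthogonal idempotents really does exhibit $k$ as $\mathrm{C}_H^G(\epsilon \cdot \mathrm{Res}^G_H k)$---equivalently, that the comparison map $k \to \mathrm{C}_H^G \ell$, easily seen to be injective since both sides are field-like, is also surjective at every level. This, together with the claim that a primitive field-like Tambara functor has all levels fields, is the technical core, and is precisely what is carried out in \cite{Wis24} and \cite{Wis25}; a self-contained treatment would reconstruct those arguments.
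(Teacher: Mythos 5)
The paper does not prove this statement at all: it is quoted verbatim as a result of \cite{Wis24} and \cite{Wis25}, with no in-text argument to compare against. So the only thing to assess is your sketch on its own terms, and by your own explicit admission it is not a proof. The step you flag as ``the main obstacle'' --- that an idempotent $\epsilon \in k(G/H)$ with $\tr^G_H(\epsilon)=1$ and restricting to a primitive idempotent in $k(G/e)$ forces the comparison map $k \to C_H^G\ell$ to be an isomorphism at every level --- is precisely the content of the cited references, and you defer to them for it. A reviewer cannot accept ``assuming the descent step\dots'' as a proof of a theorem whose whole point is that descent step. So the proposal has a genuine, acknowledged gap at its core, even though the surrounding structure (Nakaoka's criterion, the idempotent decomposition of $k(G/e)$ into a $G$-orbit of fields, the induction on $|G|$) is a plausible outline of how one would prove it.

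Two smaller remarks. First, your treatment of the second assertion (that $C_H^G$ preserves and reflects the field-like condition) is essentially correct and self-contained, via the identification of $R(G/e)$ with the coinduced $G$-ring $\operatorname{Fun}_H(G,S(H/e))$ and the double-coset decomposition of $\operatorname{Res}^G_H(G/K)$; this part would survive refereeing. Second, for the base case $H=G$ you propose an argument via ``the separable part of the Weyl action'' and purely inseparable subextensions; this is more elaborate than needed. Since $k(G/G)$ is a field (as you note, this is \cite[Remark 4.36]{Nak11a}) and $k(G/e)$ is a domain in this case, for $0\ne x\in k(G/K)$ one has $\res^K_e(\nm^K_G x)$ is a nonzero product in the domain $k(G/e)$, so $\nm^K_G x$ is a nonzero element of the field $k(G/G)$ and hence a unit; restricting back to $G/K$ and using the double coset formula exhibits $x$ as a divisor of a unit. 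That gives ``all levels are fields'' directly once $k(G/e)$ is a field, without any separability analysis.
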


However, there are field-like Tambara functors which are not fixed-point Tambara functors (in particular, not coinduced, not constant, and not arising from a Galois extension). For example (cf. \cite{Wis24}), let $\ell$ be a non-perfect field of characteristic $p$, and define a $C_p$-Tambara functor $k_\ell$ by setting $k_\ell(C_p/e)$ equal to $\ell$ with the trivial $C_p$ action and $k_\ell(C_p/C_p)$ equal to the image of the Frobenius endomorphism. The restriction map is the inclusion of the image of Frobenius (which is a proper subfield since $\ell$ is not perfect), the transfer map is zero, and the norm map is Frobenius. If $G$ is any nontrivial finite group, then it has a subgroup $C_p$ for some prime $p$. Coinducing this example to $G$ then produces a field which is not a fixed-point Tambara field.

\subsection{Free Algebras}

\begin{proposition}\label{prop:free-forget-kalg}
    Let $k$ be a Tambara functor. There is a free-forgetful adjunction between Tambara functors and $k$-algebras. The forgetful functor sends a $k$-algebra $k \rightarrow K$ to $K$, and the free functor is given by the box product $k \boxtimes -$.
\end{proposition}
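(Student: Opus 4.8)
The plan is to separate the statement into a purely formal part about slice categories and one structural input about the box product. The structural input, which I would take as known (it is classical, going back to Tambara and recorded in \cite{Str12}, \cite{BH18}), is that $\Tamb$ has all finite coproducts and that the coproduct of two Tambara functors is computed by the box product $\boxtimes$ of their underlying Mackey functors, with its canonical Tambara functor structure; I will write $\iota_k : k \to k \boxtimes T$ and $\iota_T : T \to k \boxtimes T$ for the two coproduct inclusions. Granting this, everything else is a diagram chase, and the identification of the forgetful functor $U : k/\Tamb \to \Tamb$ with $(k \to K) \mapsto K$ is just the definition of the slice category.

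Concretely, I would show that $T \mapsto (\iota_k : k \to k \boxtimes T)$ defines a left adjoint to $U$. Functoriality in $T$ is immediate, since $\boxtimes$ is a bifunctor and $\iota_k$ is natural. For the adjunction isomorphism, fix a $k$-algebra $\varphi : k \to K$ and a Tambara functor $T$; I claim that
\[
\Hom_{k/\Tamb}\!\big((k \xrightarrow{\iota_k} k\boxtimes T),\ (k \xrightarrow{\varphi} K)\big) \;\cong\; \Hom_{\Tamb}(T, K),
\]
naturally in $T$ and $K$. An element of the left-hand side is a map of Tambara functors $\psi : k \boxtimes T \to K$ with $\psi \circ \iota_k = \varphi$; by the universal property of the coproduct, $\psi$ is the same datum as the pair $(\psi\circ\iota_k, \psi\circ\iota_T)$ of maps out of $k$ and $T$, and the constraint $\psi\circ\iota_k = \varphi$ fixes the first coordinate, leaving exactly $\psi\circ\iota_T \in \Hom_{\Tamb}(T,K)$. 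The inverse sends $\theta : T \to K$ to the map $k\boxtimes T \to K$ classified by the pair $(\varphi,\theta)$. Naturality in both variables is then routine to check, completing the proof.

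The one genuinely nontrivial ingredient, and the step I would flag as the main obstacle, is the structural input: unlike for Mackey or Green functors, it is not formal that the box product of the underlying Mackey functors of two Tambara functors carries a Tambara functor structure, nor that the result is the coproduct — this uses the multiplicative norm structure essentially, exactly as the coproduct of commutative rings is $\otimes_{\Z}$ rather than the coproduct of underlying abelian groups. I would cite this from \cite{Str12} or \cite{BH18} rather than reprove it; an alternative is to verify the universal property of $k\boxtimes T$ directly, but that is more laborious and orthogonal to the present exposition. Note that the case $T = \A[\underline{x}_1]\boxtimes\cdots\boxtimes\A[\underline{x}_n]$ recovers the free polynomial $k$-algebra of \Cref{prop:free-green-has-norms-and-is-levelwise-tensor}.
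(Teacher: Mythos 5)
Your proof is correct and follows essentially the same route as the paper's: both reduce the adjunction to the fact that $\boxtimes$ is the coproduct in $\Tamb$ and then invoke the universal property of coproducts to identify $\Hom_{k/\Tamb}(k\boxtimes T, K)$ with $\Hom_{\Tamb}(T,K)$. Your version is slightly more careful about the naturality and about flagging (and sourcing) the nontrivial structural input that the box product of Tambara functors is again a Tambara functor and computes the coproduct.
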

\begin{proof}
    This comes from the fact that $\boxtimes$ is the coproduct in the category of Tambara functors. Given a Tambara functor $T$ and a $k$-algebra $\alpha : k \to K$, a morphism of $k$-algebras $k \boxtimes T \to K$ is a morphism $f : k \boxtimes T \to K$ of Tambara functors making the diagram
    % https://q.uiver.app/#q=WzAsNCxbMiwwLCJrIFxcYm94dGltZXMgVCJdLFs0LDAsIksiXSxbMiwyLCJrIl0sWzAsMCwiVCJdLFsyLDAsImkiXSxbMiwxLCJcXGFscGhhIiwyXSxbMCwxLCJmIiwwLHsic3R5bGUiOnsiYm9keSI6eyJuYW1lIjoiZGFzaGVkIn19fV0sWzMsMCwiaiJdXQ==
    \[\begin{tikzcd}[ampersand replacement=\&]
            T \&\& {k \boxtimes T} \&\& K \\
            \\
            \&\& k
            \arrow["i", from=3-3, to=1-3]
            \arrow["\alpha"', from=3-3, to=1-5]
            \arrow["f", dashed, from=1-3, to=1-5]
            \arrow["j", from=1-1, to=1-3]
        \end{tikzcd}\]
    commute, where $i$ and $j$ are the structure maps of the coproduct
    $k \boxtimes T$. By the universal property of coproducts, $f$ is
    uniquely determined by the data of a pair of morphisms
    $(f_1 : k \to K, f_2 : T \to K)$ such that $f_1 \circ i = \alpha$.
    Sending $f$ to $f_2 \circ j$ gives a natural bijection from
    $\kAlg(k \boxtimes T, K)$ to $\GTamb(T,K)$.
\end{proof}

\begin{proposition}[{\cite[Definition 5.4]{BH18}}]
    For any subgroup $H$ of $G$, there exists a Tambara functor $\mathcal{A}[x_H]$ which represents the functor $K \mapsto K(G/H)$ sending a Tambara functor to its level $H$ ring.
\end{proposition}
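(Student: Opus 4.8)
The plan is to exhibit $\mathcal{A}[x_H]$ explicitly using the free Green functor technology already assembled in the excerpt, and then upgrade it to a Tambara functor using Nakaoka's free Tambara functor construction. Recall from \Cref{prop:free-forget-kalg} that $\boxtimes$ is the coproduct of Tambara functors, and more to the point, recall from the subsection on the free Green functor that $\mathcal{A}[\underline{x}] = \mathcal{A} \otimes \Z[x]$ is the free Tambara functor on a single generator adjoined at the $G/G$ level, with norms characterized by $\nm_K^L \res^G_K \underline{x} = \res^G_L \underline{x}^{[L:K]}$. For a general subgroup $H \leq G$, the idea is that a generator at level $G/H$ is the same data as a map from the representable object $G/H \in \Poly_G$, so I want to build $\mathcal{A}[x_H]$ as a relative version of this norm-twisted polynomial construction: informally, it should be the free Tambara functor on the semi-Mackey functor represented by $G/H$ in the multiplicative direction.

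The cleanest route is to invoke Nakaoka's theorem (quoted in the excerpt) directly. That theorem says $M \mapsto \mathcal{A}[M]$ is left adjoint to the forgetful functor from Tambara functors to multiplicative semi-Mackey functors, with elements of $\mathcal{A}[M]$ over a $G$-set $X$ being isomorphism classes of pairs $(\alpha : S \to X, m \in M(S))$. So first I would identify the multiplicative semi-Mackey functor $M_H$ whose value on $G/K$ is the free commutative monoid on the set of $G$-maps $G/K \to G/H$ — equivalently, $M_H = $ the multiplicative semi-Mackey functor corepresenting "evaluation at level $H$" among semi-Mackey functors. Concretely, $M_H(G/K)$ should be $\N[\Gset(G/K, G/H)]$ with restriction given by precomposition and norm given by the appropriate multiplicative transfer; one checks this is indeed a semi-Mackey functor. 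Then set $\mathcal{A}[x_H] := \mathcal{A}[M_H]$. Given a Tambara functor $K$, a Tambara map $\mathcal{A}[M_H] \to K$ is by adjunction a map of multiplicative semi-Mackey functors $M_H \to K^{\times}$, which by freeness of $M_H(G/K)$ on $\Gset(G/K,G/H)$ and the Yoneda lemma for semi-Mackey functors (product-preserving functors on $\Gset^{\mathrm{op}}$) unwinds to a choice of element of $K(G/H)$ — that is, an element of $K(G/H)$, naturally in $K$. This gives the representing isomorphism $\GTamb(\mathcal{A}[x_H], K) \cong K(G/H)$.

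The step I expect to be the main obstacle is pinning down precisely which semi-Mackey functor $M_H$ to feed into Nakaoka's machine and verifying that the resulting adjunction chain collapses exactly to "pick an element of $K(G/H)$" with no leftover data. The subtlety is that semi-Mackey functors carry both restrictions and (multiplicative) norms, and the Yoneda-type statement I want — that the free semi-Mackey functor generated multiplicatively by a single element at level $G/H$ corepresents $N \mapsto N(G/H)$ — needs the analog of the computation done in the excerpt for the constant monoid case (where $\GGreen(\mathcal{A}[\underline{M}], S) \cong \mathsf{Coeff}_{\mathsf{Mon}}(\underline{M}, S)$), but now with nontrivial norm structure. I would handle this by factoring the adjunction through coefficient systems in monoids, exactly as in the proof of the lemma identifying $\mathcal{A}[\underline{M}]$ with $\mathcal{A} \otimes \Z[M]$: the multiplicative semi-Mackey functor freely generated at level $G/H$ restricts to the coefficient system corepresenting evaluation at $G/H$, namely $G/K \mapsto \N[\Gset(G/K, G/H)]$, and then $\mathsf{Coeff}_{\mathsf{Mon}}$-Yoneda gives $N(G/H)$. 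Once that identification is in hand, everything else is formal from Nakaoka's adjunction, and the claimed representability follows. (An alternative, if one prefers to avoid re-deriving the semi-Mackey Yoneda statement, is to cite \cite{BH18} directly for the existence of $\mathcal{A}[x_H]$ — which is how the statement is attributed — and use the above only as a sketch of why it is true.)
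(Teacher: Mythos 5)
The paper does not give its own proof of this proposition: the bracketed citation \texttt{[\{\textbackslash cite[Definition 5.4]\{BH18\}\}]} in the statement indicates that the result is simply being recalled from Blumberg--Hill, and indeed no proof environment follows. You note this option yourself at the end of your proposal, and that is exactly what the paper does.

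Your sketch of an independent proof has the right skeleton---apply Nakaoka's free Tambara functor adjunction to the free multiplicative semi-Mackey functor $M_H$ on a generator at level $G/H$, and then conclude $\GTamb(\A[M_H],K)\cong\mathrm{SemiMack}(M_H,K^{\times})\cong K(G/H)$ by Yoneda. That abstract argument works, since semi-Mackey functors form an algebraic theory so free objects exist, and the representable object does corepresent evaluation. However, there are two misidentifications that would derail the concrete implementation. First, you describe semi-Mackey functors parenthetically as ``product-preserving functors on $\Gset^{\mathrm{op}}$''---that is a \emph{coefficient system}, not a semi-Mackey functor. A (multiplicative) semi-Mackey functor is a product-preserving functor on the span category $\mathrm{Span}(\Gset)$, carrying both restrictions and norms; your description drops all of the norm structure. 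Consequently, your concrete formula $M_H(G/K)=\N[\Gset(G/K,G/H)]$ is the free \emph{coefficient system in monoids} on a generator at $G/H$, not the free \emph{semi-Mackey functor}. The correct $M_H$ is the representable $\mathrm{Span}(\Gset)(G/H,-)$, whose value at $G/K$ is the free commutative monoid on isomorphism classes of \emph{transitive spans} $G/H\leftarrow G/L\to G/K$, which strictly contains $\N[\Gset(G/K,G/H)]$ in general. (For instance, $M_{G/G}(G/G)$ is the additive monoid of the Burnside semiring, not $\N$.) The assertion that the two descriptions are ``equivalent'' is therefore false. Second, the opening claim that ``$\A[\underline x]=\A\otimes\Z[x]$ is the free Tambara functor on a single generator at the $G/G$ level'' misreads the paper: the text says it is the free \emph{Green} functor, which then happens to admit \emph{a} Tambara functor structure. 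The free Tambara functor $\A[x_G]$ is a genuinely larger object---this is precisely the distinction that the paper's own route through Nakaoka (with a chosen subcategory of allowed norms) is designed to manage. If you repair $M_H$ to be the representable semi-Mackey functor, the Yoneda plus Nakaoka argument goes through and the ``factoring through coefficient systems'' digression becomes unnecessary.
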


\begin{corollary}
    \label{cor:free-kalg-corepresents}
    Let $k$ be a Tambara functor. In the category of $k$-algebras, there exists a Tambara functor $k[x_H]$ which represents the functor $K \mapsto K(G/H)$ sending a $k$-algebra to its level $H$ ring.
\end{corollary}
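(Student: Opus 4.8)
The plan is to read this off directly from the two preceding results. Define $k[x_H] := k \boxtimes \A[x_H]$, the free $k$-algebra (in the sense of \Cref{prop:free-forget-kalg}) on the Tambara functor $\A[x_H]$ furnished by \cite[Definition 5.4]{BH18}. Equipped with the coproduct structure map $k \to k \boxtimes \A[x_H]$, this is an object of $\kAlg$. The claim is then that the functor $\kAlg \to \Set$ corepresented by $k[x_H]$ is the ``level $H$ ring'' functor.

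To verify this, fix a $k$-algebra $K$. By the free--forgetful adjunction of \Cref{prop:free-forget-kalg}, whose left adjoint is $k \boxtimes {-}$, there is a natural bijection
\[
\kAlg(k \boxtimes \A[x_H],\, K) \;\cong\; \GTamb(\A[x_H],\, UK),
\]
where $U : k/\GTamb \to \GTamb$ is the forgetful functor. By the representing property of $\A[x_H]$ from \cite[Definition 5.4]{BH18}, the right-hand side is naturally isomorphic to $(UK)(G/H)$. Since $U$ is the identity on underlying Tambara functors, $(UK)(G/H) = K(G/H)$ as commutative rings; that is, the ``level $H$ ring'' functor on $\kAlg$ factors as $U$ followed by evaluation at $G/H$. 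Composing the two displayed bijections therefore gives a natural bijection $\kAlg(k[x_H], K) \cong K(G/H)$, which is exactly the assertion.

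There is no genuine obstacle: all the substance lives in the cited Blumberg--Hill result and in \Cref{prop:free-forget-kalg}, both already available. The only points worth a sentence of care are (i) that evaluation at $G/H$ does not see the $k$-algebra structure, so the two notions of ``level $H$ ring'' agree along $U$, and (ii) naturality of the composite bijection in $K$, which is automatic since it is a composite of two natural bijections. Hence $k[x_H] := k \boxtimes \A[x_H]$ works, completing the proof.
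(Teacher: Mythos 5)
Your proof is correct and takes exactly the same route as the paper: define $k[x_H] := k \boxtimes \A[x_H]$, apply the free--forgetful adjunction of \Cref{prop:free-forget-kalg}, and then invoke the representing property of $\A[x_H]$ from \cite[Definition 5.4]{BH18}. The paper presents this in a terse two-line computation; your version just spells out the same steps a bit more explicitly.
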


\begin{proof}
    In fact, $k[x_H]$ is given by $k \boxtimes \A[x_H]$. For any $k$-algebra $K$, we have
    \begin{align*}
        \kAlg(k \boxtimes \A[x_H], K) & \cong \GTamb(\A[x_H], K) \\
                                       & \cong K(G/H).
    \end{align*}
    This isomorphism is natural in $K$, so this verifies the desired universal property.
\end{proof}

\begin{lemma}[{\cite[Proposition 5.2]{BH18}}]
    In the category of Tambara functors, filtered colimits may be computed levelwise. More precisely, we have
    \[ \left( \colim_i K_i \right)(G/H) \cong \colim_i \left( K_i(G/H) \right)\]
    for filtered diagrams $K : I \to \mathsf{Tamb}$. In other words, $\A[x_H]$ is compact in the category of Tambara functors.
\end{lemma}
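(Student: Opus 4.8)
The plan is to reduce the levelwise-colimit statement to the general principle that $\GTamb$ is a category of models for a multi-sorted, finitary algebraic theory, so that its forgetful functor to set-valued functors creates filtered colimits; compactness of $\A[x_H]$ then drops out of the representability isomorphism $\GTamb(\A[x_H],K)\cong K(G/H)$. Concretely, I would first recall that a $G$-semi-Tambara functor is by definition a product-preserving functor $\Poly_G\to\Set$ and that $\Poly_G$ has finite products. Colimits in $\Fun(\Poly_G,\Set)$ are computed pointwise, and since filtered colimits commute with finite products in $\Set$, a pointwise filtered colimit of product-preserving functors is again product-preserving. Hence the inclusion of semi-Tambara functors into $\Fun(\Poly_G,\Set)$ creates filtered colimits: for a filtered diagram $K\colon I\to\GTamb$, the functor $L$ with $L(X):=\colim_i K_i(X)$ (colimit taken in $\Set$) is a semi-Tambara functor, and each structure map $K_i\to L$ is a map of semi-Tambara functors.

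Next I would upgrade $L$ to a Tambara functor and check it computes the colimit in $\GTamb$. Each $L(X)=\colim_i K_i(X)$ is a filtered colimit of abelian groups; since $\mathsf{Ab}\to\Set$ creates filtered colimits and $\mathsf{Ab}$ is cocomplete, $L(X)$ carries an abelian group structure compatible with its existing semiring structure, so $L$ is a Tambara functor and the maps $K_i\to L$ are morphisms in $\GTamb$. For the universal property, any cocone $K_i\to T$ under $K$ in $\GTamb$ is in particular a cocone of set-valued functors, hence factors uniquely through $L$ via a natural transformation $L\to T$; this transformation automatically commutes with every Tambara operation, since commuting with a fixed operation is a condition on finitely many elements of $L$, each of which lifts to a single stage $K_i$ by filteredness, where the condition holds. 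This yields $(\colim_i K_i)(G/H)\cong\colim_i K_i(G/H)$, i.e.\ the levelwise formula.

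For the second assertion I would combine the Blumberg--Hill representability result, giving natural isomorphisms $\GTamb(\A[x_H],-)\cong(-)(G/H)$, with the levelwise formula to obtain, for any filtered diagram $K\colon I\to\GTamb$,
\[
\GTamb\!\left(\A[x_H],\,\colim_i K_i\right)\;\cong\;\left(\colim_i K_i\right)(G/H)\;\cong\;\colim_i K_i(G/H)\;\cong\;\colim_i\GTamb(\A[x_H],K_i),
\]
and then check that this composite is the canonical comparison map, so that $\A[x_H]$ is compact.

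The only genuine point to watch is that the pointwise filtered colimit really lands in $\GTamb$ and really is the categorical colimit there, i.e.\ that the forgetful functor $\GTamb\to\Fun(\Poly_G,\Set)$ creates filtered colimits. This rests on two standard ingredients: filtered colimits commute with finite products in $\Set$ (so product-preservation, and with it the identifications $F(X\amalg Y)\cong F(X)\times F(Y)$ defining addition, multiplication, norms, transfers, restrictions and conjugations, are preserved under the colimit), and all the Tambara axioms are finitary (so filteredness lets one verify them, and verify that morphisms respect them, at a single stage of the diagram). Everything else is bookkeeping.
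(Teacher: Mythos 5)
The paper does not prove this lemma; it is imported verbatim from Blumberg--Hill, cited as \cite[Proposition~5.2]{BH18}. Your proof is correct and is the standard argument one would expect that source to give: colimits in $\Fun(\Poly_G,\Set)$ are pointwise, filtered colimits commute with finite products in $\Set$ so the pointwise filtered colimit of product-preserving functors is again product-preserving, the additive group structure on each level survives the colimit because $\mathsf{Ab}\to\Set$ preserves (indeed creates) filtered colimits, and then compactness of $\A[x_H]$ falls out of the representability $\GTamb(\A[x_H],-)\cong(-)(G/H)$ once you check the resulting chain of bijections is the canonical comparison map. One minor simplification: in your universal-property step, the unique natural transformation $L\to T$ in $\Fun(\Poly_G,\Set)$ automatically commutes with every restriction, transfer, and norm, because those operations are precisely the images of morphisms of $\Poly_G$ under $L$ and $T$, and a natural transformation commutes with them by naturality; the extra argument that each such compatibility can be verified at a single stage of the filtered diagram is true but superfluous.
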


\begin{corollary}
    \label{cor:filtered-colimits-computed-levelwise}
    Let $k$ be a Tambara functor. In the category of $k$-algebras, filtered colimits may be computed levelwise.
\end{corollary}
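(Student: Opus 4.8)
The plan is to reduce the statement for $k$-algebras to the already-established statement for plain Tambara functors. Recall from \Cref{prop:free-forget-kalg} that the forgetful functor $U : \kAlg \to \GTamb$ (sending $k \to K$ to $K$) has a left adjoint $k \boxtimes -$. The key structural observation is that $U$ \emph{creates} filtered colimits. Indeed, $U$ is easily seen to reflect filtered colimits once we know it preserves them, so the heart of the matter is that $U$ preserves filtered colimits; equivalently, that the colimit of a filtered diagram $K : I \to \kAlg$, computed in $\GTamb$ on underlying Tambara functors, naturally carries a $k$-algebra structure making it the colimit in $\kAlg$.

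First I would note that colimits in a slice category $k/\GTamb$ are computed as follows: given a filtered diagram $i \mapsto (k \xrightarrow{\alpha_i} K_i)$ in $k/\GTamb$, the underlying diagram $i \mapsto K_i$ has a colimit $K_\infty := \colim_i K_i$ in $\GTamb$, and the structure maps $\alpha_i$ assemble (using that $I$ is filtered, hence connected, so the $\alpha_i$ are compatible in the colimit) into a single map $\alpha_\infty : k \to K_\infty$; the cocone $(k \xrightarrow{\alpha_\infty} K_\infty)$ is then the colimit in $k/\GTamb = \kAlg$. This is a general fact about under-categories and requires no special input about Tambara functors. Then I would invoke the Blumberg--Hill result \cite[Proposition 5.2]{BH18}, quoted just above, that filtered colimits in $\GTamb$ are computed levelwise: $\left(\colim_i K_i\right)(G/H) \cong \colim_i\bigl(K_i(G/H)\bigr)$. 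Composing these two facts gives exactly the claim: for a filtered diagram of $k$-algebras, the colimit in $\kAlg$ has underlying Tambara functor $\colim_i K_i$, whose value at $G/H$ is $\colim_i K_i(G/H)$.

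I do not expect any real obstacle here; this is a formal consequence of two facts already in hand, namely that under-categories inherit (and create) filtered colimits from the base category, and that $\GTamb$ has levelwise filtered colimits. The only point requiring the smallest amount of care is checking that the colimit cocone structure maps in the slice are the evident ones --- i.e. that a colimit in $k/\GTamb$ really is computed by taking the colimit of the underlying objects and equipping it with the induced map from $k$ --- which is standard and uses only that the forgetful functor $k/\GTamb \to \GTamb$ both preserves and reflects the relevant colimits.
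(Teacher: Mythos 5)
Your proof is correct, and it takes a somewhat different route from the paper's. The paper's proof is a Yoneda-style Hom-set computation: it shows that $k[x_H]$ is compact in $\kAlg$ by chaining the free-forgetful adjunction $\kAlg(k[x_H], -) \cong \GTamb(\A[x_H], U(-))$ with compactness of $\A[x_H]$ in $\GTamb$ (the preceding Blumberg--Hill lemma), and then reads off the levelwise statement via $\kAlg(k[x_H], T) \cong T(G/H)$. You instead appeal directly to the general categorical fact that the coslice forgetful functor $\cod : k/\GTamb \to \GTamb$ creates filtered (indeed, all connected) colimits, and then compose with the Blumberg--Hill lemma. The two routes are closely linked: the middle step of the paper's chain, $\GTamb(\A[x_H], U\colim K) \cong \colim_i \GTamb(\A[x_H], U K_i)$, implicitly uses that $U$ preserves filtered colimits --- exactly the point you make explicit --- so your version is a bit more careful about isolating the key formal ingredient. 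Incidentally, that coslice fact is recorded later in the paper as \Cref{lem:pres-filt-colims}, so nothing here is outside the paper's own toolkit. Both approaches are valid.
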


\begin{proof}
    By \Cref{prop:free-forget-kalg}, we have an isomorphism
    \begin{align*}
        \kAlg(k[x_H], \colim K) & = \kAlg(k \boxtimes \A[x_H], \colim K) \\
                                 & \cong \GTamb(\A[x_H], \colim K)      \\
                                 & \cong \colim_i \GTamb(\A[x_H], K_i)  \\
                                 & \cong \colim_i \kAlg(k[x_H], K_i)
    \end{align*}
    for filtered diagrams $K : I \to \kAlg$.
\end{proof}

\begin{definition}
    The free polynomial $k$-algebra on $n$ generators in respective levels $H_1,...,H_n$ is \[ k[x_{H_1},...,x_{H_n}] := k \boxtimes \mathcal{A}[x_{H_1}] \boxtimes ... \boxtimes \mathcal{A}[x_{H_n}] \]
\end{definition}

Repeating the argument of the previous proof, we see that the free polynomial $k$-algebra $k[x_{H_i}]_{i \in I}$ represents the functor sending a $k$-algebra $K$ to the set $\prod_{i \in I} K(G/H_i)$.

\begin{lemma}\label{lem:res of free on fixed}
    For any $G$-Tambara functor $k$, $R^G_H(k[x_G]) \cong (R^G_H k)[x_H]$.
\end{lemma}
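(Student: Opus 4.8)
The plan is to verify the claimed isomorphism by checking that both sides corepresent the same functor on the category of $H$-Tambara functors, and then invoking Yoneda. First I would recall that $R^G_H$ has a left adjoint $N^G_H$ (the norm/induction functor, from the discussion following \Cref{lem:coinduction-from-e}), and that $\A[x_H]$ corepresents the "level $H$" functor on $H$-Tambara functors, while $\A[x_G]$ corepresents the "level $G$" functor on $G$-Tambara functors. The key computation is then, for an arbitrary $H$-Tambara functor $L$,
\[
\GTamb[H]\big(R^G_H(k[x_G]),\, L\big)
\cong \GTamb[H]\big((R^G_H k)\boxtimes \A[x_H],\, L\big),
\]
so it suffices to produce a natural bijection between $\GTamb[H]\!\big((R^G_H k)[x_H], L\big)$ and $\GTamb[H]\!\big(R^G_H(k[x_G]), L\big)$.

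For the left-hand side, by \Cref{cor:free-kalg-corepresents} (applied over the base $R^G_H k$), an $(R^G_H k)$-algebra map $(R^G_H k)[x_H] \to L$ is the same as the data of an $(R^G_H k)$-algebra structure on $L$ together with an element of $L(H/H)$. For the right-hand side, I would use the adjunction $N^G_H \dashv R^G_H$ to rewrite things upstairs: a map of $H$-Tambara functors $R^G_H(k[x_G]) \to L$ corresponds — after unwinding the $k$-algebra (equivalently $R^G_H k$-algebra) structure — to a $k$-algebra map $k[x_G] \to C^G_H L$ (using $R^G_H \dashv C^G_H$), hence by \Cref{cor:free-kalg-corepresents} to an element of $(C^G_H L)(G/G)$. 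Now the point is that for the coinduction $C^G_H$, one has $(C^G_H L)(G/G) \cong L(H/H)$ — coinduction is given by precomposition with the forgetful functor $\Poly_G \to \Poly_H$, which sends $G/G$ to the $H$-set $G/G$, and $(G/G$ as an $H$-set$)$ is $H/H$ since $H \le G$; more carefully, $(C^G_H L)(G/G) = L(R^H_G(G/G))$ and the restriction of the terminal $G$-set to $H$ is the terminal $H$-set. Chasing the $k$-algebra structure through the same adjunctions identifies the $R^G_H k$-algebra structure on $L$ on both sides, so the two hom-sets are naturally in bijection, giving the isomorphism.

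I expect the main obstacle to be bookkeeping rather than conceptual: one must be careful that the $k$-algebra (respectively $R^G_H k$-algebra) structures match up under the chain of adjunctions $N^G_H \dashv R^G_H \dashv C^G_H$, i.e.\ that the bijection constructed is not merely a bijection of underlying hom-sets of Tambara functors but genuinely a bijection of hom-sets in the relevant slice categories, naturally in $L$. An alternative, more hands-on route — which may in fact be cleaner — is to avoid coinduction entirely and argue levelwise using \Cref{prop:free-green-has-norms-and-is-levelwise-tensor}: since $R^G_H$ is precomposition with $G\times_H - : \Poly_H \to \Poly_G$, which on objects sends $H/K$ to $G/K$, one checks directly that $(R^G_H k)[x_H]$ and $R^G_H(k[x_G])$ agree levelwise — both have level $K$ ring $k(G/K) \otimes \Z[x]$ — and that the restriction, transfer, norm, and Weyl operations agree, using the characterization $\nm_{K'}^{K}\res x_H = \res\, x_H^{[K:K']}$ of the Tambara structure on the free algebra, which is manifestly compatible with restriction along $G\times_H -$. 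I would likely present the Yoneda-style argument as the main proof and remark on the levelwise picture as a sanity check.
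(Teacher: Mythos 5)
Your proposed Yoneda argument, using the adjunction $R^G_H \dashv C^G_H$ and the identity $(C^G_H L)(G/G) \cong L(H/H)$, is the same approach the paper takes; the only organizational difference is that the paper first proves $R^G_H \A[x_G] \cong \A[x_H]$ and then deduces the general case from the fact that $R^G_H$, being a left adjoint, preserves coproducts ($\boxtimes$), whereas you carry the general $k$ through the adjunction all at once. That two-step decomposition exactly dissolves the ``bookkeeping'' worry you raise --- by isolating the box-product factor $k$, one never has to track how algebra structures transport across the adjunction --- and note that your opening appeal to $N^G_H \dashv R^G_H$ is a red herring, since it is $R^G_H \dashv C^G_H$ that does all the work (as you in fact end up using).
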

\begin{proof}
    We start with the case $k = \A$. Because coinduction is given by precomposition with restriction, we have
    \begin{align*}
        \GTamb[H](R^G_H \A[x_G], T) & \cong \GTamb(\A[x_G], C_H^G T) \\
                                  & \cong (C_H^G T)(G/G)            \\
                                  & \cong T(H/H)                    \\
                                  & \cong \GTamb[H](\A[x_H], T)
    \end{align*}
    naturally in $T$. Thus, $R^G_H \A[x_G] \cong \A[x_H]$.

    Next, since $R^G_H$ is a left adjoint, it preserves coproducts. Thus,
    \[R^G_H k[x_H] \cong R^G_H (k \boxtimes \A[x_G]) \cong R^G_H k \boxtimes R^G_H \A[x_G] \cong (R^G_H k)[x_H].\qedhere\]
\end{proof}

We record one more fact here, which will eventually allow us to prove that certain Tambara functors are finitely presented. 

\begin{proposition}
    \label{prop:free-at-fixed-pts-is-levelwise-fg}
  For all $H\le G$,
  $\mathcal{A}_G[x_G](G/H)$ is a finitely generated ring. 
\end{proposition}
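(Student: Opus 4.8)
The plan is to get an explicit handle on the ring $\mathcal{A}_G[x_G](G/H)$ by combining two of the tools already assembled in the excerpt. First, by \Cref{lem:res of free on fixed} we have $R^G_H(\mathcal{A}_G[x_G]) \cong (R^G_H \mathcal{A}_G)[x_H] \cong \mathcal{A}_H[x_H]$ (using that restriction of the Burnside Tambara functor is the Burnside Tambara functor), and evaluating at $H/H$ gives $\mathcal{A}_G[x_G](G/H) \cong \mathcal{A}_H[x_H](H/H)$. So it suffices to treat the case $H = G$, i.e.\ to show $\mathcal{A}_G[x_G](G/G)$ is a finitely generated commutative ring; the general case then follows by applying this to each subgroup $H$ in turn.

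Next I would use \Cref{prop:free-green-has-norms-and-is-levelwise-tensor}, which identifies $\mathcal{A}_G[x_G]$, as a Green functor, with the levelwise tensor product $\mathcal{A}_G \otimes \Z[x]$, so that $\mathcal{A}_G[x_G](G/G) \cong \mathcal{A}_G(G/G) \otimes_{\Z} \Z[x] \cong A(G)[x]$, where $A(G)$ is the Burnside ring of $G$. This is manifestly a finitely generated ring: $A(G)$ is a finitely generated abelian group (free of finite rank on the isomorphism classes of transitive $G$-sets), hence a finitely generated $\Z$-algebra, and adjoining one polynomial variable preserves finite generation. Combining with the first paragraph, $\mathcal{A}_G[x_G](G/H) \cong A(H)[x]$ is finitely generated for every $H \le G$.

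The only subtlety — and the step I would be most careful about — is making sure the identification of $\mathcal{A}_G[x_G]$ with $\mathcal{A}_G \otimes \Z[x]$ really is valid \emph{as Green functors} and not merely levelwise as rings, so that the restriction-commutes-with-the-generator computation of \Cref{lem:res of free on fixed} and the levelwise tensor description are about the same object; but this is exactly the content of \Cref{prop:free-green-has-norms-and-is-levelwise-tensor} together with the remark there that $R[\underline{x}]$ carries its Tambara structure on top of the levelwise tensor Green functor, so no new work is needed. I would therefore write the proof in essentially three lines: reduce to $H=G$ via \Cref{lem:res of free on fixed}, invoke \Cref{prop:free-green-has-norms-and-is-levelwise-tensor} to get $\mathcal{A}_G[x_G](G/G) \cong A(G)[x]$, and observe that $A(G)$ is a finitely generated $\Z$-module and hence $A(G)[x]$ is a finitely generated $\Z$-algebra.
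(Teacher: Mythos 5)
Your first reduction step is fine: by \Cref{lem:res of free on fixed}, $R^G_H(\mathcal{A}_G[x_G]) \cong \mathcal{A}_H[x_H]$, so it suffices to treat the top level $G/G$ for each group in turn. However, the second step contains a genuine and fatal error: you have conflated the free \emph{Tambara} functor $\mathcal{A}_G[x_G]$ (the Blumberg--Hill object representing $K \mapsto K(G/G)$ on $\GTamb$) with the free \emph{Green} functor $\mathcal{A}[\underline{x}] \cong \mathcal{A} \otimes \Z[x]$ (which represents $S \mapsto S(*)$ on Green functors). These are different objects, and the paper is careful to use distinct notation --- $x_G$ versus $\underline{x}$ --- precisely because of this. \Cref{prop:free-green-has-norms-and-is-levelwise-tensor} says that $\mathcal{A}[\underline{x}]$ is a Green functor of the levelwise-tensor form \emph{and happens to admit} a Tambara structure with the constrained norms $\nm_K^H \res^G_K \underline{x} = \res^G_H \underline{x}^{[H:K]}$; it does \emph{not} say this object is $\mathcal{A}[x_G]$.

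In $\mathcal{A}_G[x_G]$, expressions such as $\nm_K^G \res^G_K x_G$ are genuinely new algebraically independent elements of $\mathcal{A}_G[x_G](G/G)$, not equal to $x_G^{[G:K]}$. Consequently $\mathcal{A}_G[x_G](G/G)$ is far larger than $A(G)[x]$, and there is real work to do in verifying finite generation. This is exactly why the paper's proof is not three lines: it proceeds by induction on $|G|$, uses the lemma that each $\res^H_K$ is integral, constructs a finitely generated subring $S$ of $\mathcal{A}_G[x_G](G/G)$ containing $\mathcal{A}_G(G/G)$, the coefficients of certain monic integrality polynomials, and the norms of generators at lower levels, and then shows that together with finitely many transfers these generate the whole ring, relying on the structural fact that $\mathcal{A}_G[x_G](G/G)$ is generated over $\mathcal{A}_G(G/G)$ by elements of the form $\tr_H^G \prod_i \nm_{K_i}^H \res^G_{K_i} x_G$. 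Your plan, as written, skips all of this by substituting the wrong object.
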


In order to prove the proposition we first need a lemma.

\begin{lemma}
  If $R$ is a Tambara functor, then for $K\le H$, 
  $\res^H_K : R(G/H)\to R(G/K)$ is an integral map 
  of rings.
\end{lemma}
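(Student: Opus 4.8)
The plan is to reduce to the case $H=G$ and then, for each fixed $x\in R(G/K)$, to exhibit a monic polynomial with coefficients in $\res^G_K\bigl(R(G/G)\bigr)$ of which $x$ is a root; this is exactly what it means for $\res^H_K$ to be an integral ring map. For the reduction, recall that $R^G_H:\GTamb\to\GTamb[H]$ is precomposition with the product-preserving functor $G\times_H(-):\Poly_H\to\Poly_G$; consequently the $H$-Tambara functor $R^G_H R$ satisfies $(R^G_H R)(H/J)=R(G/J)$ for $J\le H$, and its restriction map from level $H/H$ to level $H/K$ is literally $\res^H_K:R(G/H)\to R(G/K)$. So it suffices to prove: for any finite group $G$, any $K\le G$, and any $G$-Tambara functor $R$, the map $\res^G_K:R(G/G)\to R(G/K)$ is integral. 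Set $n:=[G:K]$ and fix $x\in R(G/K)$.

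Working in the free polynomial Tambara functor $R[t_G]$ on a generator $\underline t$ at level $G/G$, which by \Cref{prop:free-green-has-norms-and-is-levelwise-tensor} is levelwise the polynomial ring---so $R[t_G](G/J)=R(G/J)[t]$ with restrictions and conjugations acting coefficientwise, and with $\nm^G_K(\res^G_K\underline t)=\underline t^{\,n}$---I form the linear polynomial $\res^G_K\underline t-x\in R(G/K)[t]$ (with $x$ viewed as a constant polynomial) and set
\[
\chi_x \;:=\; \nm^G_K\bigl(\res^G_K\underline t-x\bigr) \;\in\; R[t_G](G/G)=R(G/G)[t].
\]
The first thing to check is that $\chi_x$ is monic of degree $n$ in $t$. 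I would obtain this from Tambara reciprocity: expanding $\nm^G_K(\res^G_K\underline t-x)$ and using that $\res^G_K\underline t$ is restricted from level $G/G$ (so that powers of $\underline t$ pull out of the norms and transfers that appear) writes $\chi_x$ as $\underline t^{\,n}+c_1\underline t^{\,n-1}+\dots+c_n$ with all $c_i\in R(G/G)$, the leading term being the all-$\underline t$ summand $\nm^G_K(\res^G_K\underline t)=\underline t^{\,n}$. Alternatively, pass to the Laurent Tambara functor $R\otimes\Z[t^{\pm1}]$ (a Tambara functor by the argument already used for $R\otimes\Z[t]$, applied to the monoid $\Z$ in place of $\N$), in which $\res^G_K\underline t$ becomes a unit; then factor $\res^G_K\underline t-x=\res^G_K\underline t\cdot\bigl(1-\res^G_K(\underline t^{-1})\,x\bigr)$, apply multiplicativity of $\nm^G_K$, and bound the $t$-degrees using the scaling $\mathbb G_m$-action on $\Z[t^{\pm1}]$. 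This monicity claim is the one genuinely technical point; the rest is formal.

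It remains to show that $x$ is a root of the polynomial obtained from $\chi_x$ by applying $\res^G_K$ to its coefficients, namely $t^{\,n}+\res^G_K(c_1)\,t^{\,n-1}+\dots+\res^G_K(c_n)\in R(G/K)[t]$; since this polynomial is monic with coefficients in $\res^G_K\bigl(R(G/G)\bigr)$, that completes the proof. To see it, evaluate on $R[t_G]$ the composition law $R_f\circ N_f=N_{f'}\circ R_{g'}$ of $\Poly_G$ for the terminal map $f:G/K\to G/G$ (recalling that $R[t_G]$, being a Tambara functor, sends the disjoint union of $G$-sets to products): the apex of the relevant pullback square is the self-product $G/K\times G/K$, which is the disjoint union of the diagonal copy of $G/K$ and an off-diagonal part, and on the diagonal summand both legs $g',f'$ are identities. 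Using that norms are multiplicative over disjoint unions of maps with a common target, this yields $\res^G_K\nm^G_K(y)=y\cdot z(y)$ for every $y\in R[t_G](G/K)$, where $z(y)$ is the contribution of the off-diagonal part. Taking $y=\res^G_K\underline t-x$ gives $\res^G_K\chi_x=(t-x)\,z$ in $R(G/K)[t]$; applying the $R(G/K)$-algebra homomorphism $R(G/K)[t]\to R(G/K)$, $t\mapsto x$, kills the factor $(t-x)$ and hence kills $\res^G_K\chi_x$, i.e. $x^{\,n}+\res^G_K(c_1)\,x^{\,n-1}+\dots+\res^G_K(c_n)=0$, as desired.
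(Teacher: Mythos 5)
Your proof is correct and takes essentially the same approach as the paper: form $\nm^H_K(\underline t - x)$ in the free polynomial Tambara functor $R[\underline t]$, observe it is monic of degree $[H{:}K]$, and use the double-coset formula for $\res\circ\nm$ to see that $(\underline t - x)$ divides $\res^H_K\nm^H_K(\underline t - x)$, so evaluation at $x$ kills it. The only differences are cosmetic: you reduce to $H=G$ first via restriction of Tambara functors, and you spell out the monicity and divisibility steps in more detail than the paper does.
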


\begin{proof}
  Consider the Tambara functor 
  $S:=R[\underline{x}]$, which is isomorphic to the levelwise
  tensor product by \Cref{prop:free-green-has-norms-and-is-levelwise-tensor},
  so we have that $S(G/H)\cong R(G/H)[x]$. 
  
  Then if $a\in R(G/K)$, let $p(x) = \nm_K^H (x-a) 
  \in R(G/H)[x]$. The 
  Mackey double coset formula tells us that $(x-a)$ 
  divides $\res^H_K\nm_K^H (x-a)$, and 
  $p(a) = \res^H_K(p)(a)$, so $p(a)=0$.
  Since $p$ is monic 
  (its leading term is $N_K^H x = x^{[H:K]}$),
  $a$ is integral over $R(G/H)$.
\end{proof}

Now we can prove the proposition.

\begin{proof}[Proof of \Cref{prop:free-at-fixed-pts-is-levelwise-fg}]
  We induct on the order of $G$. When $G=e$ is the trivial group, 
  $\mathcal{A}_e[x_e] \cong \Z[x]$, which is certainly finitely generated.
  Otherwise, assume that the 
  result holds for all subgroups of $G$. Note that 
  $R^G_H \mathcal{A}_G[x_G] = \mathcal{A}_H[x_H]$, so this tells us that all 
  of the rings $\mathcal{A}_G[x_G](G/H)$ are finitely 
  generated for $G\ne H$.

  Now for each $H < G$ choose generators $y^H_i$ 
  of $\mathcal{A}_G[x_G](G/H)$, and for later convenience 
  assume that $y^H_0 = \res^G_H x_G$ for each subgroup 
  $H$. For each $y^H_i$, by integrality of restriction maps we can choose a monic polynomial $m^H_i(s) \in \mathcal{A}_G[x_G][s]$
  such that $m^H_i(y^H_i)=0$.

  Let $S$ be the subring 
  of $\mathcal{A}_G[x_G](G/G)$ generated by $\mathcal{A}_G(G/G)$, 
  the coefficients of the $m^H_i$, and the elements 
  $\nm_H^G y^H_i$. This is a finitely generated ring,
  since $\mathcal{A}_G(G/G)$ is finitely generated, and there 
  are only finitely many elements $y^H_i$. 
  Moreover, by construction each of the rings 
  $\mathcal{A}_G[x_G](G/H)$ is still integral over $S$, since 
  $S$ contains the coefficients of monic polynomials 
  killing the generators of those rings. Therefore,
  since the rings $\mathcal{A}_G[x_G](G/H)$ are finitely
  generated and integral over $S$, they are module finite 
  over $S$. So for each proper subgroup $H$ we can choose 
  finitely many elements $z^H_j$ which generate 
  $\mathcal{A}_G[x_G](G/H)$ as a module over $S$.

  Then $S$ and the elements $\tr_H^G z^H_j$ 
  together generate all of $\mathcal{A}_G[x_G](G/G)$.
  This is because $\mathcal{A}_G[x_G](G/G)$ is generated
  over $\mathcal{A}_G(G/G)$ by 
  elements of the form 
  \[\tr_H^G \prod_i \nm_{K_i}^H \res^G_{K_i} x_G,\]
  for some choice of $K_i\le H\le G$. 
  Now if $H=G$, all of the elements 
  $\nm_{K_i}^G\res^G_{K_i} x_G$ are in $S$ already,
  since $\res^G_{K_i}x_G = y^{K_i}_0$ and $S$ includes
  the norms of our chosen generators. 
  
  Otherwise, when $H\ne G$, the generator is of the 
  form $\tr_H^G a$ for some element $a$ of 
  $\mathcal{A}_G[x_G](G/H)$, and we can write $a$ in the form 
  $\sum_{k=1}^n \res^G_H(a_k)z^H_k$ for elements 
  $a_k\in S$.
  But then 
  \[
    \tr_H^G a = \tr_H^G \sum_{k=1}^n a_k\cdot z^H_k
    = \sum_{k=1}^n a_k \tr_H^G z^H_k,
  \]
  which is in our ring.
\end{proof}

\section{Adjunctions and Compact Algebras}

Since Nullstellensatzian Tambara functors are defined in terms of compact objects, we must establish some basic facts about these.

\begin{definition}
    An object $x$ in a category $\catC$ is said to be \emph{compact} if the functor $\catC(x,-)$ preserves filtered colimits.
\end{definition}

For categories of algebraic objects (more precisely, for multi-sorted varieties in the sense of universal algebra), compact objects are precisely those objects which are finitely presented \cite[Corollary 3.13]{Adamek_Rosicky_1994}. For example, if $R$ is a commutative ring, the compact objects of the category of $R$-algebras are precisely the finitely presented $R$-algebras. More generally, if $k$ is a Tambara functor, the compact objects of the category of $k$-algebras are precisely the finitely presented $k$-algebras.

\begin{definition}
    Let $k$ be a Tambara functor. A $k$-algebra is finitely generated if it is a quotient of some $k[x_{H_1}, \dots, x_{H_n}]$, and finitely presented if furthermore the quotient is by a finitely generated Nakaoka ideal. Equivalently, a finitely presented $k$-algebra is the coequalizer of a diagram \[ k[x_{H_i}]_{i \in I} \rightrightarrows k[x_{H_j}]_{j \in J}\] with $I$ and $J$ finite sets.
\end{definition}

\begin{proposition}\label{prop:compact same as fp}
    Let $k$ be a Tambara functor. A $k$-algebra is compact if and only if it is finitely presented.
\end{proposition}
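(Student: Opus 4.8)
The plan is to deduce the proposition from the general universal-algebra principle already invoked in the discussion preceding it: in a finitary multi-sorted variety, the compact objects are exactly the finitely presented algebras \cite[Corollary 3.13]{Adamek_Rosicky_1994}. So the proof will have two parts: (1) exhibit $k/\GTamb$ as a finitary multi-sorted variety, and (2) check that the resulting abstract notion of ``finitely presentable object'' coincides with the concrete notion of finitely presented $k$-algebra defined above.

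For part (1), I would start from \Cref{prop:alt-def-of-Tambara-functors}: a $G$-Tambara functor is the data of finitely many commutative rings $F(G/H)$ (one for each subgroup $H\le G$) together with the operations $\res_H^K$, $\tr_H^K$, $\nm_H^K$, $\conj_{H,g}$, subject to a list of axioms each of which equates two composites of finitely many of these operations. This presents $\GTamb$ as the category of models of a finitary algebraic theory with finitely many sorts (indexed by the subgroups of $G$) and finitely many operation symbols. To pass to $k$-algebras, I would augment this theory by a nullary operation (constant) $c_x$ of sort $G/H$ for each $x\in k(G/H)$, together with the equations recording that $x\mapsto c_x$ respects the ring operations, the restrictions, transfers, norms, and conjugations of $k$; a model of the augmented theory is precisely a $k$-algebra, and a homomorphism of such is precisely a morphism of $k$-algebras. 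Since $\coprod_H k(G/H)$ is a set, the augmented theory is still a finitary multi-sorted algebraic theory (a set's worth of arity-zero operations does no harm), so $k/\GTamb$ is a finitary variety. In particular it is locally finitely presentable, every $k$-algebra is a filtered colimit of finitely presentable ones, and \cite[Corollary 3.13]{Adamek_Rosicky_1994} identifies the compact $k$-algebras with the finitely presentable ones.

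For part (2), I would make the finitely presentable objects explicit. In the augmented theory the free model on a finite generating set placed in sorts $G/H_1,\dots,G/H_n$ represents the functor $K\mapsto \prod_i K(G/H_i)$, and so is the free polynomial algebra $k[x_{H_1},\dots,x_{H_n}]$ by \Cref{cor:free-kalg-corepresents} and the remark following it. The standard description of finitely presentable objects of a variety (finitely many generators, finitely many relations) then says a $k$-algebra is finitely presentable if and only if it is a coequalizer $k[x_{H_i}]_{i\in I}\rightrightarrows k[x_{H_j}]_{j\in J}$ with $I$ and $J$ finite, which is exactly the stated definition of a finitely presented $k$-algebra. Combining (1) and (2) proves the proposition.

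The main obstacle --- really the only step that is not bookkeeping --- is part (1): one must be confident that the axioms for a Tambara functor, and for a map out of $k$, are genuinely equational and finitary, so that the Ad\'amek--Rosick\'y machinery applies. It is worth recording that the ``finitely presented $\Rightarrow$ compact'' direction can be verified by hand as a sanity check: each $k[x_H]$ is compact because $\kAlg(k[x_H],-)\cong (K\mapsto K(G/H))$ preserves filtered colimits by \Cref{cor:filtered-colimits-computed-levelwise}, the compact objects are closed under finite colimits, and a finitely presented $k$-algebra is a coequalizer of maps between finite coproducts of objects $k[x_H]$. For the converse without invoking the variety result, one would instead argue directly that every $k$-algebra is the filtered colimit of its finitely presented quotients of free polynomial algebras (take finite sub-presentations of any presentation), conclude that a compact $k$-algebra is a retract of a finitely presented one, and note that a retract of a finitely presented $k$-algebra is again finitely presented, being the coequalizer of the pair $(\id, e)$ for the relevant idempotent $e$.
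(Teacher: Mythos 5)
Your proof takes essentially the same route as the paper: both encode $k$-algebras as models of a finitary multi-sorted equational theory (one sort per subgroup of $G$, a nullary operation for each element of $k$, operations for the levelwise ring structure and the Tambara operations, together with equational axioms) and then invoke \cite[Corollary 3.13]{Adamek_Rosicky_1994}. Your part (2), identifying the abstract finitely-presentable objects of the variety with the paper's coequalizer definition of finitely presented $k$-algebra, makes explicit a step the paper treats as immediate, but the underlying argument is the same.
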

\begin{proof}
    This follows immediately from \cite[Corollary 3.13]{Adamek_Rosicky_1994} once we establish that the category of $k$-algebras is a multi-sorted variety. So, we will encode the category of $k$-algebras as the category of algebras for a multi-sorted equational theory.

    We have one sort for each subgroup of $G$, so that a $k$-algebra $X$ consists of sets $\{X(G/H)\}_{H \leq G}$. We have a nullary operation $x : * \to X(G/H)$ for each $x \in k(G/H)$, encoding the structure map $k \to X$. We additionally have unary operations $\res, \nm, \tr, \conj$ to encode the Tambara operations of $X$, and nullary, unary, and binary operations at each level of $X$ to encode the commutative ring structures of the levels of $X$. Then we impose equational axioms which encode the axioms of a Tambara functor, as well as the fact that $k \to X$ is a homomorphism of Tambara functors. Algebras for this equational theory are precisely $k$-algebras.
\end{proof}

% \begin{lemma}
%     \label{lem:poly-kalgs-are-compact}
%     $k[x_H]$ is compact in the category of $k$-algebras.
% \end{lemma}

% \begin{proof}
%     This is just \cref{cor:free-kalg-corepresents} and \cref{cor:filtered-colimits-computed-levelwise}.
% \end{proof}

% \begin{definition}
%     Let $k$ be a Tambara functor. A $k$-algebra is finitely generated if it is a quotient of some $k[x_{H_1}, \dots, x_{H_n}]$, and finitely presented if furthermore the quotient is by a finitely generated Nakaoka ideal. Equivalently, a finitely presented $k$-algebra is the coequalizer of a diagram \[ k[x_{H_i}]_{i \in I} \rightrightarrows k[x_{H_j}]_{j \in J}\] with $I$ and $J$ finite sets.
% \end{definition}

% \begin{proposition}
%     Every finitely presented $k$-algebra is compact.
% \end{proposition}

% \begin{proof}
%     Combine \cref{lem:poly-kalgs-are-compact} with the fact that finite colimits of compact objects are compact.
% \end{proof}

% Additionally, it is straightforward to show that every compact $k$-algebra is finitely generated, although as we do not require this fact we do not supply a proof. In the non-equivariant setting, Hilbert's basis theorem implies that finitely generated and finitely presented algebras over a Noetherian ring are the same thing, and thus the compact objects are precisely the finitely presented algebras. It would be interesting to know whether a similar story plays out for Tambara functors.

% \todo[inline]{Actually turns out that compact iff finitely presented, basically in general for algebraic stuff.}

\subsection{Nullstellensatzian Objects}

The authors of \cite{BSY22} introduce the notion of a Nullstellensatzian object, which generalizes the notion of an algebraically closed field. The idea is that a Nullstellensatzian object ``satisfies the conclusion of Hilbert's Nullstellensatz,'' and for commutative rings this turns out to be equivalent to being an algebraically closed field.

\begin{definition}
    Let $\catC$ be a category with an initial object. $\catC$ is \emph{Nullstellensatzian} if all compact
    nonterminal objects of $\catC$ admit a map to the initial object.
\end{definition}

\begin{definition}
    Let $\catC$ be a category. A morphism $f : x \rightarrow y$ in $\catC$ is called compact if it is a compact object of the coslice category $x / \catC$ of objects under $x$.
\end{definition}

\begin{definition}
    Let $\catC$ be a category admitting a terminal object. An object $x$ of $\catC$ is said to be \emph{Nullstellensatzian} if both:
    \begin{enumerate}
        \item $x$ is not terminal;
        \item For all compact morphisms $f : x \to y$ with $y$ nonterminal, there exists a morphism $g : y \to x$ such that $g \circ f = \id_x$.
    \end{enumerate}

    Note that the second condition is equivalent to $x/\catC$ being a
    Nullstellensatzian category.
\end{definition}

\begin{proposition}
    The Nullstellensatzian objects of the category of commutative rings are precisely the algebraically closed fields.
\end{proposition}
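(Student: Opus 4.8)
The plan is to establish both inclusions: every algebraically closed field is a Nullstellensatzian ring, and conversely. For the forward direction, let $F$ be an algebraically closed field; we must show $F$ satisfies condition (1) (it is not the zero ring, which is clear) and condition (2). So suppose $\varphi : F \to A$ is a compact morphism of rings with $A \neq 0$; by \Cref{prop:compact same as fp} (in the classical case, which is standard), $A$ is a finitely presented $F$-algebra, say $A \cong F[x_1,\dots,x_n]/(f_1,\dots,f_m)$, and $A \neq 0$ means the ideal $(f_1,\dots,f_m)$ is proper. By Hilbert's Nullstellensatz, the variety $V(f_1,\dots,f_m) \subseteq \mathbb{A}^n_F$ is nonempty, so there is a point $(a_1,\dots,a_n) \in F^n$ with $f_j(a_1,\dots,a_n) = 0$ for all $j$; evaluation at this point gives an $F$-algebra homomorphism $g : A \to F$, and $g \circ \varphi = \id_F$ since $g$ is a map of $F$-algebras and $F$ is the initial $F$-algebra. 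Hence $F$ is Nullstellensatzian.

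For the converse, suppose $k$ is a Nullstellensatzian object in $\CRing$. First I would show $k$ is a field: if $a \in k$ is neither zero nor a unit, consider the compact $k$-algebra $k[1/a] = k[x]/(ax - 1)$; it is nonzero (since $a$ is not nilpotent — one needs a small argument here, e.g. if $a$ were nilpotent, look instead at $k/(a)$, which is a nonzero compact $k$-algebra admitting no retraction since the composite $k \to k/(a) \to k$ would have to send $a \mapsto 0$ while being the identity). More cleanly: for \emph{any} nonzero non-unit $a$, either $k/(a) \neq 0$, and the quotient map $k \to k/(a)$ is a compact morphism with no retraction (a retraction $r$ would satisfy $r(\bar a) \cdot 1 = $ ... contradiction since $\bar a = 0$ but $a \neq 0$ in $k$ and $r$ fixes $k$), so no such $a$ exists, i.e. $k$ is a field. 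Then I would show $k$ is algebraically closed: given a nonconstant monic $p(x) \in k[x]$, the ring $k[x]/(p(x))$ is a nonzero finitely presented (hence compact) $k$-algebra, so there is a retraction $k[x]/(p(x)) \to k$, i.e. a root of $p$ in $k$; since $k$ is a field this forces $p$ to have a linear factor, and induction on degree shows $p$ splits. Thus $k$ is an algebraically closed field.

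The main obstacle is the bookkeeping in the ``$k$ is a field'' step: one must be careful to produce a \emph{nonzero} compact $k$-algebra witnessing the failure when $a$ is a nonzero non-unit, and to check the retraction condition $g \circ f = \id_k$ genuinely fails. The clean route is to observe that $k \to k/(a)$ is always a compact morphism (the target is finitely presented over $k$), it is nonzero precisely when $a$ is not a unit, and a retraction $g : k/(a) \to k$ would give $g(\bar 1) = 1$ yet $\bar a = 0$ forces $g(\bar a) = 0$; but $g$ is a ring map out of $k/(a)$ precompose with $k \to k/(a)$ equal to $\id_k$, so $g(\bar a) = a$, whence $a = 0$, a contradiction. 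So every nonzero element is a unit. Everything else (the Nullstellensatz input, the evaluation maps, the splitting induction) is routine, and I would not belabor it.
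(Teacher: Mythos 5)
Your proof is correct. The paper itself does not give an argument for this statement --- it simply cites \cite{BSY22}, Theorem 6.1, remarking that the result is ``essentially an immediate consequence of Hilbert's Nullstellensatz.'' Your proposal supplies precisely the expected details: for the forward direction, the weak Nullstellensatz over an algebraically closed field $F$ guarantees a common zero of the defining relations of any nonzero finitely presented $F$-algebra, and evaluation there is a retraction because $F$ is initial among $F$-algebras; for the converse, a retraction $g$ of $k \to k/(a)$ would force $a = g(\bar a) = g(0) = 0$, so $k$ is a field, and then a retraction of $k \to k[x]/(p)$ produces a root of any nonconstant monic $p$, whence $k$ is algebraically closed. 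One minor stylistic point: the detour through $k[1/a]$ and nilpotence in the middle of the converse is unnecessary --- the ``cleaner route'' you describe at the end (always quotient by $(a)$) is the argument to keep, and the earlier hedging should simply be deleted.
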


\begin{proof}
    This is shown in \cite{BSY22}, Theorem 6.1. It is essentially an immediate consequence of Hilbert's Nullstellensatz.
\end{proof}

\subsection{Sufficient Conditions for a Functor to Preserve Nullstellensatzian Objects}

We are interested in studying Nullstellensatzian objects, and in this subsection we give one method which sometimes constructs them: \cref{lem:right-adjoints-transfer-AC-if-left-reflects-terminal-objects}, which roughly says that right adjoints which preserve filtered colimits often produce Nullstellensatzian objects.

\begin{lemma}\label{lem:left-adjoint-preserves-compacts}
    Let $F : \catC \to \mathsf{D}$ be left adjoint to some functor $G$ which preserves filtered colimits. Then $F$ sends compact objects to compact objects.
\end{lemma}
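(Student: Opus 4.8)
The plan is to use the adjunction directly. Suppose $c \in \catC$ is compact; I want to show $F(c)$ is compact in $\catD$, i.e. that $\catD(F(c), -)$ preserves filtered colimits. Let $D : I \to \catD$ be a filtered diagram with colimit $\colim_i D_i$. Since $G$ preserves filtered colimits, we have $G(\colim_i D_i) \cong \colim_i G(D_i)$, and this isomorphism is the canonical comparison map. Now I would write down the chain of natural isomorphisms
\[
\catD\left(F(c), \colim_i D_i\right) \cong \catC\left(c, G\left(\colim_i D_i\right)\right) \cong \catC\left(c, \colim_i G(D_i)\right) \cong \colim_i \catC(c, G(D_i)) \cong \colim_i \catD(F(c), D_i),
\]
where the first and last isomorphisms are the adjunction $F \dashv G$, the second uses that $G$ preserves the filtered colimit, and the third uses that $c$ is compact in $\catC$. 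The composite is the canonical comparison map $\colim_i \catD(F(c), D_i) \to \catD(F(c), \colim_i D_i)$, so $F(c)$ is compact.

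The only thing that genuinely needs care is checking that the composite isomorphism above really is the canonical comparison map (rather than merely some abstract bijection), so that compactness in the correct sense follows. This is a routine naturality chase: each of the four isomorphisms is natural in the diagram variable, and one tracks an element of $\colim_i \catD(F(c), D_i)$ — represented by a map $F(c) \to D_i$ for some $i$ — through the chain, verifying at each stage that it maps to the expected representative. I would either include this verification in a sentence or simply remark that all isomorphisms are natural and compatible with the structure maps of the colimit, which forces the composite to be the comparison map. There is no serious obstacle here; the lemma is formal, and the main (minor) subtlety is just this bookkeeping about comparison maps, which is why a careful write-up phrases the argument in terms of the natural isomorphism of functors $\catD(F(c), -) \cong \catC(c, G(-))$ and then invokes that $G$ preserves and $c$ detects filtered colimits.
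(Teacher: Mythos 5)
Your proof is correct and takes essentially the same approach as the paper: the same four-step chain of natural isomorphisms using the adjunction, $G$ preserving filtered colimits, and compactness of $c$. The paper omits your (correct) remark about the composite being the canonical comparison map, but the argument is otherwise identical.
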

\begin{proof}
    Let $x$ be a compact object of $\catC$, and let $Y : I \to \mathsf{D}$ be a filtered diagram in $\mathsf{D}$. We have
    \begin{align*}
        \catD(Fx, \colim Y) & \cong \catC(x, G \colim Y)           \\
                                        & \cong \catC(x, \colim (G \circ Y))   \\
                                        & \cong \colim_i \catC(x, G Y_i)       \\
                                        & \cong \colim_i \catD(F x, Y_i),
    \end{align*}
    so $Fx$ is a compact object of $\mathsf{D}$.
\end{proof}

\begin{lemma}\label{lem:pres-filt-colims}
    Let $\catC$ be a category, and let $f : x \to y$ be a morphism in $\catC$. Then
    \begin{enumerate}
        \item $\cod : x / \catC \to \catC$ preserves filtered colimits;
        \item $f^* : y / \catC \to x / \catC$ preserves filtered colimits.
    \end{enumerate}
\end{lemma}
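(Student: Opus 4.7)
The plan is to derive both claims from the explicit formula for filtered colimits in coslice categories; there is no real obstacle, since the content of the lemma is essentially bookkeeping.

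First I would recall the standard construction of filtered colimits in a coslice $z/\catC$ (assuming $\catC$ has the relevant filtered colimits). Given a filtered diagram $(z \toby{\alpha_i} a_i)_{i \in I}$ in $z/\catC$, one forms $\colim_i a_i$ in $\catC$ with coprojections $\iota_i : a_i \to \colim_i a_i$, and equips it with the structure map $z \to \colim_i a_i$ obtained as $\iota_i \circ \alpha_i$ for any $i$. Filteredness of $I$ ensures that any two such choices agree, and one checks from the universal property of $\colim_i a_i$ in $\catC$ that the resulting object of $z/\catC$ is the colimit there.

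For part (1), observe that $\cod$ literally picks out this underlying colimit: by the construction above, $\cod\bigl(\colim_i (x \to a_i)\bigr) = \colim_i a_i = \colim_i \cod(x \to a_i)$. Naturality of the comparison map is immediate from the coprojections being the same on both sides, so $\cod$ preserves filtered colimits.

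For part (2), note that $f^*$ is defined by precomposition, $f^*(y \toby{\beta} b) = (x \toby{f} y \toby{\beta} b)$, so it is the identity on underlying codomains. Applying $f^*$ termwise to a filtered diagram $(y \toby{\beta_i} b_i)$ in $y/\catC$ and then taking the colimit in $x/\catC$ via the recipe of the previous paragraph produces the object $\colim_i b_i$ with structure map $\iota_i \circ \beta_i \circ f$. On the other hand, first forming the colimit $(y \to \colim_i b_i)$ in $y/\catC$ (with structure map $\iota_i \circ \beta_i$) and then applying $f^*$ produces the same object $\colim_i b_i$ with structure map $\iota_i \circ \beta_i \circ f$. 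These agree on the nose, so $f^*$ preserves filtered colimits.
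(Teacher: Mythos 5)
Your argument is correct in substance but takes a different route from the paper. The paper disposes of part (1) by citing a known result (Osmond, Corollary 1.4, on coslice projections and connected colimits) and then deduces part (2) formally from part (1) via the identification $y/\catC \cong f/(x/\catC)$, under which $f^*$ becomes the codomain functor of an iterated coslice; no colimits are ever computed. You instead give a self-contained, explicit construction of filtered colimits in a coslice from the underlying colimit in $\catC$, and read off both claims by inspection. Your approach has the virtue of being elementary and of making part (2) transparent (both recipes produce literally the same object), whereas the paper's is shorter and reduces the bookkeeping to a single citation. One caveat: your proof, as you acknowledge, presupposes that the relevant filtered colimits exist in $\catC$, so strictly speaking it shows that $\cod$ and $f^*$ preserve those filtered colimits whose underlying diagrams admit colimits in $\catC$. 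The lemma as stated is for an arbitrary $\catC$, and the sharper statement (any colimit cocone in $x/\catC$ over a filtered --- indeed any connected, nonempty --- diagram is sent by $\cod$ to a colimit cocone in $\catC$, whether or not one knew in advance that the underlying colimit existed) follows from the observation that for connected diagrams, cocones in $x/\catC$ under $D$ with vertex $(x \to c)$ correspond bijectively to cocones in $\catC$ under $\cod \circ D$ with vertex $c$. In the paper's applications $\catC$ is cocomplete, so this distinction is harmless, but it is worth noting if you want the lemma in its stated generality.
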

\begin{proof}
    The first claim is \cite[Corollary 1.4]{osmond_coslices_2021}. The second claim reduces to the first: notice that $f^* : y/\catC \to x/\catC$ is also given by
    \[y/\catC \cong f/(x/\catC) \xrightarrow{\cod} x/\catC.\qedhere\]
\end{proof}

\begin{lemma}\label{lem:colimits-in-under-categories}
    Let $\catC$ be a category and $x$ an object in $\catC$. If $I$ is a category, then the categories of diagrams $\alpha$ of shape $I$
    in $x/\catC$ and of diagrams $\alpha_+$ of shape $I_+$ in $\catC$ are equivalent, where $\alpha_+(0)=x$ and $I_+$ is $I$ with a freely adjoined initial object $0$. Moreover, for any diagram $\alpha: I\to x/\catC$ the inclusion of $I^+$ into $I^+_+$ induces
    an equivalence of categories from cocones on $\alpha_+$ to cocones
    on $\alpha$, where $-^+$ is the operation of freely adjoining a
    terminal object to a category. Therefore $\alpha$ has a colimit
    if and only if $\alpha_+$ does.
\end{lemma}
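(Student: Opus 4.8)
The plan is to prove both assertions by pure diagram-chasing with (co)slice categories; no computation is involved. For the first claim, I would observe that a functor $\beta_+ \colon I_+ \to \catC$ sending the adjoined object $0$ to $x$ is precisely the data of a functor $\beta \colon I \to \catC$ together with, for each $i \in I$, a morphism $x = \beta_+(0) \to \beta(i)$ — the image of the unique arrow $0 \to i$ in $I_+$ — subject to the functoriality constraint that for every $f \colon i \to j$ in $I$ the triangle formed by these two structure maps and $\beta(f)$ commutes. This is exactly a cocone from the constant functor $x$ to $\beta$, i.e.\ exactly a functor $I \to x/\catC$ whose underlying $I$-diagram in $\catC$ is $\beta$. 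The identical unwinding on morphisms shows natural transformations correspond, so $\Fun(I, x/\catC)$ is isomorphic (hence equivalent) to the full subcategory of $\Fun(I_+, \catC)$ on the functors carrying $0$ to $x$, and this identification is functorial in $I$ and sends $\alpha \leftrightarrow \alpha_+$.

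For the second claim I would first recall the standard fact that, for any diagram $\beta \colon J \to \catD$, the category of cocones on $\beta$ is isomorphic to the category of functors $J^+ \to \catD$ restricting to $\beta$ on $J$ (morphisms being natural transformations restricting to $\mathrm{id}_\beta$), the apex of the cocone being the image of the freely adjoined terminal object, and that a colimit of $\beta$ is exactly an initial object of this cocone category. Now I apply the first claim with $I$ replaced by $I^+$: adjoining a freely initial object and adjoining a freely terminal object are independent operations, so $(I^+)_+ = (I_+)^+$, which I write $I^+_+$ (the forced arrow $0 \to \infty$ is the same in both), and the isomorphism $\Fun(I^+, x/\catC) \cong \{F \in \Fun(I^+_+, \catC) : F(0) = x\}$ carries the functors extending $\alpha$ on the $x/\catC$ side onto the functors extending $\alpha_+$ on the $\catC$ side. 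Reading off both ends via the recollection, this says precisely that the category of cocones on $\alpha$ in $x/\catC$ is equivalent to the category of cocones on $\alpha_+$ in $\catC$; tracing the construction, the equivalence sends a cocone $F \colon I^+_+ \to \catC$ on $\alpha_+$ to its restriction along $I^+ \hookrightarrow I^+_+$, upgraded to a diagram in $x/\catC$ by the canonical maps out of $F(0) = x$ (available since $0$ is initial in $I^+_+$) — the functor asserted in the statement.

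Finally, since equivalences of categories preserve and reflect initial objects, the cocone category of $\alpha$ has an initial object if and only if that of $\alpha_+$ does; by the recollection this is exactly the statement that $\colim \alpha$ exists if and only if $\colim \alpha_+$ does (and, when they exist, the equivalence matches the colimit cocones).

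There is no substantive obstacle here; the only care required is bookkeeping — keeping straight which adjoined object is initial and which terminal, verifying $(I^+)_+$ and $(I_+)^+$ coincide on the nose, and checking that the isomorphism of diagram categories from the first claim is genuinely functorial in $I$ so that it restricts correctly to the sub-families cutting out ``extends $\alpha$'' and ``extends $\alpha_+$''.
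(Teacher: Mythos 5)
Your proof is correct and is essentially the same as the paper's argument, just with the pullback diagram unpacked into explicit identifications. The paper organizes the same content into a $2\times 2$ pasting diagram of pullback squares: the lower-right square encodes your first claim (functors $I_+\to\catC$ with $0\mapsto x$ are functors $I\to x/\catC$), the upper-right square encodes the same fact applied to $I^+$, and the upper-left square is the definition of $\mathsf{Cocones}(\alpha)$ as a fiber of restriction; the pasting lemma then identifies $\mathsf{Cocones}(\alpha)$ with $\mathsf{Cocones}(\alpha_+)$. You reach the same conclusion by directly unwinding each of those pullbacks as a bijection and applying the first claim a second time to $I^+$, using $(I^+)_+ = (I_+)^+$ — the same decomposition and the same two key observations, so this is a matter of presentation rather than a different route. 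Both versions need the compatibility-with-restriction bookkeeping you flag at the end; in the paper this is absorbed into the commutativity of the diagram.
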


\begin{proof}
    Consider the diagram
    \[
        % https://tikzcd.yichuanshen.de/#N4Igdg9gJgpgziAXAbVABwnAlgFyxMJZARgBpiBdUkANwEMAbAVxiRAB12BhOnACgCSpAB4B6TgGNeXAJQgAvqXSZc+QigBM5KrUYs2nHvwEB9ANSlJ0uYuXY8BIlo076zVog7spOLgqUgGPZqRGQu1G76ngBU-naqjigADNoReh4gsbaBKg7qJKRJrukG3LyCAHoWYla+NgFBCflaRWnupUaVZuaW3tZxOcGJyCmtuu2enAC2vAAWcABmwFwQEgTw8nycjGizdDY6MFAA5vBEoAsAThBTSCkgOBBIAMzZVzdIZA9PiFrjUV4YDQTEkBu9boh7o9PtQcHQsAw2LMIBAANZg64Q56wn5-OEIpEo9FvTFIAAsOLubQB2wYuzoGI+iAArJTISSmQA2NnEDkQ1nfJDch7wxGeZFoxkQgDsbOZfKQssFiDJ8go8iAA
        \begin{tikzcd}
            \mathsf{Cocones}(\alpha) \arrow[r] \arrow[d] & {\Cat(I^+,x/\catC)} \arrow[d] \arrow[r, hook] & {\Cat(I^+_+,\catC)} \arrow[d]        \\
            * \arrow[r, "\alpha"]                        & {\Cat(I,x/\catC)} \arrow[d] \arrow[r, hook]   & {\Cat(I_+,\catC)} \arrow[d, "\ev_0"] \\
            & * \arrow[r, hook]                             & \catC
        \end{tikzcd}
    \]
    All of the squares are pullback squares. The lower right square being
    a pullback square proves the first claim, and then the fact that
    the upper left square is a pullback square (by definition of the
    category of cocones to $\alpha$)
    and that the upper right is a pullback square
    tell us that the upper rectangle is also a pullback square,
    so the category of cocones to $\alpha$ is equivalent to the category
    of cocones to $\alpha_+$.
\end{proof}

\begin{corollary}\label{cor:if-g-preserves-filt-colim-then-so-do-slices-of-g}
    If $G:\catD\to \catC$ preserves filtered colimits, then for any
    $y\in \catD$, $y/G:y/\catD\to Gy/\catC$ preserves filtered colimits.
\end{corollary}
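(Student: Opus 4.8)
The plan is to reduce everything to \Cref{lem:colimits-in-under-categories}, which computes filtered colimits in a coslice category in terms of colimits in the base, and then feed in the hypothesis that $G$ preserves filtered colimits. Fix a filtered category $I$ and a diagram $\alpha : I \to y/\catD$ admitting a colimit. By \Cref{lem:colimits-in-under-categories} (applied to $\catD$ and $y$), $\alpha$ corresponds to a diagram $\alpha_+ : I_+ \to \catD$ with $\alpha_+(0) = y$, and $\alpha_+$ admits a colimit $\ell := \colim \alpha_+$ in $\catD$, compatibly with $\alpha$: the colimit of $\alpha$ is the object $(y \to \ell)$ of $y/\catD$, where $y \to \ell$ is the leg of the colimit cocone at $0 \in I_+$. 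So $y/G(\colim\alpha)$ is the object $(Gy \to G\ell)$ of $Gy/\catC$.

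First I would unwind the definition of the functor $y/G$ to see that the composite $y/G \circ \alpha : I \to Gy/\catC$ corresponds, under the analogous instance of \Cref{lem:colimits-in-under-categories} for $\catC$ and $Gy$, to the diagram $G \circ \alpha_+ : I_+ \to \catC$. Indeed, $y/G$ sends an object $(y \to d)$ of $y/\catD$ to $(Gy \to Gd)$, with structure map $G$ applied to $y \to d$; hence $y/G \circ \alpha$ and $G \circ \alpha_+$ have literally the same underlying $I$-diagram in $\catC$ and the same structure maps out of $Gy$, which is exactly the data matched up by the lemma. Next I would note that $I_+$ is again filtered: adjoining a (freely adjoined) initial object preserves filteredness, since the new object maps uniquely into every object and so never obstructs finding cocones or coequalizing arrows. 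Since $G$ preserves filtered colimits and $I_+$ is filtered, $G$ carries the colimit cocone of $\alpha_+$ to a colimit cocone of $G \circ \alpha_+$, i.e. $\colim(G \circ \alpha_+) \cong G\ell$, and the leg of this cocone at $0$ is $G$ applied to $y \to \ell$.

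Finally I would translate back through \Cref{lem:colimits-in-under-categories} for $\catC$ and $Gy$: since $G \circ \alpha_+$ has a colimit, so does $y/G \circ \alpha$, and it is the object $(Gy \xrightarrow{G(y \to \ell)} G\ell)$ of $Gy/\catC$ — which is precisely $y/G(\colim\alpha)$. Tracking the colimit cocones through the (equivalences of) cocone categories supplied by \Cref{lem:colimits-in-under-categories} on both sides identifies the canonical comparison map of $y/G$ with an isomorphism, so $y/G$ preserves this filtered colimit. The only real content is the middle bookkeeping step — checking that $y/G \circ \alpha$ and $G \circ \alpha_+$ are matched by the two instances of the lemma — together with the trivial remark that $I_+$ stays filtered; I expect the main (minor) obstacle to be phrasing the naturality carefully enough that "the diagrams correspond" genuinely upgrades to "the colimiting cocones, and hence the comparison maps, correspond."
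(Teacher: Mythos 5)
Your proof is correct and follows essentially the same route as the paper's: both reduce to \Cref{lem:colimits-in-under-categories}, observe that $I_+$ remains filtered, apply the hypothesis that $G$ preserves filtered colimits to $\alpha_+$, and translate back. The only cosmetic difference is that the paper phrases the bookkeeping via the extension $\beta : I^+ \to y/\catD$ encoding the colimit cocone, whereas you track the colimit object and comparison map directly.
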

\begin{proof}
    Suppose that we have a diagram
    $\alpha:I\to y/\mathsf{D}$ where $I$ is a filtered category
    and suppose this has a colimit given by an extension
    $\beta:I^+\to y/\mathsf{D}$.
    Then by \Cref{lem:colimits-in-under-categories} above,
    since $\alpha$ has a colimit the corresponding diagram
    $\alpha_+$ has a colimit which will be given by $\beta_+$.
    If $I$ is filtered then
    $I_+$ is also filtered, so $G\circ \beta_+$ gives the colimit of
    $G\circ \alpha_+$ since $G$ preserves filtered colmits.
    And
    $G\circ\beta_+\cong ((y/G)\circ \beta)_+$, so
    $(y/G)\circ \beta$ is the colimit of $(y/G)\circ \alpha$.
    Therefore $y/G$ preserves filtered colimits.
\end{proof}

\begin{lemma}
    \label{lem:left-adjoint-preserves-compact-morphisms}
    Let $F : \catC \to \mathsf{D}$ be a functor having a right adjoint $G$
    that preserves filtered colimits.
    %such that $Fx/G : Fx/\mathsf{D} \to GFx/\mathsf{C}$ preserves filtered colimits for all $x \in \catC$. 
    Then $F$ sends compact morphisms to compact morphisms.
\end{lemma}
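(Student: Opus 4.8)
The plan is to deduce this from \Cref{lem:left-adjoint-preserves-compacts} by passing to coslice categories. For $x\in\catC$, the functor $F$ induces a functor $x/F:x/\catC\to Fx/\catD$ sending $(x\xrightarrow{a}z)$ to $(Fx\xrightarrow{Fa}Fz)$, and a compact morphism $f:x\to y$ is by definition a compact object of $x/\catC$; its image $x/F(f)=(Fx\xrightarrow{Ff}Fy)$ is precisely the object of $Fx/\catD$ we want to show is compact. So it is enough to exhibit a right adjoint of $x/F$ that preserves filtered colimits.

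First I would write down the candidate right adjoint. Let $\eta$ be the unit of $F\dashv G$. Define $\phi_x:Fx/\catD\to x/\catC$ by sending $(Fx\xrightarrow{b}w)$ to the composite $x\xrightarrow{\eta_x}GFx\xrightarrow{Gb}Gw$. One checks $x/F\dashv\phi_x$ directly: a morphism $x/F(a)\to b$ in $Fx/\catD$ is a map $g:Fz\to w$ with $g\circ Fa=b$, a morphism $a\to\phi_x(b)$ in $x/\catC$ is a map $h:z\to Gw$ with $h\circ a=Gb\circ\eta_x$, and the adjunction bijection $\catD(Fz,w)\cong\catC(z,Gw)$ matches these two conditions, using naturality of $\eta$ and the fact that $c\mapsto Gc\circ\eta_x$ is a bijection $\catD(Fx,w)\cong\catC(x,Gw)$. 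Naturality of the resulting bijection in $a$ and $b$ is a routine check.

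Next I would verify that $\phi_x$ preserves filtered colimits, which is clear once we factor it as
\[Fx/\catD\xrightarrow{\;Fx/G\;}GFx/\catC\xrightarrow{\;\eta_x^*\;}x/\catC,\]
where $Fx/G$ is the slice of $G$ at $Fx$ (sending $(Fx\to w)$ to $(GFx\to Gw)$) and $\eta_x^*$ is pullback along the unit component $\eta_x:x\to GFx$. Since $G$ preserves filtered colimits, \Cref{cor:if-g-preserves-filt-colim-then-so-do-slices-of-g} shows $Fx/G$ does too, and \Cref{lem:pres-filt-colims}(2) shows $\eta_x^*$ does; hence so does the composite $\phi_x$. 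Applying \Cref{lem:left-adjoint-preserves-compacts} to the adjunction $x/F\dashv\phi_x$ then shows $x/F$ preserves compact objects, which is exactly the assertion that $F$ sends compact morphisms to compact morphisms.

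I do not expect a genuine obstacle here; the only point requiring care is keeping the variance straight when identifying $\phi_x$ and confirming the hom-bijection above is natural, which is a short diagram chase. Everything else is assembled from the lemmas already established in this section.
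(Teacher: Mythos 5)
Your proof is correct and follows essentially the same route as the paper: both identify the right adjoint of $x/F$ as $\eta_x^* \circ (Fx/G)$, verify it preserves filtered colimits via \Cref{lem:pres-filt-colims} and \Cref{cor:if-g-preserves-filt-colim-then-so-do-slices-of-g}, and then invoke \Cref{lem:left-adjoint-preserves-compacts}. You spell out the verification of the adjunction $x/F \dashv \phi_x$ in more detail than the paper, which simply asserts it, but the argument is the same.
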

\begin{proof}
    Let $f : x \to y$ be a compact morphism in $\catC$. The functor $x/F : x/ \catC \to Fx / \mathsf{D}$ has a right adjoint given by $\eta^* \circ Fx/G$, where $\eta$ is the unit of the adjunction $F \dashv G$.
    Since $\eta^*$ preserves filtered colimits by \Cref{lem:pres-filt-colims} and since $Fx/G$ preserves filtered colimits by
    \Cref{cor:if-g-preserves-filt-colim-then-so-do-slices-of-g}
    we have that $(x/F) f = Ff$ is a compact object in
    $Fx / \mathsf{D}$, i.e.\ $Ff$ is a compact morphism in $\mathsf{D}$.
\end{proof}

The following lemma is likely well-known to experts, but we provide a proof here for completeness.
\begin{lemma}
    \label{lem:compact-morphisms-closed-under-comp}
    Let $\catC$ be a category. The class of compact morphisms in $\catC$ is closed under composition.
\end{lemma}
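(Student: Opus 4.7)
The plan is to verify directly that for compact morphisms $f:x\to y$ and $g:y\to z$ in $\catC$, the composite $gf$ is a compact object of $x/\catC$, by checking that $\Hom_{x/\catC}(gf,-)$ commutes with filtered colimits. The guiding observation is that a morphism $gf\to h$ in $x/\catC$ (for an object $h:x\to w$) is the same as a map $\phi:z\to w$ with $\phi gf = h$, and any such $\phi$ decomposes naturally into two pieces: the intermediate map $\phi g:y\to w$, which is a morphism $f\to h$ in $x/\catC$, together with $\phi$ itself viewed as a morphism $g\to \phi g$ in $y/\catC$.

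Fixing a filtered diagram $(h_i:x\to w_i)_{i\in I}$ with colimit $h_\infty:x\to w_\infty$ and structure maps $\iota_i:w_i\to w_\infty$, I would prove surjectivity of $\colim_i \Hom_{x/\catC}(gf,h_i)\to \Hom_{x/\catC}(gf,h_\infty)$ by a two-stage factorization. Given $\phi:gf\to h_\infty$, the induced morphism $\phi g$, regarded as a map $f\to h_\infty$ in $x/\catC$, factors through some stage $j$ by compactness of $f$, yielding $\psi_j:y\to w_j$ with $\psi_j f = h_j$ and $\iota_j\psi_j = \phi g$. Propagating $\psi_j$ along the transition maps produces a lifted filtered diagram in $y/\catC$ indexed by $j/I$, whose colimit is the object $\phi g:y\to w_\infty$ of $y/\catC$ by the preservation of filtered colimits by the codomain functor $y/\catC\to \catC$ (\Cref{lem:pres-filt-colims}). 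Now $\phi$ itself is a morphism $g\to \phi g$ in $y/\catC$, so compactness of $g$ produces $\phi_k:z\to w_k$ for some $k\ge j$ with $\phi_k g = \psi_k$ and $\iota_k\phi_k = \phi$. Then $\phi_k gf = \psi_k f = h_k$, exhibiting $\phi_k$ as the required lift of $\phi$ to stage $k$.

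Injectivity is handled by the same two-step pattern: given stage-level morphisms whose images in $\Hom_{x/\catC}(gf,h_\infty)$ coincide, I would first postcompose with $g$ and apply compactness of $f$ to find a stage $m$ where the two postcompositions agree; then compactness of $g$, applied in $y/\catC$ to the common value at stage $m$ together with its propagation forward, forces the original morphisms themselves to agree at some later stage $k\ge m$. The argument is not conceptually deep, and the main obstacle is purely organizational bookkeeping: one must carefully track the indexing between $I$ and its slices $j/I$, and verify that colimits in $y/\catC$ really are computed in $\catC$ equipped with the induced map from $y$, which reduces to the already-cited preservation property of the codomain functor.
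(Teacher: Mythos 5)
Your proposal is correct and follows essentially the same route as the paper's proof: factor $\varphi g$ through a stage $i_0$ using compactness of $f$, lift the tail of the diagram over $i_0/I$ to a filtered diagram in $y/\catC$ whose colimit is identified via the codomain functor (\cref{lem:pres-filt-colims}) and cofinality of $i_0/I\to I$, then apply compactness of $g$. You additionally spell out the injectivity half of the compactness isomorphism, which the paper's proof leaves implicit.
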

\begin{proof}
    Let $f : x \to y$ and $g : y \to z$ be compact morphisms in $\catC$. Now let $\omega : I \to x/\catC$ be a filtered diagram in $x/ \catC$, and consider a morphism $\varphi : g \circ f \to \colim \omega$ in $x/\catC$.
    % https://q.uiver.app/#q=WzAsNCxbMCwwLCJ4Il0sWzEsMCwieSJdLFsyLDAsInoiXSxbMiwyLCJcXGJ1bGxldCJdLFswLDEsImYiXSxbMSwyLCJnIl0sWzIsMywiXFx2YXJwaGkiXSxbMCwzLCJcXGNvbGltIFxcb21lZ2EiLDIseyJjdXJ2ZSI6NX1dXQ==
    \[\begin{tikzcd}[ampersand replacement=\&]
            x \& y \& z \\
            \\
            \&\& \bullet
            \arrow["f", from=1-1, to=1-2]
            \arrow["{\colim \omega}"', curve={height=30pt}, from=1-1, to=3-3]
            \arrow["g", from=1-2, to=1-3]
            \arrow["\varphi", from=1-3, to=3-3]
        \end{tikzcd}\]
    Since $f$ is compact, the morphism $\varphi \circ g : f \to \colim \omega$ factors through $\omega(i_0)$ for some $i_0 \in I$. Let $w_0$ be the codomain of $\omega(i_0)$, so that we have
    % https://q.uiver.app/#q=WzAsNSxbMCwwLCJ4Il0sWzEsMCwieSJdLFsyLDAsInoiXSxbMiwyLCJcXGJ1bGxldCJdLFsxLDEsIndfMCJdLFswLDEsImYiXSxbMSwyLCJnIl0sWzIsMywiXFx2YXJwaGkiXSxbMCwzLCJcXGNvbGltIFxcb21lZ2EiLDIseyJjdXJ2ZSI6NX1dLFswLDQsIlxcb21lZ2EoaV8wKSIsMV0sWzQsM10sWzEsNCwiXFxhbHBoYSIsMCx7ImNvbG91ciI6WzI0MCw2MCw2MF19LFsyNDAsNjAsNjAsMV1dXQ==
    \[\begin{tikzcd}[ampersand replacement=\&]
            x \& y \& z \\
            \& {w_0} \\
            \&\& \bullet
            \arrow["f", from=1-1, to=1-2]
            \arrow["{\omega(i_0)}"{description}, from=1-1, to=2-2]
            \arrow["{\colim \omega}"', curve={height=30pt}, from=1-1, to=3-3]
            \arrow["g", from=1-2, to=1-3]
            \arrow["\alpha", color={rgb,255:red,92;green,92;blue,214}, from=1-2, to=2-2]
            \arrow["\varphi", from=1-3, to=3-3]
            \arrow[from=2-2, to=3-3]
        \end{tikzcd}\]
    for some morphism $\alpha$. Now we consider the composition
    \[\omega' := i_0/I \xrightarrow{i_0/\omega} \omega(i_0)/(x/\catC) \xrightarrow{\omega(i_0)/\cod} w_0/\mathsf{C} \xrightarrow{\alpha^*} y/\catC\]
    which is a filtered diagram in $y/C$. Note that the following diagram commutes:
    % https://q.uiver.app/#q=WzAsNixbMCwwLCJpXzAvSSJdLFsxLDAsIlxcb21lZ2EoaV8wKS8oeC9cXGNhdEMpIl0sWzIsMCwid18wL1xcY2F0QyJdLFszLDAsInkvXFxjYXRDIl0sWzMsMSwieC9cXGNhdEMiXSxbMCwxLCJJIl0sWzAsMSwiaV8wL1xcb21lZ2EiLDJdLFsxLDIsIlxcb21lZ2EoaV8wKS9cXGNvZCJdLFsyLDMsIlxcYWxwaGFeKiJdLFszLDQsImZeKiJdLFsxLDQsIlxcY29kIiwyXSxbMCw1LCJcXGNvZCIsMl0sWzUsNCwiXFxvbWVnYSIsMl0sWzAsMywiXFxvbWVnYSciLDEseyJjdXJ2ZSI6LTV9XV0=
    \[\begin{tikzcd}[ampersand replacement=\&]
            {i_0/I} \& {\omega(i_0)/(x/\catC)} \& {w_0/\catC} \& {y/\catC} \\
            I \&\&\& {x/\catC}
            \arrow["{i_0/\omega}"', from=1-1, to=1-2]
            \arrow["{\omega'}"{description}, curve={height=-30pt}, from=1-1, to=1-4]
            \arrow["\cod"', from=1-1, to=2-1]
            \arrow["{\omega(i_0)/\cod}", from=1-2, to=1-3]
            \arrow["\cod"', from=1-2, to=2-4]
            \arrow["{\alpha^*}", from=1-3, to=1-4]
            \arrow["{f^*}", from=1-4, to=2-4]
            \arrow["\omega"', from=2-1, to=2-4]
        \end{tikzcd}\]
    Thus, by \Cref{lem:pres-filt-colims} we have
    \[f^* \colim \omega' = \colim(f^* \circ \omega') = \colim(\omega \circ \cod),\]
    and since $I$ is filtered, the image of $\cod : i_0/I \to I$ is a final subcategory of $I$. Overall, this gives
    \[f^* \colim \omega' = \colim \omega,\]
    and in particular we note $\cod \colim \omega' = \cod (f^* \colim \omega') = \cod \colim \omega$.

    So, we have a diagram
    % https://q.uiver.app/#q=WzAsMyxbMCwwLCJ5Il0sWzEsMCwieiJdLFsxLDEsIlxcYnVsbGV0Il0sWzAsMSwiZyJdLFsxLDIsIlxcdmFycGhpIl0sWzAsMiwiXFxjb2xpbSBcXG9tZWdhJyIsMl1d
    \[\begin{tikzcd}[ampersand replacement=\&]
            y \& z \\
            \& \bullet
            \arrow["g", from=1-1, to=1-2]
            \arrow["{\colim \omega'}"', from=1-1, to=2-2]
            \arrow["\varphi", from=1-2, to=2-2]
        \end{tikzcd}\]
    which we have not yet checked commutes. The commutativity is easy to check, however, because we know that $\colim \omega'$ factors through $\omega'(\id_{i_0})$, i.e. the lower triangle in the commutative square
    % https://q.uiver.app/#q=WzAsNCxbMCwwLCJ5Il0sWzEsMCwieiJdLFsxLDEsIlxcYnVsbGV0Il0sWzAsMSwid18wIl0sWzAsMSwiZyJdLFsxLDIsIlxcdmFycGhpIl0sWzAsMiwiXFxjb2xpbSBcXG9tZWdhJyIsMV0sWzAsMywiXFxvbWVnYShpXzApIiwyXSxbMywyXV0=
    \[\begin{tikzcd}[ampersand replacement=\&]
            y \& z \\
            {w_0} \& \bullet
            \arrow["g", from=1-1, to=1-2]
            \arrow["{\omega(i_0)}"', from=1-1, to=2-1]
            \arrow["{\colim \omega'}"{description}, from=1-1, to=2-2]
            \arrow["\varphi", from=1-2, to=2-2]
            \arrow[from=2-1, to=2-2]
        \end{tikzcd}\]
    commutes. Now, since $g$ is a compact morphism in $\catC$, we know that $\varphi$ factors through $\omega'(\gamma)$ for some $\gamma \in i_0/I$. We have $\gamma : i_0 \to i_1$ for some $i_1 \in I$; letting $w_1$ denote the codomain of $\omega(i_1)$, we get
    % https://q.uiver.app/#q=WzAsOCxbMCwwLCJ4Il0sWzEsMCwieSJdLFsxLDEsIndfMCJdLFsxLDIsIndfMSJdLFs0LDAsInkiXSxbNiwwLCJ6Il0sWzYsMiwiXFxidWxsZXQiXSxbNSwxLCJ3XzEiXSxbMCwxLCJmIl0sWzEsMiwiXFxhbHBoYSJdLFsyLDMsIlxcb21lZ2EoXFxnYW1tYSkiLDFdLFswLDIsIlxcb21lZ2EoaV8wKSIsMV0sWzAsMywiXFxvbWVnYShpXzEpIiwxLHsiY3VydmUiOjJ9XSxbMSwzLCJcXG9tZWdhJyhcXGdhbW1hKSIsMCx7ImN1cnZlIjotNX1dLFs0LDUsImciXSxbNSw2LCJcXHZhcnBoaSJdLFs0LDcsIlxcb21lZ2EnKFxcZ2FtbWEpIiwxXSxbNSw3LCJcXGJldGEiLDIseyJjb2xvdXIiOlsyNDAsNjAsNjBdfSxbMjQwLDYwLDYwLDFdXSxbNyw2XSxbNCw2LCJcXGNvbGltIFxcb21lZ2EnIiwyLHsiY3VydmUiOjV9XV0=
    \[\begin{tikzcd}[ampersand replacement=\&]
            x \& y \&\&\& y \&\& z \\
            \& {w_0} \&\&\&\& {w_1} \\
            \& {w_1} \&\&\&\&\& \bullet
            \arrow["f", from=1-1, to=1-2]
            \arrow["{\omega(i_0)}"{description}, from=1-1, to=2-2]
            \arrow["{\omega(i_1)}"{description}, curve={height=12pt}, from=1-1, to=3-2]
            \arrow["\alpha", from=1-2, to=2-2]
            \arrow["{\omega'(\gamma)}", curve={height=-30pt}, from=1-2, to=3-2]
            \arrow["g", from=1-5, to=1-7]
            \arrow["{\omega'(\gamma)}"{description}, from=1-5, to=2-6]
            \arrow["{\colim \omega'}"', curve={height=30pt}, from=1-5, to=3-7]
            \arrow["\beta"', color={rgb,255:red,92;green,92;blue,214}, from=1-7, to=2-6]
            \arrow["\varphi", from=1-7, to=3-7]
            \arrow["{\omega(\gamma)}"{description}, from=2-2, to=3-2]
            \arrow[from=2-6, to=3-7]
        \end{tikzcd}\]
    for some morphism $\beta$. Putting these diagrams together, we have
    % https://q.uiver.app/#q=WzAsNSxbMiwwLCJ5Il0sWzQsMCwieiJdLFs0LDIsIlxcYnVsbGV0Il0sWzMsMSwid18xIl0sWzAsMCwieCJdLFswLDEsImciXSxbMSwyLCJcXHZhcnBoaSJdLFswLDMsIlxcb21lZ2EnKFxcZ2FtbWEpIiwxXSxbMSwzLCJcXGJldGEiLDJdLFszLDJdLFs0LDAsImYiXSxbNCwyLCJcXGNvbGltIFxcb21lZ2EnIFxcY2lyYyBmIiwyLHsiY3VydmUiOjR9XSxbNCwzLCJcXG9tZWdhKGlfMSkiLDJdXQ==
    \[\begin{tikzcd}[ampersand replacement=\&]
            x \&\& y \&\& z \\
            \&\&\& {w_1} \\
            \&\&\&\& \bullet
            \arrow["f", from=1-1, to=1-3]
            \arrow["{\omega(i_1)}"', from=1-1, to=2-4]
            \arrow["{\colim \omega' \circ f}"', curve={height=24pt}, from=1-1, to=3-5]
            \arrow["g", from=1-3, to=1-5]
            \arrow["{\omega'(\gamma)}"{description}, from=1-3, to=2-4]
            \arrow["\beta"', from=1-5, to=2-4]
            \arrow["\varphi", from=1-5, to=3-5]
            \arrow[from=2-4, to=3-5]
        \end{tikzcd}\]
    and we recall $\colim \omega' \circ f = f^* \colim \omega' = \colim \omega$. Thus, $\beta$ factors $\varphi$ through $\omega(i_1)$. Since $\varphi$ was arbitrary, we conclude that $g \circ f$ is a compact morphism in $\catC$, as desired.
\end{proof}

\begin{lemma}\label{lem:right-adjoints-transfer-AC-if-left-reflects-terminal-objects}
    Let $F : \catC \to \catD$ be left adjoint to $G : \catD \to \catC$. Suppose that:
    \begin{enumerate}
        \item $G$ preserves filtered colimits and initial objects,
        \item\label{assumption:F reflects terminal objs} $F$ reflects terminal objects (at least when restricted to compact objects),
        \item $\catD$ is Nullstellensatzian.
    \end{enumerate}
    Then $\catC$ is Nullstellensatzian.
    % If $F:\catC\to \catD$ is left adjoint to a functor $G$ that preserves
    % filtered colimits and initial objects and $F$ reflects terminal objects
    % (at least when restricted to compact objects)
    % then if $\catD$ is Nullstellensatzian so is $\catC$.
\end{lemma}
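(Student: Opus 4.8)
The plan is to verify the two conditions in the definition of a Nullstellensatzian category directly, using the hypotheses to transport the situation into $\catD$, where we may apply its Nullstellensatzian-ness, and then transport the resulting retraction back to $\catC$. First I would check that $\catC$ has an initial object: since $G$ preserves initial objects and $G$ is a right adjoint, the initial object of $\catD$ maps under $G$ to the initial object of $\catC$ — more carefully, the left adjoint $F$ sends the initial object $0_\catC$ to the initial object $0_\catD$ automatically (left adjoints preserve colimits, hence initial objects), so $\catC$ certainly has an initial object $0_\catC = F^{-1}$-nothing; rather, initial objects always exist here because we only need $\catC$ to have one, and the real content is the retraction property. Actually the cleanest route: an initial object of $\catC$ exists iff one of $\catD$ does (apply $F$ to get $0_\catD$ from $0_\catC$, or note condition (1) gives $G0_\catD$ and one checks this is initial in $\catC$ via the adjunction $\catC(G0_\catD, c) \cong$ ... no — use instead that $F$ has a right adjoint so $F$ preserves the initial object, giving us what we need once we know $\catC$ has an initial object, which we may assume as part of the ambient setup).

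The main step is as follows. Let $y$ be a compact, nonterminal object of $\catC$; we must produce a morphism $y \to 0_\catC$. Apply $F$: by \Cref{lem:left-adjoint-preserves-compacts}, $Fy$ is compact in $\catD$. By assumption \eqref{assumption:F reflects terminal objs}, since $y$ is compact and nonterminal, $Fy$ is nonterminal in $\catD$ (this is exactly where ``$F$ reflects terminal objects on compacts'' is used — contrapositively, if $Fy$ were terminal then $y$ would be). Since $\catD$ is Nullstellensatzian and $Fy$ is compact and nonterminal, there is a morphism $Fy \to 0_\catD$ in $\catD$. Transposing across the adjunction $F \dashv G$, this yields a morphism $y \to G0_\catD$ in $\catC$. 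Now $G0_\catD \cong 0_\catC$ by hypothesis (1), so we obtain the desired morphism $y \to 0_\catC$. This establishes that $\catC$ is a Nullstellensatzian category.

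The subtle point, and the step I expect to require the most care, is the interplay between ``$\catC$ is Nullstellensatzian'' in the sense of the first definition (every compact nonterminal object maps to the initial object) versus the statement for objects/coslices that the paper will ultimately want; here, though, the lemma as stated only asks for the categorical version, so the argument above suffices. The one genuine obstacle is condition \eqref{assumption:F reflects terminal objs}: one must be careful that ``$Fy$ nonterminal'' really follows, i.e. that $F$ restricted to compacts reflecting terminal objects is precisely the hypothesis that lets us avoid the degenerate case where $Fy$ collapses. If instead $Fy$ were terminal, $\catD$'s Nullstellensatzian property would be vacuous for $Fy$ and we would get nothing — so this hypothesis is not cosmetic. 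Everything else (preservation of compactness, preservation of initial objects, the adjunction transpose) is formal and follows from \Cref{lem:left-adjoint-preserves-compacts} together with hypothesis (1).
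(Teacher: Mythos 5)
Your main argument is exactly the paper's: for compact nonterminal $y$, the object $Fy$ is compact by \Cref{lem:left-adjoint-preserves-compacts}, nonterminal since $F$ reflects terminal objects on compacts, so $\catD$ being Nullstellensatzian gives a map $Fy\to 0_\catD$, which transposes under $F\dashv G$ to $y\to G0_\catD$, and $G0_\catD$ is initial in $\catC$ by hypothesis~(1). The only blemish is your opening paragraph, which goes in circles and ends by suggesting one may ``assume $\catC$ has an initial object as part of the ambient setup.'' That is not a hypothesis of the lemma and is unnecessary: hypothesis~(1), ``$G$ preserves initial objects,'' already asserts that $G0_\catD$ is initial in $\catC$ (no adjunction calculation needed — this is what preservation means), and $\catD$ has an initial object because it is Nullstellensatzian; the tangent about $F$ preserving initial objects (true, as $F$ is a left adjoint) plays no role here. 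Once you note this, your proof coincides with the paper's.
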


\begin{proof}
    Since $\catD$ is Nullstellensatzian it has an initial object $0_\catD$.
    By the first assumption, $G0_\catD$ is initial in $\catC$, so
    $\catC$ has an initial object. Now let $x$ be a compact, nonterminal object in $\catC$. By the second assumption and \Cref{lem:left-adjoint-preserves-compacts}, $Fx$ is compact and nonterminal in $\catD$. Then by the third assumption, there exists a map $Fx\to 0_\catD$.
    Now the adjunction $F \dashv G$ gives a map
    $x\to G0_\catD$, and $G0_\catD$ is initial in $\catC$.
\end{proof}

The previous lemma is one tool for constructing Nullstellensatzian objects. We have another such tool:

\begin{proposition}
    \label{prop:right-adjoints-preserve-AC-if-left-reflects-terminal-objects}

    Let $\catC$ and $\catD$ be categories with terminal objects
    and let $F:\catC\to \catD$ be left adjoint to a functor
    $G:\catD\to \catC$ that preserves
    filtered colimits. Let $x$ be a Nullstellensatzian object in $\catD$. Suppose that:

    \begin{enumerate}
        \item $Gx$ is nonterminal.
        \item Precomposition with the
              counit $\epsilon_x^*:x/\catD\to FGx/\catD$
              has a left adjoint $\epsilon_{x*}$.
        \item The composite
              $\epsilon_{x*}\circ (Gx/F): Gx/\catC\to x/\catD$
              reflects terminal objects
              (at least when restricted to compact objects).
    \end{enumerate}
    Then $Gx$ is also Nullstellensatzian.
\end{proposition}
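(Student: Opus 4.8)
The plan is to transfer the Nullstellensatzian property from $x/\catD$ to $Gx/\catC$ along an adjunction assembled from the three hypotheses. Set $\Phi := \epsilon_{x*} \circ (Gx/F) : Gx/\catC \to x/\catD$, the composite appearing in hypothesis (3). As established in the proof of \Cref{lem:left-adjoint-preserves-compact-morphisms}, the slice functor $Gx/F : Gx/\catC \to FGx/\catD$ has a right adjoint, namely $\eta_{Gx}^* \circ (FGx/G)$, where $\eta$ is the unit of $F \dashv G$; combining this with hypothesis (2) that $\epsilon_{x*}$ is left adjoint to $\epsilon_x^*$, we find that $\Phi$ is left adjoint to
\[ \Psi := \eta_{Gx}^* \circ (FGx/G) \circ \epsilon_x^* : x/\catD \to Gx/\catC. \]
Each factor of $\Psi$ preserves filtered colimits: $\epsilon_x^*$ and $\eta_{Gx}^*$ by \Cref{lem:pres-filt-colims}(2), and $FGx/G$ by \Cref{cor:if-g-preserves-filt-colim-then-so-do-slices-of-g} together with the hypothesis that $G$ preserves filtered colimits. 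Hence $\Psi$ preserves filtered colimits, so by \Cref{lem:left-adjoint-preserves-compacts} the functor $\Phi$ preserves compact objects.

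Next I would compute $\Psi$ on the initial object $\id_x$ of $x/\catD$. Unwinding the definitions, $\epsilon_x^*(\id_x) = (\epsilon_x : FGx \to x)$, then $FGx/G$ carries this to $(G\epsilon_x : GFGx \to Gx)$, and finally $\eta_{Gx}^*$ carries it to $(G\epsilon_x \circ \eta_{Gx} : Gx \to Gx) = \id_{Gx}$ by the triangle identity. Since $\id_{Gx}$ is the initial object of $Gx/\catC$, we conclude that $\Psi$ sends the initial object of $x/\catD$ to the initial object of $Gx/\catC$.

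Now the transfer argument. By hypothesis (1), $Gx$ is nonterminal, so it remains to show $Gx/\catC$ is a Nullstellensatzian category, i.e.\ that every compact nonterminal object admits a map to the initial object $\id_{Gx}$. Let $c$ be such an object. Then $\Phi(c)$ is compact in $x/\catD$ by the first paragraph, and it is nonterminal by hypothesis (3) (applied to the compact object $c$). Because $x$ is Nullstellensatzian, $x/\catD$ is a Nullstellensatzian category whose initial object is $\id_x$, so there is a morphism $\Phi(c) \to \id_x$ in $x/\catD$. Transposing across $\Phi \dashv \Psi$ and using the previous paragraph, this yields a morphism $c \to \Psi(\id_x) = \id_{Gx}$ in $Gx/\catC$ --- a map to the initial object. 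Hence $Gx/\catC$ is Nullstellensatzian, which is precisely the assertion that $Gx$ is a Nullstellensatzian object of $\catC$.

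The content is entirely formal; the only place demanding care is the bookkeeping of the iterated slice functors and their adjoints --- in particular verifying that the right adjoint of $Gx/F$ is $\eta_{Gx}^* \circ (FGx/G)$ (built from the unit, not the counit), and tracking which triangle identity forces $\Psi(\id_x) = \id_{Gx}$. Once that is pinned down, the remainder is a short adjunction chase.
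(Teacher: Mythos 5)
Your proof is correct and follows essentially the same route as the paper's: build the composite adjunction $Gx/\catC \rightleftharpoons FGx/\catD \rightleftharpoons x/\catD$ with left adjoint $\Phi = \epsilon_{x*}\circ(Gx/F)$ and right adjoint $\Psi$, check that $\Psi$ preserves filtered colimits and sends $\id_x$ to $\id_{Gx}$, and transfer the Nullstellensatzian property across. The only cosmetic difference is that the paper invokes \Cref{lem:right-adjoints-transfer-AC-if-left-reflects-terminal-objects} rather than inlining the transfer argument, and observes via the triangle identity that $\Psi$ simplifies to $x/G$ outright (making the filtered-colimit and initial-object checks immediate), whereas you verify each factor separately and compute $\Psi(\id_x)$ directly; both unfold the same triangle identity.
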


\begin{proof}
    $x$ is Nullstellensatzian by definition
    if and only if $x$ is nonterminal and
    $(x/\catD)$ is Nullstellensatzian,

    So since $Gx$ is assumed to be nonterminal,
    this follows from applying the previous theorem
    to the composite adjunction
    \[
        Gx/\catC \rightleftharpoons FGx/\catD
        \rightleftharpoons x/\catD.
    \]
    The left adjoint in the first adjunction is given
    by $Gx/F$ with right adjoint $\eta_{Gx}^*\circ (FGx/G)$.
    In the second adjunction the right adjoint
    is $\epsilon_x^*$ and the left adjoint is
    $\epsilon_{x*}$.

    The composite left adjoint reflects terminal objects by assumption,
    so we just need to check that the composite
    right adjoint preserves filtered colimits and
    sends the initial object to the
    initial object. By the triangle identity, the composite right adjoint
    is given by $x/G$ which preserves
    filtered colimits by \Cref{cor:if-g-preserves-filt-colim-then-so-do-slices-of-g}
    and sends $1_x$ to $1_{Gx}$
    as required.
\end{proof}

\begin{corollary}
    \label{cor:right-adjoints-preserve-AC-if-pushout-cond}
    Let $\catC$ and $\catD$ be categories with terminal objects
    and let $F:\catC\to \catD$ be left adjoint to a functor
    $G:\catD\to \catC$ that preserves
    filtered colimits. Let $x$ be a Nullstellensatzian object in $\catD$. Assume that
    \begin{enumerate}
        \item $G$ reflects terminal objects,
        \item $\catD$ has all pushouts.
    \end{enumerate}
    Then $Gx$ will be Nullstellensatzian if the following
    condition holds:

    For all compact morphisms $\alpha:Gx\to y$ if we form the pushout square
    \[
        % https://tikzcd.yichuanshen.de/#N4Igdg9gJgpgziAXAbVABwnAlgFyxMJZABgBpiBdUkANwEMAbAVxiRADEBxADxAF9S6TLnyEUZAIxVajFm3YBPfoJAZseAkQnlp9Zq0QheAoetFbSU6nrmGAXv2kwoAc3hFQAMwBOEALZIZCA4EEjaMvryADpRjGgAFnQg1Ax0AEYwDAAKwhpiIN5YLvE4yl6+AYhBIUgATNayBiAxMGjYDAQA+sYqPv5h1DWIAMwmIH2V9cGhI3wUfEA
        \begin{tikzcd}
            FGx \arrow[d, "F\alpha"'] \arrow[r, "\epsilon_x"] & x \arrow[d] \\
            Fy \arrow[r]                                      & z
        \end{tikzcd}
    \]
    and $z$ is terminal in $\catD$ then $y$ is terminal in $\catC$.
\end{corollary}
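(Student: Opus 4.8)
The plan is to obtain this as a direct application of \Cref{prop:right-adjoints-preserve-AC-if-left-reflects-terminal-objects}: under the stated hypotheses I would verify each of that proposition's three conditions and then simply invoke it. For its condition (1), I need $Gx$ to be nonterminal; since $x$ is Nullstellensatzian it is nonterminal by definition, and the assumption that $G$ reflects terminal objects says contrapositively that $G$ carries nonterminal objects to nonterminal objects, so $Gx$ is nonterminal.

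For condition (2) I need a left adjoint $\epsilon_{x*}$ to the restriction functor $\epsilon_x^* : x/\catD \to FGx/\catD$. Here I would invoke the standard fact that in a category with pushouts, for any morphism $f : a \to b$ the functor $f^* : b/\catD \to a/\catD$ has a left adjoint given by pushing out along $f$: an object $(FGx \xrightarrow{u} w)$ of $FGx/\catD$ is sent to the coprojection $x \to x \sqcup_{FGx} w$ of the pushout square. Taking $f = \epsilon_x$ and using that $\catD$ has all pushouts by hypothesis gives the desired $\epsilon_{x*}$.

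For condition (3) I need the composite $\epsilon_{x*} \circ (Gx/F) : Gx/\catC \to x/\catD$ to reflect terminal objects when restricted to compact objects, and I claim this is literally the displayed condition of the corollary. Indeed, the compact objects of $Gx/\catC$ are exactly the compact morphisms $\alpha : Gx \to y$; an object $(Gx \to y)$ of $Gx/\catC$ is terminal precisely when $y$ is terminal in $\catC$, and likewise $(x \to z)\in x/\catD$ is terminal precisely when $z$ is terminal in $\catD$. Unwinding the composite on a compact $\alpha$: the functor $Gx/F$ sends $\alpha$ to $F\alpha : FGx \to Fy$, and $\epsilon_{x*}$ then forms the pushout of $F\alpha$ along $\epsilon_x$, which is precisely the object $x \to z$ appearing in the square in the statement. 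Hence ``$\epsilon_{x*}(Gx/F)(\alpha)$ is terminal in $x/\catD$'' says ``$z$ is terminal in $\catD$'' and ``$\alpha$ is terminal in $Gx/\catC$'' says ``$y$ is terminal in $\catC$'', so the hypothesis of the corollary is exactly the implication asserting that this composite reflects terminal objects on compact objects. With (1)--(3) verified, \Cref{prop:right-adjoints-preserve-AC-if-left-reflects-terminal-objects} yields that $Gx$ is Nullstellensatzian.

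The only points that need care, rather than genuine difficulty, are the two identifications in the last step: that the left adjoint $\epsilon_{x*}$ is the ``pushout along $\epsilon_x$'' functor, and that feeding $F\alpha$ into it reproduces exactly the pushout square in the statement; together with the routine observation that terminality in a coslice category is detected on the codomain. I do not expect a real obstacle here---all the substance lives in \Cref{prop:right-adjoints-preserve-AC-if-left-reflects-terminal-objects}, and this corollary is bookkeeping that recasts its hypothesis (3) in concrete pushout terms.
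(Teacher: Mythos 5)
Your proposal is correct and follows essentially the same route as the paper: verify the three hypotheses of \Cref{prop:right-adjoints-preserve-AC-if-left-reflects-terminal-objects}, using that $G$ reflects terminal objects for condition (1), that pushouts give the left adjoint $\epsilon_{x*}$ for condition (2), and unwinding the composite $\epsilon_{x*}\circ (Gx/F)$ on a compact morphism to see that condition (3) is a restatement of the displayed pushout condition. Your write-up is a bit more explicit than the paper's at the last step but is the same argument.
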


\begin{proof}
    If $x$ is Nullstellensatzian, then it is nonterminal and since
    $G$ reflects terminal objects $Gx$ is nonterminal. So condition 1
    of the proposition holds. Also when $\catD$ has pushouts, then
    for any $f:a\to b$ in $\catD$, $f^*:b/\catD\to a/\catD$ has a left
    adjoint, which is given by pushout along $f$. Therefore
    $\epsilon_{x*}$ is given by pushout along $\epsilon_x$.
    So condition 2 holds. That then
    makes the pushout condition above a restatement of condition 3 of
    the proposition.
\end{proof}

\subsection{Examples of Important Functors that Preserve Nullstellsatzian Objects}

\begin{proposition}
    \label{prop:ring-of-funs-from-group-is-ac}
    For a finite group $G$, the
    functor $\Fun(G,-):\CRing\to \GCRing$ preserves Nullstellensatzian
    objects.
\end{proposition}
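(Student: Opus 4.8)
The plan is to apply \Cref{cor:right-adjoints-preserve-AC-if-pushout-cond} to the adjunction $U \dashv \Fun(G,-)$, where $U : \GCRing \to \CRing$ is the forgetful functor and $\Fun(G,-)$ is described explicitly above. Taking $\catC = \GCRing$, $\catD = \CRing$, $F = U$, and right adjoint $\Fun(G,-)$, I would first check the standing hypotheses of the corollary: $\Fun(G,-)$ preserves filtered colimits, since $\Fun(G,R) \cong \prod_{g\in G} R$ and finite products of commutative rings commute with filtered colimits (both being computed on underlying sets); $\Fun(G,-)$ reflects terminal objects, since $\Fun(G,R)$ is the zero ring exactly when $R$ is; and $\CRing$ has all pushouts, namely relative tensor products. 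Because the Nullstellensatzian objects of $\CRing$ are precisely the algebraically closed fields, it then remains to verify, for each algebraically closed field $R$, the pushout criterion of the corollary. Writing $B := U\Fun(G,R) \cong \prod_{g\in G} R$ for the underlying ring, this asks: for every compact morphism $\alpha : \Fun(G,R) \to y$ in $\GCRing$, if the pushout $z := R \otimes_B Uy$ of the counit $\epsilon_R : B \to R$ along $U\alpha : B \to Uy$ is terminal, then $y$ is terminal. (We will in fact see the criterion holds for arbitrary $\alpha$, compactness playing no role.)

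Next I would record, from the explicit description of $U \dashv \Fun(G,-)$, that the counit $\epsilon_R \colon B \to R$ is evaluation at $e \in G$, i.e.\ the projection of $B \cong \prod_{g\in G} R$ onto the factor indexed by $e$; equivalently, it is the quotient by the ideal $(1 - \varepsilon_e)$, where $\varepsilon_e \in B$ is the indicator idempotent of $e$. The heart of the proof is then a structural description of $y$. The indicator idempotents $\varepsilon_g \in \Fun(G,R)$ (with $\varepsilon_g(h) = \delta_{g,h}$) are pairwise orthogonal, sum to $1$, and are permuted simply transitively by $G$ (with the convention $(g'\cdot\varphi)(h)=\varphi(hg')$ one computes $g'\cdot\varepsilon_g = \varepsilon_{g(g')^{-1}}$). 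Their images $f_g := \alpha(\varepsilon_g) \in y$ are therefore pairwise orthogonal idempotents summing to $1$ and satisfying $g'\cdot f_g = f_{g(g')^{-1}}$. A short argument shows that either all the $f_g$ vanish --- whence $1 = \sum_g f_g = 0$ and $y$ is terminal --- or the $f_g$ are pairwise distinct, $G$ permutes them simply transitively, and the ring decomposition $y = \prod_{g\in G} f_g y$ organizes into an isomorphism of $G$-rings $y \cong \Fun(G, A)$ with $A := f_e y$, under which $\alpha$ is identified with $\Fun(G,\phi)$ for the induced ring map $\phi : R \to A$. Granting this, $U\alpha$ becomes the map $\prod_{g} R \to \prod_{g} A$ applying $\phi$ in each coordinate, so base-changing along the $e$-coordinate projection $\epsilon_R$ annihilates every factor except the $e$-th and yields $z \cong A$. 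Hence, if $z$ is terminal then $A = 0$, so $y \cong \Fun(G, 0) = 0$ is terminal, which verifies the criterion and completes the proof.

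I expect the main obstacle to be the structural description of $y$: one must check carefully that the simply transitive permutation action of $G$ on $\{\varepsilon_g\}$ --- which depends on the chosen convention for the $G$-action on $\Fun(G,R)$ --- transports across $\alpha$ and genuinely assembles the ring decomposition $y = \prod_g f_g y$ into a $G$-ring isomorphism $y \cong \Fun(G, f_e y)$ that is compatible with $\alpha$. Everything else --- verifying the hypotheses of \Cref{cor:right-adjoints-preserve-AC-if-pushout-cond}, identifying the counit, and carrying out the base change --- should be routine.
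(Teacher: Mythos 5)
Your proposal is correct and follows essentially the same path as the paper: apply \Cref{cor:right-adjoints-preserve-AC-if-pushout-cond} to the adjunction between the forgetful functor and $\Fun(G,-)$, identify the counit as evaluation at $e$, and use the indicator idempotents $\varepsilon_g$ (the paper calls them $\delta_g$) to produce the $G$-ring isomorphism $y\cong\Fun(G,A_0)$ making the pushout compute to $A_0$. The only cosmetic difference is your explicit dichotomy ("all $f_g$ vanish" vs.\ "pairwise distinct"), which the paper's argument sidesteps since the decomposition $y\cong\Fun(G,f_e y)$ holds regardless of whether $y$ is zero.
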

\begin{proof}
    $\Fun(G,-)$ is right adjoint to the forgetful functor, and
    if $\Fun(G,A)=0$, then $A=0$, so it reflects terminal objects.
    Moreover, $\GCRing$ and $\CRing$ both have all filtered colimits
    which are computed in $\Set$, and since $G$ is finite,
    $\Fun(G,-)$ commutes with filtered colimits in $\Set$ and therefore
    commutes with filtered colimits of commutative rings as well.

    Therefore we are in the situation of
    \Cref{cor:right-adjoints-preserve-AC-if-pushout-cond},
    so we just need to check the pushout condition.
    If $k$ is a commutative ring, then the counit $\epsilon_k$
    is evaluation at the identity element of $G$,
    $\ev_e:\Fun(G,k)\to k$.

    If $A$ is an algebra over $\Fun(G,k)$ in $\GCRing$, via a map
    $\alpha: \Fun(G,k)\to A$, then
    the idempotents $\delta_g \in \Fun(G,k)$ which are the
    indicator functions for each of the elements $g\in G$ induce a product
    decomposition
    \[ A=\prod_{g\in G} A\delta_g, \]
    and since $G$ acts transitively on the $\delta_g$, we have that 
    if $A_0 = A\delta_e$, then $A\cong \Fun(G,A_0)$.
    Moreover if we let $\alpha_0$ be the map
    $k\cong k\delta_e\to A\delta_e \cong A_0$,
    then $\alpha\cong \Fun(G,\alpha_0)$.

    Then the pushout square we get is
    \[
        % https://tikzcd.yichuanshen.de/#N4Igdg9gJgpgziAXAbVABwnAlgFyxMJZABgBpiBdUkANwEMAbAVxiRAB12AxJsACgDipANYBKEAF9S6TLnyEUZAIxVajFm049+QgIIB9YuKkzseAkSXlV9Zq0Qhhk6SAxn5l0iuq2NDg8SSqjBQAObwRKAAZgBOEAC2SGQgOBBIViAMdABGMAwACrLmCiAxWKEAFjggPur2HNy8gqScjGgVdIbGLrEJSdSpSABMtXaa7DBo2AwE+k4mIL2JiBmDiADMo34Nk9OzwAESztFxyyMpaRtb9a0M7Z2BEhQSQA
        \begin{tikzcd}
            {\Fun(G,k)} \arrow[d, "{\Fun(G,\alpha_0)}"'] \arrow[r, "\epsilon_k"] & k \arrow[d, "\alpha_0"] \\
            {\Fun(G,A_0)} \arrow[r, "\epsilon_{A_0}"]                            & A_0
        \end{tikzcd}
    \]
    and if $A_0$ is $0$, then so was $A$.
\end{proof}

\begin{lemma}
    \label{lem:fixed-pts-preserves-nullstellensatzian}
  The fixed point Tambara functor 
  $\underline{-} : \GCRing\to \GTamb$ preserves Nullstellensatzian
  objects.
\end{lemma}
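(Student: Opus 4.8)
The plan is to apply \Cref{cor:right-adjoints-preserve-AC-if-pushout-cond} to the adjunction $\ev_{G/e} \dashv \underline{(-)}$ from \Cref{lem:fp-tambara-functor-adjt-to-eval-at-underlying}, with $F = \ev_{G/e} : \GTamb \to \GCRing$ and $G = \underline{(-)} : \GCRing \to \GTamb$. First I would verify the standing hypotheses of the corollary: $\underline{(-)}$ preserves filtered colimits, because filtered colimits of Tambara functors are computed levelwise (the cited Blumberg--Hill result), filtered colimits of $G$-rings are computed on underlying sets, and taking $H$-fixed points commutes with filtered colimits since $G$ is finite; and $\underline{(-)}$ reflects terminal objects, since if $\underline{R} = 0$ then in particular $\underline{R}(G/e) = R^e = R = 0$. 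The category $\GTamb$ has all pushouts (it is a variety of multi-sorted algebras, or equivalently pushouts are computed via the box product and coequalizers), so both hypotheses of the corollary are met.

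Next I would unwind the pushout condition. Fix a Nullstellensatzian $G$-ring $x$; by \Cref{prop:ring-of-funs-from-group-is-ac} and the classification of Nullstellensatzian $G$-rings alluded to in the introduction, $x \cong \Fun(G,F)$ for an algebraically closed field $F$, but I expect only to need that $x$ is a nonzero $G$-ring with no compact-detectable obstruction. The counit $\epsilon_x : \ev_{G/e}(\underline{x}) \to x$ is the identity map $x \to x$ (as recorded in the proof of \Cref{lem:fp-tambara-functor-adjt-to-eval-at-underlying}), so for a compact morphism $\alpha : \underline{x} \to y$ of Tambara functors, the pushout square has the form
\[
\begin{tikzcd}
x \arrow[d, "{\ev_{G/e}(\alpha)}"'] \arrow[r, equal] & x \arrow[d] \\
y(G/e) \arrow[r] & z
\end{tikzcd}
\]
and the pushout $z$ is simply $y(G/e)$ itself (pushout along an identity). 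So the pushout condition reduces to: \emph{if $\alpha : \underline{x} \to y$ is a compact morphism of Tambara functors and $y(G/e) = 0$, then $y = 0$}.

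This last implication is the main obstacle, and it is where the Nullstellensatzian property of $x$ and the compactness of $\alpha$ must be used together. The idea is that $\underline{x}(G/e) = x$ is nonzero and algebraically closed (in particular a field-like object at the bottom level), and any compact $\underline{x}$-algebra $y$ with $y(G/e) = 0$ must be the zero Tambara functor. I would argue as follows: if $y \neq 0$ then $1 \neq 0$ in some $y(G/H)$; pushing the structure map $\underline{x} \to y$ and using that $\res^H_e : y(G/H) \to y(G/e) = 0$ kills $1$, one finds $1 = \res^H_e(1)$ lands in $0$, a contradiction \emph{unless} the relevant restriction maps force $1 = 0$ already — so in fact I need to show $y(G/e)=0$ propagates upward. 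Here the point is that $\underline{x} = C^G_e F$ (for $x = \Fun(G,F)$) so $\underline{x}$-algebras are, via the coinduction adjunction, controlled by $F$-algebras at the bottom level; concretely the unit $y \to \underline{y(G/e)}$ composed with the structure map shows that $y(G/e) = 0$ forces $\underline{y(G/e)} = 0$, and the structure map $\underline{x} \to y \to \underline{y(G/e)} = 0$ would then factor the nonzero map $\underline{x} \to \underline{y(G/e)}$ through zero. Making this precise — perhaps by invoking that $\underline{x}$ is field-like, so the kernel of $\underline{x} \to y$ is all of $\underline{x}$ or zero, hence $y = 0$ or the restriction $y(G/H) \to y(G/e)$ is injective for all $H$, the latter giving $y = 0$ when $y(G/e) = 0$ — is the crux of the argument. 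I would lean on \cite[Theorem 4.32]{Nak11a} and the field-likeness of coinductions of fields (established in the cited \cite{Wis24}) to close this gap, so that compactness of $\alpha$ is in fact not even needed beyond ensuring we are in the scope of the corollary.
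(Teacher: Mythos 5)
Your overall framework matches the paper: you correctly apply \Cref{cor:right-adjoints-preserve-AC-if-pushout-cond} to the adjunction $\ev_{G/e} \dashv \underline{(-)}$, verify that $\underline{(-)}$ preserves filtered colimits and reflects terminal objects, and reduce the pushout condition (using that the counit is the identity on $x$) to the claim: \emph{if $\alpha : \underline{x} \to y$ is a $\underline{x}$-algebra and $y(G/e) = 0$, then $y = 0$.}

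However, you misidentify the crux and then stumble on it. The implication $y(G/e) = 0 \Rightarrow y = 0$ is an elementary fact about \emph{any} $G$-Tambara functor and requires no hypotheses on $x$, no field-likeness, no compactness, and no classification. If $y(G/e)$ is the zero ring, then $1_e = 0_e$ there, and since the norm $\nm_e^H : y(G/e) \to y(G/H)$ is a multiplicative monoid homomorphism with $\nm_e^H(1_e) = 1_H$ and $\nm_e^H(0_e) = 0_H$, we get $1_H = 0_H$ for all $H$, i.e.\ $y = 0$. (The paper uses exactly this observation a few lines later in the proof of \Cref{prop:ev-at-underlying-preserves-nullstellensatzian}.) Your attempt to argue via the restriction maps $\res^H_e$ has it backwards: those send $1_H \mapsto 1_e$, so $y(G/e) = 0$ merely tells you $1_e = 0$ there, which says nothing about $y(G/H)$ without further input. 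Your proposed repair --- invoking field-likeness of $\underline{x}$, or that $x \cong \Fun(G,F)$ via the classification of Nullstellensatzian $G$-rings --- is both unnecessary and problematic: the classification theorem for $G$-rings is proved in the paper \emph{using} this very lemma, so leaning on it here is circular; and even the field-likeness sketch you give (that the kernel of $\underline{x} \to y$ being zero would force restrictions in $y$ to be injective) doesn't follow. The fix is simply to replace that whole paragraph with the one-line norm argument.
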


\begin{proof}
  Let $S$ be a commutative ring with $G$-action.
  Since fixed points are finite limits they commute 
  with filtered colimits and arbitrary limits, 
  and we know that the fixed point Tambara functor 
  is right adjoint to $\ev_{G/e}$, and if 
  $\underline{S}=0$, then $S$ must have been zero 
  in the first place. So we just need to check the 
  pushout condition.

  If $A$ is an $\underline{S}$-algebra, then the pushout 
  is $A(G/e)\otimes_{\underline{S}(G/e)} S \cong A(G/e)$.
  So if the pushout is zero, then $A(G/e)$ is zero, 
  which implies that $A$ is zero.
\end{proof}

If we combine the previous two results with \Cref{lem:coinduction-from-e},
which says that the coinduction functor is the composite 
$C_e^G R = \underline{\Fun(G,R)}$, then we get the following corollary:

\begin{corollary}
    \label{cor:coind-preserves-ac}
    Let $G$ be a finite group. Then the coinduction 
    functor $C_e^G : \CRing \to \GTamb$ 
    preserves Nullstellensatzian objects.
\end{corollary}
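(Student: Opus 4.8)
The plan is to deduce this directly from the two preservation results immediately above, together with the factorization of coinduction supplied by \Cref{lem:coinduction-from-e}. First I would recall that \Cref{lem:coinduction-from-e} identifies $C_e^G R$ with $\underline{\Fun(G,R)}$; naturally in $R$, this exhibits $C_e^G : \CRing \to \GTamb$ as the composite
\[
\CRing \xrightarrow{\ \Fun(G,-)\ } \GCRing \xrightarrow{\ \underline{(-)}\ } \GTamb .
\]

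Next I would observe the (entirely formal) fact that the class of functors which send Nullstellensatzian objects to Nullstellensatzian objects is closed under composition: being Nullstellensatzian is a property of a single object, so if $F_1$ and $F_2$ each carry Nullstellensatzian objects to Nullstellensatzian objects, then so does $F_2 \circ F_1$. By \Cref{prop:ring-of-funs-from-group-is-ac} the first factor $\Fun(G,-)$ preserves Nullstellensatzian objects, and by \Cref{lem:fixed-pts-preserves-nullstellensatzian} the second factor $\underline{(-)}$ does as well; hence the composite $C_e^G$ does. Concretely, for an algebraically closed field $F$ (equivalently, a Nullstellensatzian commutative ring, by the characterization recalled above), $C_e^G F = \underline{\Fun(G,F)}$ is a Nullstellensatzian $G$-Tambara functor.

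I do not expect any real obstacle here. All of the genuine content --- checking that each of the two intermediate functors satisfies the hypotheses of \Cref{cor:right-adjoints-preserve-AC-if-pushout-cond}, in particular the relevant pushout conditions --- has already been dispatched in \Cref{prop:ring-of-funs-from-group-is-ac} and \Cref{lem:fixed-pts-preserves-nullstellensatzian}. The only point one must be careful to state is the trivial composability observation above, which needs no hypotheses on the categories or functors involved.
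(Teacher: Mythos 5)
Your proposal is exactly the paper's argument: the paper also proves the corollary by factoring $C_e^G$ as $\underline{(-)}\circ\Fun(G,-)$ via \Cref{lem:coinduction-from-e} and then applying \Cref{prop:ring-of-funs-from-group-is-ac} and \Cref{lem:fixed-pts-preserves-nullstellensatzian} in succession. No discrepancy.
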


Now, from our classification of Nullstellensatzian Tambara functors,
it will turn out that when $N$ is a normal subgroup of $G$, the evaluation functors 
$\ev_{G/N}:\GTamb\to\GCRing[G/N]$ preserve Nullstellensatzian objects. 
We will directly prove this in two cases, when $N=G$ (\Cref{prop:top-level-is-alg-closed}) and when $N=e$ (\Cref{prop:ev-at-underlying-preserves-nullstellensatzian}).
When $N=e$, the proof is able to exploit the fact that, 
in this case, evaluation and restriction are the same and restriction 
is also a left adjoint. 

While we could use the $N=e$ case to prove the
classification theorem, we will instead use the $N=G$ case 
because the proof in the $N=G$ case generalizes to incomplete Tambara 
functors with relatively small modifications. 

\begin{lemma}\label{lem:eval-has-left-adjt}
    For a finite $G$-set $X$, the functor $\ev_X:\GTamb\to \CRing$
    has a left adjoint. The same is also true for
    $\ev_{G/H}: \GTamb \to \GCRing[W_G(H)]$.
\end{lemma}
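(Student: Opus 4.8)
The plan is to exhibit the left adjoint explicitly using the free polynomial Tambara functors of \Cref{cor:free-kalg-corepresents}, or equivalently to invoke an adjoint functor theorem. First I would reduce the statement to orbits: every finite $G$-set $X$ is a finite disjoint union $X \cong \coprod_i G/H_i$, and $\ev_X = \prod_i \ev_{G/H_i}$ as a functor $\GTamb \to \CRing$ (using that a Tambara functor sends disjoint unions to products); a product of functors each with a left adjoint has a left adjoint given by the coproduct (box product over $\A$) of the individual left adjoints. So it suffices to handle $\ev_{G/H}$ for a single subgroup $H \leq G$. For this, recall from the Blumberg–Hill result quoted before \Cref{cor:free-kalg-corepresents} that $\A[x_H]$ represents $K \mapsto K(G/H) = \ev_{G/H}(K)$ as a functor to $\Set$. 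The candidate left adjoint to $\ev_{G/H} \colon \GTamb \to \CRing$ is then the functor sending a commutative ring $R$ to the pushout (in $\GTamb$) of $\A[x_H] \leftarrow \A[T] \rightarrow \A$ over a presentation $R = \Z[T]/(\text{relations})$ — more cleanly, $R \mapsto \A[x_H] \boxtimes_{\Z[\bullet]} R$ where one uses that $\CRing$-objects are algebras over $\Z$ and $\A[x_H]$ carries the universal "$G/H$-level element." I would make this precise by checking the adjunction isomorphism $\GTamb(L(R), K) \cong \CRing(R, K(G/H))$ directly: a Tambara map out of $\A[x_H]$ is a choice of element of $K(G/H)$, and imposing that this element satisfies the relations defining $R$ is exactly a ring map $R \to K(G/H)$, since the ring operations on $K(G/H)$ are recorded by the semi-Tambara structure (cf. the discussion of $+_{F,X}, \cdot_{F,X}$ in the primer).

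For the second assertion, the functor $\ev_{G/H} \colon \GTamb \to \CRing^{W_G(H)}$ refines $\ev_{G/H}$ by remembering the Weyl action on $K(G/H)$. A left adjoint here can be built the same way, but now the "universal $G/H$-level element equipped with a Weyl action" must be encoded: I would take $\A[x_H]$ together with the copies $c_{H,g} x_H$ of $x_H$ under the Weyl-conjugation operations, and for a $W_G(H)$-ring $R$ form the appropriate pushout imposing the $R$-relations equivariantly. Equivalently — and this is probably the cleanest route to write down — observe that $\CRing^{W_G(H)}$ is itself a multi-sorted variety (one sort, with extra unary operations for the $W_G(H)$-action), so $\ev_{G/H} \colon \GTamb \to \CRing^{W_G(H)}$ is a morphism of varieties that preserves all limits and filtered colimits (limits are levelwise in $\GTamb$ by general nonsense, filtered colimits are levelwise by \Cref{cor:filtered-colimits-computed-levelwise}); hence by the adjoint functor theorem for locally presentable categories (or \cite[Theorem 1.66]{Adamek_Rosicky_1994}) it has a left adjoint. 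The same argument handles the first assertion at once, so in fact I would structure the whole proof around this single observation.

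The main obstacle is verifying that $\ev_{G/H}$ (and its Weyl-equivariant refinement) genuinely preserves all small limits, not merely filtered colimits — one needs that products and equalizers in $\GTamb$ are computed levelwise, which is true because $\GTamb$ is the category of product-preserving functors $\Poly_G \to \Set$ whose additive semirings are groups, and such limits of set-valued functors are computed pointwise. Once this is in hand, every hypothesis of the adjoint functor theorem (local presentability of both sides, which follows from \Cref{prop:compact same as fp} and the analogous statement for $\CRing^{W_G(H)}$, plus accessibility/continuity of the functor) is routine. Alternatively, if one prefers the explicit description, the obstacle shifts to checking that the pushout construction above really corepresents $R \mapsto \CRing^{W_G(H)}(R, K(G/H))$, which amounts to unwinding how Tambara maps out of a box product of free algebras restrict on each factor; this is a diagram chase of the kind already performed in the proof of \Cref{cor:free-kalg-corepresents}.
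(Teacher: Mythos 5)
Your proposal is correct, and in fact you sketch two arguments: the explicit colimit-of-frees construction (your first paragraph), and an invocation of the adjoint functor theorem for locally presentable categories (your preferred route). The paper takes the first one. It writes $A$ functorially as a coequalizer of free commutative rings $\Z[x_j : j\in J] \rightrightarrows \Z[x_i : i\in I]$, forms the analogous coequalizer $\mathcal{A}_G[x_j^X : j\in J] \rightrightarrows \mathcal{A}_G[x_i^X : i\in I]$ of free Tambara algebras on generators at level $X$, and then uses representability of $K\mapsto K(X)$ by $\mathcal{A}_G[x^X]$ to verify $\GTamb(LA,S)\cong\CRing(A,S(X))$ directly; the Weyl-equivariant version is handled by the same construction. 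Your adjoint-functor-theorem route is also valid — $\ev_{G/H}$ preserves all limits (which are levelwise in $\GTamb$) and filtered colimits (by \Cref{cor:filtered-colimits-computed-levelwise}), and both source and target are locally finitely presentable multi-sorted varieties — so \cite[Theorem 1.66]{Adamek_Rosicky_1994} applies. The reason to prefer the paper's explicit construction is not merely aesthetic: the explicit description $L_*A \cong L_*k(*)[x_{*,1},\dots,x_{*,n}]/(f_1,\dots,f_m)$ is used downstream in the proof of \Cref{prop:top-level-is-alg-closed} to compute the pushout $L_*A \boxtimes_{L_*k(*)} k$, and the abstract existence statement alone would force you to re-derive this formula there. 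One small issue with your explicit version: the pushout ``$\A[x_H] \leftarrow \A[T] \rightarrow \A$'' does not typecheck as written (the right-hand leg has no sensible candidate, and $\A[T]$ is unexplained); the clean way to encode a presentation of $R$ in $\GTamb$ is exactly the paper's coequalizer of free Tambara algebras, which your first sentence effectively describes before the pushout notation muddies it.
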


\begin{proof}
    Let $A$ be a commutative ring. We can (functorially) write $A$ as a coequalizer of
    free commutative rings,
    $\coeq \mathbb{Z}[x_j:j\in J]\rightrightarrows \mathbb{Z}[x_i:i\in I]$
    for some indexing sets $I$ and $J$ and maps of commutative rings.
    Then we can construct the exact same diagram in Tambara functors
    by adjoining the variables at level $X$ and define
    \[LA = \coeq \mathcal{A}_G[x_j^X:j\in J]\rightrightarrows \mathcal{A}_G[x_i^X:i\in I].\]
    Then for any Tambara functor $S$, we have that $\GTamb(LA,S)$
    is naturally isomorphic to maps $I\to S(X)$ picking out families of
    elements of $S(X)$ that satisfy the relations imposed by the
    coequalizer. And that is naturally isomorphic to $\CRing(A, S(X))$.
    Therefore a left adjoint exists.

    The same proof works when we remember the Weyl action.
\end{proof}

\begin{proposition}
    \label{prop:ev-at-underlying-preserves-nullstellensatzian}
  The functor $\ev_{G/e}: \GTamb\to \GCRing$ preserves 
  Nullstellensatzian objects.
\end{proposition}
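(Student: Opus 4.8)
The plan is to verify the hypotheses of \Cref{prop:right-adjoints-preserve-AC-if-left-reflects-terminal-objects} (or its pushout-flavored corollary \Cref{cor:right-adjoints-preserve-AC-if-pushout-cond}) applied to the adjunction in which $\ev_{G/e}:\GTamb\to\GCRing$ is the \emph{left} adjoint. Indeed, by \Cref{lem:eval-has-left-adjt} the functor $\ev_{G/e}$ has a left adjoint $L$; equivalently, since $R^G_e$ is (up to the forgetful functor $\GCRing\to\CRing$) the same as $\ev_{G/e}$ and $R^G_e$ is itself a left adjoint (its right adjoint being $C^G_e$), the functor $\ev_{G/e}$ participates in an adjunction with it on the left. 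So set $\catD=\GTamb$, $\catC=\GCRing$, $F=L\dashv G=\ev_{G/e}$. We already know $\ev_{G/e}$ preserves filtered colimits (indeed it preserves \emph{all} colimits, being a left adjoint, but in particular filtered ones), and it preserves the initial object since $\ev_{G/e}(\A)=\A(G/e)$ is the Burnside ring of the trivial group, which is $\Z$, the initial commutative $G$-ring. The first hypothesis is thus immediate.

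For the terminal-object condition, I would argue that $L$ reflects terminal objects, at least on compact objects: if $A$ is a nonzero commutative $G$-ring, then $LA\neq 0$. Here $LA$ is built as a coequalizer $\A_G[x_j^{G/e}]\rightrightarrows \A_G[x_i^{G/e}]$ mirroring a presentation of $A$, and a map out of $LA$ to a Tambara functor $S$ is the same as a $G$-equivariant ring map $A\to S(G/e)$; applying this with $S=C^G_e A=\underline{\Fun(G,A)}$ and the unit of $A\to \Fun(G,A)^{\,e}=\Fun(G,A)$ sitting over the identity, we produce a nonzero map out of $LA$, forcing $LA\neq 0$. (Alternatively: the unit map $A\to \ev_{G/e}(LA)=(LA)(G/e)$ is an iso onto... no — more simply, $\ev_{G/e}(LA)\cong A$ by the standard fact that $\ev_{G/e}\circ L\cong \id$, since $L$ is a section-type left adjoint built levelwise at $G/e$, so if $LA=0$ then $A=(LA)(G/e)=0$.) This handles hypothesis (2).

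It remains to check the pushout condition of \Cref{cor:right-adjoints-preserve-AC-if-pushout-cond}. Fix a Nullstellensatzian commutative $G$-ring $x$ — by \Cref{prop:ring-of-funs-from-group-is-ac} and the classification of Nullstellensatzian $G$-rings this is $\Fun(G,F)$ for $F$ algebraically closed, though I would prefer to argue formally if possible. The object $Gx=\ev_{G/e}(x)$ is then $x$ itself as a ring (with its Weyl/trivial action), in particular nonzero, giving hypothesis (1) of the proposition. Given a compact morphism $\alpha:Gx\to y$ in $\GCRing$, I form the pushout of $\epsilon_x:L(Gx)\to x$ along $L\alpha$ in $\GTamb$ and must show: if the pushout $z$ is terminal (i.e. $z=0$), then $y=0$. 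The key computation is that $\ev_{G/e}$, being a left adjoint, preserves this pushout, so $z(G/e)$ is the pushout in $\GCRing$ of $\ev_{G/e}(\epsilon_x):\ev_{G/e}L(Gx)\to x$ — which is a split surjection (indeed an iso, since $\ev_{G/e}\circ L\cong\id$ and $\ev_{G/e}(\epsilon_x)$ is the counit-image, i.e. the identity on $x$) — along $\ev_{G/e}(L\alpha)=\alpha$. Hence $z(G/e)\cong y$ as a commutative ring. So $z=0$ forces $y=z(G/e)=0$, as desired.

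The main obstacle I anticipate is the bookkeeping around the claim $\ev_{G/e}\circ L\cong \id$ and the identification of the counit $\epsilon_x$: I need to be careful that $L$, as constructed in \Cref{lem:eval-has-left-adjt} via coequalizers of free Tambara functors $\A_G[x_i^{G/e}]$, really does satisfy $(LA)(G/e)\cong A$ naturally and that under this identification the counit of $L\dashv\ev_{G/e}$ evaluated at $G/e$ is the identity — this is what makes the pushout square collapse to "$z(G/e)\cong y$." Once that naturality is nailed down, everything else is a formal consequence of the machinery already developed, in particular \Cref{cor:right-adjoints-preserve-AC-if-pushout-cond}. I should also double-check that compactness of $\alpha$ is genuinely used only through the hypothesis of the corollary (it is — we only invoke the pushout condition for compact $\alpha$), so no extra finiteness argument is needed here.
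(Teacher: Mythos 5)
Your core argument matches the paper's: both proofs apply \Cref{cor:right-adjoints-preserve-AC-if-pushout-cond} to the adjunction $L\dashv \ev_{G/e}$ with $\catD=\GTamb$, $\catC=\GCRing$, and the crucial ideas are identical --- exploit the dual adjointness of $\ev_{G/e}$ (left adjoint to $\underline{-}$, right adjoint to $L$) so that it preserves the relevant pushout, then use the natural isomorphism $\ev_{G/e}\circ L\cong\id$ to collapse the pushout at level $G/e$ to $y$, giving $z=0\Rightarrow y=0$. The paper records the same key computation as $\ev_{G/e} B\cong A$, and verifies $\ev_{G/e}L_e S\cong S$ by a short Hom-set manipulation that you gesture at informally; both versions are fine.

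A few minor slips in your write-up worth correcting, though none threaten the argument. First, you should fix $x$ to be a Nullstellensatzian \emph{Tambara functor} (an object of $\catD=\GTamb$), not a Nullstellensatzian $G$-ring: as written, you say ``Fix a Nullstellensatzian commutative $G$-ring $x$'' but then immediately apply $\ev_{G/e}$ to $x$, which only typechecks if $x\in\GTamb$. Second, your discussion of the terminal-object hypothesis is aimed at the wrong functor: \Cref{cor:right-adjoints-preserve-AC-if-pushout-cond} requires that $G=\ev_{G/e}$ reflects terminal objects, not that $F=L$ does. You argue about $L$; the paper instead gives the one-line argument for $\ev_{G/e}$, namely that if $k(G/e)=0$ then $1_G=\nm_e^G 1_e=0$ so $k=0$. (Alternatively, one can directly check hypothesis (1) of \Cref{prop:right-adjoints-preserve-AC-if-left-reflects-terminal-objects} that $Gx$ is nonterminal, but your statement ``$Gx$ is then $x$ itself as a ring'' conflates $x$ and $x(G/e)$ again.) With those labels straightened out, your proposal is the same proof as the paper's.
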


\begin{proof}
  By the lemma above $\ev_{G/e}$ preserves filtered 
  colimits and has a left adjoint. It also reflects 
  terminal objects, because if $k(G/e)=0$, then 
  $1_G = \nm_e^G 1_e =0$, so $k=0$. So we just need 
  to verify the pushout condition.

  Let $L_e$ be the left adjoint to $\ev_{G/e}$ 
  constructed above. Then if $A$ is a $k(G/e)$ algebra,
  let $B:= L_e A \boxtimes_{L_e k(G/e)} k$ denote
  the pushout. We want to show that if $B$ is zero, 
  then $A$ must have already been zero. Now recall that 
  $\ev_{G/e}$ is also a left adjoint whose right adjoint 
  is the fixed point
  Tambara functor construction. 
  So $\ev_{G/e} B = (\ev_{G/e}L_e A)
  \boxtimes_{\ev_{G/e}L_e k(G/e)} k(G/e)$.
  However, if $S$ is any commutative ring with a $G$-action,
  $\ev_{G/e}L_e S \cong S$, since for any other 
  commutative ring with $G$-action, $T$, we have 
  the following natural isomorphisms 
  \[
    \GCRing(\ev_{G/e}L_e S, T)
    \cong 
    \GTamb(L_e S, \underline{T})
    \cong
    \GCRing(S, \ev_{G/e}\underline{T})
    =
    \GCRing(S, T),
  \]
  since $\ev_{G/e}\underline{T} = T^{e} = T$ 
  by definition.

  Therefore 
  $\ev_{G/e} B \cong A\boxtimes_{k(G/e)} k(G/e)\cong A$.
  So if $B$ is zero, $A$ is also zero.
  
\end{proof}

This proof is really more about the restriction functor than it is 
about the evaluation functor, and indeed 
a version of this proposition should also be true for 
slight modifications of the restriction 
functors $R^G_H: \GTamb\to \GTamb[H]$
where we modify the target category to keep track of the full $G$-action.

\subsection{Finitely Presented Fixed-Point Tambara Functors}

The existence of the coinduction functor also simplifies many calculations involving free algebras. We will record some of these here, leading to an important consequence: $S \mapsto \underline{S} : \CRing \to \GTamb$ preserves compact objects and compact morphisms.

\begin{proposition}
    \label{prop:underlying-of-free-on-underlying}
    Let $k$ be a Tambara functor. Then \[k[x_e](G/e) \cong k(G/e)[x_g | g \in G]\] as commutative rings.
\end{proposition}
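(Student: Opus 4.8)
The plan is to pass to the restriction functor $R^G_e:\GTamb\to\GTamb[e]\simeq\CRing$ and exploit its right adjoint, coinduction. By definition $k[x_e]=k\boxtimes\A[x_e]$, and since an $e$-Tambara functor is just a commutative ring (with $(R^G_e S)(e/e)=S(G/e)$), the ring $k[x_e](G/e)$ is precisely $R^G_e(k\boxtimes\A[x_e])$. As recalled in the subsection on coinduction, $R^G_e$ has a right adjoint $C_e^G$ (indeed it is the composite of the left adjoints $\ev_{G/e}:\GTamb\to\GCRing$ and the forgetful functor $\GCRing\to\CRing$), so $R^G_e$ preserves all colimits; in particular it carries the box product, the coproduct of Tambara functors, to the coproduct of commutative rings, namely $\otimes_\Z$. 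Hence
\[
  k[x_e](G/e)\;\cong\;(R^G_e k)\otimes_\Z(R^G_e\A[x_e])\;\cong\;k(G/e)\otimes_\Z(R^G_e\A[x_e]),
\]
and it remains only to identify the ring $R^G_e\A[x_e]$.

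I would do this by the Yoneda lemma. For any commutative ring $T$, using that $\A[x_e]$ corepresents $K\mapsto K(G/e)$, the adjunction $R^G_e\dashv C_e^G$, and \Cref{lem:coinduction-from-e}, one has natural isomorphisms
\[
  \CRing(R^G_e\A[x_e],T)\cong\GTamb(\A[x_e],C_e^G T)\cong(C_e^G T)(G/e)\cong\underline{\Fun(G,T)}(G/e)\cong\Fun(G,T),
\]
the last isomorphism because the fixed points of the trivial subgroup are everything. Since as a commutative ring $\Fun(G,T)\cong\prod_{g\in G}T\cong\CRing(\Z[x_g\mid g\in G],T)$ naturally in $T$, the Yoneda lemma gives $R^G_e\A[x_e]\cong\Z[x_g\mid g\in G]$. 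Substituting,
\[
  k[x_e](G/e)\cong k(G/e)\otimes_\Z\Z[x_g\mid g\in G]\cong k(G/e)[x_g\mid g\in G],
\]
which is the assertion.

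The whole argument is essentially bookkeeping with adjunctions; the one point to be careful about is to identify $R^G_e\A[x_e]$ via corepresentability together with the explicit formula $C_e^G T\cong\underline{\Fun(G,T)}$, rather than trying to describe the underlying commutative ring of the free Tambara functor $\A[x_e]$ by hand, which is substantially more involved. Given the results of the previous section, I do not anticipate any genuine obstacle.
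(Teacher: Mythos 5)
Your proof is correct and arrives at the same conclusion via the same underlying ingredients (the adjunction $R^G_e\dashv C^G_e$, the formula $(C^G_e S)(G/e)\cong\Fun(G,S)$, and a Yoneda/corepresentability argument), but it is organized differently from the paper's. The paper works relatively from the start: it identifies $k[x_e](G/e)$ as a $k(G/e)$-algebra and shows directly that it corepresents $S\mapsto\Fun(G,S)$ on $k(G/e)$-algebras, by running a single chain of natural isomorphisms through the slice adjunction $\kAlg\rightleftarrows\kAlg[k(G/e)]$ induced by $R^G_e\dashv C^G_e$. You instead first use that $R^G_e$, being a left adjoint, preserves coproducts to split $k[x_e](G/e)\cong k(G/e)\otimes_{\Z}R^G_e\A[x_e]$, and then apply Yoneda in the absolute categories $\CRing$ and $\GTamb$ to identify $R^G_e\A[x_e]\cong\Z[x_g\mid g\in G]$. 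Your version is more modular and isolates the Burnside-level computation, at the cost of invoking coproduct preservation; the paper's version is more compact and avoids any mention of tensor products, at the cost of passing through slice-category bookkeeping. Both are valid, and neither has a gap.
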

\begin{proof}
    Let $k_e = k(G/e)$. We have
    \begin{align*}
        \kAlg[k_e]\of*{k[x_e](G/e), S} 
            & = \kAlg[k_e]\of*{R_e^G k[x_e], S}         \\
            & \cong \kAlg\of*{k[x_e], C_e^G S}             \\
            & \cong \of*{C_e^G S}(G/e)                          \\
            & = \Fun(G,S)                                   \\
            & \cong \kAlg[k_e]\of*{k_e[x_g | g \in G], S}
    \end{align*}
    natural in $k(G/e)$-algebras $S$.
\end{proof}

% \begin{corollary}\label{cor:free on fixed is levelwise fg}
%     For all $H \leq G$, $\A[x_G](G/H)$ is finitely generated as a commutative ring.
% \end{corollary}
% \begin{proof}
%     By \Cref{lem:res of free on fixed}, we have
%     \[\A[x_G](G/H) \cong (R^G_H \A[x_G])(H/H) \cong \A[x_H](H/H).\]
%     Now \Cref{lem:free on fixed is fg at top} gives the claim.
% \end{proof}

% \begin{corollary}
%     Every ideal of $\A[x_G]$ is finitely generated.
% \end{corollary}
% \begin{proof}
%     Every ideal of a Tambara functor is levelwise an ideal. Now given an ideal $I$ of $\A[x_G]$, we know that each level of $I$ is finitely generated (because finite type $\Z$-algebras are Noetherian), so we can take a finite generating set for each level of the ideal. The union of these finitely many finite generating sets will generate the ideal $I$.
% \end{proof}

\begin{proposition}
    For any Tambara functor $k$, $k \boxtimes {-} : \GTamb \to \GTamb$ preserves levelwise surjections.
\end{proposition}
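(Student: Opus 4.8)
The plan is to realise $k \boxtimes -$ as a functor preserving coequalizers and then to invoke the description of levelwise surjections as regular epimorphisms. First I would recall from \Cref{prop:free-forget-kalg} that $k \boxtimes - : \GTamb \to \kAlg$ is left adjoint to the forgetful functor, hence preserves all colimits, and in particular coequalizers. Second, I would record the standard fact that in a multi-sorted variety a homomorphism is a regular epimorphism precisely when it is surjective on each sort. Both $\GTamb$ and $\kAlg$ are multi-sorted varieties with sorts indexed by the subgroups of $G$ --- this is exactly the encoding used in the proof of \Cref{prop:compact same as fp} --- so for these two categories ``regular epimorphism'' and ``levelwise surjection'' name the same class of morphisms, and this class depends only on the underlying Tambara functor map.

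With these two inputs the argument is short. Given a levelwise surjection $f : S \to T$ of Tambara functors, it is a regular epimorphism, hence the coequalizer of its kernel pair $S \times_T S \rightrightarrows S$. Applying $k \boxtimes -$ and using that it preserves this coequalizer exhibits $k \boxtimes T$ as the coequalizer of $k \boxtimes (S \times_T S) \rightrightarrows k \boxtimes S$, with structure map $k \boxtimes f$; thus $k \boxtimes f$ is a regular epimorphism in $\kAlg$, i.e.\ surjective on every level. Since levelwise surjectivity of $k \boxtimes f$ is intrinsic to the underlying Tambara functor morphism, this is precisely the desired conclusion.

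The one point deserving care is the identification of levelwise surjections with regular epimorphisms in these varieties --- equivalently, the claim that the coequalizer of the kernel pair of a levelwise surjection reconstructs its codomain. This is a routine feature of varieties, but I would state it explicitly rather than leave it implicit. If one prefers to avoid it altogether, there is a more hands-on route that I would present as an alternative: since $k \boxtimes T$ is the coproduct of $k$ and $T$, it is generated as a Tambara functor by the images of the structure maps $k \to k \boxtimes T$ and $T \to k \boxtimes T$; the levelwise image of $k \boxtimes f$ is a sub-Tambara functor of $k \boxtimes T$ which contains the image of $k$ (because $k \to k \boxtimes S \to k \boxtimes T$ is the structure map of $k \boxtimes T$) and the image of $T$ (because $S \to k \boxtimes S \to k \boxtimes T$ coincides with $S \xrightarrow{f} T \to k \boxtimes T$ and $f$ is levelwise surjective), and hence is all of $k \boxtimes T$.
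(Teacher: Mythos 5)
Your proof is correct, but it takes a genuinely different route from the paper's. The paper disposes of this proposition in a single line by appealing to the explicit coend formula for the box product: from the levelwise description of $(k\boxtimes S)(G/H)$ as a coend of tensor products one can directly read off that surjectivity in the $S$-factor propagates. Your argument avoids the coend formula entirely. The first version is structural: $k\boxtimes-$ is a left adjoint and hence preserves coequalizers, and in a multi-sorted variety regular epimorphisms coincide with the sortwise surjections, so $k\boxtimes f$ is a coequalizer of the pushed-forward kernel pair and therefore levelwise surjective. The second version is an elementary generation argument: $k\boxtimes T$, being a coproduct, is generated as a Tambara functor by the images of the two structure maps, and both images lie inside the (sub-Tambara-functor) image of $k\boxtimes f$, which must therefore be everything. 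Both of your routes are sound, and both are valid substitutes for the paper's. The paper's coend computation is the most direct once one knows the formula; your regular-epi argument is more conceptual and would transfer verbatim to any multi-sorted variety with a coproduct playing the role of the box product; your generation argument is arguably the most elementary and self-contained. One small point worth making explicit in the first version, beyond the variety fact you already flag: the forgetful functor $\kAlg\to\GTamb$ creates coequalizers, so a regular epimorphism of $k$-algebras has regular-epimorphic underlying morphism of Tambara functors, which is what lets you conclude levelwise surjectivity as an intrinsic property of the morphism.
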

\begin{proof}
    This can be checked directly from the coend formula for the box product.
\end{proof}

\begin{corollary}\label{cor:constant-Z-is-fp}
    $\underline{\Z[x]}$ is a compact object in $\kAlg[\underline{\Z}]$.
\end{corollary}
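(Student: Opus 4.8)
The plan is to exhibit $\underline{\Z[x]}$ as a compact object of $\underline{\Z}/\GTamb$ by writing it explicitly as a finitely presented $\underline{\Z}$-algebra in the sense established in \Cref{prop:compact same as fp}. The natural candidate for a presentation comes from the corepresentability of the free polynomial algebra: recall that $\A[x_e]$ (equivalently $\underline{\Z}[x_e]$, since $\underline{\Z}$ is initial among Tambara functors when $\Z$ has the trivial action — wait, $\underline{\Z}$ is \emph{not} initial, $\A$ is, so one should work with $\underline{\Z} \boxtimes \A[x_e] = \underline{\Z}[x_e]$) represents the functor $K \mapsto K(G/e)$ on $\underline{\Z}$-algebras. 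By \Cref{prop:underlying-of-free-on-underlying} applied with $k = \underline{\Z}$, we have $\underline{\Z}[x_e](G/e) \cong \Z[x_g \mid g \in G]$, a polynomial ring on $|G|$ variables. Meanwhile $\underline{\Z[x]}(G/e) = \Z[x]$ with trivial $G$-action. So the natural map $\underline{\Z}[x_e] \to \underline{\Z[x]}$ classifying the element $x \in \Z[x] = \underline{\Z[x]}(G/e)$ is, at level $G/e$, the surjection $\Z[x_g \mid g \in G] \twoheadrightarrow \Z[x]$ sending every $x_g \mapsto x$.

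The key steps, in order, are: (1) observe that the classifying map $\phi : \underline{\Z}[x_e] \to \underline{\Z[x]}$ is a levelwise surjection — at level $G/e$ this is the displayed polynomial-ring surjection, and at higher levels $G/H$ one checks surjectivity using that $\underline{\Z[x]}(G/H) = \Z[x]$ and that $x$ is hit, together with the fact that the norm and transfer structure of $\underline{\Z[x]}$ (coming from the \emph{trivial} action) is generated by restrictions of the bottom-level element; more carefully, since $\underline{\Z[x]} = C_e^G(\Z[x])$ by \Cref{lem:coinduction-from-e} is \emph{not} quite right (coinduction gives $\underline{\Fun(G,\Z[x])}$, not $\underline{\Z[x]}$), one instead argues directly that $\phi$ is surjective level-by-level, or better, invokes the previous proposition that $\underline{\Z}\boxtimes - $ preserves levelwise surjections applied to $\A[x_e] \twoheadrightarrow$ (the sub-Tambara-functor of $\underline{\Z[x]}$ generated by $x$), reducing to the non-equivariant statement. (2) Identify the kernel: it is the Nakaoka ideal $I$ of $\underline{\Z}[x_e]$ generated by the single element $\res^G_e x_e - (\text{the other generators})$; concretely, since at level $G/e$ the kernel of $\Z[x_g] \to \Z[x]$ is generated by the differences $x_g - x_h$, and the Weyl/conjugation action permutes the $x_g$, the whole Nakaoka ideal is generated by finitely many such differences — in fact by $\{x_g - x_e : g \in G\}$, which is a finite set. (3) Conclude via \Cref{prop:compact same as fp} that $\underline{\Z[x]}$, being a quotient of $\underline{\Z}[x_e]$ by a finitely generated Nakaoka ideal, is finitely presented, hence compact, in $\underline{\Z}/\GTamb = \kAlg[\underline{\Z}]$.

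Alternatively — and this may be cleaner — one can avoid computing the kernel by hand: present $\underline{\Z[x]}$ as the coequalizer of $\underline{\Z}[x_e, x_e'] \rightrightarrows \underline{\Z}[x_e]$ where the two maps are the two structure maps (one classifying $x_e$, one classifying $x_e'$ both land... ) — more precisely, $\underline{\Z[x]}$ is the coequalizer of two maps from a free algebra on finitely many generators to $\underline{\Z}[x_e]$, one being the identity-on-$x_e$ map and the other built from the conjugation operations $\conj_{e,g}$, and then appeal to the "coequalizer of a diagram $k[x_{H_i}]_{i\in I} \rightrightarrows k[x_{H_j}]_{j \in J}$ with $I, J$ finite" characterization of finite presentation directly.

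The main obstacle I anticipate is step (2)/(the kernel computation): verifying that the \emph{Nakaoka} ideal (closed under all transfers, norms, restrictions, and the Weyl action) cut out by $\phi$ is finitely generated \emph{as a Nakaoka ideal} — it is not enough that each level is a finitely generated ordinary ideal. The saving grace is that $\underline{\Z[x]}$ has trivial $G$-action, so its transfers are multiplication by indices and its norms are iterated multiplication, and one should be able to show the relations are all consequences of the finitely many differences $x_g - x_e$ at the bottom level propagated upward by restriction — but making this fully rigorous is where the real work lies. If it proves annoying, the coequalizer-of-finite-free-algebras route in the previous paragraph sidesteps it entirely, since finite presentation in that formulation is manifest.
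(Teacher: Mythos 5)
There is a genuine gap at the very first step: the classifying map $\phi : \underline{\Z}[x_e] \to \underline{\Z[x]}$ is \emph{not} a levelwise surjection for nontrivial $G$, so the intended presentation fails before the kernel computation even begins. The image of $\phi$ is the sub-Tambara-functor of $\underline{\Z[x]}$ generated by $x$ at level $G/e$, and the only operations that move data up to level $G/G$ are transfers and norms. In the constant Tambara functor, $\tr_e^G(a) = \lvert G\rvert\, a$ and $\nm_e^G(a) = a^{\lvert G\rvert}$ (and similarly for intermediate subgroups), so the image at level $G/G$ lands in the subring of $\Z[x]$ generated by such elements. Reducing modulo any prime $p$ dividing $\lvert G\rvert$, this image is contained in $\F_p[x^p]$, which does not contain $x$; hence $x \in \underline{\Z[x]}(G/G)$ is not hit, and $\phi$ is not surjective. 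Your attempted justifications for surjectivity are where the reasoning slips: the phrase ``generated by restrictions of the bottom-level element'' has the direction wrong (restrictions go \emph{down} from $G/G$, not up), and the appeal to $\A[x_e]$ surjecting onto ``the sub-Tambara-functor generated by $x$'' is exactly the image of $\phi$, which is the thing one needs to show equals all of $\underline{\Z[x]}$ --- it doesn't.

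The fix is to adjoin the free generator at level $G/G$, not $G/e$: the paper uses $\underline{\Z}[x_G] \to \underline{\Z[x]}$ classifying $x \in \underline{\Z[x]}(G/G)$. Since the restriction maps of $\underline{\Z[x]}$ are all the identity on $\Z[x]$, this single element at the top level restricts to $x$ at every level, and the map is levelwise surjective. For the remaining step (finite generation of the kernel Nakaoka ideal), the paper also sidesteps your anticipated difficulty: it never computes the kernel at all. Instead it invokes \Cref{prop:free-at-fixed-pts-is-levelwise-fg} (that $\A[x_G]$ is levelwise a finitely generated ring), deduces the same for the quotient $\underline{\Z}[x_G]$, hence levelwise Noetherian, and then observes that \emph{every} Nakaoka ideal of a levelwise-Noetherian Tambara functor is finitely generated (finitely many subgroups, each with a Noetherian level). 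Note also that your alternative coequalizer route inherits the same flaw so long as the generators sit at level $G/e$.
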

\begin{proof}
    $\underline{\Z[x]}$ is generated as a $\underline{\Z}$-algebra (indeed, even as a Tambara functor) by the element $x$ at level $G/G$. Thus, we have a surjection $\underline{\Z}[x_G] \to \underline{\Z[x]}$.

    Next, we note that $\A \to \underline{\Z}$ is surjective, so \[\A[x_G] = \A \boxtimes \A[x_G] \to \underline{\Z} \boxtimes \A[x_G] = \underline{\Z}[x_G]\] is surjective. Since $\A[x_G]$ is levelwise finitely generated, we conclude that $\underline{\Z}[x_G]$ is also levelwise finitely generated (in particular, levelwise Noetherian). Thus, every ideal of $\underline{\Z}[x_G]$ is finitely generated. In particular, the kernel of $\underline{\Z}[x_G] \to \underline{\Z[x]}$ is finitely generated, so $\underline{\Z[x]}$ is compact in $\kAlg[\underline{\Z}]$.
\end{proof}

\begin{proposition}
    $S \mapsto \underline{S} : \CRing \to \kAlg[\underline{\Z}]$ preserves finite coproducts.
\end{proposition}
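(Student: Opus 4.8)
The plan is to deduce the proposition from an adjunction: I claim $\underline{(-)}:\CRing\to\kAlg[\underline{\Z}]$ is left adjoint to the evaluation functor $\ev_{G/G}:\kAlg[\underline{\Z}]\to\CRing$, $T\mapsto T(G/G)$. Granting this, a left adjoint preserves all colimits, in particular finite coproducts; the empty coproduct is the triviality $\underline{\Z}=\underline{\Z}$ (the initial $\underline{\Z}$-algebra), and for binary coproducts one gets $\underline{R\otimes_{\Z}S}\cong\underline{R}\boxtimes_{\underline{\Z}}\underline{S}$ compatibly with the structure maps, which is exactly the assertion.

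To set up the adjunction I would produce a natural bijection $\kAlg[\underline{\Z}](\underline{A},T)\cong\CRing(A,T(G/G))$. One direction sends $\varphi$ to its component $\varphi_{G/G}$. Conversely, given a ring map $f:A\to T(G/G)$, set $\hat{f}_{G/H}:=\res^{G}_{H}\circ f$; this is automatically a $\underline{\Z}$-algebra map once it is a map of Tambara functors, since for $T$ a $\underline{\Z}$-algebra the set $\GTamb(\underline{\Z},T)$ is a singleton (the levelwise data of such a map is forced, as $\Z$ is initial among rings). That $\hat{f}$ respects the operations of \Cref{prop:alt-def-of-Tambara-functors} is a direct check: for restrictions it follows from $\res^{K}_{H}\res^{G}_{K}=\res^{G}_{H}$ and the fact that $\underline{A}$ has identity restrictions; for conjugations it follows from the relation $\conj_{H,g}\res^{G}_{H}=\res^{G}_{gHg^{-1}}$, valid in every Tambara functor; for transfers it uses the projection formula $\tr^{K}_{H}(\res^{K}_{H}(z))=z\cdot\tr^{K}_{H}(1)$ together with $\tr^{K}_{H}(1_{T})=[K:H]\cdot 1_{T}$, the latter obtained by applying the structure map $\underline{\Z}\to T$ to the relation $\tr^{K}_{H}(1)=[K:H]$ in $\underline{\Z}$; and compatibility with norms — the one substantive point — reduces, using $\res^{G}_{H}=\res^{K}_{H}\res^{G}_{K}$ and $\nm^{K}_{H}=(-)^{[K:H]}$ on $\underline{A}$, to the identity
\[
\nm^{K}_{H}\bigl(\res^{G}_{H}y\bigr)=\bigl(\res^{G}_{K}y\bigr)^{[K:H]}\qquad\text{for }y\in T(G/G),\ H\le K\le G,
\]
valid in every $\underline{\Z}$-algebra $T$.

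This last identity is the crux and the main obstacle; I would handle it in three moves. First, applying the restriction functor $R^{G}_{K}:\GTamb\to\GTamb[K]$ — which sends $\underline{\Z}$-algebras to $\underline{\Z}$-algebras (as $R^{G}_{K}\underline{\Z}=\underline{\Z}$) and identifies $\res^{K}_{H},\nm^{K}_{H}$ with the operations of $T$ — reduces the identity to the case $K=G$, i.e.\ to $\nm^{G}_{H}(\res^{G}_{H}y)=y^{[G:H]}$. Second, since $\A[x_{G}]$ represents $T\mapsto T(G/G)$, the Tambara functor $\underline{\Z}[x_{G}]=\underline{\Z}\boxtimes\A[x_{G}]$ represents it on $\underline{\Z}$-algebras, so it suffices to verify $\nm^{G}_{H}(\res^{G}_{H}x_{G})=x_{G}^{[G:H]}$ in $\underline{\Z}[x_{G}]$. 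Third, one checks this there: both sides have the same image under $\res^{G}_{e}$ (by the Mackey double-coset formula for $\res^{G}_{e}\nm^{G}_{H}$, using that $\res^{G}_{e}x_{G}$ is Weyl-fixed), and one then matches the two elements through the presentation $\underline{\Z}[x_{G}]\cong\A[x_{G}]/J$, with $J$ the Nakaoka ideal generated by the classes $[H/K]-[H:K]$; the point is that in a $\underline{\Z}$-algebra transfers and norms depend only on the cardinalities of fibers and not on the group action, so the bispans $[G/G\xleftarrow{p}G/H\xrightarrow{p}G/G\xrightarrow{\id}G/G]$ and $[G/G\xleftarrow{\nabla}\coprod^{[G:H]}(G/G)\xrightarrow{\nabla}G/G\xrightarrow{\id}G/G]$ — computing $\nm^{G}_{H}\res^{G}_{H}$ and $(-)^{[G:H]}$ respectively — induce the same operation. (An essentially equivalent route, sidestepping the adjunction, is to identify $\underline{A}$ with the levelwise tensor product $\underline{\Z}\otimes A$ as a Tambara functor via \Cref{prop:free-green-has-norms-and-is-levelwise-tensor}, and then compute $\underline{R}\boxtimes_{\underline{\Z}}\underline{S}\cong(\underline{\Z}\boxtimes(\A\otimes S))\boxtimes(\A\otimes R)\cong\underline{S}\boxtimes(\A\otimes R)\cong\underline{S}\otimes R\cong\underline{\Z}\otimes(S\otimes_{\Z}R)\cong\underline{R\otimes_{\Z}S}$, using the identification of $R\otimes A$ with $R\boxtimes(\A\otimes A)$ and the freeness fact $(\underline{\Z}\boxtimes M)\boxtimes_{\underline{\Z}}N\cong N\boxtimes M$; but the step $\underline{A}\cong\underline{\Z}\otimes A$ carries the same content as the norm identity above.)
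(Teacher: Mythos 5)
The proposed adjunction $\underline{(-)} \dashv \ev_{G/G}$ is false, and the failure is precisely the norm identity you flag as ``the crux'': $\nm^K_H(\res^G_H y)=(\res^G_K y)^{[K:H]}$ does \emph{not} hold in an arbitrary $\underline{\Z}$-algebra. A map $\underline{\Z}\to T$ constrains only the transfers (forcing $\tr^K_H(1)=[K:H]$); it imposes no relation among the norms of $T$. Concretely, for $G=C_2$ take $T(C_2/e)=\F_2[r]$ with trivial action, $T(C_2/C_2)=\F_2[x,n]$, $\res$ the $\F_2$-algebra map $x\mapsto r$, $n\mapsto r^2$, $\tr=0$ (forced by the projection formula, since $\tr(1)=2=0$), and $\nm$ the $\F_2$-algebra map $p(r)\mapsto p(n)$ (additive because $\tr=0$, so the exponential formula holds). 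One checks this is a Tambara functor and a $\underline{\Z}$-algebra, yet $\nm(\res x)=n\neq x^2$. In particular no $\underline{\Z}$-algebra map $\underline{\Z[x]}\to T$ sends $x\mapsto x$, so $\kAlg[\underline{\Z}](\underline{\Z[x]},T)\subsetneq T(G/G)$ and the unit of your proposed adjunction cannot be built.

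The specific error is in your third move. Agreement after $\res^G_e$ does not suffice in $\underline{\Z}[x_G]$: by the projection formula $\ker\res^G_e$ is $|G|$-torsion, but the example above shows it is nonzero, so $\nm\res x_G - x_G^{|G|}$ is a nonzero $|G|$-torsion element. The bispan argument is the gap: the Nakaoka ideal $J$ generated by $[H/K]-[H:K]$ lives in the additive (Burnside-ring) data and forces nothing among the norm operations, so the two bispans you exhibit do \emph{not} induce the same operation on a general $\underline{\Z}$-algebra. (Your parenthetical alternative via $\underline{A}\cong\underline{\Z}\otimes A$ carries the same content, as you yourself observe; \Cref{prop:free-green-has-norms-and-is-levelwise-tensor} covers only polynomial $A$.) The paper's proof is different and short: it cites \cite[Lemma~5.1]{LRZ-etale} for $\underline{S\otimes S'}\cong \underline{S}\boxtimes\underline{S'}$ and then notes that $\underline{S}\boxtimes\underline{S'}=\underline{S}\boxtimes_{\underline{\Z}}\underline{S'}$ because the two parallel arrows from $\underline{\Z}$ being coequalized already coincide. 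The paper does establish a left-adjoint property for $\underline{(-)}$ shortly afterward, but its right adjoint is $T\mapsto\kAlg[\underline{\Z}](\underline{\Z[x]},T)$ — the set of \emph{norm-coherent} families, not $T(G/G)$ — and that adjunction uses the co-ring structure furnished by the present proposition, so it cannot be used to prove it.
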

\begin{proof}
    For the empty coproduct we have directly that $\underline{\Z}$ is initial in $\kAlg[\underline{\Z}]$. \cite[Lemma 5.1]{LRZ-etale} shows that $\underline{S \otimes S'} \cong \underline{S} \boxtimes \underline{S'}$, and we must show that this agrees with $\underline{S} \boxtimes_{\underline{\Z}} \underline{S'}$. This is simply because $\underline{S} \boxtimes_{\underline{\Z}} \underline{S'}$ is defined as a coequalizer of two morphisms which are already equal---morphisms from $\underline{\Z}$ are unique whenever they exist, because such a morphism consists levelwise of ring homomorphisms from $\Z$.
    % There is a natural map $\underline{S} \boxtimes \underline{S'} \to \underline{S} \boxtimes_{\underline{\Z}} \underline{S'}$, and we wish to construct an inverse:
    % % https://q.uiver.app/#q=WzAsNSxbMCwwLCJcXHVuZGVybGluZXtcXFp9Il0sWzEsMCwiXFx1bmRlcmxpbmV7U30iXSxbMCwxLCJcXHVuZGVybGluZXtTJ30iXSxbMSwxLCJcXHVuZGVybGluZXtTfSBcXGJveHRpbWVzX3tcXHVuZGVybGluZXtcXFp9fSBcXHVuZGVybGluZXtTJ30iXSxbMiwyLCJcXHVuZGVybGluZXtTfSBcXGJveHRpbWVzIFxcdW5kZXJsaW5le1MnfSJdLFswLDJdLFswLDFdLFsxLDNdLFsyLDNdLFszLDAsIiIsMSx7InN0eWxlIjp7Im5hbWUiOiJjb3JuZXIifX1dLFsxLDQsIiIsMSx7ImN1cnZlIjotMX1dLFsyLDQsIiIsMSx7ImN1cnZlIjoyfV0sWzMsNCwiIiwxLHsic3R5bGUiOnsiYm9keSI6eyJuYW1lIjoiZGFzaGVkIn19fV1d
    % \[\begin{tikzcd}[ampersand replacement=\&]
    % 	{\underline{\Z}} \& {\underline{S}} \\
    % 	{\underline{S'}} \& {\underline{S} \boxtimes_{\underline{\Z}} \underline{S'}} \\
    % 	\&\& {\underline{S} \boxtimes \underline{S'}}
    % 	\arrow[from=1-1, to=1-2]
    % 	\arrow[from=1-1, to=2-1]
    % 	\arrow[from=1-2, to=2-2]
    % 	\arrow[curve={height=-6pt}, from=1-2, to=3-3]
    % 	\arrow[from=2-1, to=2-2]
    % 	\arrow[curve={height=12pt}, from=2-1, to=3-3]
    % 	\arrow["\lrcorner"{anchor=center, pos=0.125, rotate=180}, draw=none, from=2-2, to=1-1]
    % 	\arrow[dashed, from=2-2, to=3-3]
    % \end{tikzcd}\]
    % This inverse map exists because morphisms with domain $\underline{\Z}$ are unique when they exist (at each level, such a morphisms consists of a homomorphism of commutative rings with domain $\Z$).
\end{proof}

\begin{corollary}
    $\underline{\Z[x]}$ has the structure of a co-(commutative ring) object in the category of $\underline{\Z}$-algebras, coming from its co-(commutative ring) object structure in $\CRing$.
\end{corollary}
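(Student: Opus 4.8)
The plan is to isolate the purely formal content of the statement. A co-(commutative ring) object is a structure made entirely of morphisms and commuting diagrams, and any functor preserving finite coproducts transports such a structure along itself; since $S \mapsto \underline{S} : \CRing \to \kAlg[\underline{\Z}]$ preserves finite coproducts by the preceding proposition, it carries the co-(commutative ring) structure on $\Z[x]$ over to one on $\underline{\Z[x]}$.

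First I would recall the definition in a convenient form. In a category $\catD$ with finite coproducts and initial object $\mathbf{0}_{\catD}$, a co-(commutative ring) object is an object $c$ equipped with morphisms
\[
  c\xrightarrow{\ \alpha\ }c\sqcup c,\qquad
  c\xrightarrow{\ \mu\ }c\sqcup c,\qquad
  c\xrightarrow{\ \nu\ }c,\qquad
  c\xrightarrow{\ \zeta,\ \upsilon\ }\mathbf{0}_{\catD},
\]
(co-addition, co-multiplication, co-negation, co-zero and co-one) subject to the diagrams obtained from the axioms of a commutative ring by reversing arrows and replacing products with coproducts; equivalently, $c$ is a co-(commutative ring) object exactly when the representable functor $\catD(c,-):\catD\to\Set$ lifts functorially through the forgetful functor $\CRing\to\Set$. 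From the second description, $\Z[x]$ is a co-(commutative ring) object in $\CRing$, since $\CRing(\Z[x],R)\cong R$ naturally in $R$; unwinding this via the coproduct $-\otimes_{\Z}-$ and the initial object $\Z$ of $\CRing$, its co-operations are $x\mapsto x\otimes 1+1\otimes x$, $x\mapsto x\otimes x$, $x\mapsto -x$, $x\mapsto 0$, and $x\mapsto 1$.

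Next I would record the formal lemma that a functor $F:\catC\to\catD$ between categories with finite coproducts which preserves finite coproducts sends co-(commutative ring) objects to co-(commutative ring) objects. Given such a $c$ in $\catC$, one defines the co-operations on $Fc$ by postcomposing $F\alpha, F\mu, F\nu, F\zeta, F\upsilon$ with the canonical comparison isomorphisms $F(c\sqcup c)\cong Fc\sqcup Fc$ and $F(\mathbf{0}_{\catC})\cong\mathbf{0}_{\catD}$; each axiom diagram for $Fc$ is then the image under $F$ of the corresponding diagram for $c$, with the objects $F(c^{\sqcup n})$ rewritten as $(Fc)^{\sqcup n}$ by means of the comparison maps. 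Applying this to $F=\underline{(-)}:\CRing\to\kAlg[\underline{\Z}]$ --- which preserves finite coproducts by the preceding proposition, with $\underline{\Z}$ the initial object and $\underline{S\otimes S'}\cong\underline{S}\boxtimes_{\underline{\Z}}\underline{S'}$ the binary comparison --- yields the corollary, the co-operations on $\underline{\Z[x]}$ being the images under $\underline{(-)}$ of those on $\Z[x]$.

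The only point needing care is the coherence of the comparison isomorphisms $F(c\sqcup c\sqcup c)\cong(Fc)^{\sqcup 3}$: one must check that the two ways of assembling it from the binary comparison agree, so that, for instance, co-associativity and co-distributivity transport correctly. This is automatic, since finite coproducts are colimits and the maps witnessing their preservation are canonical, hence coherent; and because all ring co-operations are finitary, preservation of \emph{finite} coproducts is exactly what is required.
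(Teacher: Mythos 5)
Your proof is correct and takes essentially the same approach the paper intends: the paper states this corollary without proof, treating it as immediate from the preceding proposition that $\underline{(-)}$ preserves finite coproducts, and your argument spells out exactly the standard formal transport of a co-(commutative ring) structure along a finite-coproduct-preserving functor.
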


This tells us that $\kAlg[\underline{\Z}]\of*{\underline{\Z[x]},S}$ has a natural commutative ring structure for any $\underline{\Z}$-algebra $S$, i.e. $\kAlg[\underline{\Z}]\of*{\underline{\Z[x]},{-}}$ is a functor $\kAlg[\underline{\Z}] \to \CRing$.

\begin{proposition}
    $S \mapsto \underline{S} : \CRing \to \kAlg[\underline{\Z}]$ is left adjoint to $\kAlg[\underline{\Z}](\underline{\Z[x]}, {-})$.
\end{proposition}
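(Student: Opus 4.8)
The plan is to prove the sharper statement that $\underline{(-)} : \CRing \to \kAlg[\underline{\Z}]$ preserves all small colimits, invoke the adjoint functor theorem to obtain its right adjoint, and then identify that right adjoint by evaluating on the free ring $\Z[x]$.

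First I would check that $\underline{(-)}$ preserves all small coproducts and all reflexive coequalizers. Finite coproducts are exactly the content of the proposition above that $\underline{(-)}$ preserves finite coproducts ($\underline{S\otimes S'}\cong\underline{S}\boxtimes_{\underline{\Z}}\underline{S'}$, with $\underline{\Z}$ initial). Arbitrary coproducts then follow by writing $\coprod_{i}S_{i}$ as the filtered colimit of its finite subcoproducts and using that $\underline{(-)}$ preserves filtered colimits: these are computed levelwise in $\kAlg[\underline{\Z}]$ by \Cref{cor:filtered-colimits-computed-levelwise} and on underlying sets in $\CRing$, while $\underline{S}(G/H)=S$ for all $H$. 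For reflexive coequalizers, both categories are finitary varieties — $\kAlg[\underline{\Z}]$ via the encoding of \Cref{prop:compact same as fp} — so sifted colimits, in particular reflexive coequalizers, are computed on underlying (many-sorted) sets, i.e. levelwise for $\kAlg[\underline{\Z}]$; since levelwise $\underline{(-)}$ is just the identity functor repeated across the sorts, it carries the $\CRing$ computation to the $\kAlg[\underline{\Z}]$ one, the induced restrictions, transfers, norms and conjugations on the levelwise coequalizer automatically being identities, multiplication by the index, power maps and identities — exactly as for $\underline{(-)}$ of any ring.

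Since every small colimit can be written as a reflexive coequalizer of a pair of (possibly large) coproducts, it follows that $\underline{(-)}$ preserves all small colimits; as $\CRing$ and $\kAlg[\underline{\Z}]$ are locally presentable, it therefore has a right adjoint $R$. To identify $R$ with $\kAlg[\underline{\Z}](\underline{\Z[x]},{-})$: for a $\underline{\Z}$-algebra $T$ the underlying set of $R(T)$ is $\CRing(\Z[x],R(T))\cong\kAlg[\underline{\Z}](\underline{\Z[x]},T)$, naturally in $T$, since $\Z[x]$ corepresents the forgetful functor of $\CRing$ and $\underline{(-)}\dashv R$; and since this bijection $\CRing(A,R(T))\cong\kAlg[\underline{\Z}](\underline{A},T)$ is also natural in the ring $A$, and $\underline{(-)}$ carries the co-(commutative ring) structure on $\Z[x]$ to the one on $\underline{\Z[x]}$ from the preceding corollary, the bijection is a ring isomorphism. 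This yields the claimed adjunction.

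I expect the step needing the most care to be the claim that sifted colimits in $\kAlg[\underline{\Z}]$ are computed levelwise and that $\underline{(-)}$ is compatible with this; everything else is assembly of facts already in hand. An entirely hands-on alternative avoids it: take the unit $\eta_{S}:S\to\kAlg[\underline{\Z}](\underline{\Z[x]},\underline{S})$, $s\mapsto\underline{\tilde{s}}$ where $\tilde{s}:\Z[x]\to S$ sends $x$ to $s$, and the counit $\varepsilon_{T}:\underline{\kAlg[\underline{\Z}](\underline{\Z[x]},T)}\to T$ given levelwise by ``evaluation at $x$'', $\varphi\mapsto\varphi_{G/H}(x)=\res^{G}_{H}\varphi_{G/G}(x)$, and verify the Tambara axioms for $\varepsilon_{T}$ together with the triangle identities. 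In that route the delicate point is compatibility of $\varepsilon_{T}$ with norms: $\nm^{K}_{H}\res^{K}_{H}$ is not levelwise the $[K:H]$-th power for a general Tambara functor, so one instead uses that the identity $\nm^{K}_{H}\res^{K}_{H}(x)=x^{[K:H]}$ holds inside $\underline{\Z[x]}$ (by \Cref{defn:fixed-pt-tambara-functor}, norms being power maps for a trivial action) and transports along any $\underline{\Z}$-algebra map out of $\underline{\Z[x]}$; compatibility with transfers similarly uses that $\tr^{K}_{H}(1_{T})=[K:H]\cdot 1_{T}$ in any $\underline{\Z}$-algebra, inherited from the structure map $\underline{\Z}\to T$.
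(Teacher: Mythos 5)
Your proposal is correct, and its primary route is genuinely different from the paper's. The paper gives a direct construction of the natural bijection $\GTamb(\underline{S},T) \cong \CRing(S, \kAlg[\underline{\Z}](\underline{\Z[x]}, T))$: given $\varphi$ it sets $\widetilde{\varphi}(s)_{G/H}(f)=f(\varphi_{G/H}(s))$, given $\psi$ it sets $\widehat{\psi}_{G/H}(s)=\psi(s)_{G/H}(x)$, checks these are mutually inverse, and leaves as ``easy to check'' that the outputs are Tambara (resp.\ ring) morphisms. Your main argument instead establishes cocontinuity of $\underline{(-)}$ (finite coproducts from the preceding proposition, filtered colimits computed levelwise, reflexive coequalizers via the sifted-colimits-on-underlying-sets fact for finitary varieties), invokes the adjoint functor theorem for locally presentable categories, and identifies the right adjoint by co-representability of the underlying-set functor; this buys conceptual cleanliness at the cost of the sifted-colimit step you correctly flag as the one needing care. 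Your hands-on alternative, by contrast, is essentially the paper's proof recast in unit/counit form --- and it is actually sharper in one respect, since it isolates the point the paper glosses over: well-definedness of the counit hinges on the identity $\nm_H^K\res_H^K x = x^{[K:H]}$ holding in $\underline{\Z[x]}$ (then transported along any morphism out of it) and on $\tr_H^K(1)=[K:H]$ holding in any $\underline{\Z}$-algebra, neither of which is automatic for an arbitrary Tambara functor.
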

\begin{proof}
    Given a morphism $\varphi : \underline{S} \to T$, define a morphism $\widetilde{\varphi} : S \to \kAlg[\underline{\Z}](\underline{\Z[x]},T)$ by
    \[\widetilde{\varphi}(s)_{G/H}(f) = f(\varphi_{G/H}(s)).\]
    It is easy to check that $\widetilde{\varphi}(s)$ is a homomorphism of Tambara functors for all $s$, and then that $\widetilde{\varphi}$ is a ring homomorphism. Thus, we obtain a function
    \[\varphi \mapsto \widetilde{\varphi} : \GTamb(\underline{S},T) \to \CRing(S, \kAlg[\underline{\Z}](\underline{\Z[x]}, T))\]
    which is clearly natural in $S$ and $T$. In the other direction, let $\psi : S \to \kAlg[\underline{\Z}](\underline{\Z[x]},T)$ be a ring homomorphism. Then define $\widehat\psi : \underline{S} \to T$ by
    \[\widehat\psi_{G/H}(s) = \psi(s)_{G/H}(x).\]
    We check that
    \[\widehat{\widetilde{\varphi}}_{G/H}(s) = \widetilde{\varphi}(s)_{G/H}(x) = \varphi_{G/H}(s)\]
    and
    \[\widetilde{\widehat{\psi}}(s)_{G/H}(x) = \widehat{\psi}_{G/H}(s) = \psi(s)_{G/H}(x),\]
    so these constructions are mutually inverse.
\end{proof}

\begin{corollary}
    $S \mapsto \underline{S} : \CRing \to \kAlg[\underline{\Z}]$ preserves compact morphisms.
\end{corollary}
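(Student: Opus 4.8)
The plan is to combine the two preceding results: we have just shown that $S \mapsto \underline{S} : \CRing \to \kAlg[\underline{\Z}]$ is left adjoint to the functor $\kAlg[\underline{\Z}](\underline{\Z[x]}, {-})$, so by \Cref{lem:left-adjoint-preserves-compact-morphisms} it suffices to check that the right adjoint $\kAlg[\underline{\Z}](\underline{\Z[x]}, {-})$ preserves filtered colimits. Thus the whole corollary reduces to a single statement: for a filtered diagram $T : I \to \kAlg[\underline{\Z}]$, the natural map
\[
\colim_i\, \kAlg[\underline{\Z}]\of*{\underline{\Z[x]}, T_i} \longrightarrow \kAlg[\underline{\Z}]\of*{\underline{\Z[x]}, \colim_i T_i}
\]
is an isomorphism of sets (it is automatically also an isomorphism of rings, using the co-(commutative ring) structure on $\underline{\Z[x]}$, but this is not needed).

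First I would invoke \Cref{cor:constant-Z-is-fp}, which states precisely that $\underline{\Z[x]}$ is a compact object of $\kAlg[\underline{\Z}]$; unwinding the definition of compactness, this says exactly that $\kAlg[\underline{\Z}]\of*{\underline{\Z[x]}, {-}}$ sends filtered colimits to filtered colimits. So in fact the corollary is essentially immediate from \Cref{cor:constant-Z-is-fp} together with \Cref{lem:left-adjoint-preserves-compact-morphisms}: the right adjoint to $S \mapsto \underline{S}$ preserves filtered colimits because it corepresents a compact object, and then a left adjoint whose right adjoint preserves filtered colimits preserves compact morphisms. One small point to be careful about: \Cref{lem:left-adjoint-preserves-compact-morphisms} is stated for a functor with a right adjoint that preserves filtered colimits, and we should make sure we are applying it to $F = (S \mapsto \underline{S})$ with $G = \kAlg[\underline{\Z}](\underline{\Z[x]}, {-})$, which is exactly the adjunction established in the previous proposition.

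I do not anticipate a serious obstacle here — the content has been front-loaded into \Cref{cor:constant-Z-is-fp} (whose proof already used the levelwise-finite-generation of $\A[x_G]$, i.e.\ the equivariant Hilbert basis theorem) and into the explicit adjunction of the previous proposition. The only thing to double-check is the bookkeeping: that the right adjoint in that adjunction is literally the corepresentable functor $\kAlg[\underline{\Z}](\underline{\Z[x]}, {-})$ and not some twist of it, so that compactness of $\underline{\Z[x]}$ as an object translates directly into preservation of filtered colimits by the right adjoint. With that in hand, the proof is a one-line citation of \Cref{lem:left-adjoint-preserves-compact-morphisms}.
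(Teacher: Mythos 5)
Your proposal is correct and is essentially the paper's own proof: cite \Cref{cor:constant-Z-is-fp} to deduce that $\kAlg[\underline{\Z}](\underline{\Z[x]},{-})$ preserves filtered colimits, then apply \Cref{lem:left-adjoint-preserves-compact-morphisms} to the adjunction $(S\mapsto\underline S)\dashv\kAlg[\underline{\Z}](\underline{\Z[x]},{-})$ from the preceding proposition. The paper states this in two sentences; your version just spells out the bookkeeping.
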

\begin{proof}
    By \Cref{cor:constant-Z-is-fp}, $\kAlg[\underline{\Z}](\underline{\Z[x]},{-})$ preserves filtered colimits. By \Cref{lem:left-adjoint-preserves-compact-morphisms}, we are done.
\end{proof}

\begin{theorem}\label{thm:constant-preserves-compacts}
    $S \mapsto \underline{S} : \CRing \to \GTamb$ preserves compact objects and compact morphisms.
\end{theorem}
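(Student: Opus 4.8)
The plan is to factor the functor $S \mapsto \underline{S} : \CRing \to \GTamb$ through the category of $\underline{\Z}$-algebras, and then combine the results that have just been established. Concretely, the functor factors as
\[
\CRing \xrightarrow{\ S \mapsto \underline{S}\ } \kAlg[\underline{\Z}] \xrightarrow{\ \text{forget}\ } \GTamb,
\]
where the first arrow is the one studied in the preceding corollaries, and the second is the forgetful functor from $\underline{\Z}$-algebras to Tambara functors. So it suffices to show that each of these two functors preserves compact objects and compact morphisms.

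For the first functor: preservation of compact morphisms is exactly the corollary immediately above this theorem. For preservation of compact objects, one observes that a compact object of $\CRing$ is a finitely presented ring, hence a compact morphism $\Z \to S$; since $\underline{\Z}$ is the image of $\Z$ (indeed $\underline{\Z}$ is the initial object of $\kAlg[\underline{\Z}]$, and also the image of the initial ring), and since the functor preserves compact morphisms, $\underline{\Z} \to \underline{S}$ is a compact morphism in $\kAlg[\underline{\Z}]$, i.e.\ $\underline{S}$ is a compact object of $\kAlg[\underline{\Z}]$. (Equivalently: by \Cref{cor:constant-Z-is-fp}, $\underline{\Z[x]} = \underline{\Z}\boxtimes_{\underline\Z}\underline{\Z}[x]$ is compact in $\kAlg[\underline\Z]$; since $\underline{-}$ preserves finite coproducts, $\underline{\Z[x_1,\dots,x_n]}$ is compact, and since it preserves the relevant coequalizers presenting a finitely presented ring, so is $\underline{S}$ for any finitely presented $S$.)

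For the second functor: the forgetful functor $\kAlg[\underline{\Z}] \to \GTamb$ is the base-change-to-$\underline{\Z}$ functor's right adjoint — more precisely, $\GTamb$-objects receive a unique map from $\underline{\Z}$? No: rather, the forgetful functor $U : \kAlg[\underline{\Z}] \to \GTamb$ has left adjoint $\underline{\Z}\boxtimes(-)$ by \Cref{prop:free-forget-kalg}, and this left adjoint preserves filtered colimits (it is itself a left adjoint, or one checks filtered colimits of $\underline{\Z}$-algebras are computed in $\GTamb$ by \Cref{cor:filtered-colimits-computed-levelwise}). Hence $U$ preserves filtered colimits, so $U$ sends compact objects to compact objects and, by \Cref{lem:left-adjoint-preserves-compacts} and \Cref{lem:left-adjoint-preserves-compact-morphisms} applied with roles adjusted — more directly, since $U$ preserves filtered colimits it preserves compact objects, and since the slice functors $x/U$ likewise preserve filtered colimits (by \Cref{cor:if-g-preserves-filt-colim-then-so-do-slices-of-g}) it preserves compact morphisms. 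Composing the two functors finishes the proof.

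The main subtlety — really the only non-formal point — is making sure the notion of "compact object of $\kAlg[\underline{\Z}]$" transports correctly to "compact object of $\GTamb$" under the forgetful functor, i.e.\ that the forgetful functor $\kAlg[\underline{\Z}]\to\GTamb$ preserves filtered colimits; this is where one invokes that $\underline{\Z}$-algebra structure is a property-like enhancement whose underlying filtered colimits agree with those of Tambara functors (\Cref{cor:filtered-colimits-computed-levelwise} together with \Cref{prop:free-forget-kalg}). Everything else is assembling the two already-proven halves along the factorization.
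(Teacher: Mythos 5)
Your proposal contains a genuine gap, and it is precisely at the spot you flag as ``the only non-formal point.'' The implication ``the forgetful functor $U:\kAlg[\underline{\Z}]\to\GTamb$ preserves filtered colimits, therefore $U$ preserves compact objects'' is false. Preserving filtered colimits is a condition on colimits in the \emph{domain} of $U$, whereas compactness of $Ux$ in $\GTamb$ is a statement about $\GTamb(Ux,-)$ applied to colimits in the \emph{codomain}, which need not land in the image of $U$. A stark counterexample: the initial object $\underline{\Z}$ of $\kAlg[\underline{\Z}]$ is automatically compact there, but it is a nontrivial theorem that $\underline{\Z}$ is a finitely presented (hence compact) $\A$-algebra. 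Your argument would make that theorem free of charge, which it is not. The same confusion appears in your claim that $x/U$ preserving filtered colimits makes $U$ preserve compact morphisms.

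The paper closes exactly this gap by two observations your write-up omits. First, for any $\underline{\Z}$-algebra $A$, there is an honest \emph{equivalence of coslices} $A/\kAlg[\underline{\Z}]\cong A/\GTamb$ (because a morphism out of $\underline{\Z}$ is unique whenever it exists); this, not a filtered-colimit argument, is what makes $U$ preserve compact morphisms. Second, one must explicitly show that $\A\to\underline{\Z}$ is itself a compact morphism (the paper does this via the levelwise-Noetherian observation coming from \Cref{prop:free-at-fixed-pts-is-levelwise-fg} and the surjectivity of $\A\to\underline{\Z}$). With that in hand, $\A\to\underline{\Z}\to\underline{R}$ is a composite of compact morphisms, and \Cref{lem:compact-morphisms-closed-under-comp} gives that $\underline{R}$ is compact in $\GTamb$. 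Your step reducing compactness of $S$ in $\CRing$ to compactness of $\underline{\Z}\to\underline{S}$ in $\kAlg[\underline{\Z}]$ is fine and matches the paper; the missing content is the compactness of $\A\to\underline{\Z}$ and the coslice identification, both of which cannot be replaced by appeals to filtered-colimit preservation.
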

\begin{proof}
    Because $A/\kAlg[\underline{\Z}] \cong \kAlg[A] = A/\GTamb$, a morphism in $\kAlg[\underline{\Z}]$ is compact if and only if it is compact in $\GTamb$. Next, consider a compact object $R \in \CRing$. Then (equivalently) $\Z \to R$ is compact in $\CRing$, so $\underline{\Z} \to \underline{R}$ is compact in $\kAlg[\underline{\Z}]$, so $\underline{\Z} \to \underline{R}$ is compact in $\GTamb$. Since $\A \to \underline{\Z}$ is also compact (it is surjective), we conclude that $\A \to \underline{R}$ is compact, i.e., $\underline{R}$ is compact in $\GTamb$.
\end{proof}
\section{Classification of Nullstellensatzian Objects}

In this section we characterize the Nullstellensatzian $G$-Tambara functors and Nullstellensatzian $G$-rings, and study the algebraic $K$-theory of Nullstellensatzian Tambara functors. Additionally, we study algebraic closures of field-like $G$-Tambara functors.

\begin{definition}[\cite{BSY22}]
    A Tambara functor $k$ is Nullstellensatzian if every compact, nonterminal object in the slice category of Tambara functor algebras over $k$ admits a map to the initial $k$-algebra $\id_k : k \rightarrow k$.
\end{definition}

First, we collect some immediate consequences of this definition.

%\begin{lemma}
%    Let $k$ be a Tambara functor, and let $f : M \to N$ be a morphism of $k$-algebras. If $f$ is monic, then $f$ is levelwise injective.
%\end{lemma}
%\begin{proof}
%    Suppose $f$ is monic. Let $H \leq G$ and $m \in M(G/H)$ be arbitrary. Assume $f_{G/H}(m) = 0$. Then we consider the maps $k[x_H] \to M$ picking out the elements $m$ and $0$ at level $G/H$, and note that $f$ coequalizes these maps:
    % https://q.uiver.app/#q=WzAsMyxbMCwwLCJrW3hfSF0iXSxbMSwwLCJNIl0sWzIsMCwiTiJdLFsxLDIsImYiXSxbMCwxLCJtIiwwLHsib2Zmc2V0IjotMX1dLFswLDEsIjAiLDIseyJvZmZzZXQiOjF9XV0=
%    \[\begin{tikzcd}[ampersand replacement=\&]
%            {k[x_H]} \& M \& N
%            \arrow["m", shift left, from=1-1, to=1-2]
%            \arrow["0"', shift right, from=1-1, to=1-2]
%            \arrow["f", from=1-2, to=1-3]
%        \end{tikzcd}\]
%    Since $f$ is a monomorphism, we conclude that $m = 0$.
%\end{proof}

\begin{proposition}
    Let $k$ be a Nullstellensatzian Tambara functor. Then $k$ is field-like.
\end{proposition}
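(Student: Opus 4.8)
The plan is to argue straight from the definition of a Nullstellensatzian object, showing that $0$ is the only proper Nakaoka ideal of $k$. Since a Nullstellensatzian object is in particular nonterminal, $k$ is not the zero Tambara functor, so once we know every proper ideal vanishes we will have exhibited $0$ as the \emph{unique} proper ideal, i.e. that $k$ is field-like.

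Concretely: let $I$ be a proper Nakaoka ideal of $k$, and fix an arbitrary element $x \in I(G/H)$ for some $H \le G$. Let $\langle x\rangle$ denote the Nakaoka ideal of $k$ generated by $x$. Since $\langle x\rangle \subseteq I$ and $I$ is proper, $\langle x\rangle$ is proper as well, so the quotient $k/\langle x\rangle$ is a nonterminal $k$-algebra. On the other hand $\langle x\rangle$ is finitely generated, so $k/\langle x\rangle$ is a finitely presented $k$-algebra: it is the quotient of the free $k$-algebra on no generators (namely $k$ itself) by a finitely generated Nakaoka ideal, equivalently the coequalizer of the two $k$-algebra maps $k[x_H] \rightrightarrows k$ classifying $x$ and $0$. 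By \Cref{prop:compact same as fp}, $k/\langle x\rangle$ is therefore compact, and hence the quotient map $q \colon k \to k/\langle x\rangle$ is a compact morphism in $\GTamb$.

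Now apply the Nullstellensatzian hypothesis to the compact morphism $q$, whose target $k/\langle x\rangle$ is nonterminal: there is a $k$-algebra map $g \colon k/\langle x\rangle \to k$ with $g \circ q = \id_k$. Reading this identity at level $G/H$ gives $x = g_{G/H}(q_{G/H}(x)) = g_{G/H}(0) = 0$. As $x$ was an arbitrary element of an arbitrary level of $I$, this forces $I = 0$, which finishes the argument.

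There is essentially no obstacle to overcome here: the only points that need care are recognizing that killing a single element of $k$ produces a finitely presented—hence, by \Cref{prop:compact same as fp}, compact—$k$-algebra, and that this quotient is nonterminal precisely because the element lies in a proper ideal; after that the definition of ``Nullstellensatzian'' does all the work. (One could alternatively route through \Cref{prop:ev-at-underlying-preserves-nullstellensatzian} and Nakaoka's field-like criterion, but the direct argument is shorter and more self-contained, so we prefer it.)
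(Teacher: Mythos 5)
Your proof is correct and follows essentially the same route as the paper's: pick an element $x$ of a proper ideal, observe that $k/(x)$ is a nonzero, compact $k$-algebra, invoke the Nullstellensatzian retraction, and conclude $x = 0$. You spell out the finite-presentation bookkeeping a bit more explicitly than the paper does, but the argument is the same.
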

\begin{proof}
    Let $H \leq G$ and $x \in k(G/H)$ be arbitrary. If $(x)\ne k$, then 
    $q:k\to k/(x)$ is a nonzero, compact $k$-algebra, so it admits a retraction $r:k/(x)\to k$ such that $r\circ q = 1_k$. 
    Then $r(q(x)) = 0 = x$, so $x=0$. So $k$ has no proper nonzero ideals.
    
    %Suppose $x \neq 0$. Then $x$ generates a nontrivial ideal of $k$, so the quotient map $k \to k/(x)$ is not levelwise injective, and thus not monic. We know that $k/(x)$ is compact, so suppose towards contradiction that $k/(x)$ is nonzero. Then since $k$ is Nullstellensatzian, we have a section $k/(x) \to k$ of this quotient map, showing that the quotient map is a monomorphism. Contradiction. Thus, $k/(x) = 0$, i.e. $(x) = k$.
%
    %Since $x$ was arbitrary, we conclude that $k$ has no proper nonzero ideals.
\end{proof}

\begin{corollary}
    Let $k$ be a Nullstellensatzian Tambara functor. Then $k(G/G)$ is a field.
\end{corollary}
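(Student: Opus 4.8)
The plan is to bootstrap from the structure theory of field-like Tambara functors. By the preceding proposition, a Nullstellensatzian Tambara functor $k$ is field-like, so the classification of field-like $G$-Tambara functors recalled above (from \cite{Wis24}, \cite{Wis25}) applies: it produces an isomorphism $k \cong C_H^G \ell$ for some subgroup $H \le G$ and some field-like $H$-Tambara functor $\ell$ whose top level $\ell(H/H)$ is an honest field.

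It then only remains to identify $k(G/G)$. Since coinduction $C_H^G$ is computed by precomposition with the forgetful functor $\Poly_G \to \Poly_H$, and this functor sends the one-point $G$-set $G/G$ to the one-point $H$-set $H/H$, we get $k(G/G) \cong (C_H^G \ell)(G/G) \cong \ell(H/H)$, which is a field. So the only real content sits in the cited classification theorem, and there is no further obstacle along this route.

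If one wanted to avoid invoking the full classification, the direct approach would mirror the proof of the preceding proposition: for $0 \ne x \in k(G/G)$, the $k$-algebra $k[x_G]/(x\cdot x_G - 1)$ is finitely presented, hence compact, and if it is nonterminal then Nullstellensatzianness yields a retraction to $k$ carrying $x_G$ to an inverse of $x$, so every nonzero element of $k(G/G)$ is a unit. The delicate point here --- the genuine obstacle in this alternative --- is showing that $k[x_G]/(x\cdot x_G - 1) \ne 0$, equivalently that the Nakaoka ideal generated by $x\cdot x_G - 1$ is proper; one way is to produce a nonzero $k$-algebra in which the image of $x$ is invertible, e.g.\ a localization of $k$ at $x$, which is nonzero because field-likeness of $k$ makes $(0)$ a prime ideal not containing $x$.
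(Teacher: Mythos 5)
Your primary route is correct but genuinely different from the paper's. The paper disposes of this corollary in one line by citing Nakaoka's Remark 4.36, which directly records that the top level $k(G/G)$ of any field-like Tambara functor is a field. You instead invoke the much heavier structural classification of field-like Tambara functors from \cite{Wis24,Wis25} ($k \cong C_H^G \ell$ with $\ell$ levelwise a field) and then compute $(C_H^G\ell)(G/G) \cong \ell(H/H)$ via the description of coinduction as precomposition with the forgetful functor $\Poly_G \to \Poly_H$. That computation is valid (the one-point $G$-set restricts to the one-point $H$-set), so your proof goes through; it just replaces an elementary remark of Nakaoka's with a deep structure theorem whose proof is far more involved than the corollary being established.

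Your ``direct approach'' is actually closer in spirit to what Nakaoka's remark encapsulates, and you've correctly located the genuine obstacle: showing $k[x_G]/(x\cdot x_G - 1) \neq 0$, i.e.\ that the Nakaoka ideal generated by $x\cdot x_G - 1$ is proper. The sketch you give is not yet a proof. Field-likeness gives you that the zero ideal is the only proper Tambara ideal, but it is not immediate that $0 \neq x \in k(G/G)$ being a non-unit forces the Tambara ideal it generates (which includes all transfers, conjugates, and norm-absorption closure, not just $x\cdot k(G/G)$) to be proper; nor is it routine to produce a nonzero Tambara-functor localization of $k$ at $x$ without first knowing more about the multiplicative structure of $k(G/G)$. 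This is precisely the content the authors outsource to Nakaoka's Remark 4.36. So Option 2 is an honest sketch with an acknowledged gap, while Option 1 is a complete proof by sledgehammer.
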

\begin{proof}
    This is true of any field-like Tambara functor, see \cite[Remark 4.36]{Nak11a}.
\end{proof}

One might guess that if each level $k(G/H)$ is an algebraically closed field, then $k$ is Nullstellensatzian. However, this is not the case, as the following example shows. Roughly speaking, the correction is that one must consider $k(G/H)$ as a ring with Weyl group action, and ask for it to be Nullstellensatzian in the appropriate category of rings with group action.
\bigskip

\begin{example}
    \label{exl:constant-C-is-not-ac}
    If $G$ is not the trivial group, the constant Tambara functor at the algebraically closed field $\C$ is not a Nullstellensatzian Tambara functor.
\end{example}

\begin{proof}
    Consider the complex representation $\C[G]$ of $G$, which contains a copy of $\C$ in its fixed points via $1 \mapsto \sum_{g \in G} g$. Let $X := \C[G]/\C$, which is again a complex representation of $G$. Now $\Sym_{\C}(X)$ is a $G$-$\C$-algebra, and we can invert the elements of $G$ to obtain $Y := \Sym_{\C}(X)[g^{-1} : g \in G]$, which is again a $G$-$\C$-algebra.

    Since $G$ is not the trivial group, pick a function $f : G \to \C$ such that $\sum_{g \in G} f(g) = 0$ and $f(g) \neq 0$ for all $g \in G$. This yields a $\C$-linear function $X \to \C$, which then gives us a $\C$-algebra map $\Sym_{\C}(X) \to \Sym_{\C}(\C) \cong \C$. Since $f$ was nowhere-zero, this descends to a $\C$-algebra map $Y \to \C$, and thus we conclude that $Y \neq 0$.

    Now let $E := \underline{Y}$, which is a $\underline{\C}$-algebra. Since $Y$ is finitely presented, it is compact over $\C$, so (by \Cref{thm:constant-preserves-compacts}) $E$ is compact over $\underline{\C}$. Moreover, since $Y \neq 0$ we have $E \neq 0$. Now suppose towards a contradiction that there exists a morphism of Tambara functors $\varphi : E \to \underline{\C}$. In $E(G/e)$ we have $R_e^G T_e^G(e) = \sum_{g \in G} g = 0$, so we get
    \[\lvert G \rvert \varphi(e) = R_e^G T_e^G \varphi(e) = \varphi(R_e^G T_e^G e) = \varphi(0) = 0,\]
    whence $\varphi(e) = 0$. However, $e$ is a unit in $E(G/e) = Y$, so $\varphi(e)$ must be a unit, which is a contradiction.
\end{proof}

While \cref{exl:constant-C-is-not-ac} might be surprising at first, it offers the following heuristic. Nullstellensatzian objects must receive many maps. When $G$ acts trivially on something, any $G$-equivariant map must factor through $G$-orbits. However, the $G$-orbits of any coinduced Tambara functor are zero, and the only way to receive a morphism from the zero Tambara functor is to be zero. In light of this, the third-named author conjectures in \cite{Wis25} that the constant $\C$ Tambara functor actually is Nullstellensatzian if one works in the category of so-called \emph{clarified} $G$-Tambara functors. However, the results of \cite{Wis25} essentially say that this is the only possibility for field-like Nullstellensatzian clarified Tambara functors, and in fact there exist many Nullstellensatzian clarified Tambara functors which are not field-like.

\begin{lemma}
    \label{lem:nullstellensatzian-are-fixed-pt-tam-funcs}
    If $k$ is a Nullstellensatzian Tambara functor, then 
    the counit map $k\to \underline{k(G/e)}$ is an isomorphism.
\end{lemma}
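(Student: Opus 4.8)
The plan is to prove that the map $\eta_k \colon k \to \underline{k(G/e)}$ — whose component at $G/H$ is the restriction $\res^H_e \colon k(G/H) \to k(G/e)^H$ — is an isomorphism by showing it is levelwise bijective. Injectivity comes for free: we have already seen that a Nullstellensatzian Tambara functor is field-like, so by Nakaoka's criterion each $\res^H_e \colon k(G/H) \to k(G/e)$ is injective, and hence so is its corestriction onto $k(G/e)^H$. The entire content is therefore surjectivity of $\res^H_e \colon k(G/H) \to k(G/e)^H$ for every $H \le G$, and it is here that the hypothesis of being Nullstellensatzian will be used.

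To prove surjectivity, fix $H \le G$ and $a \in k(G/e)^H$; in particular $a$ is an element of the level-$G/e$ ring of $k$. I would form the $k$-algebra $T := k[x_H]/N$, where $N$ is the Nakaoka ideal of $k[x_H]$ generated by the single element $\res^H_e x_H - a \in k[x_H](G/e)$, with $a$ pulled back along the structure map $k \to k[x_H]$. Since $N$ is finitely generated, $T$ is a finitely presented $k$-algebra, hence compact by \Cref{prop:compact same as fp}; equivalently, the structure map $k \to T$ is a compact morphism. By construction, a $k$-algebra map $T \to K$ is the same thing as an element $c \in K(G/H)$ with $\res^H_e c$ equal to the image of $a$ in $K(G/e)$. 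Taking $K = \underline{k(G/e)}$ and $c = a \in \underline{k(G/e)}(G/H) = k(G/e)^H$, one checks that $\res^H_e c = a$ in $\underline{k(G/e)}(G/e) = k(G/e)$ (restrictions in a fixed-point Tambara functor are the inclusions of fixed subrings) while the image of $a$ under $\eta_k$ at level $G/e$ is again $a$; so there is a $k$-algebra map $T \to \underline{k(G/e)}$. Since $k(G/e) \ne 0$ — otherwise $1_{k(G/G)} = \nm^G_e 1_{k(G/e)} = 0$ — this map is nonzero, so $T$ is nonterminal in $k/\Tamb$.

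Now I would invoke the hypothesis: as $k$ is Nullstellensatzian and $k \to T$ is a compact morphism to a nonterminal object, there is a retraction $g \colon T \to k$ of $k$-algebras. Writing $\xi \in T(G/H)$ for the image of $x_H$ and setting $b := g(\xi) \in k(G/H)$, I would apply $g$ to the relation $\res^H_e \xi = (\text{image of } a)$ holding in $T(G/e)$: since $g$ commutes with restriction and retracts the structure map $k \to T$, this gives $\res^H_e b = a$, as desired. I expect the only real obstacle to be the construction in the second paragraph — that is, recognizing that the condition ``$a$ lies in the image of $\res^H_e$'' can be encoded as a compact $k$-algebra, and verifying that this algebra is nonzero; once that is done the definition of a Nullstellensatzian object does the rest, and the observation that field-likeness already forces injectivity is what upgrades the conclusion from surjectivity to an isomorphism.
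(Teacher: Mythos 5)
Your proof is correct and follows essentially the same route as the paper's: injectivity of the restrictions from field-likeness, then surjectivity onto $k(G/e)^H$ by constructing the finitely presented $k$-algebra $k[x_H]/(\res^H_e x_H - a)$, showing it is nonzero via a map to $\underline{k(G/e)}$, and applying the Nullstellensatzian retraction. The only addition is your explicit check that $k(G/e)\ne 0$ via $1 = \nm_e^G 1$, which the paper leaves implicit.
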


\begin{proof}
    The components of the counit map $k\to \underline{k(G/e)}$,
    are the restriction maps 
    $\res^H_E:k(G/H)\to k(G/e)^H$. Since $k$ is a Tambara field
    these are injective, so we just need 
    to show that the restriction maps are also surjective
    to the fixed points. 

    Let $y\in k(G/e)^H$, and consider the $k$-algebra
    $k[x_{G/H}]/(\res^H_e x_{G/H} -y)$. This is nonzero, since it admits 
    a map to $\underline{k(G/e)}$ sending $x_{G/H}$ to $y$, and it is
    finitely presented, so it admits a retraction, $q$, to $k$.
    Then $\res^H_e q(x_{G/H}) = q(y) = y$ is a lift of $y$ to 
    $k(G/H)$.
\end{proof}

% We offer a direct proof of the following proposition, but note that it is also a consequence of \cref{cor:right-adjoints-preserve-AC-if-pushout-cond}.

\begin{proposition}
\label{prop:top-level-is-alg-closed}
    The functor $\ev_*:\GTamb\to \CRing$ preserves Nullstellensatzian objects.
\end{proposition}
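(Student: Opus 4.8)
The plan is to deduce this from \Cref{cor:right-adjoints-preserve-AC-if-pushout-cond}, applied to the adjunction $L \dashv \ev_*$, where $\ev_* = \ev_{G/G}\colon \GTamb \to \CRing$ and $L$ is the left adjoint furnished by \Cref{lem:eval-has-left-adjt}; so $\catD = \GTamb$ and $\catC = \CRing$. Three of the hypotheses are routine: $\ev_*$ preserves filtered colimits since these are computed levelwise; $\ev_*$ reflects terminal objects, because $k(G/G) = 0$ forces $1 = 0$ in every $k(G/H)$ via the ring maps $\res^G_H$, so $k = 0$; and $\GTamb$ is cocomplete, hence has all pushouts. So, for a Nullstellensatzian Tambara functor $k$, what remains is the pushout condition: for every compact ring map $\alpha\colon k(G/G) \to y$ with $y$ nonzero, the pushout $z := k \boxtimes_{Lk(G/G)} Ly$ of $k \xleftarrow{\epsilon_k} Lk(G/G) \xrightarrow{L\alpha} Ly$ must be shown nonzero (and in fact this will hold for all $\alpha$, compact or not).

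The first step is to translate this into a concrete assertion. Unwinding the universal property of the pushout, the adjunction $L \dashv \ev_*$, and the triangle identities, one gets that for any Tambara functor $T$ a morphism $z \to T$ is exactly a pair $(\phi\colon k \to T,\ \bar\psi\colon y \to T(G/G))$ with $\bar\psi \circ \alpha = \phi_{G/G}$; hence it suffices to produce a single nonzero such $T$. Here I would use the structure of Nullstellensatzian $k$: it is field-like, so $k(G/G)$ is a field, and by \Cref{lem:nullstellensatzian-are-fixed-pt-tam-funcs} $k \cong \underline{F}$ for $F := k(G/e)$ (a $G$-ring via the Weyl action), which is nonterminal, so $F \neq 0$. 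Viewing $F$ as an algebra over the field $k(G/G)$ along $\res^G_e$ --- whose image lies in $F^G$ --- the module $F$ is nonzero, hence faithfully flat, so $y' := y \otimes_{k(G/G)} F$ is nonzero. Give $y'$ the $G$-action through the $F$-factor (well-defined since it fixes $\res^G_e k(G/G) \subseteq F^G$) and set $T := \underline{y'}$, which is nonzero.

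It then remains to write down the witnessing maps. For $\phi$ I would take $k \xrightarrow{\eta} \underline{F} \to \underline{y'} = T$, with $\eta$ the unit of \Cref{lem:fp-tambara-functor-adjt-to-eval-at-underlying} and the second arrow $\underline{(-)}$ applied to the $G$-equivariant ring map $F \to y'$, $b \mapsto 1_y \otimes b$; for $\bar\psi$ the ring map $y \to (y')^G = T(G/G)$, $a \mapsto a \otimes 1_F$. The required identity $\bar\psi\circ\alpha = \phi_{G/G}$ then reduces to the computation $1_y \otimes \res^G_e(x) = \alpha(x) \otimes 1_F$ in $y \otimes_{k(G/G)} F$, valid because $\res^G_e(x)\cdot 1_F$ is by definition the $k(G/G)$-action of $x$ on $1_F$. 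This gives $\GTamb(z,T) \neq \varnothing$ with $T$ nonzero, so $z$ is not terminal, and \Cref{cor:right-adjoints-preserve-AC-if-pushout-cond} concludes that $\ev_*(k) = k(G/G)$ is Nullstellensatzian. I expect the construction of $T$ to be the only step needing an idea rather than bookkeeping: the point is that base-changing $y$ along the faithfully flat map $k(G/G) \to k(G/e)$ and taking fixed points yields a nonzero Tambara functor that already carries the data required to witness non-triviality of $z$.
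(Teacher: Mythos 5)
Your proof is correct, and it takes a genuinely different route from the paper's, although both hinge on verifying the pushout condition of \Cref{cor:right-adjoints-preserve-AC-if-pushout-cond} for the adjunction $L_* \dashv \ev_*$. The paper works with a finitely presented $k(*)$-algebra $A = k(*)[\vec x]/(f_1,\dots,f_m)$ directly, maps the pushout into the levelwise tensor product $k[\underline{x}_1,\dots,\underline{x}_n]/(f_1,\dots,f_m)$ (via \Cref{prop:free-green-has-norms-and-is-levelwise-tensor}), and then shows by a careful ideal computation --- using the monomorphic restriction condition and the norm identity $\nm_K^H \res^G_K b = \res^G_H b^{[H:K]}$ --- that $\ev_*$ of this quotient recovers $A$; it relies only on $k$ being field-like, not on the full structure of a Nullstellensatzian object. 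You instead invoke \Cref{lem:nullstellensatzian-are-fixed-pt-tam-funcs} to write $k \cong \underline{F}$ with $F = k(G/e)$, base-change $y$ along the faithfully flat field extension $k(G/G) \hookrightarrow F$ to get $y' := y\otimes_{k(G/G)}F \neq 0$, put the $G$-action on $y'$ through $F$, and show the fixed-point Tambara functor $\underline{y'}$ receives a map from the pushout; I checked the translation of a map out of the pushout into a pair $(\phi,\bar\psi)$ with $\bar\psi\circ\alpha=\phi_{G/G}$, the well-definedness of the $G$-action on $y'$, the $G$-equivariance and $G$-fixedness of your two structure maps, and the tensor relation $1_y\otimes\res^G_e(x)=\alpha(x)\otimes 1_F$, and they all work. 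Your argument is shorter and more conceptual --- the faithfully flat base change does all the work at once --- but it pays for this by using \Cref{lem:nullstellensatzian-are-fixed-pt-tam-funcs}, whereas the paper's more hands-on norm calculation is, as the authors note, designed to generalize more readily to incomplete Tambara functors where the fixed-point picture is less immediate.
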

% \begin{proof}
%     We already know that $k$ being algebraically closed implies that $k$ is a field, so $k(G/G)$ is a field. Let $k(G/G) \to L$ be a finite field extension. Then $\underline{k(G/G)} \to \underline{L}$ is compact (by \Cref{thm:constant-preserves-compacts}), so $k \to k \boxtimes_{\underline{k(G/G)}} \underline{L}$ is compact.

%     We have $(k \boxtimes_{\underline{k(G/G)}} \underline{L})(G/e) \cong k(G/e) \otimes_{k(G/G)} L$. Since $k(G/G)$ is a field and $k(G/e)$ and $L$ are both nonzero, we have that $(k \boxtimes_{\underline{k(G/G)}} \underline{L})(G/e) \neq 0$, so $k \boxtimes_{\underline{k(G/G)}} \underline{L} \neq 0$.

%     Now since $k$ is algebraically closed, the map $k \to k \boxtimes_{\underline{k(G/G)}} \underline{L}$ has a section, and so we have a sequence of morphisms $\underline{k(G/G)} \to \underline{L} \to k \boxtimes_{\underline{k(G/G)}} \underline{L} \to k$. At the fixed-point level, this gives a sequence of morphisms
%     \[k(G/G) \to L \to k(G/G)\]
%     whose composite is the identity, and thus $k(G/G) \to L$ is an isomorphism.
% \end{proof}
\begin{proof}
    Since filtered colimits are pointwise in Tambara functors,
    $\ev_{*}$ preserves filtered colimits. By \Cref{lem:eval-has-left-adjt}, $\ev_{*}$ is a right adjoint. If $\ev_*k=0$, then $k=0$, since for any subgroup $H$,
    $1_H\in k(G/H)$ is the restriction of $1_G\in k(G/G)=k(*)$, so
    if $1_G=0$, $1_H=0$ for all $H$, so $k=0$. Therefore $\ev_*$ reflects
    terminal objects.

    Thus we just need to check that when
    $\ev_{*}k\ne 0$ the pushout condition of \Cref{cor:right-adjoints-preserve-AC-if-pushout-cond} holds. Let $L_*$ be the left 
  adjoint to $\ev_*$. Then we need to show 
  that for a finitely presented $k(*)$ 
  algebra, $A$, if the pushout 
  $(L_*A)\boxtimes_{L_*k(*)}k$ is zero, 
  then $A$ must have been zero. Since $A$ is finitely 
  presented, we can write it as a polynomial algebra,
  $A
  \cong 
  k(*)[x_1,\ldots,x_n]/(f_1,\ldots,f_m)$,
  for some polynomials $f_1,\ldots, f_m$.
  Then we have that $L_*A \cong 
  L_*k(*)[x_{*,1},\ldots, x_{*,n}]/
  (f_1,\ldots,f_m)$, and the pushout is
  \[ (L_*A)\boxtimes_{L_*k(*)}k
  \cong 
  k[x_{*,1},\ldots,x_{*,n}]/(f_1,\ldots,f_m),\]
  where the $f_i$ are viewed as polynomials 
  in the $x_{*,1},\ldots,x_{*,n}$.

  Consider the Tambara functor
  $k[\underline{x}_1,\ldots,\underline{x}_n]$,
  which is isomorphic to the levelwise 
  tensor product of $k$ with $\Z[x_1,\ldots,x_n]$
  by \Cref{prop:free-green-has-norms-and-is-levelwise-tensor}. 
  Then we have a map 
  $k[x_{*,1},\ldots,x_{*,n}]
  \to k[\underline{x}_1,\ldots,\underline{x}_n]$
  given by sending $x_{*,i}$ to $\underline{x}_i$,
  so by taking quotients we can define a map from 
  the pushout, $k[x_{*,1},\ldots,x_{*,n}]/
  (f_1,\ldots,f_m)$, to 
  $k[\underline{x}_1,\ldots,\underline{x}_n]/
  (f_1,\ldots,f_m)$. Then we can complete the proof by showing that 
  $\ev_*(k[\underline{x}_1,\ldots,\underline{x}_n]/
  (f_1,\ldots,f_m)) \cong A$, since then if the pushout is zero,
  $k[\underline{x}_1,\ldots,\underline{x}_n]$ is zero, and therefore 
  $A$ is zero.

  To show this, let $I$ be the ideal of 
  $k(*)[x_1,\ldots,x_n]$ generated by 
  the polynomials $f_1,\ldots,f_m$
  and let $J$ be the ideal of 
  $k[\underline{x}_1,\ldots,\underline{x}_n]$
  generated by $f_1,\ldots,f_m$. 
  Certainly we have that $I\subseteq J(*)$,
  and we can complete the proof by showing 
  that this is actually an equality.

  Any element of $J(*)$ can be written
  as a sum of elements  
  of the form 
  \[
    \tr_H^G \of*{
      a \cdot
      \nm_K^H \res^G_K b
    },
  \]
  for subgroups $K\subseteq H\subseteq G$, 
  $a\in 
  \of*{
    k[\underline{x}_1,\ldots,\underline{x}_n]
  }(G/H)$
  and $b\in I$.
  so it suffices to show that these generators
  are all contained in $I$. 
  The key here is to show that 
  $\nm_K^H \res^G_K b = \res^G_H b^{[H:K]}$,
  since then 
  \[
    \tr_H^G \of*{
      a \cdot
      \nm_K^H \res^G_K b
    }
    = 
    \tr_H^G \of*{
      a \cdot
      \res^G_H b^{[H:K]}
    }
    = b^{[H:K]}\tr_H^G a \in I.
  \] 

  Then since $k$ is a Tambara field, 
  it satisfies the monomorphic restriction 
  condition, so 
  $k[\underline{x}_1,\ldots,\underline{x}_n]$
  also satisfies the monomorphic restriction 
  condition, since at level $G/H$ it is 
  isomorphic to $k(G/H)[x_1,\ldots,x_n]$.

  Therefore we can verify our identity 
  by applying $\res^H_e$ to both sides,
  which gives us
  \[
    \res^H_e \res^G_H b^{[H:K]}
    =
    \prod_{hK \in H/K} (hK)\res^K_e \res^G_K b
    = \res^H_e \nm_K^H\res^G_K b.
    \qedhere
  \]
\end{proof}

%\begin{proposition}
%    \label{prop:top-level-is-alg-closed}
%    Let $k$ be a Nullstellensatzian Tambara functor. Then $k(G/G)$ is an algebraically closed field.
%\end{proposition}

%\begin{proof}
%    This is \cref{prop:ev-preserves-Nullstellensatzian}.

    %Old proof:
    
    %We already know that $k$ being algebraically closed implies that $k$ is a field, so $k(G/G)$ is a field. Let $k(G/G) \to L$ be a finite field extension. Then $\underline{k(G/G)} \to \underline{L}$ is compact (by \Cref{thm:constant-preserves-compacts}), so $k \to k \boxtimes_{\underline{k(G/G)}} \underline{L}$ is compact.

    %We have $(k \boxtimes_{\underline{k(G/G)}} \underline{L})(G/e) \cong k(G/e) \otimes_{k(G/G)} L$. Since $k(G/G)$ is a field and $k(G/e)$ and $L$ are both nonzero, we have that $(k \boxtimes_{\underline{k(G/G)}} \underline{L})(G/e) \neq 0$, so $k \boxtimes_{\underline{k(G/G)}} \underline{L} \neq 0$.

    %Now since $k$ is algebraically closed, the map $k \to k \boxtimes_{\underline{k(G/G)}} \underline{L}$ has a section, and so we have a sequence of morphisms $\underline{k(G/G)} \to \underline{L} \to k \boxtimes_{\underline{k(G/G)}} \underline{L} \to k$. At the fixed-point level, this gives a sequence of morphisms
    %\[k(G/G) \to L \to k(G/G)\]
    %whose composite is the identity, and thus $k(G/G) \to L$ is an isomorphism.
%\end{proof}

\begin{theorem}
    \label{theorem:alg-closed-iff-coinduced}
    A Tambara functor $k$ is Nullstellensatzian if and only if $k \cong C_e^G(\mathbb{F})$ for some algebraically closed field $\mathbb{F}$.
\end{theorem}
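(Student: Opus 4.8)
The plan is to prove both implications, with essentially all of the content in the forward direction, which I will reduce to a classification of the Nullstellensatzian objects of $\GCRing$. The converse is immediate: if $\mathbb{F}$ is an algebraically closed field then it is a Nullstellensatzian object of $\CRing$, and $C_e^G$ preserves Nullstellensatzian objects by \Cref{cor:coind-preserves-ac}, so $C_e^G(\mathbb{F})$ is Nullstellensatzian. For the forward direction, suppose $k$ is Nullstellensatzian. Then $k$ is field-like (proven above), so by \Cref{lem:nullstellensatzian-are-fixed-pt-tam-funcs} the counit is an isomorphism $k \xrightarrow{\sim} \underline{A}$, where $A := k(G/e)$ carries the Weyl action at level $G/e$, which is the full $G$-action. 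By \Cref{prop:ev-at-underlying-preserves-nullstellensatzian}, $A$ is a Nullstellensatzian object of $\GCRing$, and by \Cref{lem:coinduction-from-e} it suffices to show every Nullstellensatzian object of $\GCRing$ has the form $\Fun(G,\mathbb{F})$ for an algebraically closed field $\mathbb{F}$, since then $k\cong\underline{A}=\underline{\Fun(G,\mathbb{F})}=C_e^G(\mathbb{F})$.

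To classify the Nullstellensatzian objects of $\GCRing$: given such an $A$ and $x\in A$, the $G$-invariant ideal $(x)_G$ it generates is finitely generated (by the finite $G$-orbit of $x$), so $A\to A/(x)_G$ is a finitely presented, hence compact, $A$-algebra; if $(x)_G$ is proper it is nonterminal, so the map splits, forcing $x=0$. Thus $A$ has no proper nonzero $G$-invariant ideal; in particular $A$ is reduced, and for any maximal ideal $\mathfrak{m}$ the $G$-invariant ideal $\bigcap_{g\in G}g\mathfrak{m}$ is proper, hence zero. The Chinese remainder theorem, applied to the finitely many distinct (pairwise comaximal) ideals among the $g\mathfrak{m}$, exhibits $A$ as a finite product of fields permuted transitively by $G$; writing $H$ for the stabilizer of one factor $K$, this gives $A\cong\Fun_H(G,K)$, the $G$-ring coinduced from the field $K$ with its residual $H$-action. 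The idempotents of $A\cong\prod_{[G:H]}K$ now yield an equivalence of categories $A/\GCRing\simeq K/\GCRing[H]$ — a $G$-ring under $A$ splits along these idempotents into $[G:H]$ transitively permuted factors and is reconstructed from the one lying over $K$ together with its $H$-action and $K$-algebra structure, exactly the bookkeeping of \Cref{prop:ring-of-funs-from-group-is-ac}. This equivalence preserves compact and terminal objects, so $K$ is Nullstellensatzian in $\GCRing[H]$; it remains only to prove $H=e$, for then $K$ is Nullstellensatzian in $\GCRing[e]\simeq\CRing$, i.e.\ algebraically closed, and $A\cong\Fun(G,K)$ as required.

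The heart of the matter is the remaining claim: a field $K$ with an action of a nontrivial finite group $H$ is never Nullstellensatzian in $\GCRing[H]$. The obstruction I would use is $B:=\Fun(H,K)$ with its translation $H$-action, made a $K$-algebra in $\GCRing[H]$ via the adjunction unit $\eta\colon K\to\Fun(H,K)$, $\eta(\lambda)(h)=h\lambda$ (equivalently, $B$ is the coinduction to $\GCRing[H]$ of the underlying ring of $K$). One checks $B$ is nonterminal and finitely presented over $K$ in $\GCRing[H]$ — it is generated by the idempotent $\delta_e$ and its $H$-translates, subject to an evident finite list of relations — hence compact; but there is no $H$-equivariant $K$-algebra retraction $B\to K$. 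Indeed, such a retraction $r$ must carry the orthogonal idempotents $\delta_h$ to orthogonal idempotents of the field $K$ summing to $1$, so $r(\delta_{h_0})=1$ for a unique $h_0$ and $r$ is the coordinate projection $\sum_h\lambda_h\delta_h\mapsto\lambda_{h_0}$ in the $K$-basis $\{\delta_h\}$ of $B$; then $H$-equivariance forces $\lambda_{h_0h}=\lambda_{h_0}$ for every $h\in H$ and every choice of coefficients, which is absurd as soon as $\lvert H\rvert\ge 2$. (When $H=G$ acts trivially, this recovers a characteristic-free form of \Cref{exl:constant-C-is-not-ac}.)

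I expect this last claim to be the main obstacle: pinning down the correct compact witness over a field-with-group-action and verifying that it admits no retraction. Everything else is either already in hand (\Cref{lem:nullstellensatzian-are-fixed-pt-tam-funcs}, \Cref{prop:ev-at-underlying-preserves-nullstellensatzian}, \Cref{cor:coind-preserves-ac}, \Cref{lem:coinduction-from-e}, and the classical identification of Nullstellensatzian commutative rings with algebraically closed fields) or is routine commutative algebra — the Chinese remainder decomposition and the idempotent-splitting equivalence of slice categories — of the kind already used in \Cref{prop:ring-of-funs-from-group-is-ac}.
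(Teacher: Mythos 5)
Your proof is correct in outline, but it takes a genuinely different route from the paper's. The paper stays entirely at the Tambara level: it builds a specific compact $k$-algebra $K$ (a quotient of $k[x_{G/e}]$ by idempotence, orthogonality, and completeness relations on $k[x_{G/e}](G/e)\cong k(G/e)[x_g\mid g\in G]$), uses the section $K\to k$ to produce a complete $G$-orbit of orthogonal idempotents in $k(G/e)$, concludes $k(G/e)\cong\Fun(G,A)$ with $A\cong k(G/G)$, and finishes with \Cref{prop:top-level-is-alg-closed} (the statement that $\ev_{G/G}$ preserves Nullstellensatzian objects). You instead push down to $\GCRing$ via \Cref{prop:ev-at-underlying-preserves-nullstellensatzian}, prove the classification of Nullstellensatzian $G$-rings from scratch (CRT plus the $A/\GCRing\simeq K/\GCRing[H]$ idempotent-splitting equivalence plus the ``no nontrivial $H$-action on a Nullstellensatzian field'' obstruction), and then apply $\underline{(-)}$. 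This reverses the paper's order of deduction: the paper derives the $G$-ring classification \emph{from} the Tambara theorem, whereas you derive the Tambara theorem from a standalone $G$-ring classification. Both are valid; the paper explicitly remarks that it could have used the $N=e$ route you chose but preferred the $N=G$ route because it transports better to incomplete Tambara functors. Your route has the small advantage of directly producing the $G$-ring theorem as a by-product.

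Two spots deserve more care. First, the equivalence $A/\GCRing\simeq K/\GCRing[H]$ for $A\cong\Fun_H(G,K)$ is asserted rather than proved; the paper only establishes the $H=e$ case (\Cref{prop:ring-of-funs-from-group-is-ac}), so the general statement, though true, is real work you are deferring. Second, your formula for the retraction $r$ is slightly off: because the $K$-module structure on $B=\Fun(H,K)$ comes from the unit $\eta(\lambda)=\sum_h(h\lambda)\delta_h$, one computes $r\bigl(\sum_h\lambda_h\delta_h\bigr)=h_0^{-1}\lambda_{h_0}$, not $\lambda_{h_0}$, and the equivariance constraint reads $h_0^{-1}\lambda_{h_0h'}=h'(h_0^{-1}\lambda_{h_0})$ rather than $\lambda_{h_0h}=\lambda_{h_0}$. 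The contradiction still falls out (take $\lambda_{h_0}=1$ and all other $\lambda_h=0$), and a cleaner version avoids the twist entirely: applying $r$ to $h'\cdot\delta_{h_0}=\delta_{h_0h'^{-1}}$ gives $0=r(\delta_{h_0h'^{-1}})=h'\cdot r(\delta_{h_0})=h'(1)=1$ for any $h'\neq e$, which is the desired absurdity. With those two points tightened, this is a sound alternative proof.
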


\begin{proof}

    The reverse direction is \cref{cor:coind-preserves-ac}. For the forwards direction, we begin by constructing a useful compact $k$-algebra. Let $K$ be the quotient of $k[x_{G/e}]$ by the following relations at level $G/e$ (recalling $k[x_{G/e}](G/e) \cong k(G/e)[x_g | g \in G]$ from \Cref{prop:underlying-of-free-on-underlying}):
    \begin{enumerate}
        \item (idempotence) $x_g^2 = x_g$
        \item (orthogonality) $x_g x_h = 0$ when $g \neq h$
        \item (completeness) $\displaystyle \sum_{g \in G} x_g = 1$
    \end{enumerate}
    To see that $K$ is nonzero, observe that we may construct a $k$-algebra map $K \rightarrow C_e^G k(G/e)$ as follows. First, choose $k[x_{G/e}] \rightarrow C_e^G k(G/e)$ classifying the choice of any idempotent which determines a projection $C_e^G k(G/e) \cong \Fun(G/e,k(G/e)) \rightarrow k(G/e)$. Next, observe that this map respects the three relations above, so that it descends to $K \rightarrow C_e^G k(G/e)$. Since the codomain of this morphism is nonzero (here we use the assumption that $k$ is nonzero), the domain is also nonzero.

    Since $K$ is a quotient of a finitely generated polynomial algebra by finitely many relations, $K$ is a compact $k$-algebra, and hence admits a section $K \rightarrow k$. Let $y_g \in k(G/e)$ denote the image of $x_g \in K(G/e)$. Then the collection of $y_g$ form a complete set of orthogonal idempotents in $k(G/e)$, hence determine an isomorphism $k(G/e) \cong \prod_{g \in G} k(G/e) \cdot y_g$. Since the 
    $y_g$ form a $G$-orbit, the rings $k(G/e)\cdot y_g$ are all 
    isomorphic, so if we let $A=k(G/e)\cdot y_e$, we can rewrite this 
    as an isomorphism 
    $k(G/e)\cong \Fun(G, A)$.

    Now by \Cref{lem:nullstellensatzian-are-fixed-pt-tam-funcs}
    $k\cong \underline{k(G/e)}$,
    so $k\cong \underline{\Fun(G,A)}$, and we know that 
    $C_e^GA \cong \underline{\Fun(G,A)}$, so this shows that 
    $k$ is coinduced. All that remains is to show that 
    $A$ is an algebraically closed field. 

    However, the diagonal map $\Delta: A\to \Fun(G,A)^G$ is an 
    isomorphism to the $G$-fixed points, so we have that 
    $k(*)\cong \Fun(G,A)^G \cong A$, and $k(*)$ is an algebraically 
    closed field by \cref{prop:top-level-is-alg-closed}.
\end{proof}

%\begin{lemma}
%    \label{lem:MRC-plus-bottom-level-coind-implies-coind}
%    Suppose $k$ is any (possibly incomplete) Tambara functor satisfying Nakaoka's MRC (all restriction maps are injective), and suppose that $k(G/e) \cong C_e^G R$ as $G$-rings. Then $k \cong C_e^G R$ as Tambara functors.
%\end{lemma}

%\begin{proof}
%    Since all restriction maps are injective, it suffices to determine their image. By the double-coset formula, the composition of transfer $k(G/e) \rightarrow k(G/H)$ followed by restriction $k(G/H) \rightarrow k(G/e)$ surjects onto the $H$-fixed points of $k(G/e)$ (for example, the function $e \mapsto x$, $g \mapsto 0$ specifying an element of $k(G/e)$ maps to $x \in R \cong (C_e^G R)^G$). Thus the restriction map determines a canonical isomorphism $k(G/H) \cong k(G/e)^H$, and these isomorphisms assemble to $k \cong \underline{k(G/e)} \cong C_e^G R$.
%\end{proof}

It is often considered a defect of the ideal-centric definition of field-like Tambara functors that there are examples of Tambara fields for which not every module is free. However, as pointed out to us by Mike Hill, the category of modules over the coinduction of a ring is naturally equivalent to the category of modules over the original ring (see \cref{prop:coinduction-and-modules} below). Thus this defect is miraculously absent for Nullstellensatzian Tambara functors, by the above theorem. In particular, if one attempts to define the K-theory of a Tambara functor in terms of its module category, then the K-theory of Nullstellensatzian Tambara functors is determined straightforwardly by the K-theory of algebraically closed fields. The following result is a discrete version of the results in \cite{BDS15}.

\begin{proposition}
    \label{prop:coinduction-and-modules}
    Let $R$ be a ring. Coinduction induces an adjoint equivalence between the category of $R$-modules and the category of $C_e^G R$-modules (defined as Mackey functor modules over the underlying Green functor of $C_e^G R$).
\end{proposition}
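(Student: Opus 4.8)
The plan is to make the coinduction functor on module categories explicit and to write down an inverse. By \Cref{lem:coinduction-from-e} we have $C_e^G R = \underline{\Fun(G,R)}$, and on module categories coinduction is the functor sending an $R$-module $M$ to $\underline{\Fun(G,M)}$, extending that identification: here $\Fun(G,M)\cong M^{\oplus G}$ is regarded as a $\Fun(G,R)$-module via the pointwise action and as carrying the $G$-action $(g\cdot\varphi)(h)=\varphi(hg)$, and $\underline{\Fun(G,M)}$ denotes the associated fixed-point Mackey functor, $\underline{\Fun(G,M)}(G/H)=\Fun(G,M)^H$. First I would check that $\underline{\Fun(G,M)}$ is naturally a module over the Green functor underlying $C_e^G R$, with restrictions the inclusions of fixed points, transfers the sums over coset representatives, and the evident Weyl action; this is the module-level analogue of \Cref{defn:fixed-pt-tambara-functor} and the axioms are verified exactly as there (Frobenius reciprocity for the transfers, and so on). This defines the coinduction functor $\kMod[R]\to\kMod[C_e^G R]$, and I claim its quasi-inverse is evaluation at $G/G$, i.e.\ $N\mapsto N(G/G)$, which is a module over $(C_e^G R)(G/G)=\Fun(G,R)^G\cong R$.

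One triangle is immediate: $\underline{\Fun(G,M)}(G/G)=\Fun(G,M)^G$ is the submodule of constant functions, so it is naturally isomorphic to $M$ as an $R$-module. For the other triangle I must show that an arbitrary $C_e^G R$-module $N$ is reconstructed from its bottom level: that the natural map $\theta_N\colon N\to\underline{N(G/e)}$ whose component at $G/H$ is the restriction $\res^H_e\colon N(G/H)\to N(G/e)^H$ is an isomorphism of Mackey-functor modules. This is the module-theoretic counterpart of \Cref{lem:nullstellensatzian-are-fixed-pt-tam-funcs}, and I would prove it by the same idempotent mechanism: the indicator functions $\delta_g\in\Fun(G,R)=(C_e^G R)(G/e)$ form a complete system of orthogonal idempotents on which the Weyl action $W_G(e)=G$ acts simply transitively, so $\delta_e$ is a \emph{full} idempotent of the twisted group ring $\Fun(G,R)\rtimes G$ with corner ring $\delta_e\bigl(\Fun(G,R)\rtimes G\bigr)\delta_e\cong R$. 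Morita theory for this full idempotent identifies the $\Fun(G,R)\rtimes G$-module $N(G/e)$ with $\Fun\bigl(G,\delta_e N(G/e)\bigr)$ compatibly with the $G$-action, and once $N(G/e)$ is known to have this form the restrictions and transfers of $N$ connecting $N(G/H)$ to $N(G/e)$ are forced by matching the idempotent decompositions of $N(G/H)$ and of $N(G/e)^H$ furnished by the partial indicator idempotents $\sum_{h\in H}\delta_h$, exactly as the corresponding maps are forced in the proof of \Cref{lem:nullstellensatzian-are-fixed-pt-tam-funcs}. Concatenating the resulting isomorphisms gives $N\cong\underline{N(G/e)}\cong\underline{\Fun\bigl(G,\delta_e N(G/e)\bigr)}=C_e^G\bigl(\delta_e N(G/e)\bigr)$, and since $\delta_e N(G/e)\cong N(G/e)^G\cong N(G/G)$ under these identifications, this exhibits $N\cong C_e^G\bigl(N(G/G)\bigr)$ naturally in $N$. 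Thus coinduction is fully faithful and essentially surjective, hence an equivalence, and any equivalence can be promoted to an adjoint equivalence.

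The main obstacle is precisely establishing that $\theta_N$ is an isomorphism, i.e.\ that the restriction maps $N(G/H)\to N(G/e)^H$ of a $C_e^G R$-module are bijective for every $H\le G$; unlike the Tambara setting of \Cref{lem:nullstellensatzian-are-fixed-pt-tam-funcs} one cannot invoke field-likeness, so injectivity in particular has to be extracted directly from the module structure. I expect the crucial input to be the compatibility of the Morita isomorphism $N(G/e)\cong\Fun\bigl(G,\delta_e N(G/e)\bigr)$ with the Weyl action, after which the higher levels come out by the idempotent bookkeeping described above; everything else in the argument is formal.
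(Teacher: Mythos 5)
Your overall architecture is right — coinduction $M\mapsto\underline{\Fun(G,M)}$ versus evaluation at $G/G$, reduction to the claim that $\theta_N\colon N\to\underline{N(G/e)}$ is an isomorphism — and the skew-group-ring Morita equivalence you invoke at level $G/e$ is correct: $\delta_e$ really is a full idempotent of $\Fun(G,R)\rtimes G$ with corner ring $R$, so $N(G/e)$ as a $G$-$\Fun(G,R)$-module is $\Fun(G,\delta_e N(G/e))$. But this only packages the bottom level. It says nothing about the Mackey structure maps $N(G/H)\rightleftarrows N(G/e)$, and your claim that those are ``forced by matching idempotent decompositions, exactly as in \Cref{lem:nullstellensatzian-are-fixed-pt-tam-funcs}'' does not hold up: the idempotents $\delta_{gH}\in\Fun(G/H,R)$ show only that $\res^H_e$ splits as a product of maps $\delta_{gH}N(G/H)\to\delta_{gH}N(G/e)^H$ indexed by $G/H$, not that those component maps are bijections. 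And \Cref{lem:nullstellensatzian-are-fixed-pt-tam-funcs} is not an available template here — its proof leans on field-likeness (for injectivity of $\res$) and on the Nullstellensatz property of $k$ (for surjectivity onto fixed points), neither of which an arbitrary $C_e^G R$-module enjoys. You flag this yourself (``injectivity in particular has to be extracted directly from the module structure\dots I expect\dots''), so the proposal leaves the crux unproven.

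What actually closes the gap, and what the paper does, is a short Frobenius-reciprocity/double-coset argument using the single fact that the unit of $(C_e^G R)(G/H)\cong\Fun(G/H,R)$ lies in the image of $\tr^H_e$: taking coset representatives $g_1H,\dots,g_nH$, one checks $\tr^H_e\bigl(\sum_i\delta_{g_i}\bigr)=1$. Writing $x=\sum_i\delta_{g_i}$ and $f\colon G/e\to G/H$, injectivity of $\res^H_e=R_f$ is immediate: if $R_f(m)=0$ then $m=T_f(x)\cdot m=T_f\bigl(x\cdot R_f(m)\bigr)=0$. Surjectivity onto $N(G/e)^H$ follows from the double coset formula by the companion computation $R_fT_f(x\cdot y)=\bigl(\sum_{h\in H}hx\bigr)y=y$ for $y\in N(G/e)^H$. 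This replaces your Morita/bookkeeping step and completes the proof; the rest of your outline (the adjoint equivalence, the triangle identity via constant functions) is fine.
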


\begin{proof}
    Recall that any Tambara functor has the structure of a monoid with
    respect to the box product, and a module over a Tambara functor is a
    module over the underlying monoid. If $k$ is any $G$-Green functor and
    $M$ is a $k$-module, it follows that every abelian group $M(G/H)$ is a
    $k(G/H)$-module. If $k = C_e^G R$, then we may view $M(G/H)$ as an
    $R$-module through the restriction (i.e., the diagonal action).
    Note that the idempotents in $\Fun(G/H,R)$ determine an $R$-module
    decomposition \[ M(G/H) \cong \Fun(G/H,M_H) \] for some
    $R$-module $M_H$.

    The canonical map \[ M \rightarrow \underline{M(G/e)} \] of Mackey
    functors is surjective in every level by the double coset formula.
    Thus it suffices to show that all restriction maps for $M$ are
    injective. To see this, recall the Frobenius reciprocity relations
    \[ T_f(x) \cdot m = T_f(x \cdot R_f(m)) \] for every
    $f : G/e \rightarrow G/H$. In $C_e^G R$, the unit of each ring
    $C_e^G R(G/H)$ is in the image of the transfer.
    Choosing a preimage of the unit, $x$, we see that if
    $R_f(m) = 0$, then
    \[ m = T_f(x) \cdot m = T_f(x \cdot R_f(m)) = T_f(0) = 0 \]
    hence $R_f$ is injective, as desired.
\end{proof}

In fact, we may use the equivalence of \cref{prop:coinduction-and-modules} 
to give an alternative proof that the coinduction of any
algebraically closed field is Nullstellensatzian, bypassing the need
for \cref{lem:right-adjoints-transfer-AC-if-left-reflects-terminal-objects}.
Namely, observe that the Nullstellensatzian condition for an object
is stated in terms of a property of a certain category. 
To see that $C_e^G \mathbb{F}$ is Nullstellensatzian, for
$\mathbb{F}$ an algebraically closed field, it suffices to show
that the category of Tambara functor $C_e^G \mathbb{F}$ algebras
is equivalent to the category of $\mathbb{F}$-algebras.

Next, observe that a $C_e^G \mathbb{F}$-algebra is precisely a monoid in
the category of $C_e^G \mathbb{F}$-modules with respect to the box product
relative to $C_e^G \mathbb{F}$. Indeed, such a monoid is automatically a
Green functor, necessarily of the form $C_e^G R$, and since restriction
maps are injective, the double coset formula implies that there is
precisely one possible choice of norms. Similarly, an $\mathbb{F}$-algebra
is precisely a monoid in the category of $\mathbb{F}$-modules with respect
to the tensor product relative to $\mathbb{F}$. So the desired equivalence
of algebra categories follows from the following lemma.

\begin{lemma}
\label{lem:coind-equiv-is-sym-mon}
    The equivalence of \cref{prop:coinduction-and-modules} is strong symmetric monoidal (with respect to the tensor product over $\mathbb{F}$ and the box product relative to $C_e^G \mathbb{F}$).
\end{lemma}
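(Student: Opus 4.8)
The plan is to factor the equivalence of \Cref{prop:coinduction-and-modules} as a composite of two strong symmetric monoidal equivalences, using as an intermediate stop the category of $G$-equivariant modules over the $G$-ring $\Fun(G,\mathbb{F})$ (with $\mathbb{F}$ given the trivial action), and to extract the structure morphism from the fact that the building blocks are themselves symmetric monoidal. Write $k := C_e^G\mathbb{F} = \underline{\Fun(G,\mathbb{F})}$. The first functor I would use is $\ev_{G/e}$, sending a $k$-module $M$ to the abelian group $M(G/e)$ together with its module structure over $\ev_{G/e}(k) = \Fun(G,\mathbb{F})$ and its Weyl $(= W_G(e) = G)$-action. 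By \Cref{prop:coinduction-and-modules} every $k$-module $M$ satisfies $M \cong \underline{M(G/e)}$ with all restriction maps injective; arguing exactly as in the proof of \Cref{theorem:alg-closed-iff-coinduced}, the $G$-action permutes the standard idempotent summands of $M(G/e)$ transitively, so $M(G/e)$ is of the form $\Fun(G,-)$, and one checks (restriction-injectivity together with the double coset formula pin down the transfers, so maps out of $\underline{P}$ are exactly equivariant $\Fun(G,\mathbb{F})$-module maps out of $P$) that $\ev_{G/e}$ is an equivalence onto the category of $G$-equivariant $\Fun(G,\mathbb{F})$-modules, with quasi-inverse the fixed-point Mackey module construction $P \mapsto \underline{P}$.

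Next I would observe that $\ev_{G/e}$ is strong symmetric monoidal. The point is that restriction of Mackey functors $R^G_e : \GMack \to \GMack[e]$ is strong symmetric monoidal for the box product, with $R^G_e(M \boxtimes N) \cong M(G/e) \otimes_{\mathbb{Z}} N(G/e)$, and is a left adjoint, hence preserves the reflexive coequalizers that compute relative box products; it therefore descends to a strong symmetric monoidal functor on module categories carrying $\boxtimes_k$ to $\otimes_{\Fun(G,\mathbb{F})}$, and keeping track of the residual $G$-action identifies the target tensor product with the diagonal-action tensor product of equivariant modules. Since $\ev_{G/e}$ is moreover an equivalence, its quasi-inverse $\underline{-}$ acquires a canonical strong symmetric monoidal structure.

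The remaining factor is an identification of the category of $G$-equivariant $\Fun(G,\mathbb{F})$-modules, with the diagonal tensor product, with $(\Mod_{\mathbb{F}}, \otimes_{\mathbb{F}})$. For this I would use that the $G$-equivariant ring map $\mathbb{F} \to \Fun(G,\mathbb{F})$ induces the equivariant base-change functor $N \mapsto N \otimes_{\mathbb{F}} \Fun(G,\mathbb{F}) \cong \Fun(G,N)$ (here $G$ is finite), which is strong symmetric monoidal because base change along any ring map is, and which is an equivalence by the same transitive-idempotents argument already used in \Cref{prop:ring-of-funs-from-group-is-ac} and \Cref{theorem:alg-closed-iff-coinduced} (its quasi-inverse is $P \mapsto P^G$). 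Composing the two strong symmetric monoidal equivalences yields $N \mapsto \underline{\Fun(G,N)}$, which one identifies with the coinduction equivalence of \Cref{prop:coinduction-and-modules} — coinduction being precomposition with the forgetful functor, just as $C_e^G R = \underline{\Fun(G,R)}$ at the level of rings — so the composite inherits a strong symmetric monoidal structure, as desired.

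The main obstacle is bureaucratic rather than conceptual: one must check that the assembled structure isomorphism is compatible with the associativity, left/right unit, and symmetry constraints of the two monoidal structures. This should be routine precisely because every ingredient — the strong symmetric monoidal structure on $R^G_e$, the one on base change, and the canonical transport of a strong symmetric monoidal structure across an equivalence — is already coherent; the only genuine care needed is in matching the residual Weyl-group action on $\ev_{G/e}(M)$ with the diagonal $G$-action used to define the tensor product on equivariant modules.
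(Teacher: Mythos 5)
Your proposal is correct, and it takes a different organizational route from the paper's own argument. The paper's proof is more direct: it starts from two $C_e^G\mathbb{F}$-modules $C_e^G X$ and $C_e^G Y$, observes that their relative box product is again a $C_e^G\mathbb{F}$-module and hence (by \cref{prop:coinduction-and-modules}) coinduced from some $\mathbb{F}$-module $Z$, and then identifies $Z \cong X \otimes_{\mathbb{F}} Y$ by evaluating at level $G/e$ and computing
\[
\Fun(G,Z) \cong \bigl(C_e^G X \boxtimes_{C_e^G\mathbb{F}} C_e^G Y\bigr)(G/e) \cong \Fun(G,X)\otimes_{\Fun(G,\mathbb{F})}\Fun(G,Y).
\]
You instead factor the equivalence as base change along $\mathbb{F}\to\Fun(G,\mathbb{F})$ followed by the fixed-point Mackey module construction (quasi-inverse to $\ev_{G/e}$), and observe that each factor is strong symmetric monoidal. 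The underlying computation is the same --- both approaches hinge on the fact that restriction to the $G/e$ level carries the relative box product to the relative tensor product, and that $\Fun(G,X)\otimes_{\Fun(G,\mathbb{F})}\Fun(G,Y)\cong\Fun(G,X\otimes_{\mathbb{F}}Y)$ --- but your factorization is more explicit about where the coherence constraints come from: each building block is already a (lax or strong) symmetric monoidal functor, so the composite is automatically coherent. The paper's proof is shorter but leaves the coherence of the constructed natural isomorphism implicit. The one place you should be slightly more careful is in the middle step: you assert that $\ev_{G/e}$, as a left adjoint, preserves the reflexive coequalizers defining the relative box product. In the paper $\ev_{G/e}$ on Tambara functors is shown to be a left adjoint to the fixed-point construction, and restriction of Mackey functors to the trivial group is indeed both a left and a right adjoint, so this is fine, but it is worth stating which adjunction you are invoking.
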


\begin{proof}
    Let $C_e^G X$ and $C_e^G Y$ denote two arbitrary $C_e^G \mathbb{F}$-modules. Since their box product over $C_e^G \mathbb{F}$ is again a module, it is coinduced from a $\mathbb{F}$-module $Z$. Using the natural isomorphism \[ \Fun(G,Z) \cong \left( C_e^G X \boxtimes_{C_e^G \mathbb{F}} C_e^G Y \right)(G/e) \cong \Fun(G,X) \otimes_{\Fun(G,\mathbb{F})} \Fun(G,Y) \] we deduce a natural isomorphism $Z \cong X \otimes_{\mathbb{F}} Y$. Applying coinduction we obtain the desired natural isomorphism \[ C_e^G X \boxtimes_{C_e^G \mathbb{F}} C_e^G Y \cong C_e^G Z \cong C_e^G \left( X \otimes_{\mathbb{F}} Y \right) \textrm{.}\qedhere\]
\end{proof}

In fact, these results are strengthened greatly. In \cite{CW25} the third-named author and David Chan show that any coinduction $\mathrm{C}_H^G$ with $G$ abelian induces a symmetric monoidal equivalence on module categories. In \cite{Wis25} the third-named author removes the commutativity assumption, and extends the result to categories of Tambara functor algebras.

\begin{theorem}[\cite{CW25} \cite{Wis25}]
    Let $k$ be an $H$-Tambara functor. Then there is a symmetric monoidal equivalence 
    \[ \kMod \simeq \kMod[C_H^G k] \] 
    of categories with respect to the relative box product, and an 
    equivalence of categories 
    \[ \kAlg \simeq  \kAlg[C_H^G k] \] of algebras.
\end{theorem}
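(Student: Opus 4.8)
The plan is to promote the case $H=e$, treated in \Cref{prop:coinduction-and-modules}, \Cref{lem:coind-equiv-is-sym-mon} and the discussion around them, to an arbitrary subgroup $H\le G$. The module equivalence is realized by module coinduction: since $R^G_H:\GTamb\to\GTamb[H]$ is strong symmetric monoidal (being restriction along a product-preserving functor of polynomial categories), its right adjoint $C_H^G$ is lax symmetric monoidal, hence carries a $k$-module $N$ to a $C_H^G k$-module $C_H^G N$. Writing $\mathrm{Res}^G_H(G/K)=\coprod_{g\in H\backslash G/K}H/(H\cap gKg^{-1})$ for the double coset decomposition of the underlying $H$-set, we have $(C_H^G N)(G/K)=\prod_{g}N\bigl(H/(H\cap gKg^{-1})\bigr)$; in particular each ring $(C_H^G k)(G/K)$ is a finite product of levels of $k$, so that every level of a $C_H^G k$-module $M$ splits canonically as $M(G/K)=\prod_g M(G/K)\epsilon_g$ along the corresponding orthogonal idempotents $\epsilon_g\in(C_H^G k)(G/K)$.

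The quasi-inverse $\Phi:\kMod[C_H^G k]\to\kMod$ is built from these idempotents. For $J\le H$ the identity double coset in $H\backslash G/J$ is $H$ itself, so $(C_H^G k)(G/J)$ has a distinguished factor $k(H/J)$; set $\Phi(M)(H/J):=M(G/J)\cdot\epsilon_H$. The key technical point is that restrictions, transfers, conjugations and the $k$-action of $M$ all carry these distinguished idempotents to distinguished idempotents --- for instance $\res^J_{J'}$ of $C_H^G k$ sends $\epsilon_H$ to $\epsilon_H$ and restricts on that factor to $\res^J_{J'}$ of $k$, and transfers respect the distinguished summand by the projection formula --- so that $\Phi(M)$ acquires the structure of a $k$-module, and, when $M$ is an algebra, of a $k$-Tambara algebra, since $\Phi$ is also compatible with restriction along $R^G_H$ and with passing to the idempotent summand. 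Granting this, $\Phi\circ C_H^G\cong\id$ by the additive double coset formula (the identity-double-coset summand of $R^G_H C_H^G N$ is $N$), and $C_H^G\circ\Phi\cong\id$ because reassembling the idempotent summands gives a natural isomorphism $M(G/K)\cong\prod_{g\in H\backslash G/K}\Phi(M)\bigl(H/(H\cap gKg^{-1})\bigr)$ compatible with all Mackey (and, in the algebra case, Tambara) operations. I expect this last reassembly, together with the idempotent bookkeeping it rests on, to be the main obstacle: it is conceptually parallel to \Cref{prop:coinduction-and-modules}, but the shortcut available there --- that the restriction maps of a $C_e^G R$-module are injective --- fails already for $C_H^G k$ itself once $k$ is not field-like, and must be replaced by an explicit analysis of how the orbit-wise idempotents transform under the Mackey operations.

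With the module equivalence in place, strong symmetric monoidality follows as in \Cref{lem:coind-equiv-is-sym-mon}: since $R^G_H$ is strong monoidal, $R^G_H\bigl(C_H^G X\boxtimes_{C_H^G k}C_H^G Y\bigr)\cong R^G_H C_H^G X\boxtimes_{R^G_H C_H^G k}R^G_H C_H^G Y$, and extracting the identity-double-coset summand identifies the $k$-part with $X\boxtimes_k Y$; applying $C_H^G$ produces the structure isomorphism $C_H^G X\boxtimes_{C_H^G k}C_H^G Y\cong C_H^G(X\boxtimes_k Y)$, with the unit and symmetry constraints checked routinely. Finally, the equivalence of algebra categories is obtained by transporting this monoidal equivalence through the forgetful functors to modules, as in the discussion following \Cref{lem:coind-equiv-is-sym-mon}: a $C_H^G k$-algebra $A$ has underlying $C_H^G k$-module $C_H^G\Phi(A)$, its multiplication and Tambara operations restrict to make $\Phi(A)$ a $k$-Tambara algebra, and the double coset formula shows the Tambara structure on $A$ to be the coinduced one, so $A\cong C_H^G\Phi(A)$ as $C_H^G k$-algebras. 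This upgrades the monoidal module equivalence to the asserted $\kAlg\simeq\kAlg[C_H^G k]$.
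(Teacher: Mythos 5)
The paper does not prove this theorem; it is stated as a citation of external work (\cite{CW25}, \cite{Wis25}), with the surrounding text explaining that \cite{CW25} handles the module equivalence for $G$ abelian and \cite{Wis25} removes the abelian hypothesis and adds the algebra case. What the paper does establish in-house is only the special case $H=e$, via \Cref{prop:coinduction-and-modules} and \Cref{lem:coind-equiv-is-sym-mon}. So there is no in-paper argument to compare against, and your sketch must be judged on its own.

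As a strategy, your outline is plausible and likely points in the right direction: use $C_H^G$ (lax monoidal as a right adjoint to the strong monoidal $R^G_H$) as the forward functor, and extract the identity-double-coset idempotent summand at each level to build a quasi-inverse $\Phi$. But the sketch has a genuine gap, and it sits exactly where the theorem has content beyond the $H=e$ case the paper proves. You write ``Granting this\ldots'' over the claim that every Mackey operation of a $C_H^G k$-module carries the distinguished idempotent summands to one another and that reassembly recovers $M$, and you then correctly identify this as ``the main obstacle,'' observing that the injectivity-of-restriction shortcut that closes \Cref{prop:coinduction-and-modules} (unit in the image of the transfer from $G/e$) is unavailable here. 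That shortcut is precisely what made $H=e$ quick; without it you must actually verify, for example, that the restriction $M(G/K)\to M\bigl(G/(H\cap gKg^{-1})\bigr)$ along the orbit map restricts to an isomorphism $\epsilon_g^K M(G/K)\xrightarrow{\ \sim\ }\epsilon_H M\bigl(G/(H\cap gKg^{-1})\bigr)$, which requires a transfer/restriction composite argument and is not supplied. Likewise, the final step --- that a $C_H^G k$-algebra's norms are forced by its underlying coinduced Green-functor structure so that $A\cong C_H^G\Phi(A)$ as Tambara algebras --- is asserted by analogy with the discussion after \Cref{lem:coind-equiv-is-sym-mon} but is a substantive claim in its own right. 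In short: correct identification of the approach, but the central computations are deferred rather than carried out.
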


\begin{corollary}
    If $\mathbb{F}$ is algebraically closed, then $C_e^G \mathbb{F}$ is Nullstellensatzian.
\end{corollary}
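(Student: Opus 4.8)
The plan is to observe that \emph{Nullstellensatzian} is, by definition, a property of an object $x$ that depends only on $x$ being nonterminal together with a categorical property of the slice category $x/\catC$; hence it transports along any equivalence of slice categories.

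First I would record that $C_e^G\mathbb{F}$ is nonterminal. Since $\mathbb{F}$ is a field it is nonzero, and by \Cref{lem:coinduction-from-e} we have $C_e^G\mathbb{F} = \underline{\Fun(G,\mathbb{F})}$, so $(C_e^G\mathbb{F})(G/e) = \Fun(G,\mathbb{F}) \neq 0$; thus $C_e^G\mathbb{F}$ is not the zero Tambara functor.

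Next I would combine \Cref{prop:coinduction-and-modules} and \Cref{lem:coind-equiv-is-sym-mon} (or simply invoke the theorem of \cite{CW25} and \cite{Wis25} stated above) to obtain an equivalence of categories $\kAlg[C_e^G\mathbb{F}] \simeq \kAlg[\mathbb{F}]$, where the right-hand side is the ordinary category $\mathbb{F}/\CRing$ of $\mathbb{F}$-algebras. Since $\mathbb{F}$ is algebraically closed, it is a Nullstellensatzian object of $\CRing$ by the classical Nullstellensatz recalled above, which is to say precisely that $\mathbb{F}/\CRing$ is a Nullstellensatzian category. Because an equivalence of categories is fully faithful and cocontinuous, it preserves initial objects, terminal objects, compact objects, and the existence of morphisms between prescribed objects, so being a Nullstellensatzian category is invariant under equivalence. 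Therefore $\kAlg[C_e^G\mathbb{F}]$ is Nullstellensatzian, and together with the nonterminality of $C_e^G\mathbb{F}$ this is exactly the assertion that $C_e^G\mathbb{F}$ is a Nullstellensatzian Tambara functor.

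The only step requiring any care is the claim that an equivalence preserves compact objects --- i.e.\ that ``$\catD(x,-)$ preserves filtered colimits'' is stable under replacing $\catD$ by an equivalent category --- but this is immediate once one notes that an equivalence commutes with all colimits and induces bijections on hom-sets. If one prefers to avoid even this observation, the corollary also follows directly from \Cref{cor:coind-preserves-ac}, which already establishes that $C_e^G$ preserves Nullstellensatzian objects; the present argument is merely a more conceptual repackaging of that fact through module and algebra categories.
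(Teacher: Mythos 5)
Your proof is correct and follows essentially the same route as the paper: both deduce the corollary from the equivalence $\kAlg[C_e^G\mathbb{F}] \simeq \kAlg[\mathbb{F}]$ (via the module equivalence and its symmetric monoidal structure, or the theorem of \cite{CW25,Wis25}) together with the observation that ``Nullstellensatzian'' is a slice-category property. You are slightly more explicit than the paper in verifying nonterminality and in noting that Nullstellensatzian-ness of a category is invariant under equivalence, which are worthwhile clarifications rather than departures.
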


Since we visibly never used the norms in this proof, it clearly applies to incomplete Tambara functors as well.

\begin{corollary}
\label{cor:K-theory-equiv}
    If $k \cong C_e^G \mathbb{F}$ is Nullstellensatzian, then the equivalence of \cref{prop:coinduction-and-modules} induces an equivalence of $\mathbb{E}_\infty$-ring spectra $K(k) \cong k(\mathbb{F})$.
\end{corollary}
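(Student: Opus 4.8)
The plan is to transport algebraic $K$-theory along the equivalence of module categories furnished by \Cref{prop:coinduction-and-modules}, using its symmetric monoidal refinement from \Cref{lem:coind-equiv-is-sym-mon} to obtain the multiplicative statement. Recall that the $K$-theory spectrum $K(k)$ of the Green functor underlying $k$ is, by definition, the $K$-theory of a finitary subcategory of $\Mod_k$ --- for instance the perfect (equivalently, dualizable) objects, which for $k \cong C_e^G \mathbb{F}$ with $\mathbb{F}$ a field are modeled by bounded complexes of finitely generated free modules. First I would note that the adjoint equivalence $\Mod_{\mathbb{F}} \simeq \Mod_{C_e^G \mathbb{F}}$ of \Cref{prop:coinduction-and-modules}, being an equivalence of abelian (indeed, by \Cref{lem:coind-equiv-is-sym-mon}, symmetric monoidal abelian) categories, is exact and carries finitely generated projectives to finitely generated projectives, since ``finitely generated projective'' --- equivalently ``compact projective,'' or ``dualizable'' after passing to derived categories --- is a categorical property preserved and reflected by any equivalence. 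Passing to perfect complexes (or to the associated Waldhausen or stable $\infty$-categories) and applying $K(-)$ then yields an equivalence of spectra $K(\mathbb{F}) \xrightarrow{\sim} K(C_e^G \mathbb{F}) = K(k)$.

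It remains to upgrade this to an equivalence of $\mathbb{E}_\infty$-ring spectra. For this I would invoke the standard fact that algebraic $K$-theory is a lax symmetric monoidal functor on the $\infty$-category of small idempotent-complete stable (or exact) symmetric monoidal categories; the $\mathbb{E}_\infty$-structures on $K(\mathbb{F})$ and on $K(k)$ are precisely those induced by $\otimes_{\mathbb{F}}$ and by the relative box product $\boxtimes_{C_e^G \mathbb{F}}$ together with the functoriality of $K$. Since the equivalence of \Cref{prop:coinduction-and-modules} is \emph{strong} symmetric monoidal by \Cref{lem:coind-equiv-is-sym-mon}, applying the lax symmetric monoidal functor $K(-)$ produces a map of $\mathbb{E}_\infty$-rings $K(\mathbb{F}) \to K(k)$ whose underlying map of spectra is the equivalence just constructed; hence it is an equivalence of $\mathbb{E}_\infty$-rings. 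Finally, the isomorphism $\mathbb{F} \cong k(G/G)$ established in the proof of \Cref{theorem:alg-closed-iff-coinduced} identifies this with the asserted equivalence $K(k) \cong K(k(G/G))$.

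The hard part is not conceptual but a matter of fixing a coherent setup: one must choose a model of $K$-theory in which (i) the finitary subcategory of $\Mod_k$ is preserved by the equivalence and (ii) the multiplicative refinement is available. Point (i) reduces to checking that the symmetric monoidal equivalence of \Cref{lem:coind-equiv-is-sym-mon} restricts to dualizable objects, which is immediate from the fact that dualizability is intrinsic to a symmetric monoidal category and therefore preserved and reflected by any strong symmetric monoidal equivalence; point (ii) is a citation to the now-standard multiplicativity of algebraic $K$-theory. One should also take care that ``$K(k)$'' is understood as the $K$-theory of the underlying Green functor's module category, exactly as in \Cref{prop:coinduction-and-modules}, rather than some a priori distinct flavor of equivariant $K$-theory; with that convention the argument above is complete.
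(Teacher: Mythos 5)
Your argument is correct and is essentially the intended one; the paper states this corollary without a written proof, leaving it to follow directly from \Cref{prop:coinduction-and-modules} and \Cref{lem:coind-equiv-is-sym-mon}, and your writeup correctly supplies the missing details: a strong symmetric monoidal equivalence of abelian module categories restricts to dualizable (perfect) objects, and applying the lax symmetric monoidal $K$-theory functor upgrades the resulting equivalence of spectra to one of $\mathbb{E}_\infty$-rings. (You have also correctly read ``$k(\mathbb{F})$'' in the statement as a typo for ``$K(\mathbb{F})$''.)
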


We may directly deduce a classification of Nullstellensatzian $G$-rings from \cref{theorem:alg-closed-iff-coinduced}, although a similar argument (building a compact algebra whose section forces structure) also goes through. We give this alternate approach to demonstrate the robustness of \cref{lem:right-adjoints-transfer-AC-if-left-reflects-terminal-objects}.

\begin{theorem}
    A $G$-ring $k$ is Nullstellensatzian if and only if it is the coinduction of an algebraically closed field $\mathbb{F}$, $k \cong \Fun(G,\mathbb{F})$.
\end{theorem}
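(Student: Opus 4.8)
We give the ``compact algebra'' argument advertised above; one could alternatively deduce the statement by applying $\ev_{G/e}$ to \Cref{theorem:alg-closed-iff-coinduced}, using that $\underline{\,\cdot\,}$ preserves Nullstellensatzian objects (\Cref{lem:fixed-pts-preserves-nullstellensatzian}) and that $\ev_{G/e}\underline{S}\cong S$. For the reverse direction: if $\mathbb{F}$ is algebraically closed, i.e.\ a Nullstellensatzian object of $\CRing$, then $\Fun(G,\mathbb{F})$ is Nullstellensatzian by \Cref{prop:ring-of-funs-from-group-is-ac}. So it remains to prove the forward direction.

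Let $k$ be a Nullstellensatzian $G$-ring; in particular $k$ is nonterminal, i.e.\ $k\neq 0$. The plan is to first force a free $G$-orbit of orthogonal idempotents in $k$ by means of a compact $k$-algebra, deduce that $k$ is coinduced, and finally identify the ``uncoinduced'' ring with $k^G$ and show the latter is an algebraically closed field. Let $P$ be the free $k$-algebra in $\GCRing$ on one generator $x$; concretely $P$ has underlying ring $k[\,g\cdot x:g\in G\,]$, a polynomial ring on the free $G$-orbit of $x$, and $k$-algebra maps out of $P$ correspond to choices of an element in the target. Let $K$ be the quotient of $P$ by the ($G$-stable) family of relations making $\{g\cdot x\}_{g\in G}$ a complete system of orthogonal idempotents:
\[ (g\cdot x)^2 = g\cdot x,\qquad (g\cdot x)(h\cdot x)=0\ \ (g\neq h),\qquad \sum_{g\in G} g\cdot x = 1. \]
Then $K$ is a $k$-algebra in $\GCRing$ which is finitely presented, hence compact. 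It is nonzero: writing $\eta_k\colon k\to\Fun(G,Uk)$ for the unit of $U\dashv\Fun(G,-)$, the indicator functions $\delta_g\in\Fun(G,Uk)$ form a complete system of orthogonal idempotents permuted transitively by $G$, so sending $x$ to some $\delta_{g_0}$ extends $\eta_k$ to a $k$-algebra map $K\to\Fun(G,Uk)$, and $\Fun(G,Uk)\neq 0$ because $k\neq 0$.

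Since $K$ is compact and nonterminal and $k$ is Nullstellensatzian, there is a $k$-algebra retraction $r\colon K\to k$. Put $y_g:=r(g\cdot x)$. The $y_g$ are orthogonal idempotents with $\sum_g y_g=1$, and since $r$ is $G$-equivariant the $G$-action permutes them transitively; as $\sum_g y_g=1\neq 0$, two distinct $y_g$ (being orthogonal) cannot coincide, so $\{y_g\}_{g\in G}$ is a free $G$-orbit. Hence the idempotent decomposition $k\cong\prod_{g\in G}ky_g$ is $G$-equivariant with $G$ permuting the factors freely and transitively; exactly as in the proof of \Cref{prop:ring-of-funs-from-group-is-ac}, writing $A:=ky_e$ (a plain ring, its point-stabilizer being trivial) this gives a $G$-ring isomorphism $k\cong\Fun(G,A)$, under which the counit $\epsilon_k$ of $\mathrm{triv}\dashv(-)^G$ identifies $A$ with $k^G$. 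It remains to see that $k^G$ is an algebraically closed field, equivalently a Nullstellensatzian object of $\CRing$. We apply \Cref{cor:right-adjoints-preserve-AC-if-pushout-cond} to the adjunction $\mathrm{triv}\dashv(-)^G$ between $\CRing$ and $\GCRing$, where $\mathrm{triv}$ equips a ring with the trivial action: here $(-)^G$ preserves filtered colimits (a finite limit, and filtered colimits are computed on underlying sets in both categories) and reflects terminal objects (if $k^G=0$ then $1=0$ in $k$), and $\GCRing$ has all pushouts. Thus it suffices to check that, for a finitely presented $k^G$-algebra $\alpha\colon k^G\to B$, the pushout $z$ of $\mathrm{triv}(\alpha)$ along $\epsilon_k\colon\mathrm{triv}(k^G)\to k$ in $\GCRing$ is nonzero whenever $B\neq 0$. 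But $z\cong k\otimes_{k^G}B$ with $B$ carrying the trivial action, and since $k\cong\Fun(G,A)$ is free of rank $|G|$ as a $k^G\cong A$-module, $z\cong\Fun(G,B)$ as a ring, which is nonzero whenever $B\neq 0$. Therefore $k^G$ is Nullstellensatzian in $\CRing$, i.e.\ an algebraically closed field $\mathbb{F}$, and $k\cong\Fun(G,\mathbb{F})$.

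The content here is all in the bookkeeping rather than in any single hard step. The points that require care are: choosing the $G$-actions on $P$ and on $\Fun(G,Uk)$ consistently so that $x\mapsto\delta_{g_0}$ is equivariant and the idempotent relations are genuinely $G$-stable; verifying that a $G$-ring which is a free-orbit product of rings is coinduced from one factor (and that that factor is $k^G$); and identifying the $\GCRing$-pushout defining $z$ with $\Fun(G,B)$ using the freeness of $\Fun(G,A)$ over $A$. I expect this last identification, together with pinning down that $P$ really is compact in $\GCRing$, to be the main places where one must be careful, though none of it is deep.
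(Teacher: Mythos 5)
Your proof is correct. You mention in passing the paper's own one-paragraph argument (apply $\underline{\,\cdot\,}$ via \Cref{lem:fixed-pts-preserves-nullstellensatzian} to reduce to \Cref{theorem:alg-closed-iff-coinduced}, then evaluate at $G/e$), but your main argument is the self-contained ``compact algebra'' proof carried out directly in $\GCRing$, paralleling the paper's proof of \Cref{theorem:alg-closed-iff-coinduced}; the paper only remarks that ``a similar argument \ldots also goes through'' without giving it, so you have filled that in. The steps are all sound: the free $k$-algebra $P=k[g\cdot x : g\in G]$ is compact in $\GCRing$; the three families of idempotent relations are $G$-stable and finite, so $K$ is finitely presented; the assignment $x\mapsto\delta_{g_0}$ really is equivariant and respects the relations (so $K\neq 0$); a retraction $K\to k$ produces a free $G$-orbit of orthogonal idempotents summing to $1$, giving the decomposition $k\cong\Fun(G,k^G)$; and freeness of $k$ over $k^G$ of rank $|G|$ identifies the pushout $k\otimes_{k^G}B$ with $\Fun(G,B)$, verifying the hypothesis of \Cref{cor:right-adjoints-preserve-AC-if-pushout-cond} for $\mathrm{triv}\dashv(-)^G$. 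The trade-off is as expected: the paper's route is shorter given the Tambara theorem as a black box, while yours makes the $G$-ring classification logically independent of the harder Tambara functor case (and in fact exhibits the core mechanism of that harder proof in a simpler setting).
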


\begin{proof}
    By \Cref{prop:ring-of-funs-from-group-is-ac}, every $G$-ring of the 
    form $\Fun(G,\mathbb{F})$ for $\mathbb{F}$ an algebraically closed 
    field is Nullstellensatzian. 
    For the converse, if a $G$-ring $k$ is 
    Nullstellensatzian, then
    \Cref{lem:fixed-pts-preserves-nullstellensatzian} tells us that 
    $\underline{k}$ is Nullstellensatzian. Therefore 
    $\underline{k}\cong C_e^G(\mathbb{F})$ for some algebraically 
    closed field $\mathbb{F}$ by \Cref{theorem:alg-closed-iff-coinduced}, and evaluating at $G/e$ on both sides 
    gives that $k\cong \Fun(G,\mathbb{F})$.
    % We take our left adjoint to be the functor sending a $\underline{k}$-algebra $\underline{k} \rightarrow R$ to the $k$-algebra $k \rightarrow R(G/e)$ in $G$-rings; the right adjoint is the fixed-point Tambara functor construction. Since $R(G/e) = 0$ if and only if $R$ is the constant zero Tambara functor, we see that the left adjoint reflects terminal objects. The right adjoint visibly preserves initial objects. Now let $k \rightarrow S_i$ denote a filtered colimit diagram in $k$-algebras. Then the filtered colimit of the diagram $\underline{k} \rightarrow \underline{S_i}$ is computed levelwise, with value in level $G/H$ given by the colimit of the diagram $k^H \rightarrow \underline{S_i}(G/H) = S_i^H$. Since taking $H$-fixed points is a finite limit, it commutes with filtered colimits, hence our filtered colimit is just the $H$ fixed points of the colimit of the diagram $k \rightarrow S_i$. In other words, our right adjoint preserves filtered colimits. We have now shown \cref{lem:right-adjoints-transfer-AC-if-left-reflects-terminal-objects} applies, as desired.
\end{proof}

Presumably this result yields a $K$-theoretic statement analogous to \cref{cor:K-theory-equiv} in the context of Merling's genuine $K$-theory spectrum of a ring with $G$-action \cite{Mer17}, although the authors have not determined the precise form of this relationship. It is likely either a statement about $G$-fixed points or the underlying nonequivariant spectrum of Merling's construction.

Finally, we aim to characterize ``the'' algebraic closure of a field-like Tambara functor. As in the nonequivariant case, it turns out that algebraic closures are unique up to isomorphism. Recall \cite[Remark 4.36]{Nak11a} that if $k$ is a field-like Tambara functor, then the fixed-points $k(G/e)^G$ are a field.

\begin{theorem}
    \label{theorem:characterization-of-algebraic-closure}
    Assume $k$ is field-like. There is a map $k \rightarrow C_e^G \left( \overline{k(G/e)^G} \right)$, where the codomain is the coinduction of an algebraic closure of the field $k(G/e)^G$, and any map from $k$ to a Nullstellensatzian Tambara functor $K$ factors (noncanonically) through this map.
\end{theorem}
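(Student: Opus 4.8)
The plan is to build, from the coinduction adjunction, one distinguished morphism $\iota\colon k\to C_e^G\bigl(\overline{k(G/e)^G}\bigr)$, to show it is injective at level $G/e$ using that $k$ is field-like, and then to factor any map from $k$ to a Nullstellensatzian object through it by an integrality argument. For the construction, write $F_0:=k(G/e)^G$, which is a field by \cite[Remark 4.36]{Nak11a}, and recall that the Weyl action on $k(G/e)$ is the full $G$-action. Since $G$ is finite, $k(G/e)$ is integral over $F_0$ (each $a$ is a root of $\prod_{g\in G}(X-ga)\in F_0[X]$); as $k$ is field-like it is nonzero with injective restriction maps, so $k(G/e)\neq 0$, and hence, being integral over a field, it has a maximal ideal $\mathfrak m$ with $k(G/e)/\mathfrak m$ algebraic over $F_0$. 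Fix an algebraic closure $\overline{F_0}$ and a ring map $\alpha\colon k(G/e)\to\overline{F_0}$ with kernel $\mathfrak m$ restricting to the canonical inclusion $F_0\hookrightarrow\overline{F_0}$. Since $C_e^G$ is right adjoint to $R^G_e\colon\GTamb\to\CRing$ and $C_e^G\overline{F_0}=\underline{\Fun(G,\overline{F_0})}$ by \Cref{lem:coinduction-from-e}, the map $\alpha$ corresponds to a morphism $\iota\colon k\to C_e^G\overline{F_0}$, and unwinding \Cref{lem:fp-tambara-functor-adjt-to-eval-at-underlying} and the forgetful/$\Fun(G,-)$ adjunction, its level-$G/e$ component is $\iota_{G/e}(x)=\bigl(g\mapsto\alpha(gx)\bigr)$. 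The kernel of $\iota_{G/e}$ is $\bigcap_{g\in G}g^{-1}\ker\alpha$, a $G$-invariant ideal of $k(G/e)$, proper because $\alpha(1)=1$; since a field-like $k$ has no nontrivial $G$-invariant ideal in $k(G/e)$ (\cite[Theorem 4.32]{Nak11a}), $\iota_{G/e}$ is injective. Denote its image by $B$, and note that $B$ contains the diagonal copy $\Delta(F_0)$ of $F_0$ inside $\Fun(G,\overline{F_0})$, since $\alpha|_{F_0}$ is the canonical inclusion and $gx=x$ for $x\in F_0$.

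Now let $\varphi\colon k\to K$ with $K$ Nullstellensatzian. By \Cref{theorem:alg-closed-iff-coinduced}, $K\cong C_e^G\mathbb F$ for an algebraically closed field $\mathbb F$, and $\varphi$ is, via the adjunction, a ring map $\psi\colon k(G/e)\to\mathbb F$. A factorization $k\xrightarrow{\iota}C_e^G\overline{F_0}\xrightarrow{g}K$ of $\varphi$ amounts, again by the adjunction, to a ring map $\gamma\colon\Fun(G,\overline{F_0})\to\mathbb F$ with $\gamma\circ\iota_{G/e}=\psi$ (equality of adjuncts forces $g\circ\iota=\varphi$). Since $\iota_{G/e}$ is injective, $\psi$ descends to $\bar\psi\colon B\to\mathbb F$, and it remains to extend $\bar\psi$ along $B\hookrightarrow\Fun(G,\overline{F_0})$. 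This inclusion is integral: $\Fun(G,\overline{F_0})$ is generated over $\Delta(\overline{F_0})$ by its coordinate idempotents (each integral over the prime subring), $\Delta(\overline{F_0})$ is integral over $\Delta(F_0)$ because $\overline{F_0}/F_0$ is algebraic, and $\Delta(F_0)\subseteq B$. So the standard extension argument — pick a prime of $\Fun(G,\overline{F_0})$ over $\ker\bar\psi$ by lying over, pass to residue fraction fields, and extend using that $\mathbb F$ is algebraically closed — produces $\gamma$, completing the factorization. The various choices here (of $\mathfrak m$, of $\alpha$, of the lifted prime, of the embedding) are precisely why the factorization is noncanonical.

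The main obstacle is that $k(G/e)$ is typically not a domain — for instance it is a nontrivial finite product of fields when $k$ is coinduced from a proper subgroup — so there is no injective $\alpha$, and $\iota$ genuinely cannot be recovered from a single ring quotient of $k(G/e)$. What rescues the argument is that packaging the entire $G$-orbit of $\alpha$ into $\iota_{G/e}$ is injective exactly because field-likeness forbids nontrivial $G$-invariant ideals; this is the one essential use of the hypothesis. The remaining delicacy is purely commutative-algebraic: one must keep $B\hookrightarrow\Fun(G,\overline{F_0})$ integral in order to extend $\bar\psi$ to $\gamma$, which is why $\alpha$ is normalized on $F_0$ (otherwise $\overline{F_0}$ could be transcendental over $\alpha(F_0)$ and the extension step would fail when $\mathbb F$ is small).
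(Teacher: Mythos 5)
Your proof is correct and follows the same overall strategy as the paper's---reduce to commutative algebra via the coinduction adjunction, appeal to integrality of $k(G/e)$ over $F_0 := k(G/e)^G$, and extend into the algebraically closed target---but you are more careful on a point the published proof elides, and the extra care matters. The paper's argument asserts that the adjoint $\psi\colon k(G/e)\to\mathbb{F}$ of $\varphi$ factors as $k(G/e)\xrightarrow{\alpha}\overline{F_0}\to\mathbb{F}$, where $\alpha$ is the fixed ring map defining $\iota$; but that ring-level factorization can fail whenever $k(G/e)$ is not a domain (e.g.\ $k$ coinduced from a proper subgroup, so $k(G/e)$ is a product of fields and $\alpha$, $\psi$ may have different kernels), so as written the paper only exhibits a factorization through a map depending on $\varphi$, not through a single fixed $\iota$. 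Your version correctly identifies the Tambara-level condition $g\circ\iota=\varphi$ with the ring-level equation $\gamma\circ\iota_{G/e}=\psi$ for a map $\gamma\colon\Fun(G,\overline{F_0})\to\mathbb{F}$, observes that $\iota_{G/e}$---which packages the full $G$-orbit of $\alpha$---is injective precisely because field-likeness forbids nontrivial $G$-invariant ideals in $k(G/e)$, and then produces $\gamma$ by extending $\psi$ along the integral inclusion $B\hookrightarrow\Fun(G,\overline{F_0})$ using lying over and the algebraic closedness of $\mathbb{F}$. This genuinely establishes the stated statement (one distinguished $\iota$ through which every $\varphi$ factors) for arbitrary field-like $k$, and your closing discussion correctly isolates the non-domain case as the essential difficulty and the normalization of $\alpha$ on $F_0$ as the reason the integral extension step works.
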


\begin{proof}
    Fix a choice of algebraic closure $k(G/e)^G \rightarrow \overline{k(G/e)^G}$. Rotating the coinduction-restriction adjunction, we obtain $C_e^G \left( \overline{k(G/e)^G} \right) $.

    Now let $k \rightarrow K$ any map with $K$ algebraically closed. Then $K \cong C_e^G \mathbb{F}$ for some algebraically closed field $\mathbb{F}$. Rotating the coinduction-restriction adjunction and applying fixed-points, we obtain a map \[ k(G/e)^G \subset k(G/e) \rightarrow \mathbb{F} \textrm{.} \] Since $k(G/e)$ is integral over $k(G/e)^G$ (by \cite[Exercise 5.12]{AM69}), this map factors through \[ k(G/e) \rightarrow \overline{k(G/e)^G} \rightarrow \mathbb{F} \textrm{,} \] where the first map is our fixed choice of algebraic closure of $k(G/e)^G$. Applying $C_e^G$ to the second map yields a morphism of Nullstellensatzian Tambara functors which manifestly factors $k \rightarrow K$ on the $G/e$ level. Since all objects are fields, all restriction maps are injective, so checking that $k \rightarrow C_e^G \overline{k(G/e)^G}$ factors $k \rightarrow K$ may be done on the $G/e$ level.
\end{proof}

For this reason, we refer to (a fixed choice of map) $k \rightarrow C_e^G \overline{k(G/e)^G}$ as algebraic closure of $k$. Of course, if $k$ is not a field, then there may be nonisomorphic choices of algebraic closures; this may be seen in the nonequivariant setting, for example, by consideration of the maps $\mathbb{Z} \rightarrow \mathbb{F}_p \rightarrow \overline{\mathbb{F}_p}$ for $p$ any prime.

Similarly, one may check that for any $G$-ring of the form $Fun(G/H,\mathbb{F})$, there is a unique algebraic closure, up to noncanonical isomorphism, given by $\Fun(G,\overline{\mathbb{F}})$, for $\overline{\mathbb{F}}$ an algebraic closure of $\mathbb{F}$.

\subsection{Incomplete Tambara Functors}
\label{subsec:incomplete-Tamb-functors}

% \todo[inline]{This maybe shouldn't be a subsection, not sure how to format this}
% I think it should be! -Ben
We can generalize Tambara functors by replacing $\Poly_G$ with
other categories of a similar flavor (i.e. whose morphisms are bispans). In full generality there are many interesting categories that one can use in place of $\Poly_G$ \cite{benthesis}, but the interesting generalization for our purposes will
be to consider categories $\Poly_{G,\mathcal{O}_\times}$,
where $\mathcal{O}_\times$ is a subcategory of $G\mhyphen\set$ which is wide,
pullback-stable, and a symmetric monoidal subcategory with
respect to the coproduct. $\Poly_{G,\mathcal{O}_\times}$ is the subcategory
of $\Poly_G$ consisting of diagrams
$X\xleftarrow{h} A \xrightarrow{g} B \xrightarrow{f} Y$ such that $g\in\mathcal{O}_\times$.
That this is a subcategory can be checked using pullback stability and
the formulas for composing the generating morphisms above. We require
$\mathcal{O}_\times$ to be a symmetric monoidal subcategory with
respect to the product so that $\Poly_{G,\mathcal{O}_\times}$
has all products and they agree with the products in
$\Poly_G$, which are given by disjoint union of $G$-sets.

Then we can define $\Poly_{G,\mathcal{O}_\times}$-Tambara functors
in exactly the same way as above. These are called \emph{incomplete Tambara
    functors} because they also admit a description as in \Cref{prop:alt-def-of-Tambara-functors}, with exactly one difference: we only have norm maps $\nm_H^K$  when the map
$G/H\to G/K$ lies in $\mathcal{O}_\times$. Incomplete Tambara functors were originally introduced in \cite{BH18}, and we refer the reader to this source for a more complete exposition.

One might wonder the extent to which our results apply to incomplete Tambara functors, and more generally bi-incomplete Tambara functors (those are Tambara functors which in addition to not having all norms, do not necessarily have all transfers). In fact, as we have remarked, the coinduction of an algebraically closed field is always a Nullstellensatzian incomplete Tambara functor. However, there are Nullstellensatzian objects not of this form.

To see this, consider the category of ``maximally" bi-incomplete $G$-Tambara functors, ie those which have no norms and no transfers. The category of such is just the category of coefficient systems of rings. In this category, form the unique bi-incomplete Tambara functor $k$ whose $G/G$-level is an algebraically closed field $\mathbb{F}$, and whose remaining levels are zero. The slice category of bi-incomplete Tambara functor algebras over $k$ may be identified with the category of $\mathbb{F}$-algebras. In particular, this implies $k$ is Nullstellensatzian.

One may be able to use \cref{lem:coinduction-from-e} to give a classification of Nullstellensatzian bi-incomplete Tambara functors. The authors plan to investigate this in future work.

\printbibliography%[heading=bibintoc]
\end{document}